\newtheorem{thm}{Theorem}
\newtheorem{defn}{Definition}
\newtheorem{fact}{Fact}
\newtheorem{lem}[thm]{Lemma}
\newtheorem{prop}[thm]{Proposition}
\newtheorem{cor}[thm]{Corollary}
\newtheorem{question}{Question}
\newtheorem{exam}{Example}
\newcommand{\pp}{\mathcal{P}}
\newcommand{\up}{\upharpoonright}
\newcommand{\ra}{\rightarrow}
\newcommand{\mc}{\mathcal}
\newcommand{\ms}{\mathscr}
\newcommand{\msa}{\mathscr{A}}
\newcommand{\msb}{\mathscr{B}}
\newcommand{\pah}{\mathscr{P}_{\mathscr{A}, H}}
\newcommand{\bfe}{\mathbf{E}}
\newcommand{\mct}{\mathcal{T}}
\newcommand{\Phit}{\Phi_\mathcal{T}}
\newcommand{\phipi}{\varphi(\pi, \mathcal{C}, \mathcal{R}, \mathcal{T}, \mathbf{E})}
\newcommand{\psipi}{\psi_0(\pi, \mathcal{C}, \mathcal{R}, \mathcal{T}, \mathbf{E})}
\newcommand{\ma}{{\rm MA}}
\newcommand{\map}{{\rm MA}(powerfully ccc)}
\begin{document}

 \title{Distinguishing Martin's axiom from its restrictions}
  \author{Yinhe Peng}
  \address{Academy of Mathematics and Systems Science, Chinese Academy of Sciences\\ East Zhong Guan Cun Road No. 55\\Beijing 100190\\China}
\email{pengyinhe@amss.ac.cn}

\thanks{The author was partially supported by a program of the Chinese Academy of Sciences.}

\subjclass[2010]{03E50, 03E65, 03E02}
\keywords{{\rm MA}, powerfully ccc, {\rm MA}(powerfully ccc), squarely ccc}

  \begin{abstract}
 We introduce an iteration of forcing notions satisfying the countable chain condition with minimal damage to a strong coloring. Applying this method, we prove that Martin's axiom  is strictly stronger than its restriction to forcing notions satisfying the countable chain condition in all finite powers. Our method shows also the finer distinction, that Martin's axiom is strictly stronger than its restriction to forcing notions whose squares satisfy the countable chain condition.
  \end{abstract}
  
  \begingroup
  \def\uppercasenonmath#1{}
    \maketitle
    \endgroup

    \section{Introduction}
     Solovay and Tennebaum \cite{ST}  introduced the method of iterated forcing to prove the consistency of Suslin's hypothesis. Then Martin formulated a forcing axiom Martin's axiom ({\rm MA}) whose consistency follows from Solovay-Tennebaum's method.
     
     For cardinal $\kappa$,  {\rm MA}$_{\kappa}$ is the following assertion:
  \begin{itemize}
  \item If $\mc{P}$ is a ccc partial order and $\{D_\alpha: \alpha<\kappa\}$ is a collection of dense subsets of $\mc{P}$, then there is a filter $G\subset \mc{P}$ such that $G\cap D_\alpha\neq \emptyset$ for any $\alpha<\kappa$.
  \end{itemize}
  And {\rm MA} is the assertion that  {\rm MA}$_{\kappa}$ holds for all $\kappa< 2^\omega$.
  
  A partial order $\mc{P}$ has the \emph{countable chain condition} (ccc)  if every uncountable subset of $\mc{P}$ contains two elements with a common lower bound.
   
    Since formulated, {\rm MA} has been a powerful method, to set-theorists and non-set-theorists, of proving consistent results (see e.g., \cite{Fremlin}, \cite{KV},  \cite{TV}, \cite{Todorcevic2000}, \cite{Bagaria}). For example, a straightforward application of {\rm MA} to $(\mc{B}_\mathbb{R}, \subseteq)$, where $\mc{B}_\mathbb{R}=\{(p, q): p<q$ are rational numbers$\}$ is the collection of basic open subsets of $\mathbb{R}$,  shows 
   \begin{itemize}
   \item the \emph{Strong Baire Category Theorem}: the intersection of $<2^\omega$ dense open sets is dense.
   \end{itemize}
   
   In general, consequences of {\rm MA} are strictly weaker than {\rm MA} and follow from restrictions of {\rm MA} to corresponding subclasses of ccc posets.  For example, the above Strong Baire Category Theorem is equivalent to   Martin's axiom restricted to Cohen forcing.
   
The investigation of stronger ccc properties and their corresponding restricted Martin's axiom has a long histroy, e.g., Knaster \cite{Knaster} investigated property $K$ ($K_n$ in Figure 1 for $n=2$)  and  Kunen-Tall \cite{KT} investigated various versions of restricted Martin's axiom  (see also \cite{Juhasz},  \cite{Fremlin}, \cite{TV}, \cite{Todorcevic91}).

   For a property (or a collection) of ccc partial orders $\Psi$ and cardinal $\kappa$, {\rm MA}$_\kappa(\Psi)$ is the following assertion:
     \begin{itemize}
  \item If $\mc{P}$ has property $\Psi$ and $\{D_\alpha: \alpha<\kappa\}$ is a collection of dense subsets of $\mc{P}$, then there is a filter $G\subset \mc{P}$ such that $G\cap D_\alpha\neq \emptyset$ for any $\alpha<\kappa$.
  \end{itemize}
  And {\rm MA}$(\Psi)$ is the assertion that  {\rm MA}$_{\kappa}(\Psi)$ holds for all $\kappa< 2^\omega$.
  
  Restricted Martin's axiom is related to numerous interesting properties. For example, 
  by Stone's Representation Theorem \cite{Stone},
  \begin{itemize}
  \item {\rm MA},  {\rm MA}(powerfully ccc), and {\rm MA}(countable) are equivalent to   Strong Baire Category Theorem for ccc compact spaces,  powerfully ccc compact spaces, and $\mathbb{R}$ respectively;
  \end{itemize}
  by \cite{Bell} and  \cite{MS},
  \begin{itemize}
  \item $\mathrm{MA}(\sigma\text{-centered}) \Longleftrightarrow \mathfrak{t}=2^\omega.$
  \end{itemize}
 The  following figure collects  most frequently used and well-known strong ccc properties.

    \begin{figure}[h]\label{figure1}
\begin{tikzpicture}[auto, node distance=1.2 cm, >=latex]
\node   (countable) {countable};
\node  (centered) [below left of=countable, xshift=-1.5cm] {$\sigma$-centered};
\node  (precaliber) [below right of=countable, xshift=1.5cm] {precaliber $\omega_1$};
\node (linked1) [below of=centered] {$\sigma$-$(n+1)$-linked};
\node (K1) [below of=precaliber] {$K_{n+1}$};
\node (linked) [below of=linked1]  {$\sigma$-$n$-linked};
\node (K) [below of=K1] {$K_n$};
\node (productive) [below left of=K, xshift=-1.5cm] {productively ccc};
\node (powerful) [below of=productive] {powerfully ccc};
\node (ccc) [below of=powerful] {ccc};

\draw [->] (countable) -- (centered);
\draw [->] (countable) -- (precaliber);
\draw [->] (centered) -- (precaliber);
\draw [->] (centered) -- (linked1);
\draw [->] (precaliber) -- (K1);
\draw [->] (linked1) -- (K1);
\draw [->] (linked1) -- (linked);
\draw [->] (K1) -- (K);
 \draw [->] (linked) -- (K);
 \draw [->] (linked) -- (productive);
 \draw [->] (K) -- (productive);
 \draw [->] (productive) -- (powerful);
 \draw [->] (powerful) -- (ccc);

\end{tikzpicture}
\caption{ }
\end{figure}
In above figure, arrow means implication. Implications in Figure 1 are known to be non-reversible (see \cite{Bagaria} for a more detailed survey).  See Definition \ref{definition2} below for definition of properties in above figure.

Clearly, the stronger the property $\Psi$ is, the weaker {\rm MA}($\Psi$) is. The axiom version of above figure is Figure 2.

   \begin{figure}[h]\label{fig2}
\begin{tikzpicture}[auto, node distance=1.2 cm, >=latex]
\node   (MA) {{\rm MA}};
\node (powerful) [below of=MA] {{\rm MA}(powerfully ccc)};
\node (productive) [below  of=powerful] {{\rm MA}(productively ccc)};
\node (K) [below left of=productive, xshift=-1.5cm] {{\rm MA}($K_n$)};
\node (linked) [below right of=productive, xshift=1.5cm] {{\rm MA}($\sigma$-$n$-linked)};
\node (K1) [below of=K] {{\rm MA}($K_{n+1}$)};
\node (linked1) [below of=linked] {{\rm MA}($\sigma$-$(n+1)$-linked)};
\node (precaliber) [below of=K1] {{\rm MA}(precaliber $\omega_1$)};
\node (centered) [below of=linked1] {{\rm MA}($\sigma$-centered)};
\node (countable) [below left of=centered, xshift=-1.5cm] {{\rm MA}(countable)};

\draw [->] (MA) -- (powerful);
\draw [->] (powerful) -- (productive);
\draw [->] (productive) -- (K);
\draw [->] (productive) -- (linked);
\draw [->] (K) -- (linked);
\draw [->] (K) -- (K1);
\draw [->] (linked) -- (linked1);
\draw [->] (K1) -- (linked1);
\draw [->] (K1) -- (precaliber);
 \draw [->] (linked1) -- (centered);
 \draw [->] (precaliber) -- (centered);
 \draw [->] (precaliber) -- (countable);
 \draw [->] (centered) -- (countable);

\end{tikzpicture}
\caption{ }
\end{figure}

 And implications in Figure 2 were known to be non-reversible except possibly for the first implication. In particular, all properties below {\rm MA}(powerfully ccc) in Figure 2 are strictly weaker than \ma. The following natural question was explicitly asked in \cite{Todorcevic91}.
\begin{question}[\cite{Todorcevic91}]\label{q1}
Does {\rm MA}(powerfully ccc) imply {\rm MA}?
\end{question}
 
 Another reason to expect a positive answer to Question \ref{q1} is an attempt to achieve the consistency of {\rm MA} together with some other property (e.g., the consistency of {\rm MA} +$\neg \mathrm{CH}$ + every linear order of size continuum can be embedded into $(\omega^\omega, <^*)$ in \cite{Woodin}). Due to nice properties of a powerfully ccc poset $\pp$, e.g., adding no branches to Suslin trees, being representable  as  a natural ccc partition on $[|\pp|]^2$ (see, e.g.,   \cite[pp. 837]{Todorcevic91}), a positive answer to the following question might be useful in achieving above attempt.
 \begin{question}\label{q2}
 Is there always a powerfully ccc poset to force \ma?
 \end{question}

A natural strategy to distinguish {\rm MA}($\Psi$) from {\rm MA}($\Psi'$) for $\Psi\rightarrow \Psi'$ (in Figure 1 below countable) is to first find a property that can be destroyed by a $\Psi'$ poset but not a $\Psi$ poset (e.g., a poset having property $\Psi'$ but not $\Psi$)
 and then iteratively force with $\Psi$ posets. If  $\Psi$ is preserved in the finite support iteration, then the final model would satisfy {\rm MA}($\Psi$) but not {\rm MA}($\Psi'$). It turns out that productively ccc, $K_n$ and precaliber $\omega_1$ are preserved under finite support iterated forcing; $\sigma$-$n$-linked and $\sigma$-centered are also preserved if the iteration has length $\leq 2^\omega$ (see \cite{Barnett}).  However, powerfully ccc is not preserved in iterated forcing (see Example \ref{exam1} in Section 5).

In order to solve Question \ref{q1}, we analyze the damage that a ccc forcing causes to a strong coloring. Then we introduce an iteration of ccc posets to minimize the damage to the pre-chosen strong coloring (see Definition \ref{varphi} in Section 3). In the iteration process, a ccc poset will either be forced or have an uncountable antichain added to some of its finite power.  In particular, {\rm MA}(powerfully ccc) holds in the final model. On the other hand, the damage to a pre-chosen strong coloring $\pi: [\omega_1]^2\ra 2$ is minimal so that it is possible to preserve ccc of the poset adding uncountable $l$-homogeneous subset by finite approximation  for $l<2$ (see Definition \ref{defn11} in Section 4). Eventually we get a negative answer to Question \ref{q1}.
 \begin{thm}\label{theorem1}
 {\rm MA}(powerfully ccc) does not imply {\rm MA}.
 \end{thm}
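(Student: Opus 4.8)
The overall strategy is to force $\map$ by a length-$\omega_2$ iteration of ccc posets that is designed to inflict only ``minimal damage'' on a fixed strong coloring, so that in the final model a ccc poset witnessing $\neg\ma_{\omega_1}$ survives. \emph{Step 1 (fixing the coloring).} Work over a model $V$ of {\rm GCH} and fix a strong coloring $\pi\colon[\omega_1]^2\ra 2$ — one with $\pi$ non-constant on $[A]^2$ for every uncountable $A\subseteq\omega_1$ — chosen ``suitably'' so that for each $l<2$ the poset $\mc Q^l_\pi$ of finite $l$-homogeneous subsets of $\omega_1$ ordered by $\supseteq$ (Definition~\ref{defn11}) is ccc and the sets $D^l_\xi=\{p\in\mc Q^l_\pi:p\not\subseteq\xi\}$ are dense for $\xi<\omega_1$; both properties, together with $\pi$ being a strong coloring, are secured by a transfinite construction under {\rm CH}. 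The aim is then an iteration that keeps $\pi$ a strong coloring and keeps $\mc Q^l_\pi$ ccc while forcing $\map$.

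\emph{Step 2 (the iteration).} Using a bookkeeping function enumerating, with cofinal repetition, all names for ccc posets together with their $\le\omega_1$-sized families of dense subsets, recursively build the iteration $\langle P_\alpha,\dot Q_\alpha:\alpha<\omega_2\rangle$ according to Definition~\ref{varphi}. At stage $\alpha$, given a name for a ccc poset $\pp$ in $V^{P_\alpha}$ handed up by the bookkeeping: if the combinatorial configuration $\phipi$ obtains — roughly, there is an uncountable antichain in some finite power $\pp^{\,n}$ that can be added by finite approximations without creating an uncountable $\pi$-homogeneous set and without spoiling the ccc of $P_\alpha$ — then let $\dot Q_\alpha$ be that antichain-adding forcing; otherwise set $\dot Q_\alpha=\dot\pp$ and meet the prescribed dense sets. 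In the first case $\pp$ permanently loses the property of being powerfully ccc (an uncountable antichain in a finite power persists under further forcing); in the second case the dense sets are met.

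\emph{Step 3 (verifications).} Four facts, whose proofs form the substance of Sections 3--4, are needed. (a) $P_{\omega_2}$ is ccc: the iteration keeps ccc provided every iterand is ccc in its model, which for the antichain-adding posets is precisely what the analysis underlying $\phipi$ delivers; hence cardinals are preserved, $\omega_1$ is absolute, and counting nice names over {\rm GCH} gives $2^\omega=\omega_2$ in $V^{P_{\omega_2}}$. (b) $\map$ holds in $V^{P_{\omega_2}}$: by a standard Löwenheim--Skolem reduction it suffices to treat powerfully ccc posets of size $\le\omega_1$ together with $\omega_1$-families of dense sets, and by a reflection argument — using $\mathrm{cf}(\omega_2)=\omega_2>\omega_1$, so that any name for a subset of $\omega_1$ uses only $\le\omega_1$ coordinates and hence is bounded — every such pair occurs in some $V^{P_\alpha}$; moreover if $\pp$ is powerfully ccc in $V^{P_{\omega_2}}$ it is already powerfully ccc at the (cofinally many) stages where it is considered, so by the \emph{key lemma} — a powerfully ccc poset never satisfies $\phipi$, which rests on the Ramsey-type tameness of powerfully ccc posets, e.g. their representability as natural ccc partitions on $[|\pp|]^2$ — the iteration took the ``force'' branch and met the dense sets. (c) $\pi$ remains a strong coloring: no single step adds an uncountable $\pi$-homogeneous set (by the choice of $\phipi$ on one branch, by the tameness of the antichain-adding posets on the other), and by the same reflection any uncountable subset of $\omega_1$ in $V^{P_{\omega_2}}$ already lies in some $V^{P_\alpha}$. (d) $\mc Q^l_\pi$ stays ccc in $V^{P_{\omega_2}}$, by the same minimal-damage bookkeeping. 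I expect (a) together with the key lemma in (b) to be the main obstacle: one must force \emph{enough} powerfully ccc posets to obtain $\map$ yet never disturb $\pi$, and the entire point of isolating $\phipi$ — proving powerfully ccc posets fall on its negative side while every $\pi$-dangerous poset falls on its positive side via a harmless antichain — is that $\pp$ ceasing to be powerfully ccc is exactly what releases the obligation to force with it.

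\emph{Step 4 (conclusion).} In $V^{P_{\omega_2}}$ we have $2^\omega=\omega_2$, the poset $\mc Q^l_\pi$ is ccc, and the $\omega_1$ sets $D^l_\xi$ ($\xi<\omega_1$) are dense, yet no filter meets all of them: such a filter $G$ would make $\bigcup G$ cofinal in $\omega_1$ and $l$-homogeneous, i.e.\ an uncountable $\pi$-homogeneous set, contradicting that $\pi$ is still a strong coloring. (In particular $\mc Q^l_\pi$, though ccc, is not powerfully ccc in $V^{P_{\omega_2}}$, consistently with (b), since forcing with it would add such a set.) Hence $\ma_{\omega_1}$ fails, so $\ma$ fails, while $\map$ holds; this proves Theorem~\ref{theorem1}.
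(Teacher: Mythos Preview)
Your sketch has the right flavor but misreads the central machinery, and this produces a genuine gap.

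First, $\varphi(\pi,\mc C,\mc R,\mct,\bfe)$ is \emph{not} a property of the stage's poset $\pp$ saying ``an antichain can be harmlessly added''; it is the global invariant (Definition~\ref{varphi}) carried through the iteration. The tuple $(\mc C,\mc R,\mct,\bfe)$ records the cumulative damage---the families added so far together with their allowed color-patterns---and grows at each step. The branching criterion is simply: does $\mc Q$ preserve $\varphi$? If yes, force with $\mc Q$ (and then $\psi_0$ is preserved too, Corollary~\ref{cor preservation}(1)); if no, Lemma~\ref{lem25} manufactures a ccc poset $\pah$ which both preserves the expanded $\varphi\wedge\psi_0$ \emph{and} kills the ccc of some $\mc Q^m$. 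So the invariant survives on both branches by construction; in particular the ``force $\mc Q$'' branch is tautologically safe.

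Your dichotomy runs the other way: you branch on whether some harmless antichain-adding forcing exists, and force with $\pp$ otherwise. But ``no harmless way to destroy $\pp$'' does not entail ``forcing with $\pp$ is harmless'': $\pp$ could perfectly well add an uncountable $\pi$-homogeneous set or break the ccc of $\mc H_l$. Your Step~3(c) asserts this cannot happen but offers no mechanism; the phrases ``by the choice of $\phipi$'' and ``Ramsey-type tameness'' do no work here. Correspondingly your ``key lemma'' is not what is actually proved: the real content (Lemma~\ref{lem14}/\ref{lem25}) is that \emph{if $\mc Q$ destroys $\varphi$ then} one can destroy the powerfully-ccc of $\mc Q$ while preserving $\varphi\wedge\psi_0$---not that powerfully ccc posets are immune to harmless destruction. $\map$ then follows in the final model because any $\mc Q$ still powerfully ccc must have taken the force branch at every stage it was handled (else some $\mc Q^m$ would have acquired an uncountable antichain).

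Finally, the paper does not conclude $\neg\ma$ via ``$\pi$ remains a strong coloring'' as in your Step~4; indeed $\mathrm{Pr}_0$ is explicitly destroyed. What is preserved is $\psi_0$, which with $\varphi$ gives that \emph{both} $\mc H_0$ and $\mc H_1$ are ccc (Lemma~\ref{lem17}). Since $\{(\{\alpha\},\{\alpha\}):\alpha<\omega_1\}$ is always an uncountable antichain in $\mc H_0\times\mc H_1$, ccc is not productive, and $\ma$ fails.
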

    We will use the following notation for  strong colorings introduced in \cite{Galvin} and \cite{Shelah94}.\footnote{A similar property {\rm Pr}$_1(\omega_1, 2, \omega)$   requiring $h$ in {\rm Pr}$_0(\omega_1, 2, \omega)$ to be constant was investigated in \cite{Galvin}.}
    For $2\leq \kappa\leq \omega_1$ and $\theta\leq \omega$, Pr$_0(\omega_1, \kappa, \theta)$ is the following assertion:
\begin{itemize}
  \item  There is a function $\pi:[\omega_1]^2\rightarrow \kappa$ such that whenever we are given $n<\theta$, a collection $\langle a_\alpha: \alpha< \omega_1\rangle$ of pairwise disjoint elements of $[\omega_1]^{n}$ and  $h: n\times n\ra \kappa$,   there are $\alpha< \beta$ such that $\pi( a_\alpha(i), a_\beta(j))=h(i, j)$ for all $i, j<n$.
  \end{itemize}
  In the iteration process, Pr$_0(\omega_1, 2, \omega)$ of $\pi$ will be destroyed while ccc of poset to add 0 (1) homogeneous subset will be preserved.
  
  It   turns out that a negative answer to Question \ref{q1} gives a negative answer to Question \ref{q2}. To see this, suppose $V$ is a model of {\rm MA}(powerfully ccc) + $\neg \mathrm{MA}$ and $\pp$ is a powerfully ccc poset. Then a ccc poset $\mc{Q}$ witnessing the failure of {\rm MA} in $V$ also witnesses the failure of {\rm MA} in the extension over $\pp$. On one hand, $\mc{Q}$ cannot be powerfully ccc in $V$ and hence is not powerfully ccc in $\pp$-extension. On the other hand, $\mc{Q}$ is still ccc in $\pp$-extension since if a powerfully ccc poset can add an uncountable antichain to $\mc{Q}$, then some uncountable antichain already exists in $V$ by {\rm MA}(powerfully ccc).
  \begin{thm}
  Assume {\rm MA}(powerfully ccc) + $\neg\mathrm{MA}$. Then {\rm MA} fails in any powerfully ccc forcing extension.
  \end{thm}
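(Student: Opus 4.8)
The plan is to keep a single witness to $\neg\mathrm{MA}$ from $V$ and check that it survives an arbitrary powerfully ccc extension. Since $\mathrm{CH}$ implies $\mathrm{MA}$, the hypothesis yields $\neg\mathrm{CH}$, so $\aleph_1<2^\omega$ and $\mathrm{MA}_\lambda$(powerfully ccc) holds in $V$ for every $\lambda<2^\omega$. Fix $\kappa<2^\omega$, a ccc poset $\mc Q$, and dense sets $\{D_\alpha:\alpha<\kappa\}$ of $\mc Q$ for which no filter of $\mc Q$ in $V$ meets all of them. A routine closure argument lets us assume $|\mc Q|\le\kappa$: replace $\mc Q$ by a subposet of size $\kappa$ closed under choosing a common lower bound for each pair of compatible conditions and under choosing, for each condition and each $\alpha<\kappa$, an element of $D_\alpha$ below it; this keeps the poset ccc, keeps each $D_\alpha$ dense, and keeps it a witness (a filter on the subposet would generate, by upward closure, a filter on the original $\mc Q$ meeting every $D_\alpha$). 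Now let $\pp$ be powerfully ccc. Since $\mc Q$ and the $D_\alpha$ lie in $V$, the $D_\alpha$ remain dense in $V^{\pp}$, and since $\pp$ is ccc we have $\kappa<(2^\omega)^V\le(2^\omega)^{V^{\pp}}$; in passing, $\mc Q$ cannot be powerfully ccc in $V$ (else $\mathrm{MA}_\kappa$(powerfully ccc) would already produce the missing filter), so $\mc Q$ stays a genuinely non-powerfully-ccc poset in $V^{\pp}$. Hence it suffices to prove that in $V^{\pp}$ the poset $\mc Q$ is still ccc and still admits no filter meeting every $D_\alpha$; then $\mc Q$ together with $\{D_\alpha:\alpha<\kappa\}$ witnesses $\neg\mathrm{MA}$ in $V^{\pp}$.

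For ccc-ness, suppose toward a contradiction that $\pp$ forces $\langle\dot a_\alpha:\alpha<\omega_1\rangle$ to be a one-to-one enumeration of an uncountable antichain of $\mc Q$. I would apply $\mathrm{MA}_{\aleph_1}$(powerfully ccc) to $\pp$ with the dense sets $E_\alpha=\{p\in\pp:\exists q\in\mc Q\,(p\Vdash\dot a_\alpha=\check q)\}$, obtaining a filter $H\subseteq\pp$ in $V$; for each $\alpha$ pick $p_\alpha\in H\cap E_\alpha$ with $p_\alpha\Vdash\dot a_\alpha=\check q_\alpha$ for some $q_\alpha\in\mc Q$. For $\alpha\neq\beta$, a common lower bound of $p_\alpha$ and $p_\beta$ in $H$ forces $\check q_\alpha\neq\check q_\beta$ and $\check q_\alpha\perp\check q_\beta$; since these are assertions about elements of $\mc Q\in V$, the $q_\alpha$ are already pairwise distinct and pairwise incompatible in $V$, so $\{q_\alpha:\alpha<\omega_1\}$ is an uncountable antichain of $\mc Q$ in $V$, contradicting that $\mc Q$ is ccc in $V$.

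For the absence of a generic filter, suppose $\dot G$ is a $\pp$-name forced to be a filter on $\mc Q$ with $\dot G\cap D_\alpha\neq\emptyset$ for all $\alpha<\kappa$. I would apply $\mathrm{MA}_\kappa$(powerfully ccc) to $\pp$ with the following at most $\kappa$ dense sets (at most $\kappa$ of them since $|\mc Q|\le\kappa$): for each $\alpha<\kappa$, the set $F_\alpha=\{p\in\pp:\exists q\in D_\alpha\,(p\Vdash\check q\in\dot G)\}$; and for each pair $q_1,q_2\in\mc Q$, the set $F_{q_1,q_2}$ of conditions $p$ that force $\check q_1\notin\dot G$, or force $\check q_2\notin\dot G$, or satisfy $\exists t\in\mc Q\,(t\le q_1\wedge t\le q_2\wedge p\Vdash\{\check q_1,\check q_2,\check t\}\subseteq\dot G)$. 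The $F_\alpha$ are dense because $\dot G$ is forced to meet $D_\alpha$; the $F_{q_1,q_2}$ are dense because $\dot G$ is forced to be a filter, so once $\check q_1$ and $\check q_2$ have both been forced into $\dot G$ one can extend to pin down a common lower bound of them that is also forced into $\dot G$. From the resulting filter $H\subseteq\pp$ in $V$ I would read off $G=\{q\in\mc Q:\exists p\in H\,(p\Vdash\check q\in\dot G)\}$. Then $G$ is upward closed, as $\dot G$ is forced upward closed; $G$ meets every $D_\alpha$ by the sets $F_\alpha$; and $G$ is downward directed, because if $q_1,q_2\in G$ then a common lower bound $p^*\in H$ of their witnesses and of some element of $H\cap F_{q_1,q_2}$ forces $\check q_1,\check q_2\in\dot G$, so $p^*$ cannot realize either of the first two alternatives of $F_{q_1,q_2}$; hence it realizes the third, producing $t\in\mc Q$ with $t\le q_1$, $t\le q_2$, and $\check t$ forced into $\dot G$, i.e. $t\in G$. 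Thus $G\in V$ is a filter of $\mc Q$ meeting every $D_\alpha$, the desired contradiction.

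The crux is this last step. A filter on $\pp$, unlike a $V$-generic, decides only $\kappa$-many statements about $\dot G$, so the dense sets on $\pp$ must be arranged so that the set $G$ extracted from $H$ is genuinely a filter in $V$ --- in particular downward directed --- which is the purpose of the dense sets $F_{q_1,q_2}$, and in turn the reason for first shrinking $\mc Q$ to size $\le\kappa$ so that there are only $\le\kappa$ of them and the relevant instance of $\mathrm{MA}$(powerfully ccc) may legitimately be invoked. Everything else amounts to routine absoluteness checks for statements about elements of $\mc Q\in V$.
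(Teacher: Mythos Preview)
Your proof is correct, but it takes a different route from the paper's. Both arguments start by fixing a ccc witness $\mc Q$ to $\neg\mathrm{MA}$ in $V$ and showing $\mc Q$ stays ccc in $V^{\pp}$ via the same reflection: an uncountable antichain added by the powerfully ccc $\pp$ would, by $\mathrm{MA}_{\omega_1}$(powerfully ccc), already yield one in $V$. The divergence is in how $\neg\mathrm{MA}$ is concluded in $V^{\pp}$.

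The paper simply observes that $\mc Q$ is not powerfully ccc in $V$ (else $\mathrm{MA}$(powerfully ccc) would have produced the missing filter), and an uncountable antichain in some $\mc Q^n$ persists upward; so $\mc Q$ is ccc but not powerfully ccc in $V^{\pp}$. Since $\mathrm{MA}+\neg\mathrm{CH}$ implies every ccc poset is powerfully ccc (indeed has precaliber $\omega_1$), and $\neg\mathrm{CH}$ survives the ccc extension, $\mathrm{MA}$ must fail in $V^{\pp}$. This is a two-line argument, but it leans on an external consequence of $\mathrm{MA}$.

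You instead show directly that the \emph{same} family $\{D_\alpha:\alpha<\kappa\}$ still admits no sufficiently generic filter in $V^{\pp}$: a hypothetical $\pp$-name for one is read off in $V$ by a second application of $\mathrm{MA}_\kappa$(powerfully ccc), using the dense sets $F_{q_1,q_2}$ to guarantee directedness of the extracted $G$. This is why you need to shrink $\mc Q$ to size $\le\kappa$ first. Your route is longer but entirely self-contained, and it yields a slightly sharper conclusion (the very same witness to $\neg\mathrm{MA}$ survives, not merely some witness). The directedness verification via $F_{q_1,q_2}$ is the only real extra work, and you handle it correctly: since the filter $H$ on $\pp$ contains compatible witnesses forcing $\check q_1,\check q_2\in\dot G$, the element of $H\cap F_{q_1,q_2}$ cannot satisfy either of the first two clauses, so it supplies the needed common lower bound $t\in G$.
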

  
  In view of {\rm MA}(powerfully ccc)$\not\ra$ \ma, one may expect a natural stronger restricted Martin's axiom that might imply \ma. And a natural strengthening  of {\rm MA}(powerfully ccc) is {\rm MA}(squarely ccc). A ccc poset is \emph{squarely ccc} if its square is ccc. However, we will modify the proof and show that this attempt fails.
  \begin{thm}
  {\rm MA}(squarely ccc) does not imply \ma.
  \end{thm}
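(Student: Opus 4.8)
The plan is to rerun the proof of Theorem~\ref{theorem1}, enlarging the class of posets handled by the iteration from powerfully ccc to squarely ccc and modifying the iteration of Section~3 (Definition~\ref{varphi}) in one respect. Recall that at each step the iteration treats the ccc poset $\pp$ supplied by the bookkeeping in one of two ways: either $\pp$ is forced, or an uncountable antichain is added to one of its finite powers. The modification is to restrict the second alternative to the \emph{square}: at a step treating $\pp$ we force with $\pp$ unless $\pp\times\pp$ fails the countable chain condition, in which case we add an uncountable antichain to $\pp\times\pp$ instead. Since a squarely ccc poset has a ccc square at every stage at which it is itself ccc, it is always forced; the usual bookkeeping then yields \ma(squarely ccc) in the final model. (As with powerfully ccc in Example~\ref{exam1}, squarely ccc is not preserved under finite support iteration, which is why the apparatus of Section~3 rather than a naive iteration is needed.)

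The witness to $\neg\ma$ is unchanged. Fix at the outset a coloring $\pi\colon[\omega_1]^2\ra 2$ with $\mathrm{Pr}_0(\omega_1,2,\omega)$, and let $\mc{Q}_0$ and $\mc{Q}_1$ be the posets of Definition~\ref{defn11} adding, by finite approximation, an uncountable $0$-, respectively $1$-homogeneous set for $\pi$. One verifies, as in Section~4 but now for the modified iteration, the two facts driving the counterexample: $\pi$ acquires no uncountable monochromatic set along the iteration, and $\mc{Q}_0$ and $\mc{Q}_1$ remain ccc in the final model; these are exactly what Section~4 uses to conclude $\neg\ma_{\omega_1}$. This is compatible with \ma(squarely ccc) precisely because $\mc{Q}_l$ ($l<2$) is not squarely ccc: its square must fail the countable chain condition, since otherwise the iteration would force $\mc{Q}_l$ and thereby add an uncountable monochromatic set for $\pi$. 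Hence $\mc{Q}_l$ is neither forced nor required to be, and restricting the second branch of the iteration to squares does it no harm.

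The real content, and the main obstacle, is to show that forcing with an \emph{arbitrary} squarely ccc poset, not merely a powerfully ccc one, still does only the minimal damage to $\pi$ needed to keep $\mc{Q}_0$ and $\mc{Q}_1$ ccc. Concretely, the damage analysis of Section~3 and the ccc-preservation argument of Section~4 must be reinspected wherever powerful ccc-ness of the forcing was invoked, and ccc of the \emph{square} of the forcing must be shown to suffice in its place. This should go through because $\pi$ and the posets $\mc{Q}_l$ are two-dimensional: after a $\Delta$-system reduction, an uncountable antichain of $\mc{Q}_l$ appearing in an extension is governed by the values of $\pi$ on pairs taken from two of the conditions, so the obstruction lives inside the square of a suitable power of the forcing, and ccc of $\pp\times\pp$ is precisely what excludes it. The delicate case is a poset $\pp$ that is squarely but not powerfully ccc, so that $\pp^3$ may already carry an uncountable antichain: there the preservation argument can no longer appeal to ccc of higher powers of $\pp$, and must instead be run directly from ccc of $\pp\times\pp$ together with $\mathrm{Pr}_0(\omega_1,2,\omega)$. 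Making that step work is where the modified proof genuinely departs from the proof of Theorem~\ref{theorem1}.
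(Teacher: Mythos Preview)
Your proposal misidentifies both the branching criterion in the iteration and where the new work lies. In the iteration of Proposition~\ref{prop1} the decision at each stage is \emph{not} based on any chain-condition property of $\mc{Q}$, but on whether $\mc{Q}$ preserves the invariant $\phipi$: if it does, $\mc{Q}$ is forced with $(\mc{C},\mc{R})$ unchanged; if not, one forces instead with some $\pah$ that adds an uncountable antichain to $\mc{Q}^m$ while preserving the expanded $\varphi$ and $\psi_0$ (Lemma~\ref{lem25}). {\rm MA}(powerfully ccc) then follows \emph{indirectly}: any $\mc{Q}$ that is powerfully ccc in the final model can have had no finite power destroyed, hence preserved $\varphi$ at its stage and was forced. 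Your alternative criterion ``force $\mc{Q}$ iff $\mc{Q}^2$ is ccc'' is incompatible with this framework, because a squarely ccc $\mc{Q}$ may very well destroy $\varphi$ --- already under ${\rm Pr}_0(\omega_1,2,\omega)$ the poset $\mc{H}_0$ itself is powerfully ccc, yet forcing with it kills $\varphi$ with $\mc{C}=\emptyset$ (the generic $0$-homogeneous set cannot realize the constant-$1$ pattern). Thus the ``real content'' you isolate, that an arbitrary squarely ccc forcing does only minimal damage to $\pi$, is false, and forcing every squarely ccc poset would collapse the whole apparatus. (Relatedly, the paper never claims that $\pi$ acquires no uncountable monochromatic set; the failure of {\rm MA} comes from $\mc{H}_0\times\mc{H}_1$ being non-ccc while each factor is ccc.)

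The actual modification, carried out in Section~6.1, leaves the criterion ``preserves $\varphi$'' untouched and instead sharpens the destruction branch: one shows that whenever $\mc{Q}$ forces $\neg\phipi$, one can always arrange $m=2$ in Lemma~\ref{lem25}, i.e., kill ccc of $\mc{Q}^2$ specifically while still preserving $\psi_0$. This is Lemma~\ref{lem31}, resting on the combinatorial Lemma~\ref{lem30}: for a $\mc{C}$-candidate $\msa$ one chooses $m$ and $\mc{R}$-satisfiable functions $\langle f_{i,j}:i,j<m\rangle$ so that whenever two $m$-blocks $a_0<\cdots<a_{m-1}<b_0<\cdots<b_{m-1}$ in $\msa$ realize them, no single $b\in\mc{H}_l$ can meet every $a_i$ and every $b_i$. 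Packaging the $a_i$'s under one condition of $\mc{Q}$ and the $b_i$'s under another then yields the desired antichain in $\mc{Q}^2$. Once $m=2$ is secured at every destruction step, the same indirect argument as before gives {\rm MA}(squarely ccc) in the final model, while $\mc{H}_0$ and $\mc{H}_1$ remain ccc by $\psi_0$.
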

  
  This paper is organized as follows. Section 3 introduces the iteration of ccc posets with minimal damage to a strong coloring. Applying the method, one always abtains a model of {\rm MA}(powerfully ccc). Theorem \ref{theorem1} is proved  in Section 4  where a new property is introduced and preserved in the iteration process to guarantee the failure of \ma. Section 5 shows that Theorem \ref{theorem1} is consistent with continuum arbitrarily large. Section 6 introduces several strengthening of {\rm MA}(powerfully ccc) together with the failure of \ma.

    \section{Preliminary}

      Throughout this paper, $\subset$ and $\supset$ are strict and denote $\subsetneq$ and $\supsetneq$ respectively.     We then  introduce several definitions and notations.     
    
    For a function $f$ and a set $X\subseteq dom(f)$, denote $f[X]=\{f(x): x\in X\}$.
    
    \begin{defn}
  \begin{enumerate}
    \item[(i)] A set of ordinals $X$ is identified with the increasing sequence which enumerates $X$.  In particular, for $\alpha$ less than the order type of $X$ and a subset $I$ of the order type of $X$,   $X(\alpha)$ is the $\alpha$th element  in the increasing enumeration of $X$ and $X[I]=\{X(\beta): \beta\in I\}$.
    \item[(ii)] Suppose   $a$, $b$ are both finite sets of ordinals but neither is an ordinal. 
    Say $a<b$ if $\max a<\min b$. 
    \item[(iii)] A set $\ms{A}\subset [Ord]^{<\omega}$ is \emph{non-overlapping} if for every $a\neq b$ in $\ms{A}$, either $a< b$ or $ b< a$.
    \item[(iv)] For an uncountable non-overlapping family $\msa\subset [\omega_1]^n$, use $N_\msa$ to denote this $n$, i.e., $\msa\subset [\omega_1]^{N_\msa}$.
    \item[(v)] For an uncountable non-overlapping family $\msa\subset [\omega_1]^{N_\msa}$ and $i<N_\msa$, denote $\msa_i=\{a(i): a\in \msa\}$.
  \end{enumerate}
\end{defn}

The following notions of posets are standard (see, e.g.,  \cite{Barnett}, \cite{Bagaria}).
\begin{defn}\label{definition2}
Suppose $\pp$ is a partial order and $2\leq n<\omega$.
\begin{itemize}
\item $\mc{P}$ is \emph{powerfully ccc} if all finite powers of $\pp$ are ccc.
\item $\pp$ is \emph{productively ccc} if its product with every ccc partial order is ccc.
\item $\pp$ has \emph{property $K_n$} if every uncountable subset of $\pp$ has an uncountable subset that is $n$-linked.   A subset $X$ of $\pp$ is \emph{$n$-linked} if every $n$-element subset of $X$ has a common lower bound.  
\item $\pp$ is \emph{$\sigma$-$n$-linked} if $\pp$ is a countable union of $n$-linked subsets.  
\item $\pp$ has \emph{precaliber $\omega_1$} if every uncountable subset of $\pp$ has an uncountable centered subset. A subset $X$ of $\pp$ is \emph{centered} if every finite subset of $X$ has a common lower bound.
\item $\pp$ is \emph{$\sigma$-centered} if $\pp$ is a countable union of centered subsets.
\end{itemize}
In above concepts, we omit $n$ if $n=2$.
\end{defn}

  \section{Iteration with minimal damage to a strong coloring}
  
  We start from a model of  $2^{\omega_1}=\omega_2$ in which there is 
 a coloring $\pi: [\omega_1]^2\ra 2$ witnessing Pr$_0(\omega_1, 2, \omega)$. 
   
  In order to force a model of \map, we have to destroy property Pr$_0(\omega_1, 2, \omega)$ of $\pi$. We first investigate the influence on $\pi$ by a poset adding an uncountable family $\subset [\omega_1]^n$ with pre-described pattern.

  \begin{defn}\label{defn2}
  For $1\leq n<\omega$, an uncountable non-overlapping family $\ms{A}\subset [\omega_1]^n$  and a non-empty collection $H$ of functions from $n\times n$ to $2$, let $\mc{P}_{\ms{A}, H}$ be the poset consisting of $p\in [\ms{A}]^{<\omega}$ such that
  \begin{itemize}
  \item for every $a< b$ in $p$, there is $h\in H$ satisfying $\pi(a(i), b(j))=h(i,j)$ whenever $i, j<n$.
  \end{itemize}
  $p\leq q$ if $p\supseteq q$.
  \end{defn}
  Note that if $\pi$ satisfies Pr$_0(\omega_1, 2, \omega)$, then $\ms{P}_{\ms{A}, H}$ is ccc. Throughout this paper, we assume, omitting a countable part of $\msa$ if necessary, that a $\pah$-generic filter is uncountable.
  
  Before introducing the structure associated to the iteration, we first investigate the influence of $\ms{P}_{\ms{A}, H}$ on color $\pi$. But first, we introduce some notions to simplify the statements. 
  
  For finite sets of ordinals $a, b, c, d$,
  \begin{itemize}
  \item for functions $f, g$ with domain $a\times b$, $c\times d$ respectively, say $f$ is \emph{order isomorphic} to $g$ if $|a|=|c|, |b|=|d|$ and $f(a(i), b(j))=g(c(i), d(j))$ whenever $i<|a|, j<|b|$. 
  \end{itemize}
  \begin{defn}
 For uncountable non-overlapping family $\ms{A}\subset [\omega_1]^n$ and $f: n\times n\ra 2$, say \emph{$f$ can be realized in $\ms{A}$} if for some $a< b$ in $\ms{A}$, $f$ is order isomorphic to $\pi\up_{a\times b}$.\footnote{Here we identify $\{\{a(i), b(j)\}: i, j<n\}$ with $a\times b$.}
  \end{defn}

  \begin{lem}\label{lem1}
  Suppose $\pi$ witnesses {\rm Pr}$_0(\omega_1, 2, \omega)$ and  $n ,\ms{A}, H, \ms{P}_{\ms{A}, H}$  are as above. Suppose $G$ is $\ms{P}_{\ms{A}, H}$-generic and for some $m$, $\ms{B}\subset [\omega_1]^m$ is an uncountable non-overlapping family in $V[G]$  such that for some $k$ and pairwise disjoint $\{I_i\in [m]^n: i<k\}$, $$[b]^n\cap \bigcup G=\{b[I_i]: i<k\}\text{ and }b(j)\notin \bigcup\bigcup G$$
   whenever $b\in \ms{B}$, $j\in m\setminus \bigcup_{i<k} I_i$. Then a function $f: m\times m\ra 2$ can be realized in $\ms{B}$ iff for every $i, j<k$, $f\up_{I_i\times I_j}$ is order isomorphic to some $h\in H$.
  \end{lem}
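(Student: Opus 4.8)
The two directions call for different tools. For the forward implication I would use only that $G$ is a filter: if $f$ is realized in $\ms{B}$, say $\pi(b(s),b'(t))=f(s,t)$ for $b<b'$ in $\ms{B}$ and all $s,t<m$, then all the blocks $b[I_i],b'[I_j]$ lie in $\bigcup G$, so by directedness a single $p\in G$ contains them all; since $b<b'$ gives $b[I_i]<b'[I_j]$, the definition of $\ms{P}_{\ms{A},H}$ yields $h\in H$ with $\pi(b[I_i](x),b'[I_j](y))=h(x,y)$ for $x,y<n$, and rewriting $b[I_i](x)=b(I_i(x))$ shows $f\up_{I_i\times I_j}$ is order isomorphic to $h$.

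For the reverse implication the plan is a density argument. Fix $h_{ij}\in H$ order isomorphic to $f\up_{I_i\times I_j}$ for all $i,j<k$, and fix $p_0\in G$ forcing the hypotheses about $\dot{\ms{B}}$; I will show the conditions forcing ``$f$ is realized in $\dot{\ms{B}}$'' are dense below $p_0$. Given $p\le p_0$, I would first build in $V$ a sequence $q_\alpha\le p$ and pairwise distinct $b_\alpha\in[\omega_1]^m$ ($\alpha<\omega_1$) with $q_\alpha\Vdash\check b_\alpha\in\dot{\ms{B}}$ and all $b_\alpha[I_i]\in q_\alpha$: one takes $q'\le p$ forcing $\check b_\alpha\in\dot{\ms{B}}$ for a new $b_\alpha$, notes that (as $q'\le p_0$) $q'$ forces each $\check b_\alpha[I_i]\in\bigcup\dot G$, and sets $q_\alpha=q'\cup\{b_\alpha[I_i]:i<k\}$, which is still a condition since the newly created pairs are forced by $q'$ to realize members of $H$. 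The key structural observation is then: because $q_\alpha\le p_0$ forces $\check b_\alpha\in\dot{\ms{B}}$ and $q_\alpha$ lies in any generic through it, the free coordinates $b_\alpha(j)$ ($j\in m\setminus\bigcup_i I_i$) are, in $V$, disjoint from $\bigcup\bigcup q_\alpha$; hence the only elements of $b_\alpha$ inside $\bigcup\bigcup q_\alpha$ are those of the $b_\alpha[I_i]$, and, $\ms{A}$ being non-overlapping, the blocks of $q_\alpha$ meeting $b_\alpha$ are exactly $b_\alpha[I_0],\dots,b_\alpha[I_{k-1}]$.

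Next I would thin out in $V$ --- by the $\Delta$-system lemma and counting the finitely many possible ``types'' --- to an uncountable $\Gamma$ with: $\{\bigcup q_\alpha\cup b_\alpha:\alpha\in\Gamma\}$ a $\Delta$-system with root $\rho$; all of one order type, with $b_\alpha$ and each block of $q_\alpha$ at fixed position sets and $b_\alpha\cap\rho=\emptyset$; and, after reindexing $\Gamma$ increasingly, the petals $s_\alpha:=(\bigcup q_\alpha\cup b_\alpha)\setminus\rho$ being $<$-increasing. Standard remarks then give that each block of $q_\alpha$ is $\subseteq\rho$ (hence shared with every $q_\beta$) or disjoint from $\rho$, so for $\alpha<\beta$ in $\Gamma$ the union $q_\alpha\cup q_\beta$ is a condition once every pair of a non-root block $B$ of $q_\alpha$ with a non-root block $B'$ of $q_\beta$ (automatically $B<B'$) has $\pi\up_{B\times B'}$ order isomorphic to some member of $H$. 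Setting $n'=|s_\alpha|$, I would define $h^*:n'\times n'\to 2$ by: copy $f$ on the positions of $b_\alpha\times b_\alpha$; on the positions of $B\times B'$ for non-root blocks $B$ of $q_\alpha$ and $B'$ of $q_\beta$, copy $h_{ij}$ if $B=b_\alpha[I_i]$ and $B'=b_\beta[I_j]$, and otherwise copy any fixed member of $H$; arbitrary elsewhere. This is consistent: the two substantive rules overlap only on the positions of $b_\alpha[I_i]\times b_\beta[I_j]$, where they ask for ``a copy of $f\up_{I_i\times I_j}$'' and ``a copy of $h_{ij}$'', which agree by the choice of $h_{ij}$ --- the one place the hypothesis is used --- while the ``otherwise'' blocks are disjoint from $b_\alpha$ by the observation above and distinct blocks inside a single condition are disjoint. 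Applying {\rm Pr}$_0(\omega_1,2,\omega)$ to $\langle s_\alpha:\alpha\in\Gamma\rangle$ and $h^*$ gives $\alpha<\beta$ in $\Gamma$ realizing $h^*$ between $s_\alpha$ and $s_\beta$; then $q:=q_\alpha\cup q_\beta\le p$ is a condition, $\pi\up_{b_\alpha\times b_\beta}$ is order isomorphic to $f$, and $q$ forces $\check b_\alpha,\check b_\beta\in\dot{\ms{B}}$ with $b_\alpha<b_\beta$, so $q$ forces ``$f$ is realized in $\dot{\ms{B}}$''. I expect the main difficulty to be exactly this last design --- arranging the $\Delta$-system so that ``$q_\alpha$ is compatible with $q_\beta$'' becomes a constraint on the $\pi$-values between the petals that can be merged with the constraint $\pi\up_{b_\alpha\times b_\beta}=f$ into a single instance of {\rm Pr}$_0$ --- which succeeds precisely because the two constraints overlap only on the rectangles $I_i\times I_j$, where their agreement is the asserted condition on $f\up_{I_i\times I_j}$.
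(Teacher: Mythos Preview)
Your proof is correct and follows essentially the same strategy as the paper's: the forward direction is immediate from directedness of $G$, and the backward direction is a density argument in which a $\Delta$-system reduces both ``$q_\alpha\cup q_\beta$ is a condition'' and ``$\pi\up_{b_\alpha\times b_\beta}$ is order isomorphic to $f$'' to a single application of $\mathrm{Pr}_0$, the two constraints overlapping exactly on the rectangles $I_i\times I_j$ where the hypothesis on $f$ makes them agree. One small point: to guarantee $b_\alpha\cap\rho=\emptyset$ after thinning you should pick $b_\alpha$ with $\min b_\alpha\ge\alpha$ (the paper takes $b_\alpha\in[\omega_1\setminus\alpha]^m$), since ``pairwise distinct'' or ``new'' alone does not obviously force the $b_\alpha$ out of the root.
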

  \begin{proof}
  The only if part follows from the definition of $\ms{P}_{\ms{A}, H}$ and the properties of $\msb$.  
  To prove the if part, we fix a $\pah$ name $\dot{\msb}$ of $\msb$ and a condition $p\in G$ which forces $\dot{\msb}$ to satisfy properties of $\msb$.
  
  For each $\alpha<\omega_1$, find $p_\alpha\leq p$ and $b_\alpha\in [\omega_1]^m$ such that
  \begin{enumerate}
  \item $p_\alpha\Vdash b_\alpha\in \dot{\msb}\cap [\omega_1\setminus \alpha]^m$. Extending $p_\alpha$ if necessary, assume $[b_\alpha]^n\cap  p_\alpha=\{b_\alpha[I_i]: i<k\}$.
  \end{enumerate}
  Find uncountable $\Gamma\subset \omega_1$ such that
  \begin{enumerate}\setcounter{enumi}{1}
  \item $\{p_\alpha: \alpha\in \Gamma\}$ forms a $\Delta$-system with root $r$ and each $p_\alpha\setminus r$ has size $k^*$;
  \item $\{c_\alpha: \alpha\in \Gamma\}$ is non-overlapping where $c_\alpha=b_\alpha\cup \bigcup (p_\alpha\setminus r)$;
  \item for some $m^*$, $J\in [m^*]^m$ and $\{J_i\in [m^*]^n: i<k^*\}$, $|c_\alpha|=m^*$, $b_\alpha=c_\alpha[J]$ and $p_\alpha\setminus r=\{c_\alpha[J_i]: i<k^*\}$ whenever $\alpha\in \Gamma$.
  \end{enumerate}
  Note that $J_i$'s are pairwise disjoint. By properties of $\msb$ and the fact that $p$ forces $\dot{\msb}$ to satisfy properties of $\msb$, each $J_i$ is either contained in $J$ or disjoint from $J$. Hence there exists $g: m^*\times m^*\ra 2$ such that
  \begin{enumerate}\setcounter{enumi}{4}
  \item $g\up_{J\times J}$ is order isomorphic to $f$;
  \item for $i, j<k^*$ with $J_i\cup J_j\not\subseteq J$, $g\up_{J_i\times J_j}$ is order isomorphic to some $h\in H$.
  \end{enumerate}
  Note that by (5) and our choice of $f$,  $g\up_{J_i\times J_j}$ is order isomorphic to some $h\in H$ if $J_i\cup J_j\subseteq J$. Together with (6), we conclude that $g\up_{J_i\times J_j}$ is order isomorphic to some $h\in H$ for all $i, j<k^*$.
  
  Applying Pr$_0(\omega_1, 2, \omega)$, we get $\alpha<\beta$ in $\Gamma$ such that $\pi\up_{c_\alpha\times c_\beta}$ is order isomorphic to $g$. Then it is straightforward to check that $p_\alpha\cup p_\beta$ is a condition extending both $p_\alpha$ and $p_\beta$. Moreover, $p_\alpha\cup p_\beta$ forces that $b_\alpha$ and $b_\beta$ witness that $f$ is realized in $\dot{\msb}$.
  
  Now a standard density argument shows that in $V[G]$, $f$ can be realized in $\msb$.
  \end{proof}
  
  For $\pi$, $n ,\ms{A}, H, \ms{P}_{\ms{A}, H}$ and $G$    as in Lemma \ref{lem1}, the added uncountable family $\bigcup G$ will have the following pattern.
  \begin{itemize}
  \item For $a< b$ in $\bigcup G$, $\pi\up_{a\times b}$ is order isomorphic to some $h\in H$.
  \end{itemize}
  In particular, Pr$_0(\omega_1, 2, \omega)$, which asserts that every function from $m\times m$ to 2 can be realized in every uncountable non-overlapping family $\subset [\omega_1]^m$, fails in $V[G]$ (assuming that $H$ is not the collection of all functions from $n\times n$ to 2). But on the other hand, Lemma \ref{lem1} indicates that for every appropriate $\msb\subset [\omega_1]^m$, every function not contradicting above pattern can be realized in $\msb$.
  
  But during the iteration process, more generic families $\subset [\omega_1]^{<\omega}$ will be added. In order to have a clear description on what patterns can be realized, we need the generically added families to form a ``nice'' structure.
  
    \begin{defn}\label{c}
    Let $\varphi_0(\mc{C})$ be the assertion that the following statements hold.
      \begin{itemize}
  \item[(C1)] Every $\msa\in \mc{C}$ is an uncountable non-overlapping family such that $\msa\subset [\omega_1]^{n}$ for some $1\leq n<\omega$.\footnote{Recall that this $n$ is denoted by $N_\msa$.}
  \item[(C2)] For $\msa, \msb$ in $\mc{C}$, if $\msb_i\subseteq \msa_j$ for some $i<N_\msb$ and $ j<N_\msa$, then there are $k$ and $\{I_l\in [N_\msb]^{N_\msa}: l<k\}$ such that   
  $$[b]^{N_\msa}\cap \msa=\{b[I_l]: l<k\}\text{ and }b(j')\notin \bigcup \msa$$
   whenever  $b\in \msb$ and $j'\in N_\msb\setminus \bigcup_{l<k} I_l$.  
  \end{itemize}
        \end{defn}
    \textbf{Remark.} If $\msb_i=  \msa_j$ for some $\msa, \msb$ in $\mc{C}$, then applying (C2) to $\msb_i\subseteq \msa_j$ and $\msa_j\subseteq \msb_i$, we conclude that $\msa=\msb$.
        
        Here $\mc{C}$ is used to collect all generically added non-overlapping families $\bigcup G$'s.  For example, if our first ccc poset is $\ms{P}_{\ms{A}, H}$ as in Lemma \ref{lem1}, our $\mc{C}$ in the forcing extension, denoted by $\mc{C}^1$,  will be $\{\bigcup G\}$ where $G$ is $\ms{P}_{\ms{A}, H}$-generic.\footnote{The superscript in $\mc{C}^1$ is used to record the length of iteration.}

        Of course we need to record the patterns that every $\msa\in \mc{C}$ has to follow.
  \begin{defn}\label{cr}
     Let $\varphi_0(\pi, \mc{C}, \mc{R})$ be the assertion that $\pi: [\omega_1]^2\ra 2$ is a mapping and $\varphi_0(\mc{C})$ together with the following statements hold.
      \begin{itemize}
       \item[(R1)] $\mc{R}$ is a map with domain $\mc{C}$ such that  for every $\msa\in \mc{C}$, $\mc{R}(\msa)$ is a collection of functions from $N_\msa\times N_\msa$ to 2.
       \item[(R2)] For every $\msa, \msb$ in $\mc{C}$ with $\msb_i\subset \msa_j$ for some $i<N_\msb, j<N_\msa$, if $k$ and $\{I_i\in [N_\msb]^{N_\msa}: i<k\}$ satisfy that   $[b]^{N_\msa}\cap \msa=\{b[I_i]: i<k\}$ for every $b\in \msb$, then $h\up_{I_i\times I_j}$ is order isomorphic to some $g\in \mc{R}(\msa)$ whenever $h\in \mc{R}(\msb)$ and $i<k$, $j<k$.
  \item[(R3)] For every $\msa\in \mc{C}$ and $a< b$ in $\msa$, $\pi\up_{a\times b}$ is order isomorphic to some $h$ in $\mc{R}(\msa)$.  
    \end{itemize}
        \end{defn}
        
      So we use $\mc{R}$ to collect potential patterns that (uncountable subsets of) $\msa\in \mc{C}$ can realize. For example, if our first ccc poset is $\ms{P}_{\ms{A}, H}$ as in Lemma \ref{lem1}, our $\mc{R}$ in the forcing extension, denoted by $\mc{R}^1$, will be $\mc{R}^1(\bigcup G)=H$.
      
      The requirement (R2) is necessary according to Lemma \ref{lem1}. It simply says that if a pattern can be realized in a later stage (in $\msb$), it cannot contradict requirements in any earlier stage (in $\msa$).
        
  We actually need more requirements on $\mc{C}$ if the iteration gets longer. To describe the requirements, we will record coordinate-wise collections of each element of $\mc{C}$ (see (T2) below).
   \begin{defn}\label{crt}
    Let $\varphi_0(\pi, \mc{C}, \mc{R}, \mc{T})$ be the assertion that $\varphi_0(\pi, \mc{C}, \mc{R})$ together with the following statements hold.
      \begin{itemize}
        \item[(T1)] $(\mc{T}, \supset)$ is a downward tree of height $\leq \omega$.
  \item[(T2)] $\mc{T}_0=\{\omega_1\}$ and for every $A$, $A\in \mc{T}\setminus \mc{T}_0$ iff  $A=\msa_i$ for some $\msa\in \mc{C}$ and $i<N_\msa$.\footnote{Another option for notations is to require $[\omega_1]^1\in \mc{C}$, $\mc{R}([\omega_1]^1)=\{\{((0,0),0)\}, \{((0,0),1)\}\}$ and $A\in \mct$ iff $a=\msa_i$ for some $\msa\in \mc{C}$ and $i<N_\msa$.}
  \item[(T3)] For every incomparable $A, B$ in $\mc{T}$, $|A\cap B|\leq \omega$.
  \item[(T4)] For every $A\supset B$ in $\mc{T}$ and every $\alpha<\omega_1$, $A(\alpha)<B(\alpha)$.
    \end{itemize}
        \end{defn}
        (T3) is used to reduce the damage among elements of $\mc{C}$. For example, if $|\msa_i\cap \msb_j|\leq \omega$ for every $i<N_\msa, j<N_\msb$ where $\msa, \msb\in \mc{C}$, then the requirements on $\msa$ will not affect $\msb$. 
        
        Before analyzing the role that (T4) plays, we first introduce one more notation to record the equivalence relation on $[\omega_1]^{\omega_1}$ induced by $\mc{C}$.
          
  \begin{defn}\label{cter}
Let $\varphi_0(\pi, \mc{C}, \mc{R}, \mc{T}, \mathbf{E})$ be the assertion that $\varphi_0(\pi, \mc{C}, \mc{R}, \mc{T})$ together with the following statement hold.
  \begin{itemize}
  \item[(E1)] $\mathbf{E}$ is an equivalence relation on $[\omega_1]^{\omega_1}$ defined by $A\mathbf{E}B$  iff $A=B$ or for some   $\msa\in \mc{C}$, $i, j<N_\msa$ and $\Gamma\in [\omega_1]^{\omega_1}$, $A=\msa_i[\Gamma]$ and $B=\msa_j[\Gamma]$.
  \end{itemize}
  \end{defn}
  We   check that $\mathbf{E}$ is an equivalence relation and hence the above definition is well-defined.
   
  \begin{lem}
  Assume $\varphi_0(\pi, \mc{C}, \mc{R}, \mc{T})$. Then $\mathbf{E}$ defined by (E1) is an equivalence relation.
  \end{lem}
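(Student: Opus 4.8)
The plan is to verify the three clauses of ``equivalence relation'' in turn; reflexivity and symmetry are immediate from the shape of (E1), and all the content is in transitivity. Reflexivity holds because $A=A$ falls under the first disjunct of (E1). For symmetry, observe that the second disjunct is symmetric in $A$ and $B$: if $\mathscr{A}\in\mathcal{C}$, $i,j<N_{\mathscr{A}}$ and $\Gamma\in[\omega_1]^{\omega_1}$ witness $A=\mathscr{A}_i[\Gamma]$, $B=\mathscr{A}_j[\Gamma]$, then $\mathscr{A},j,i,\Gamma$ witness $B\mathbf{E}A$.

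For transitivity, suppose $A\mathbf{E}B$ and $B\mathbf{E}C$. If $A=B$ or $B=C$ there is nothing to prove, so I may fix $\mathscr{A},\mathscr{B}\in\mathcal{C}$, coordinates $i,j<N_{\mathscr{A}}$ and $k,l<N_{\mathscr{B}}$, and $\Gamma,\Delta\in[\omega_1]^{\omega_1}$ with $A=\mathscr{A}_i[\Gamma]$, $B=\mathscr{A}_j[\Gamma]=\mathscr{B}_k[\Delta]$ and $C=\mathscr{B}_l[\Delta]$. Before the main step I would record a few elementary facts, obtained by identifying each set of ordinals with its increasing enumeration: (1) each $\mathscr{A}_j$ is an uncountable subset of $\omega_1$ of order type $\omega_1$, since $a\mapsto a(j)$ is injective on the non-overlapping family $\mathscr{A}$, and hence $\mathscr{A}_j[\Gamma]$ is uncountable; (2) if $Y\subseteq X$ are uncountable subsets of $\omega_1$ then $Y=X[\Sigma]$ for a unique uncountable $\Sigma\subseteq\omega_1$, with $(X[\Sigma])[\Gamma]=X[\Sigma[\Gamma]]$ for all $\Gamma$, and $X[\Delta]=X[\Delta']$ implies $\Delta=\Delta'$; (3) for $\mathscr{D}\in\mathcal{C}$ with increasing enumeration $\langle d^\xi:\xi<\omega_1\rangle$ and $p\neq q<N_{\mathscr{D}}$, the columns $\mathscr{D}_p$ and $\mathscr{D}_q$ are disjoint, because $d^\xi(p)\neq d^\eta(q)$ for all $\xi,\eta$ (when $\xi\neq\eta$ the blocks $d^\xi,d^\eta$ are order-separated, and $d^\xi(p)\neq d^\xi(q)$). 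Now $B$ is uncountable by (1) and $B\subseteq\mathscr{A}_j\cap\mathscr{B}_k$, so $\mathscr{A}_j\cap\mathscr{B}_k$ is uncountable; since $\mathscr{A}_j,\mathscr{B}_k\in\mathcal{T}$ by (T2), property (T3) forces them to be $\supset$-comparable. The conclusion $A\mathbf{E}C$ is unchanged under simultaneously swapping $A\leftrightarrow C$ and the two triples, so I may assume $\mathscr{A}_j\subseteq\mathscr{B}_k$. Write $\mathscr{A}_j=\mathscr{B}_k[\Sigma]$ with $\Sigma$ uncountable; from $B=\mathscr{A}_j[\Gamma]=\mathscr{B}_k[\Sigma[\Gamma]]=\mathscr{B}_k[\Delta]$ and (2), $\Delta=\Sigma[\Gamma]$.

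The key step is to transport $\mathscr{B}$'s columns back into $\mathscr{A}$ using (C2). Applying (C2) with $\mathscr{B}$ playing the role of ``$\mathscr{A}$'' and $\mathscr{A}$ that of ``$\mathscr{B}$'' (valid since $\mathscr{A}_j\subseteq\mathscr{B}_k$), I get $m<\omega$ and $N_{\mathscr{B}}$-element sets $I_0,\dots,I_{m-1}\subseteq N_{\mathscr{A}}$ such that $[a]^{N_{\mathscr{B}}}\cap\mathscr{B}=\{a[I_t]:t<m\}$ for every $a\in\mathscr{A}$, and $a(j')\notin\bigcup\mathscr{B}$ whenever $j'\in N_{\mathscr{A}}\setminus\bigcup_{t<m}I_t$. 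Since $a(j)\in\mathscr{A}_j\subseteq\mathscr{B}_k\subseteq\bigcup\mathscr{B}$, some $I_{t_0}$ contains $j$, say $j=I_{t_0}(p)$. Writing $\mathscr{A}=\langle a^\xi\rangle$ and $\mathscr{B}=\langle b^\eta\rangle$ increasingly, for each $\xi$ we have $a^\xi[I_{t_0}]\in\mathscr{B}$, say $a^\xi[I_{t_0}]=b^{\eta(\xi)}$, and comparing its $p$-th entry with $a^\xi(j)=\mathscr{A}_j(\xi)=\mathscr{B}_k(\Sigma(\xi))=b^{\Sigma(\xi)}(k)$ gives $b^{\eta(\xi)}(p)=b^{\Sigma(\xi)}(k)$. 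By (3) this is impossible unless $p=k$, and then injectivity of the enumeration of the set $\mathscr{B}_k$ gives $\eta(\xi)=\Sigma(\xi)$; hence $a^\xi[I_{t_0}]=b^{\Sigma(\xi)}$ for all $\xi$. Reading off the $l$-th entry of this identity (note $l<N_{\mathscr{B}}=|I_{t_0}|$) yields $\mathscr{A}_{I_{t_0}(l)}=\mathscr{B}_l[\Sigma]$, so $C=\mathscr{B}_l[\Delta]=\mathscr{B}_l[\Sigma[\Gamma]]=(\mathscr{B}_l[\Sigma])[\Gamma]=\mathscr{A}_{I_{t_0}(l)}[\Gamma]$. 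Together with $A=\mathscr{A}_i[\Gamma]$, this shows $\mathscr{A},i,I_{t_0}(l),\Gamma$ witness $A\mathbf{E}C$, as desired.

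I expect the only real obstacle to be the combinatorial bookkeeping of the last paragraph: correctly matching up the roles of the two families when invoking (C2), and tracking which column of which family is named by each enumeration index, so that the chain ``$p=k$, then $\eta(\xi)=\Sigma(\xi)$, then $\mathscr{A}_{I_{t_0}(l)}=\mathscr{B}_l[\Sigma]$'' really goes through. Properties (T1) and (T4) and the $\mathcal{R}$-clauses of $\varphi_0$ should not be needed; only (C1)--(C2) and (T2)--(T3) enter.
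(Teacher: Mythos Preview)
Your proof is correct and follows essentially the same route as the paper: use (T3) to get comparability of $\mathscr{A}_j$ and $\mathscr{B}_k$, invoke (C2) to obtain the index block containing $j$, and use non-overlapping to identify blocks and transport one endpoint into the other family's coordinates. The only cosmetic differences are that the paper treats the strict-inclusion and equality cases separately (using the remark after Definition~\ref{c} for the latter) and transports $A$ into $\mathscr{B}$ rather than $C$ into $\mathscr{A}$; your handling of $\subseteq$ uniformly via the parameter $\Sigma$ is a clean variant of the same argument.
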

  \begin{proof}
  Only transitivity needs a proof. So suppose $A\mathbf{E}B$ and $B\mathbf{E}C$. The case that $A=B$ or $B=C$ is trivial. So assume 
  \begin{enumerate}
  \item for some $\msa\in \mc{C}$, $i, j<N_\msa$  and $\Gamma\in [\omega_1]^{\omega_1}$, $A=\msa_i[\Gamma]$ and $B=\msa_j[\Gamma]$;
  \item for some $\msb\in \mc{C}$, $i', j'<N_\msb$  and $\Gamma'\in [\omega_1]^{\omega_1}$, $B=\msb_{i'}[\Gamma']$ and $C=\msb_{j'}[\Gamma']$.
  \end{enumerate}
  Then $\msa_j\cap \msb_{i'}$ is uncountable. By (T3), either $\msa_j\subseteq \msb_{i'}$ or $\msb_{i'}\subseteq \msa_j$.
  
  We first assume that $\msb_{i'}\subset \msa_{j}$. Then by (C2), there is $I\in [N_\msb]^{N_\msa}$ such that $i'\in I$ and $b[I]\in \msa$ whenever $b\in \msb$. 
  
  Recall $B=\msa_j[\Gamma]=\msb_{i'}[\Gamma']$. Arbitrarily choose $\alpha<\omega_1$. Suppose $a$ is the $\Gamma(\alpha)$th element (under $<$) of $\msa$ and $b$ is the $\Gamma'(\alpha)$th element of $\msb$. Then 
  $$a(j)=b(i')= B(\alpha)\in a\cap b[I].$$ 
 By the fact that $b[I]\in \msa$ and $\msa$ is non-overlapping,
  \begin{itemize}
  \item[(3)] $a=b[I]$, $i'=I(j)$; 
  \item[(4)] $A(\alpha)=a(i)=b(I(i))$ is the $\Gamma(\alpha)$th element of $\msa_i$ and $\Gamma'(\alpha)$th element of $\msb_{I(i)}$.
  \end{itemize}
  
  So $A=\msa_i[\Gamma]=\msb_{I(i)}[\Gamma']$. Together with (2), we conclude $A\mathbf{E}C$.
  
  The case $\msa_j\subset\msb_{i'}$ follows from an argument similar to the above one.  Just note that $C\bfe B$ and $B\bfe A$.
  
  The case $\msa_j=\msb_{i'}$ follows from the definition (E1). Just note by the remark after Definition \ref{c}, $\msa=\msb$ and hence $j=i'$, $\Gamma=\Gamma'$.
    \end{proof}
We then observe the following property of $\mathbf{E}$ describing one equivalence class.
    \begin{lem}\label{lem3}
    Assume $\varphi_0(\pi, \mc{C}, \mc{R}, \mc{T}, \mathbf{E})$ and $A\mathbf{E}B$.
 Suppose for some $\msa\in \mc{C}$, $i<N_\msa$ and $\Gamma\in [\omega_1]^{\omega_1}$, $A=\msa_i[\Gamma]$ and $A\not\subseteq A'$ for any $A'\subset \msa_i$ in $\mc{T}$.  Then $B=\msa_j[\Gamma]$ for some $j<N_\msa$. Hence, $[A]_\mathbf{E}=\{\msa_k[\Gamma]: k<N_\msa\}$  where $[A]_\mathbf{E}$ is the $\mathbf{E}$-equivalence class of $A$.
    \end{lem}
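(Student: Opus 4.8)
The plan is to unpack the hypothesis $A\mathbf{E}B$ and run a short case analysis governed by (T3). If $A=B$ we are done with $j=i$, so assume there are $\msb\in\mc{C}$, $i',j'<N_\msb$ and $\Gamma'\in[\omega_1]^{\omega_1}$ with $A=\msb_{i'}[\Gamma']$ and $B=\msb_{j'}[\Gamma']$. Since $A$ is uncountable and $A\subseteq\msa_i\cap\msb_{i'}$, and both $\msa_i$ and $\msb_{i'}$ lie in $\mc{T}$ by (T2), property (T3) forces $\msa_i$ and $\msb_{i'}$ to be $\supseteq$-comparable. I would split into the three subcases $\msb_{i'}\subset\msa_i$, $\msb_{i'}=\msa_i$, and $\msa_i\subset\msb_{i'}$.

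The first subcase is where the maximality hypothesis on $A$ is used: $\msb_{i'}\in\mc{T}$, $\msb_{i'}\subset\msa_i$, and $A\subseteq\msb_{i'}$, contradicting the assumption that $A\not\subseteq A'$ for any $A'\subset\msa_i$ in $\mc{T}$; so this subcase cannot occur. In the second subcase, the Remark after Definition \ref{c} gives $\msa=\msb$, and since distinct coordinate-columns of a non-overlapping family are pairwise disjoint, $\msa_i=\msb_{i'}$ yields $i=i'$; then $\msa_i[\Gamma]=A=\msa_i[\Gamma']$, and injectivity of the increasing enumeration of $\msa_i$ gives $\Gamma=\Gamma'$, so $B=\msb_{j'}[\Gamma']=\msa_{j'}[\Gamma]$ and we take $j=j'$.

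The remaining subcase $\msa_i\subset\msb_{i'}$ is the substantive one. Here I would apply (C2) with $\msa$ playing the role of the ``inner'' family: this produces $k$ and fixed $I_0,\dots,I_{k-1}\in[N_\msa]^{N_\msb}$ such that $[a]^{N_\msb}\cap\msb=\{a[I_l]:l<k\}$ for every $a\in\msa$, and $a(j'')\notin\bigcup\msb$ whenever $j''\in N_\msa\setminus\bigcup_{l<k}I_l$. Since $\msa_i\subseteq\msb_{i'}$ we have $a(i)\in\bigcup\msb$ for all $a\in\msa$, so $i\in\bigcup_{l<k}I_l$; and because two distinct members of $\msb$ cannot share an element (non-overlapping), there is a unique $l^*<k$ with $i\in I_{l^*}$, and the position of $i$ in the increasing enumeration of $I_{l^*}$ must be $i'$, since that is the unique column of $\msb$ containing the element in question. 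Writing $a_\beta$, $b_\beta$ for the $\beta$-th members of $\msa$, $\msb$ under $<$, the equalities $A=\msa_i[\Gamma]=\msb_{i'}[\Gamma']$ say $a_{\Gamma(\alpha)}(i)=b_{\Gamma'(\alpha)}(i')$ for all $\alpha$; since $a_{\Gamma(\alpha)}[I_{l^*}]\in\msb$ also carries this common value in its column $i'$, non-overlapping forces $b_{\Gamma'(\alpha)}=a_{\Gamma(\alpha)}[I_{l^*}]$. Setting $j=I_{l^*}(j')<N_\msa$ then gives $B(\alpha)=b_{\Gamma'(\alpha)}(j')=a_{\Gamma(\alpha)}(j)=\msa_j[\Gamma](\alpha)$ for every $\alpha$, i.e. $B=\msa_j[\Gamma]$, as desired.

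Finally, for the displayed conclusion $[A]_\mathbf{E}=\{\msa_k[\Gamma]:k<N_\msa\}$: the above shows every member of $[A]_\mathbf{E}$ has this form, while conversely (E1) applied to $\msa,i,k,\Gamma$ witnesses $\msa_i[\Gamma]\mathbf{E}\,\msa_k[\Gamma]$ for each $k<N_\msa$. I expect the third subcase --- keeping straight the bookkeeping of columns, the index $l^*$, and the identification $b_{\Gamma'(\alpha)}=a_{\Gamma(\alpha)}[I_{l^*}]$ --- to be the only real obstacle; the first two subcases are essentially immediate once one observes how (C2) applies and that the columns of a non-overlapping family are pairwise disjoint.
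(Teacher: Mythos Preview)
Your proposal is correct and follows essentially the same route as the paper's proof: unpack (E1), use (T3) to compare $\msa_i$ with $\msb_{i'}$, eliminate $\msb_{i'}\subset\msa_i$ via the maximality hypothesis, handle $\msa_i=\msb_{i'}$ via the Remark after Definition~\ref{c}, and in the remaining case apply (C2) to find $I\ni i$ with $a[I]\in\msb$ and read off $j=I(j')$. The paper is terser---it simply picks one $I$ rather than naming all the $I_l$'s and isolating $l^*$---but the substance is identical.
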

    \begin{proof}
 Without loss of generality, assume $A\neq B$. So for some $\msb\in \mc{C}$, $i', j'<N_\msb$ and $\Sigma\in [\omega_1]^{\omega_1}$, $A=\msb_{i'}[\Sigma]$ and $B=\msb_{j'}[\Sigma]$. 
    
     Then $\msa_i\cap \msb_{i'}$ is uncountable. By (T3), either $\msa_i\subseteq \msb_{i'}$ or $\msb_{i'}\subseteq \msa_i$. If $\msa_i=\msb_{i'}$, then by (C2), $\msa=\msb$ and we are done. So assume $\msa_i\neq\msb_{i'}$. Since $A$ is not contained in any $\supset$-extension of $\msa_i$ in $\mc{T}$, we conclude that $\msa_i\subset \msb_{i'}$.
  
  Then by (C2), there is $I\in [N_\msa]^{N_\msb}$ such that $i\in I$ and $a[I]\in \msb$ whenever $a\in \msa$.   
  Recall $A=\msa_i[\Gamma]=\msb_{i'}[\Sigma]$. Then $i=I(i')$ and $\msa_{I(i')}[\Gamma]=\msb_{i'}[\Sigma]$. Hence $\msa_{I(j')}[\Gamma]=\msb_{j'}[\Sigma]=B$.
    \end{proof}
  
  $\mc{T}$ and $\mathbf{E}$ are induced from $\mc{C}$. But (T1)-(T4) add some restrictions on $\mc{C}$ while (E1) is automatically satisfied.
  
  Note that for $A\supset B$ in $\mc{T}$, $[A]_\mathbf{E}$ has size less than or equal to $[B]_\mathbf{E}$. We do not want any equivalence class to be infinite. So one reason to require (T4) is to make sure  $ht(\mc{T})\leq \omega$ and every $\mathbf{E}$-equivalence class is finite.
  
  \begin{lem}\label{lem4}
  Assume $\varphi_0(\pi, \mc{C}, \mc{R}, \mc{T}, \mathbf{E})$. For a sequence $\langle A_n: n<\omega\rangle$ of $\supset$-chain in $\mc{T}$, $\bigcap_{n<\omega} A_n=\emptyset$.
  \end{lem}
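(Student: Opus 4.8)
The plan is to argue by contradiction, extracting from a putative common element a ``position index'' in each $A_n$ and showing those indices must stabilize, which then contradicts (T4). Recall that by (T2) and (C1) every member of $\mc{T}$ is either $\omega_1$ itself or of the form $\msa_i$ for an uncountable non-overlapping $\msa$, hence an uncountable subset of $\omega_1$; in particular each $A_n$ has order type $\omega_1$, so $A_n(\alpha)$ is defined for every $\alpha<\omega_1$.

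So suppose toward a contradiction that some ordinal $\gamma$ lies in $\bigcap_{n<\omega}A_n$. For each $n$ let $\beta_n<\omega_1$ be the unique index with $A_n(\beta_n)=\gamma$; explicitly $\beta_n=\mathrm{ot}(A_n\cap\gamma)$, a countable ordinal since $A_n\cap\gamma\subseteq\gamma$. The key observation is that $\langle\beta_n:n<\omega\rangle$ is non-increasing: because the chain is descending, $A_{n+1}\subseteq A_n$, hence $A_{n+1}\cap\gamma\subseteq A_n\cap\gamma$ and therefore $\beta_{n+1}=\mathrm{ot}(A_{n+1}\cap\gamma)\leq\mathrm{ot}(A_n\cap\gamma)=\beta_n$. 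A non-increasing sequence of ordinals is eventually constant, so I fix $N<\omega$ and $\beta<\omega_1$ with $\beta_n=\beta$ for all $n\geq N$; thus $A_n(\beta)=\gamma$ for every $n\geq N$.

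Now I apply (T4) to the consecutive pair $A_N\supset A_{N+1}$ of $\mc{T}$ with the index $\alpha=\beta$: this yields $A_N(\beta)<A_{N+1}(\beta)$, i.e.\ $\gamma<\gamma$, which is absurd. Hence no such $\gamma$ exists and $\bigcap_{n<\omega}A_n=\emptyset$.

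I do not expect any real obstacle here. The only points requiring a touch of care are that the index sequence $\langle\beta_n\rangle$ is \emph{non-increasing} rather than non-decreasing (passing to a subset can only delete elements below $\gamma$, never add them), and that the chain is a $\supset$-chain in the strict sense of the paper's convention, so that (T4) is legitimately applicable to each consecutive pair; both are immediate.
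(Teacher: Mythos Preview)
Your proof is correct and follows essentially the same approach as the paper's: both extract position indices $\beta_n$ with $A_n(\beta_n)=\gamma$ and derive a contradiction from (T4). The paper's version is marginally more direct, applying (T4) at every step to conclude $\beta_{n+1}<\beta_n$ strictly (yielding an infinite descending sequence of ordinals immediately), whereas you first obtain $\beta_{n+1}\leq\beta_n$ from the subset relation, stabilize, and then invoke (T4) once.
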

  \begin{proof}
  Suppose towards a contradiction that $\xi\in \bigcap_{n<\omega} A_n$. For each $n$, choose $\alpha_n$ such that $\xi=A_n(\alpha_n)$. By (T4) and the fact that $A_n\supset A_{n+1}$, 
  $$A_{n+1}(\alpha_{n+1})=A_n(\alpha_n)<A_{n+1}(\alpha_n)\text{ and hence }\alpha_{n+1}<\alpha_n.$$ 
  This induces an infinite decreasing sequence of ordinals $\alpha_0>\alpha_1>\cdot\cdot\cdot$. A contradiction.
  \end{proof}
  
  \begin{lem}\label{lem5}
  Assume $\varphi_0(\pi, \mc{C}, \mc{R}, \mc{T}, \mathbf{E})$.  For every $A\in [\omega_1]^{\omega_1}$, 
  \begin{enumerate}
  \item $[A]_\bfe$ is finite;
  \item there exists unique $A^*\in \mc{T}$ such that $A\subseteq A^*$ and $A\not\subseteq A'$ for any $A'\subset A^*$ in $\mc{T}$.
  \end{enumerate}
  \end{lem}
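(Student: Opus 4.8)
The plan is to prove (2) first and then deduce (1) from it together with Lemma \ref{lem3}.

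For (2), I would work with $\mc{S}=\{B\in\mc{T}: A\subseteq B\}$, which is nonempty because $\omega_1\in\mc{T}_0\subseteq\mc{T}$ and $A\subseteq\omega_1$. The first point is that $\mc{S}$ is linearly ordered by $\subseteq$: if $B_1,B_2\in\mc{S}$ then $B_1\cap B_2\supseteq A$ is uncountable, so by (T3) $B_1$ and $B_2$ must be $\supset$-comparable in $\mc{T}$. The second point is that $\mc{S}$ has a $\subseteq$-least element: otherwise, being a chain with no $\subseteq$-minimum, $\mc{S}$ would contain a strictly $\supset$-decreasing sequence $A_0\supsetneq A_1\supsetneq\cdots$, which is a $\supset$-chain in $\mc{T}$, and Lemma \ref{lem4} would give $\bigcap_nA_n=\emptyset$, impossible since $A\subseteq A_n$ for all $n$. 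Let $A^*$ be the $\subseteq$-least member of $\mc{S}$; then $A\subseteq A^*$, and no $A'\subset A^*$ in $\mc{T}$ contains $A$ (such an $A'$ would lie in $\mc{S}$ below $A^*$). Uniqueness is immediate: if $A_1^*$ and $A_2^*$ both satisfy the statement, they both contain $A$, hence are $\supset$-comparable by (T3), and if $A_1^*\subsetneq A_2^*$ then $A_1^*\subset A_2^*$ in $\mc{T}$ with $A\subseteq A_1^*$, contradicting the minimality clause for $A_2^*$.

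For (1), I would split on whether $A$ is of the form $\msa_i[\Gamma]$ for some $\msa\in\mc{C}$, $i<N_\msa$, $\Gamma\in[\omega_1]^{\omega_1}$. If not, then $[A]_\bfe=\{A\}$ straight from (E1). If so, fix such $\msa,i,\Gamma$ and let $A^*$ be the set provided by (2). Since $\msa_i\in\mc{T}$ by (T2) and $A\subseteq\msa_i$, we have $\msa_i\in\mc{S}$, hence $A^*\subseteq\msa_i$. The key step is to observe that $A^*$ is itself a column of a member of $\mc{C}$: if $A^*\neq\omega_1$ this is exactly (T2) (as $A^*\in\mc{T}\setminus\mc{T}_0$), and if $A^*=\omega_1$ then $\omega_1=A^*\subseteq\msa_i\subseteq\omega_1$ forces $\msa_i=\omega_1=A^*$, so $A^*$ is the column $\msa_i$. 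Write $A^*=\msa'_{i'}$ with $\msa'\in\mc{C}$ and $i'<N_{\msa'}$; since $A\subseteq A^*$ and $A^*$ has order type $\omega_1$, choose the unique $\Gamma'\in[\omega_1]^{\omega_1}$ with $A=\msa'_{i'}[\Gamma']$. The minimality clause for $A^*$ from (2) is precisely the hypothesis of Lemma \ref{lem3} that $A$ is not contained in any $A''\subset\msa'_{i'}$ in $\mc{T}$, so Lemma \ref{lem3} applies and gives $[A]_\bfe=\{\msa'_k[\Gamma']: k<N_{\msa'}\}$, a set of size at most $N_{\msa'}<\omega$.

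The only point needing real care is the one just highlighted in (1): ensuring that the $\mc{T}$-minimal set above $A$ is genuinely a column $\msa'_{i'}$, so that Lemma \ref{lem3} is applicable — the degenerate value $A^*=\omega_1$ must be handled separately, using that in the nontrivial case $A$ already lies inside a column $\msa_i$, which is then forced to equal $\omega_1$. Everything else is routine bookkeeping with (T1)--(T4), (E1), and Lemmas \ref{lem3} and \ref{lem4}.
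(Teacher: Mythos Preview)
Your proof is correct and follows essentially the same route as the paper: both arguments show that the set of $\mc{T}$-elements containing $A$ forms a finite $\supset$-chain (via (T3) and Lemma~\ref{lem4}), take its minimum as $A^*$, and then invoke Lemma~\ref{lem3} at that minimum to bound $|[A]_\bfe|$ by the width $N_{\msa'}$ of the relevant family. The paper organizes things slightly differently---it works with $\mc{A}=\{\msa\in\mc{C}:\exists i\ A\subseteq\msa_i\}$ rather than your $\mc{S}$, and treats (1) and (2) in one pass---but the content is the same.

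One remark: your separate handling of the ``degenerate'' case $A^*=\omega_1$ in part (1) is unnecessary. By (T2), $\omega_1$ is the unique element of $\mc{T}_0$ and the elements of $\mc{T}\setminus\mc{T}_0$ are exactly the columns $\msa_i$; hence no column equals $\omega_1$. Since in your nontrivial case you already have $A^*\subseteq\msa_i\subsetneq\omega_1$, the possibility $A^*=\omega_1$ simply does not arise, and you may apply (T2) directly to write $A^*=\msa'_{i'}$.
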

  \begin{proof}
  Let $\mc{A}=\{\msa\in \mc{C}: \exists i<N_\msa \ A\subseteq\msa_i\}$. If $\mc{A}=\emptyset$, then $[A]_\bfe=\{A\}$ is finite and $A^*=\omega_1$. So assume $\mc{A}\neq\emptyset$.
  
  For each $\msa\in \mc{A}$, let $i_\msa$ be such that $A\subseteq \msa_{i_\msa}$. Then $\{\msa_{i_\msa}: \msa\in \mc{A}\}$ is a $\supset$-chain by (T3) and finite by Lemma \ref{lem4}.  
  Let $\msa\in \mc{A}$ be such that $\msa_{i_\msa}$ is the last element in the $\supset$-chain.
  
  Then $A\not\subseteq A'$ for any $A'\subset \msa_{i_\msa}$ in $\mc{T}$. By Lemma \ref{lem3}, $[A]_\bfe=\{\msa_k[\Gamma]: k<N_\msa\}$ is finite where $\Gamma$ is such that $A=\msa_{i_\msa}[\Gamma]$. And uniqueness of $A^*=\msa_{i_\msa}$ follows from (T3).
  \end{proof}
  
  We record the unique $A^*$ for each $A$ as indicated in above lemma.
  \begin{defn}
  Assume $\varphi_0(\pi, \mc{C}, \mc{R}, \mc{T}, \mathbf{E})$.  Define $\Phi_\mc{T}: [\omega_1]^{\omega_1}\ra \mc{T}$ by
  $$\Phi_\mc{T}(A) \text{ is the unique } A^*\in \mc{T} \text{ such that  $A\subseteq A^*$ and $A\not\subseteq A'$ for any $A'\subset A^*$ in $\mc{T}$}.$$
  \end{defn}
  Note that $\Phi_\mc{T}(A)=A$ iff $A\in \mc{T}$. The proof of Lemma \ref{lem5} shows the following.
  
  \begin{lem}\label{lem6}
  Assume $\varphi_0(\pi, \mc{C}, \mc{R}, \mc{T}, \mathbf{E})$.  For every $A\in [\omega_1]^{\omega_1}$,   either $[A]_\bfe=\{A\}$ or
  $$[A]_\bfe=\{\msa_i[\Gamma]: i<N_{\msa}\}$$
   where $\msa\in \mc{C}$ and $\Gamma\in [\omega_1]^{\omega_1}$ are such that $\Phi_\mc{T}(A)=\msa_i$ for some $i$ and $A=\Phi_\mc{T}(A)[\Gamma]$.
  \end{lem}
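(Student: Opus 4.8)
The plan is to recognize that Lemma~\ref{lem6} merely repackages Lemma~\ref{lem5} and, more precisely, records \emph{which} element of $\mc{T}$ the proof of Lemma~\ref{lem5} actually extracted; so the argument is to re-run that proof and check that the object it produced is $\Phi_\mc{T}(A)$.

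Concretely, I would again set $\mc{A}=\{\msa\in\mc{C}:\exists i<N_\msa\ A\subseteq\msa_i\}$ and split on whether $\mc{A}$ is empty. If $\mc{A}=\emptyset$, then $[A]_\bfe=\{A\}$: for if $A\bfe B$ with $B\neq A$, then by (E1) there are $\msa\in\mc{C}$, $i<N_\msa$ and $\Gamma'\in[\omega_1]^{\omega_1}$ with $A=\msa_i[\Gamma']$, whence $A\subseteq\msa_i$ and $\msa\in\mc{A}$, a contradiction; this is the first alternative in the statement. If instead $\mc{A}\neq\emptyset$, then exactly as in the proof of Lemma~\ref{lem5} the family $\{\msa_{i_\msa}:\msa\in\mc{A}\}$ (with $i_\msa$ chosen so $A\subseteq\msa_{i_\msa}$) is a $\supset$-chain by (T3) and finite by Lemma~\ref{lem4}; picking $\msa$ with $\msa_{i_\msa}$ the last element of this chain, one has $A\not\subseteq A'$ for every $A'\subset\msa_{i_\msa}$ in $\mc{T}$, so by the uniqueness clause of Lemma~\ref{lem5}(2) this $\msa_{i_\msa}$ is exactly $\Phi_\mc{T}(A)$. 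Writing $\Gamma$ for the set with $A=\msa_{i_\msa}[\Gamma]$ and setting $i=i_\msa$, Lemma~\ref{lem3} applies and gives $[A]_\bfe=\{\msa_k[\Gamma]:k<N_\msa\}$, while $A=\msa_i[\Gamma]=\Phi_\mc{T}(A)[\Gamma]$ with $\Phi_\mc{T}(A)=\msa_i$, which is the second alternative.

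I do not expect a real obstacle here: the substantive facts — that $\{\msa_{i_\msa}:\msa\in\mc{A}\}$ is genuinely a $\supset$-chain (from (T3)), that it is finite (from (T4) via Lemma~\ref{lem4}), and the description of a single $\bfe$-class (Lemma~\ref{lem3}) — have all been established already. The only points needing a moment's attention are bookkeeping: that ``$[A]_\bfe\neq\{A\}$'' forces $\mc{A}\neq\emptyset$, so that the two alternatives of the statement line up with the two cases of the proof, and that the element $\msa_{i_\msa}$ produced along the way must coincide with $\Phi_\mc{T}(A)$ by the uniqueness in Lemma~\ref{lem5}(2).
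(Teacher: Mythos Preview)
Your proposal is correct and is exactly the paper's approach: the paper's ``proof'' of this lemma is the single remark that the proof of Lemma~\ref{lem5} already shows it, and you have spelled out precisely that re-reading, identifying the $\msa_{i_\msa}$ produced there with $\Phi_\mc{T}(A)$ via the uniqueness clause and then invoking Lemma~\ref{lem3}. Your bookkeeping observations (that $[A]_\bfe\neq\{A\}$ forces $\mc{A}\neq\emptyset$, and that $\msa_{i_\msa}=\Phi_\mc{T}(A)$) are the only points the paper leaves implicit, and you handle them correctly.
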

  
  Next, we observe the following properties.
  \begin{lem}\label{lem7}
    Assume $\varphi_0(\pi, \mc{C}, \mc{R}, \mc{T}, \mathbf{E})$ and $A\mathbf{E}B$.   Then the following statements hold.
    \begin{enumerate}
    \item Suppose $A\neq B$. Then for some $\msa\in \mc{C}$ and $i, j<N_\msa$, $\msa_i=\Phi_\mct(A)$ and $\msa_j=\Phi_\mct(B)$. In particular, $\Phi_\mct(A)\bfe \Phi_\mct(B)$.
     \item $|A\cap A'|\leq \omega$ for every $A'\subset \Phi_\mc{T}(A)$ in $\mc{T}$ iff $\Phit(A'')=\Phit(A)$ for every $A''\in [A]^{\omega_1}$.
    \item $|A\cap A'|\leq \omega$ for every $A'\subset \Phi_\mc{T}(A)$ in $\mc{T}$ iff $|B\cap B'|\leq \omega$ for every $B'\subset \Phi_\mc{T}(B)$ in $\mc{T}$.
       \item $\Phi_\mct(A)$ is an end node in $\mct$ iff $\Phi_\mct(B)$ is an end node in $\mct$.
    \end{enumerate}
      \end{lem}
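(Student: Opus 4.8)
The plan is to exploit the fact that all four items are really statements comparing the single objects $A,B$ with $\Phi_\mct(A),\Phi_\mct(B)$, and that $A\mathbf{E}B$ with $A\ne B$ forces a very rigid picture, already half-assembled in Lemma \ref{lem3} and Lemma \ref{lem6}. So the first step is to prove item (1). Since $A\mathbf{E}B$ and $A\ne B$, by (E1) fix $\msb\in\mc{C}$, indices $i',j'<N_\msb$, and $\Sigma\in[\omega_1]^{\omega_1}$ with $A=\msb_{i'}[\Sigma]$ and $B=\msb_{j'}[\Sigma]$. Put $A^*=\Phi_\mct(A)$; by Lemma \ref{lem5} there is $\msa\in\mc{C}$ and $i<N_\msa$ with $A^*=\msa_i$ and $A\subseteq\msa_i$ but $A$ is contained in no proper $\mct$-extension of $\msa_i$. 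Now apply Lemma \ref{lem3} with this $\msa$, $i$, and the $\Gamma$ with $A=\msa_i[\Gamma]$: it yields $[A]_\mathbf{E}=\{\msa_k[\Gamma]:k<N_\msa\}$, so in particular $B=\msa_j[\Gamma]$ for some $j<N_\msa$. It remains to see $\msa_j=\Phi_\mct(B)$. If not, then $B$ lies in some proper $\mct$-extension $C\subset\msa_j$ of $\msa_j$; I would argue symmetrically — $B$ determines via Lemma \ref{lem5} some $\msa'\in\mc{C}$, $j''<N_{\msa'}$ with $\Phi_\mct(B)=\msa'_{j''}\subsetneq\msa_j$, apply Lemma \ref{lem3} to $B$ to get $A=\msa'_{i''}[\Gamma']$ for some $i''$, hence $A\subseteq\msa'_{i''}$; then $\msa'_{i''}\cap\msa_i=A$ is uncountable, so by (T3) one contains the other, and since $A\subseteq\msa'_{i''}$ with $\msa'_{i''}\subsetneq\msa_j$ — no, I should compare $\msa'_{i''}$ with $\msa_i$ directly: both contain $A$, they are $\mct$-comparable by (T3), and since $A^*=\msa_i$ is maximal among $\mct$-nodes not properly refined while still containing $A$, we must have $\msa'_{i''}\supseteq\msa_i$; but also $\msa'_{i''}$ sits below $\msa_j$ in the refinement coming from applying (C2) to $\msa'\subset\msa$, which is incompatible with $A$ not being in any proper extension of $A^*$. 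The clean way: from $\msa=\msb$ reasoning (the Remark after Definition \ref{c}, plus (C2)) the two representations $[A]_\mathbf{E}=\{\msa_k[\Gamma]:k\}$ and, if $\Phi_\mct(B)\ne\msa_j$, $[B]_\mathbf{E}=[A]_\mathbf{E}=\{\msa'_k[\Gamma']:k\}$ with $\msa'_{j''}\subsetneq\msa_j$ would give $\msa'_{i''}\subsetneq\msa_i$ (applying (C2) coordinatewise to $\msa'\subset\msa$), contradicting maximality of $A^*$. So $\Phi_\mct(B)=\msa_j$, and $\Phi_\mct(A)=\msa_i\mathbf{E}\msa_j=\Phi_\mct(B)$ by (E1). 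When $A=B$ item (1) is vacuous but the ``in particular'' clause is trivial since $\Phi_\mct(A)=\Phi_\mct(B)$.

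Next, items (3) and (4) follow quickly from (1) once the relevant properties are shown to be $\mathbf{E}$-invariant. For (3): by (1) there is $\msa\in\mc{C}$ with $\Phi_\mct(A)=\msa_i$, $\Phi_\mct(B)=\msa_j$, and $\Gamma$ with $A=\msa_i[\Gamma]$, $B=\msa_j[\Gamma]$. I claim that for a node $A'\subsetneq\msa_i$ in $\mct$, $|A\cap A'|\le\omega$ is equivalent to the corresponding statement on the $j$-side; the map $\msa_i\to\msa_j$ sending $a(i)\mapsto a(j)$ for $a\in\msa$ transports everything, and crucially, by (C2)/(R2) structure applied to $\msa$ against any earlier family $\msa''$ producing $A'$, a node below $\msa_i$ has a canonical counterpart below $\msa_j$ with the same $\Gamma$-index set; so $A\cap A'$ uncountable on one side produces an uncountable intersection on the other. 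I would spell this out using that $A'\subsetneq\msa_i$ in $\mct$ means $A'=\msa''_{l}$ for some $\msa''\in\mc{C}$ with $\msa_i\subsetneq\msa''_l$, apply (C2) to $\msa\subset\msa''$ to get the index set $I$ with $a[I]\in\msa''$ for $a\in\msa$, and observe the $j$-coordinate sits in the same relation to $\msa''_{I(j)}\cdot$-type node. Item (4): $\Phi_\mct(A)$ is an end node of $\mct$ iff it has no proper $\mct$-extension; since $\Phi_\mct(A)\mathbf{E}\Phi_\mct(B)$ and — by the Remark after Definition \ref{c} together with (C2) — $\mathbf{E}$-related nodes $\msa_i,\msa_j$ of the same $\msa$ have order-isomorphic sets of proper $\mct$-refinements (each refinement of $\msa_i$ coming from some $\msb\in\mc{C}$ with $\msa_i\subsetneq\msb_l$ yields, via (C2) applied to $\msa\subset\msb$, a refinement of $\msa_j$, and vice versa), one is an end node iff the other is.

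Item (2) is the most substantive and is where I expect the main obstacle. One direction is easy: if some $A''\in[A]^{\omega_1}$ has $\Phi_\mct(A'')\ne\Phi_\mct(A)$, then since $A''\subseteq A\subseteq\Phi_\mct(A)$ the node $\Phi_\mct(A'')$ must be a proper $\mct$-refinement $A'\subsetneq\Phi_\mct(A)$ with $A''\subseteq A'$, so $|A\cap A'|\ge|A''|=\omega_1>\omega$, contradicting the left-hand side; hence left-to-right. For the converse, suppose $A'\subsetneq\Phi_\mct(A)$ in $\mct$ with $|A\cap A'|=\omega_1$; set $A''=A\cap A'\in[A]^{\omega_1}$. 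Then $A''\subseteq A'\subsetneq\Phi_\mct(A)$, so $A''$ is contained in a proper $\mct$-extension of $\Phi_\mct(A)$, whence $\Phi_\mct(A'')\ne\Phi_\mct(A)$ (by uniqueness in Lemma \ref{lem5}(2), since $\Phi_\mct(A'')$ would have to be $\subseteq A'$ or incomparable, and in the incomparable case (T3) gives $|A''\cap\Phi_\mct(A'')|\le\omega$, contradicting $A''\subseteq\Phi_\mct(A'')$ — so $\Phi_\mct(A'')\subseteq A'\subsetneq\Phi_\mct(A)$). This gives right-to-left, so (2) holds. The delicate point throughout is keeping the uniqueness clause of Lemma \ref{lem5} and condition (T3) doing the bookkeeping of ``which $\mct$-node captures an uncountable piece''; once that is internalized the four equivalences are bookkeeping around Lemma \ref{lem3} and Lemma \ref{lem6}, with item (2) as the genuine content.
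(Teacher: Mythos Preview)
Your overall approach matches the paper's, and items (2)--(4) are essentially the same arguments, but there are two points worth noting.

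For item (1), the paper takes a shorter path than your symmetry/contradiction argument. Instead of fixing $\msb$ from the definition of $\mathbf{E}$ and then separately arguing that $\Phi_\mct(B)=\msa_j$, the paper begins by fixing \emph{both} $\Phi_\mct(A)=\msa_i$ and $\Phi_\mct(B)=\msb_j$ for some $\msa,\msb\in\mc{C}$. Lemma~\ref{lem6} gives $A\subseteq\msb_{i'}$ for some $i'$; since $A\subseteq\msa_i$ as well, (T3) and the minimality defining $\Phi_\mct(A)$ force $\msa_i\subseteq\msb_{i'}$. Symmetrically $\msb_j\subseteq\msa_{j'}$ for some $j'$. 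Then (C2) (applied in both directions) yields $\msa=\msb$, done. This avoids the back-and-forth with Lemma~\ref{lem3} and the coordinatewise comparison of $\msa'$ with $\msa$.

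For item (3), your writeup contains a direction error. You write that $A'\subsetneq\msa_i$ in $\mct$ means $A'=\msa''_l$ with $\msa_i\subsetneq\msa''_l$; but $A'=\msa''_l$ together with $A'\subsetneq\msa_i$ gives $\msa''_l\subsetneq\msa_i$, the reverse inclusion. Correspondingly, (C2) applied to $\msa''_l\subseteq\msa_i$ yields index sets $I\in[N_{\msa''}]^{N_\msa}$ with $a''[I]\in\msa$ for $a''\in\msa''$, not $a[I]\in\msa''$ for $a\in\msa$. Your transport idea is correct, but the paper packages it more cleanly as the single observation (from (C2)) that for every $\Sigma\in[\omega_1]^{\omega_1}$, $\msa_i[\Sigma]\in\mct$ iff $\msa_j[\Sigma]\in\mct$; the equivalence in (3) then follows since $|A\cap\msa_i[\Sigma]|=|\Gamma\cap\Sigma|=|B\cap\msa_j[\Sigma]|$.

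For item (4), the paper avoids re-deriving the node correspondence: since $\Phi_\mct(A)\mathbf{E}\Phi_\mct(B)$ by (1), simply apply (3) to the pair $\Phi_\mct(A),\Phi_\mct(B)$, noting that a $\mct$-node $X$ is an end node iff $|X\cap X'|\le\omega$ for every $X'\subsetneq X$ in $\mct$.
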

  \begin{proof}
  (1) Fix $\msa, \msb$ in $\mc{C}$ and $i<N_\msa, j<N_\msb$ such that $\Phi_\mct(A)=\msa_i$ and $\Phi_\mct(B)=\msb_j$. 
  
  By Lemma \ref{lem6}, $A\subseteq \msb_{i'}$ for some $i'<N_\msb$. By (T3) and the fact that $A\subseteq \Phi_\mct(A)=\msa_i$, either $\msa_i\subseteq \msb_{i'}$ or $\msb_{i'}\subset \msa_i$. Then by definition of $\Phi_\mct(A)$, $\msa_i\subseteq \msb_{i'}$.
  
  Similarly, $\msb_j\subseteq \msa_{j'}$ for some $j'<N_\msa$. Then by (C2), $\msa=\msb$.\medskip
  
  (2)   ``$\Leftarrow$''. If $|A\cap A'|= \omega_1$ for some $A'\subset \Phi_\mc{T}(A)$ in $\mc{T}$, then 
  $$\Phit(A)=\Phit(A\cap A')\subseteq A'\subset \Phit(A)$$
   which cannot happen.
  
  ``$\Rightarrow$''.  On one hand, $A''\subseteq A\subseteq \Phit(A)$. On the other hand, for every $A'\subset \Phi_\mct(A)$ in $\mc{T}$, $A''\cap A'\subseteq A\cap A'$ is countable and hence $A''\not\subseteq A'$.\medskip
  
  (3)  Clearly, we may assume $A\neq B$. Then by (1) and Lemma \ref{lem6}, 
  $$A=\msa_i[\Gamma], B=\msa_j[\Gamma], \Phit(A)=\msa_i \text{ and } \Phit(B)=\msa_j$$
   for some $\msa\in \mc{C}$, $i, j<N_\msa$ and $\Gamma\in [\omega_1]^{\omega_1}$. Note by (C2), for every $\Sigma\in [\omega_1]^{\omega_1}$, $\msa_i[\Sigma]\in \mct$ iff $\msa_j[\Sigma]\in \mct$.
   This implies  the equivalence.\medskip

  (4) By (1), $\Phi_\mct(A)\bfe \Phi_\mct(B)$. Then applying (3) to $\Phi_\mct(A)$ and $ \Phi_\mct(B)$, we get (4).
  Just note that $\Phi_\mct(A)$ is an end node in $\mct$ iff $|\Phi_\mct(A)\cap A'|\leq \omega$ for every $A'\subset \Phi_\mc{T}(A)$ in $\mc{T}$.
    \end{proof}
  
  To illustrate the possible patterns that should be reserved by $(\mc{C}, \mc{R}, \mc{T}, \mathbf{E})$  as in Lemma \ref{lem1}, we need to first introduce   families that are ``appropriate'' for $\mc{C}$.
  
  \begin{defn}\label{candidate}
  Assume $\varphi_0(\pi, \mc{C}, \mc{R}, \mc{T}, \mathbf{E})$. Say $\msa$ is \emph{a candidate for $\mc{C}$} (or say \emph{a $\mc{C}$-candidate}) if for some $n<\omega$, $\msa\subset [\omega_1]^{n}$ is uncountable non-overlapping  and 
  \begin{itemize}
  \item[(i)] $\{\msa_i: i<n\}$ is $\bfe$-invariant, i.e., $[\msa_j]_\bfe\subseteq\{\msa_i: i<n\}$ for every $j<n$; 
  \item[(ii)] for every $i<n$ and every $A\subset \Phi_\mct(\msa_i)$ in $\mc{T}$, $|\msa_{i}\cap A|\leq \omega$;
  \item[(iii)] for every $i<n$ and every $\alpha<\omega_1$, $\Phi_\mct(\msa_i)(\alpha)< \msa_{i}(\alpha)$;
  \item[(iv)] for every $i, j<n$ and every $A\supseteq \Phit(\msa_j)$ in $\mct$, either $\msa_i\subset A$ or $\msa_i\cap A=\emptyset$.
  \end{itemize}
  \end{defn}
  
  So  $\mc{C}$-candidates   are candidates that might be added into $\mc{C}$. More precisely, if $\msa$ is a candidate for $\mc{C}$, then we may add $\msa$ to $\mc{C}$ and add $\msa_i$'s to $\mc{T}$ as end nodes while (C1)-(C2) and (T1)-(T4) are all satisfied. In other words, we only need to take care of the $\mc{R}$ part.
  Of course, we will generically add a subset of a candidate into $\mc{C}$. But the preparation can be done in advance.
  
  We observe some basic facts of   $\mc{C}$-candidates.
  \begin{lem}\label{lem8}
    Assume $\varphi_0(\pi, \mc{C}, \mc{R}, \mc{T}, \mathbf{E})$. Suppose $\msa$ is a candidate for $\mc{C}$.
    \begin{enumerate}
    \item For every $A\in \mct$ and $i<N_\msa$, $|\msa_i \cap A|=\omega_1$ iff $\msa_i\subset A$ iff $\Phit(\msa_i)\subseteq A$.
    \item For every $\msb\in [\msa]^{\omega_1}$, $\msb$ is a candidate for $\mc{C}$. 
    \item There are $k<\omega$, a sequence $\langle \msa^i\in \mc{C}: i<k\rangle $ and a partition $N_\msa=I\cup\bigcup_{i<k} I_i$ such that 
    \begin{enumerate}
    \item $\Phit(\msa_j)=\omega_1$ for $j\in I$;
    \item $\Phit(\msa_{I_i(j)})=\msa^i_j$ for $i<k$ and  $j<|I_i|=N_{\msa^i}$;
    \item $a[I_i]\in \msa^i$ for $a\in \msa$ and $i<k$.
        \end{enumerate}
        \item for $\msb\in \mc{C}$ and $J\in [N_\msa]^{N_\msb}$, if $\{a\in \msa: a[J]\in \msb\}$ is uncountable, then $a[J]\in \msb$ whenever $a\in \msa$.
    \end{enumerate}   
  \end{lem}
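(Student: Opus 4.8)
The plan is to prove items (1)--(4) in order, using (1)--(3) as tools in (4) and the structure theory of $\Phit$ and $\bfe$ from Lemmas \ref{lem5}--\ref{lem7}; the two recurring facts will be that $\Phit(A)$ is the $\subseteq$-least node of $\mct$ containing $A$ (read off the proof of Lemma \ref{lem5}) and that distinct coordinates of a member of $\mc{C}$ are pairwise disjoint (immediate from non-overlapping). For (1) I would close the cycle $\Phit(\msa_i)\subseteq A\Rightarrow\msa_i\subset A\Rightarrow|\msa_i\cap A|=\omega_1\Rightarrow\Phit(\msa_i)\subseteq A$: condition (iii) of Definition \ref{candidate} rules out $\msa_i=\Phit(\msa_i)$, so the first arrow holds and the second is trivial; for the third, $\msa_i\cap A$ is an uncountable subset of $\msa_i$ lying in the node $A$, so $\Phit(\msa_i\cap A)\subseteq A$, while condition (ii) of Definition \ref{candidate} together with Lemma \ref{lem7}(2) (taken with $B=A$) makes $\Phit$ constant on $[\msa_i]^{\omega_1}$, whence $\Phit(\msa_i\cap A)=\Phit(\msa_i)\subseteq A$.

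For (2), fix $\Gamma'$ with $\msb=\msa[\Gamma']$, so $\msb_i=\msa_i[\Gamma']$ for $i<n$. Since $\msb_i\in[\msa_i]^{\omega_1}$, Lemma \ref{lem7}(2) gives $\Phit(\msb_i)=\Phit(\msa_i)$, and conditions (ii), (iii), (iv) of Definition \ref{candidate} for $\msb$ are then immediate from those for $\msa$ using $\msb_i\subseteq\msa_i$ (so $\msb_i(\alpha)\ge\msa_i(\alpha)$ and $\msb_i\cap A\subseteq\msa_i\cap A$). For (i) it suffices to prove $\msb_i\bfe\msb_j\Leftrightarrow\msa_i\bfe\msa_j$ for $i\ne j$; if $\Phit(\msa_i)=\omega_1$ both classes are singletons, so assume not and write (by Lemma \ref{lem6}) $[\msa_i]_\bfe=\{\mathscr{C}_t[\Theta]:t<N_{\mathscr{C}}\}$ with $\Phit(\msa_i)=\mathscr{C}_{p}$, and $\mathscr{C}_t[\Theta]=\msa_{k(t)}$ by condition (i) for $\msa$; then $\msb_i=\mathscr{C}_p[\Theta[\Gamma']]$, so $[\msb_i]_\bfe=\{\mathscr{C}_t[\Theta[\Gamma']]:t\}=\{\msa_{k(t)}[\Gamma']:t\}=\{\msb_{k(t)}:t\}$, and $\msb_j\in[\msb_i]_\bfe$ iff $j\in\{k(t):t\}$ iff $\msa_j\in[\msa_i]_\bfe$.

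For (3), set $I=\{j<n:\Phit(\msa_j)=\omega_1\}$; by condition (i) for $\msa$, $\bfe$ partitions $\{\msa_j:j\notin I\}$ into full classes, whose index sets I enumerate increasingly as $I_0,\dots,I_{k-1}\subseteq n\setminus I$. For a block $I_i$, Lemmas \ref{lem6} and \ref{lem7}(1) provide a single $\mathscr{C}\in\mc{C}$ and a single $\Gamma$ with $[\msa_j]_\bfe=\{\mathscr{C}_t[\Gamma]:t<N_{\mathscr{C}}\}$, $\msa_{j}=\mathscr{C}_{t(j)}[\Gamma]$, and $\Phit(\msa_j)=\mathscr{C}_{t(j)}$ for every $j\in I_i$, with $j\mapsto t(j)$ a bijection $I_i\to N_{\mathscr{C}}$; put $\msa^i:=\mathscr{C}$, so $|I_i|=N_{\msa^i}$. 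The crux is that $a[I_i]\in\msa^i$ for every $a\in\msa$: writing $a=a_\alpha$ in the increasing enumeration of $\msa$, one has $a(j)=\msa_j(\alpha)=\mathscr{C}_{t(j)}(\Gamma(\alpha))$ for all $j\in I_i$ with the \emph{same} column index $\Gamma(\alpha)$, so $a[I_i]=\{\mathscr{C}_t(\Gamma(\alpha)):t<N_{\msa^i}\}$ is precisely the $\Gamma(\alpha)$-th element of $\msa^i$. Hence $\msa_{I_i(r)}\subseteq\msa^i_r$, so $\Phit(\msa_{I_i(r)})\subseteq\msa^i_r$; as $\Phit(\msa_{I_i(r)})$ is itself one of the pairwise disjoint coordinates of $\msa^i$, this forces $\Phit(\msa_{I_i(r)})=\msa^i_r$, establishing (3)(a)--(c).

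For (4), let $\msa'=\{a\in\msa:a[J]\in\msb\}$, uncountable, write $\msa'=\msa[\Gamma']$, and fix any $l<N_\msb$. Then $\msa'_{J(l)}=\msa_{J(l)}[\Gamma']\subseteq\msb_l$ is uncountable, so by (1) $\msa_{J(l)}\subset\msb_l$ and $\Phit(\msa_{J(l)})\subseteq\msb_l$; hence $J(l)\notin I$, so $J(l)$ sits in a block $I_{i(l)}$ with $\Phit(\msa_{J(l)})=\msa^{i(l)}_{m(l)}$ and $J(l)=I_{i(l)}(m(l))$ by (3), and $a[I_{i(l)}]\in\msa^{i(l)}$ for every $a\in\msa$ by (3)(c). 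As $\msa^{i(l)}_{m(l)}\subseteq\msb_l$ and both are nodes of $\mct$, (C2) of Definition \ref{c} applies to $\big(\msa^{i(l)},\msb\big)$; arguing as in the ``only if'' part of Lemma \ref{lem1}, and using disjointness of the coordinates of $\msb$ to pin down positions, it produces $I^{(l)}\in[N_{\msa^{i(l)}}]^{N_\msb}$ (not depending on $a$) with $c[I^{(l)}]\in\msb$ for all $c\in\msa^{i(l)}$ and $I^{(l)}(l)=m(l)$. Thus for any $a\in\msa$, $a\big[I_{i(l)}[I^{(l)}]\big]=(a[I_{i(l)}])[I^{(l)}]\in\msb$, and its $l$-th coordinate is $(a[I_{i(l)}])(m(l))=a(J(l))=(a[J])(l)$. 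For $a\in\msa'$ we have $a[J]\in\msb$ as well, and two members of $\msb$ agreeing on a coordinate are equal by non-overlapping, so $a[J]=a\big[I_{i(l)}[I^{(l)}]\big]$ and hence $J=I_{i(l)}[I^{(l)}]$. Since $J$ is now fixed, $a[J]=(a[I_{i(l)}])[I^{(l)}]\in\msb$ for \emph{every} $a\in\msa$.

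I expect (4) to be the deepest item, as it chains together (1), (3), and a Lemma \ref{lem1}-style application of (C2); within (3), the one non-routine point is upgrading $\Phit(\msa_{I_i(r)})\subseteq\msa^i_r$ to equality, for which the explicit form of $\bfe$-classes in Lemma \ref{lem6} and disjointness of coordinates suffice.
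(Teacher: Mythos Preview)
Your proof is correct and follows essentially the same route as the paper: (1) via candidate condition~(ii) (you route through Lemma~\ref{lem7}(2) where the paper invokes (T3) directly, which is equivalent), (2) from $\Phit(\msb_i)=\Phit(\msa_i)$ plus Lemma~\ref{lem6}, (3) by partitioning $N_\msa\setminus I$ into $\bfe$-classes and reading off the common $\msa^i,\Gamma$ from Lemma~\ref{lem6}, and (4) by pushing $J(0)$ (you do each $J(l)$, but one coordinate suffices) into a block $I_i$ and applying (C2) to $(\msa^i,\msb)$ to produce $J'$ with $J=I_i[J']$. Your extra care in (3) verifying that the bijection $j\mapsto t(j)$ is order-preserving (equivalently, that $\Phit(\msa_{I_i(r)})=\msa^i_r$ via disjointness of coordinates) is a genuine detail the paper leaves implicit; the one cosmetic slip is the appeal to ``the only if part of Lemma~\ref{lem1}'' in (4), which is not needed---the existence of $I^{(l)}$ with $I^{(l)}(l)=m(l)$ follows directly from (C2) and the non-overlapping of $\msb$.
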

  \begin{proof}
  (1) It suffices to prove that $|\msa_i \cap A|=\omega_1$ implies $\Phit(\msa_i)\subseteq A$. Assume $|\msa_i \cap A|=\omega_1$. By (T3), $A$ is comparable with $\Phit(\msa_i)$. By Definition \ref{candidate} (ii) for $\msa$, $\Phit(\msa_i)\subseteq A$. \medskip
  
(2) By Lemma \ref{lem7} (2), for every $i<N_\msa$, $\Phit(\msb_i)=\Phit(\msa_i)$.
  
  Now Definition \ref{candidate} (i) for $\msb$ follows from Lemma \ref{lem6} and Definition \ref{candidate} (i) for $\msa$. Definition \ref{candidate} (ii)-(iv) for $\msb$ follow from $\Phit(\msb_i)=\Phit(\msa_i)$ and $\msb\subseteq  \msa$.\medskip
  
  (3) Let $I=\{j<N_\msa: \Phit(\msa_j)=\omega_1\}$ and $k=|\{\msa_j: j\in N_\msa\setminus I\}/\bfe|$ be the number of $\bfe$-equivalence classes. Now $N_\msa=I\cup\bigcup_{i<k} I_i$ is the partition such that $\{\msa_j: j\in I_i\}$ is one equivalence class for each $i<k$.
  
  For $i<k$, by Lemma \ref{lem6} and Lemma \ref{lem7}, there are $\msa^i\in \mc{C}$ and $\Gamma\in[\omega_1]^{\omega_1}$ such that $\{\msa_j: j\in I_i\}=\{\msa^i_{j}[\Gamma]: j<N_{\msa^i}\}$ and $\Phit(\msa_{I_i(j)})=\msa^i_{j}$ for $j<|I_i|$.
  Clearly, $a[I_i]\in \msa^i$ for $a\in \msa$ and $i<k$.
  
(4)  Fix $\msb\in \mc{C}$ and $J\in [N_\msa]^{N_\msb}$ such that $\{a\in \msa: a[J]\in \msb\}$ is uncountable. Let $k$,  $\langle \msa^i\in \mc{C}: i<k\rangle $ and $N_\msa=I\cup\bigcup_{i<k} I_i$ be guaranteed by (3). By (T3), $\Phit(\msa_{J(0)})$ is comparable with $\msb_0$. By Definition \ref{candidate} (ii), $\Phit(\msa_{J(0)})\subseteq \msb_0$. So  $\msa^i_j\subseteq \msb_0$ for   $i, j$ with $I_i(j)=J(0)$. By (C2), for some $J'$ with $j\in J'$, $a'[J']\in \msb$ whenever $a'\in \msa^i$. Clearly, $J=I_i[J']$ and $a[J]\in \msb$ whenever $a\in \msa$.
    \end{proof}
  \textbf{Remark.} In above lemma, repetition is allowed in $\langle \msa^i: i<k\rangle$.
  
  We would like to find, for each $A\in [\omega_1]^{\omega_1}$, an uncountable subset whose $\bfe$-closure is a coordinate-wise collection of a $\mc{C}$-candidate. The only difficulty is the satisfaction of Definition \ref{candidate} (ii).  
  We will see below that under some condition, Definition \ref{candidate} (ii) can be satisfied.
  \begin{lem}\label{lem candidate}
   Assume $\varphi_0(\pi, \mc{C}, \mc{R}, \mc{T}, \mathbf{E})$ and $|\mct|\leq \omega_1$. For every $A\in [\omega_1]^{\omega_1}$, there exists $B\in [A]^{\omega_1}$ such that
   $$\text{ for every } B'\subset \Phi_\mct(B) \text{ in } \mct, |B\cap B'|\leq \omega.$$
  \end{lem}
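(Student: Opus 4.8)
The plan is to build $B$ by transfinite recursion of length $\omega_1$, choosing elements of $A$ one at a time and at each step avoiding the ``bad'' nodes of $\mct$ strictly below $\Phit(A)$. First I would fix $A^*=\Phit(A)$. By Lemma \ref{lem5}, $A^*$ is well-defined, and by (T1) together with $|\mct|\leq\omega_1$ the set $\mct_{<A^*}=\{A'\in\mct: A'\subset A^*\}$ is a family of size $\leq\omega_1$ of proper subsets of $A^*$, each of which (by (T3), since each such $A'$ is either below or incomparable with any other node, and in particular $A'\cap A''$ is countable whenever $A',A''$ are incomparable) is ``thin'' relative to $A^*$ in the sense made precise below. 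The key point is a counting/pigeonhole observation: for each $\alpha<\omega_1$, $A(\alpha)$ can lie in at most one end-segment of $\mct_{<A^*}$ at each level, and the $\supset$-chains through $\mct_{<A^*}$ below any fixed point of $A^*$ have length $\leq\omega$ by Lemma \ref{lem4}; so the set of $\xi\in A$ that lie in \emph{some} $A'\subset A^*$ in $\mct$ need not be small, but we can still thin out.

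The cleanest route I expect to work: enumerate $\mct_{<A^*}=\{A'_\xi:\xi<\omega_1\}$ (allowing repetitions; if $\mct_{<A^*}$ is countable the argument only gets easier). Recursively pick $B=\{b_\alpha:\alpha<\omega_1\}\subseteq A$ increasing so that for each $\alpha$, $b_\alpha\in A\setminus\bigcup_{\xi\le\alpha}(A'_\xi\cup\sup\{b_\gamma:\gamma<\alpha\}+1)$ — this requires that $A\setminus\bigcup_{\xi\le\alpha}A'_\xi$ be cofinal in $\omega_1$, equivalently that $A\not\subseteq^* \bigcup_{\xi\le\alpha}A'_\xi$, i.e. that a countable union of the $A'_\xi$ cannot cover a tail of $A$. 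This is where (T4) enters decisively: by Lemma \ref{lem4}, along any $\supset$-chain in $\mct$ the intersection is empty, so in particular no point of $\omega_1$ lies in infinitely many members of any chain; combined with (T3) this should force that for any fixed $\delta<\omega_1$, only countably many of the $A'_\xi$ meet the interval $[\delta,\omega_1)$ in an uncountable set — actually I would aim to show each countable subfamily of $\mct_{<A^*}$ has union with countable intersection with $A^*$, using that $A^*$ itself satisfies $|A^*\cap A'|\le\omega$ for $A'\subset A^*$ in $\mct$ precisely when $A^*$ is an end node, which need not hold; so instead I would argue directly that $A=A[\Gamma']$ for cofinally many indices escapes each $A'_\xi$.

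Let me state the recursion step I actually intend to verify. At stage $\alpha<\omega_1$, having chosen $\langle b_\gamma:\gamma<\alpha\rangle$, I claim $A\cap(\sup_{\gamma<\alpha}b_\gamma,\omega_1)\setminus\bigcup_{\xi\le\alpha}A'_\xi\neq\emptyset$. Suppose not; then $A$ minus a countable set is covered by $\bigcup_{\xi\le\alpha}A'_\xi$, a countable union of nodes properly below $A^*$. Since $|A|=\omega_1$, some single $A'_\xi$ with $\xi\le\alpha$ satisfies $|A\cap A'_\xi|=\omega_1$; but then $A'_\xi\supseteq \Phit(A\cap A'_\xi)$ and by (T3) $A'_\xi$ is comparable with $A^*=\Phit(A)$, and since $A\cap A'_\xi\subseteq A\subseteq A^*$ while $A'_\xi\subset A^*$ strictly, we would get $\Phit(A)\subseteq A'_\xi\subset A^* = \Phit(A)$ (mimicking the argument in Lemma \ref{lem7}(2)), a contradiction. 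Hence the recursion goes through, $B=\{b_\alpha:\alpha<\omega_1\}\in[A]^{\omega_1}$, and for any $B'\subset\Phit(B)$ in $\mct$: note $\Phit(B)=\Phit(A)=A^*$ since $B\subseteq A\subseteq A^*$ and $B$ meets every proper subnode of $A^*$ countably by construction (so $B\not\subseteq B'$ for $B'\subset A^*$), whence $B'=A'_\xi$ for some $\xi<\omega_1$ and $B\cap B'\subseteq\{b_\gamma:\gamma\le\xi\}$ is countable.

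\textbf{Main obstacle.} The delicate point is the pigeonhole in the recursion step — showing that a countable union $\bigcup_{\xi\le\alpha}A'_\xi$ of proper subnodes of $A^*$ cannot cover $A$ modulo a countable set. I have sketched it via: if it did, one $A'_\xi$ meets $A$ uncountably, contradicting $\Phit(A)=A^*$ by the comparability (T3) plus the minimality in the definition of $\Phit$. I would double-check that this uses only $\varphi_0(\pi,\mc{C},\mc{R},\mc{T},\mathbf{E})$ and not the candidate hypotheses, and that the hypothesis $|\mct|\le\omega_1$ is exactly what lets me enumerate $\mct_{<A^*}$ in order type $\le\omega_1$ so the diagonal recursion of length $\omega_1$ closes off.
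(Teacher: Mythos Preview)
Your recursion step contains a genuine gap. You claim that if some $A'_\xi\subset A^*=\Phit(A)$ in $\mct$ satisfies $|A\cap A'_\xi|=\omega_1$, then ``we would get $\Phit(A)\subseteq A'_\xi\subset A^*=\Phit(A)$'', a contradiction. But the definition of $\Phit(A)$ only asserts $A\not\subseteq A'$ for proper subnodes $A'\subset A^*$; it says nothing about $|A\cap A'|$, and an uncountable intersection is perfectly compatible with $\Phit(A)=A^*$. Concretely, countably many proper subnodes of $A^*$ \emph{can} cover $A$ modulo a countable set: nothing in your argument (nor in (T3), (T4) alone) rules this out, so the recursion may halt. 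You even flag the worry yourself (``which need not hold''), but the workaround you then state commits exactly this error---you conflate $\Phit(A\cap A'_\xi)$ with $\Phit(A)$.

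The missing idea is that $\Phit(B)$ need not equal $\Phit(A)$. The paper first locates a node $C\in\mct$---possibly strictly below $\Phit(A)$---with $|A\cap C|=\omega_1$ and such that no countable family of proper subnodes of $C$ covers $A\cap C$ up to a countable set. The existence of such $C$ is the real content of the lemma: one argues by contradiction that if every node meeting $A$ uncountably were ``countably coverable from below'', then the subtree $\{C\in\mct:|A\cap C|=\omega_1\}$ would be countably branching, hence countable (height $\le\omega$), and a point of $A$ avoiding all the countable residues would then generate an infinite $\supset$-chain with nonempty intersection, contradicting Lemma~\ref{lem4}. Once this $C$ is in hand, your diagonal construction applied to $A\cap C$ (rather than $A$) does work and yields $B$ with $\Phit(B)=C$.
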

  \begin{proof}
  First assume that for some $C\in \mct$, 
  \begin{enumerate}
  \item $|A\cap C|=\omega_1$ and for every countable subset $\mct'$ of $\{C'\in \mct: C'\subset C\}$, $|A\cap C\setminus \bigcup \mct'|=\omega_1$.
  \end{enumerate}
  
  Then enumerate $\{C'\in \mct: C'\subset C\}$ as $\{C_\alpha: \alpha<\kappa\}$ for some $\kappa$. Since $|\mct|\leq \omega_1$, we may assume $\kappa\leq \omega_1$.
  
  If $\kappa<\omega_1$, then take $B=A\cap C\setminus \bigcup_{\alpha<\kappa} C_\alpha$.
  
  If $\kappa=\omega_1$, then inductively choose $x_\alpha\in A\cap C\setminus \bigcup_{\xi<\alpha} C_\xi$ that is greater than $\sup \{x_\xi: \xi<\alpha\}$. Take $B=\{x_\alpha: \alpha<\omega_1\}$.
  
  In each case, it is straightforward to check that $\Phit(B)=C$ and $B$ is as desired.\medskip
  
  Now it suffices to prove that there exists $C\in \mct$ satisfying (1). Suppose towards a contradiction that no such $C$ exists. Then, by (T3),
  $$\mct'=\{C\in \mct: |A\cap C|=\omega_1\}$$
   is a countably branching subtree, i.e., for every $C\in \mct'$, $C$ has countably many immediate successors in $\mct'$.  To see this, for every $C\in \mct'$, let $succ_{\mct'}(C)$ be the collection of immediate successors of $C$ in $\mct'$. Since $C$ does not witness (1), there is a countable subset $\mct''$ of $\{C'\in \mct: C'\subset C\}$ such that $|A\cap C\setminus \bigcup \mct''|\leq \omega$. Then $succ_{\mct'}(C)$ is contained in elements and predecessors of elements in $\mct''$ at level $ht(C)+1$.
   
   Moreover, for every $C\in \mct'$, 
   \begin{enumerate}
   \item[(2)] $|A\cap C\setminus \bigcup succ_{\mct'}(C)|\leq \omega$.
   \end{enumerate}
   Note that $\mct'$ is countable since it is a countably branching tree  of height $\leq \omega$. So
   $$\bigcup\{A\cap C\setminus \bigcup succ_{\mct'}(C): C\in \mct'\}\text{  is countable}.$$   
   Now fix $\alpha\in A\setminus \bigcup\{A\cap C\setminus \bigcup succ_{\mct'}(C): C\in \mct'\}$.
   Then for every $C\in \mct'$, 
   \begin{enumerate}
   \item[(3)] if $\alpha\in C$, then $\alpha\in C'$ for some $C'\in succ_{\mct'}(C)$.
   \end{enumerate}
    So we inductively choose a chain 
    $$\omega_1=C_0\supset C_1\supset\cdot\cdot\cdot\text{ in $\mct'$ with  }\alpha\in \bigcap_{n<\omega} C_n.$$ But this contradicts Lemma \ref{lem4}. This contradiction shows the existence of some $C\in \mct$ satisfying (1) and finishes the proof of the lemma. 
  \end{proof}
  Above proof indicates that we need the full strength of Lemma \ref{lem4}. In other words, $\bigcap_{n<\omega} C_n$ being small (e.g., countable) for every chain $C_0\supset C_1\supset\cdot\cdot\cdot$ may be sufficient to control the height of $\mct$, but not strong enough for our purpose.
  \begin{lem}\label{lem10}
    Assume $\varphi_0(\pi, \mc{C}, \mc{R}, \mc{T}, \mathbf{E})$. Suppose $\msa\subset [\omega_1]^{N_\msa}$ is uncountable non-overlapping and
    \begin{itemize}
    \item $\{\msa_i: i<N_\msa\}$ is $\bfe$-invariant;
    \item  for every $i<N_\msa$ and every $A\subset \Phit(\msa_i)$ in $\mct$, $|\msa_i\cap A| \leq \omega$.
    \end{itemize}
     Then there is $\msb\in [\msa]^{\omega_1}$ that is a candidate for $\mc{C}$. 
      \end{lem}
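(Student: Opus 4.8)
The plan is to thin $\msa$ in two stages: first delete countably many of its blocks to secure condition (iv) of Definition \ref{candidate}, then run a recursion of length $\omega_1$ to secure condition (iii); the two hypotheses, which are precisely conditions (i) and (ii), will be inherited by every uncountable subfamily. I would fix the $<$-increasing enumeration $\msa=\{a_\gamma:\gamma<\omega_1\}$, so that $\msa_i(\gamma)=a_\gamma(i)$ for each $i<N_\msa$, and use repeatedly that, by the second hypothesis and Lemma \ref{lem7}(2), $\Phit$ takes the constant value $\Phit(\msa_i)$ on $[\msa_i]^{\omega_1}$; in particular condition (ii) passes automatically to any uncountable subfamily $\msb$, since then $\msb_i\subseteq\msa_i$ and $\Phit(\msb_i)=\Phit(\msa_i)$.

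For condition (iv), the key point is that only countably many nodes of $\mct$ are relevant. For fixed $j$ the set of $A\in\mct$ with $A\supseteq\Phit(\msa_j)$ is a $\supset$-chain running to the root, hence countable by (T1), and there are only $N_\msa<\omega$ choices of $j$; let $\mc{B}$ be the resulting countable collection. For $i<N_\msa$ and $A\in\mc{B}$: if $|\msa_i\cap A|=\omega_1$, then $\msa_i\subseteq\Phit(\msa_i)$ together with (T3) makes $A$ comparable with $\Phit(\msa_i)$, while the second hypothesis forbids $A\subset\Phit(\msa_i)$, so $\Phit(\msa_i)\subseteq A$ and $\msa_i\subseteq A$ and nothing need be done; if $|\msa_i\cap A|\le\omega$, then $\{\gamma<\omega_1:a_\gamma(i)\in A\}$ is countable, and I delete exactly these indices. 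Doing this simultaneously over the countably many pairs $(i,A)$ with $|\msa_i\cap A|\le\omega$ removes only countably many blocks, leaving an uncountable $\msb^1\subseteq\msa$ such that for every $i<N_\msa$ and $A\in\mc{B}$, either $\msb^1_i\subseteq A$ or $\msb^1_i\cap A=\emptyset$; this property survives further thinning.

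For condition (iii), I would build $\msb\subseteq\msb^1$ by a recursion of length $\omega_1$, keeping blocks $a_{\gamma_\xi}$ with $\gamma_\xi$ strictly increasing and chosen among the indices surviving in $\msb^1$, and requiring at stage $\xi$ that $a_{\gamma_\xi}(i)>\Phit(\msa_i)(\xi)$ for every $i<N_\msa$. This is always possible, since for each $i$ only countably many elements of $\msa_i$ lie below the countable ordinal $\Phit(\msa_i)(\xi)$, so the admissible values of $\gamma_\xi$ form a co-countable subset of $\omega_1$. For this $\msb$ we get $\msb_i(\xi)=a_{\gamma_\xi}(i)>\Phit(\msa_i)(\xi)=\Phit(\msb_i)(\xi)$, which is exactly (iii); and (iv) still holds, since for $A\supseteq\Phit(\msb_j)=\Phit(\msa_j)$ in $\mct$ we have $A\in\mc{B}$, and the first stage leaves two cases: either $\msb_i\cap A\subseteq\msb^1_i\cap A=\emptyset$, or $\msb_i\subseteq\Phit(\msa_i)\subseteq A$ with $\msb_i\subsetneq\Phit(\msa_i)$ by (iii), whence $\msb_i\subset A$.

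It remains to verify condition (i) for $\msb$, and this is the one step that is not sheer counting; I would run exactly the argument from the proof of Lemma \ref{lem8}(2). Writing $\msb=\{a_\gamma:\gamma\in S\}$ for the final uncountable $S$, so $\msb_i=\msa_i[S]$, suppose $\msb_j\,\bfe\,D$ with $D\neq\msb_j$. By Lemma \ref{lem6} applied to $\msb_j$, together with $\Phit(\msb_j)=\Phit(\msa_j)$, one gets $[\msb_j]_\bfe=\{\ms{D}_k[\Gamma']:k<N_{\ms{D}}\}$ for some $\ms{D}\in\mc{C}$ and $\Gamma'\in[\omega_1]^{\omega_1}$ with $\Phit(\msa_j)=\ms{D}_{k_0}$ and $\msb_j=\ms{D}_{k_0}[\Gamma']$; applying Lemma \ref{lem6} to $\msa_j$ with the same $\ms{D}$ gives $[\msa_j]_\bfe=\{\ms{D}_k[\Gamma]:k<N_{\ms{D}}\}$, where $\Gamma'$ is the restriction of $\Gamma$ along $S$. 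Since by the first hypothesis $[\msa_j]_\bfe\subseteq\{\msa_l:l<N_\msa\}$, each $\ms{D}_k[\Gamma]$ is some $\msa_{l_k}$, hence each $\ms{D}_k[\Gamma']$ is the corresponding $\msb_{l_k}$, giving $[\msb_j]_\bfe\subseteq\{\msb_i:i<N_\msa\}$. Thus $\msb$ is a candidate for $\mc{C}$. I expect this last transfer of $\bfe$-invariance to the subfamily to be the only genuinely delicate point; everything else reduces to the easy facts that only countably many nodes matter and that each $\msa_i$ is cofinal in $\omega_1$.
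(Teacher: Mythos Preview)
Your proof is correct and follows essentially the same approach as the paper, which likewise thins $\msa$ to secure (iii) and (iv) and then invokes Lemma~\ref{lem6} for (i). The paper merges your two stages into a single choice of $\Gamma$ (requiring $\alpha<\Gamma(\alpha)$, which yields (iii) via $\msa_i\subseteq\Phit(\msa_i)$, together with the (iv)-dichotomy), observing that your set $\mc{B}$ is in fact \emph{finite} by (T1); it then cites Lemma~\ref{lem6} for the transfer of $\bfe$-invariance that you spell out in detail.
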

  \begin{proof}
  First denote
    \begin{enumerate}
  \item  $\msa=\{a_\alpha: \alpha<\omega_1\}$ such that $a_\alpha< a_\beta$ whenever $\alpha<\beta$.
  \end{enumerate}
  
  Then find $\Gamma\in [\omega_1]^{\omega_1}$ such that
    \begin{enumerate}
    \item[(2)]  for every $\alpha<\omega_1$, $\alpha<\Gamma(\alpha)$;
  \item[(3)] for every $i, j<N_\msa$, and every $A\supseteq \Phit(\msa_j)$ in $\mct$,  either $\msa_i[\Gamma]\subset A$ or $\msa_i[\Gamma]\cap A=\emptyset$.
   \end{enumerate}
   Note that there are only finitely many $i, j, A$'s in (3) and (2), (3) are preserved by going to uncountable subset of $\Gamma$. So the existence of $\Gamma$ follows from a simple finite induction.
   
   Now take $\msb=\{a_\alpha: \alpha\in \Gamma\}$. By Lemma \ref{lem7} (2), for every $i<N_\msa$, 
   $$\Phit(\msb_i)=\Phit(\msa_i).$$ Consequently, Definition \ref{candidate} (ii) for $\msb$ holds.
   
   By Lemma \ref{lem6} and the fact that $\{\msa_i: i<N_\msa\}$ is $\bfe$-invariant, $\{\msb_i: i<N_\msa\}$ is $\bfe$-invariant. Definition \ref{candidate} (iii) for $\msb$ follows from (2).  Definition \ref{candidate} (iv) follows from (3) and the choice of $\msb$.
Now $\msb$ is as desired.
     \end{proof}
  
  Now we are ready to introduce the property that is preserved in the iteration process.
  \begin{defn}\label{varphi}
  Let $\varphi(\pi, \mc{C}, \mc{R}, \mc{T}, \mathbf{E})$ be the assertion that $\varphi_0(\pi, \mc{C}, \mc{R}, \mc{T}, \mathbf{E})$ together with the following statement hold.
  \begin{itemize}
  \item[(Res)] Suppose $\msa$ is a candidate for $\mc{C}$ and  $f$ is a function from $N_\msa\times N_\msa$ to 2. Then $f$ can be realized in $\msa$ if the following requirement is satisfied:
  \begin{itemize}
  \item[(i)] for every $\msb\in \mc{C}$ and  every $I, J$ in $[N_\msa]^{N_\msb}$  with $\{a[I], a[J]\}\subset \msb$ whenever $a\in \msa$,
  $f\up_{I\times J}$ is order isomorphic to some $h$ in $\mc{R}(\msb)$.
  \end{itemize}
  \end{itemize}
  Say $f$ is \emph{$\mc{R}$-satisfiable for $\msa$} if above condition (i) is satisfied.
  \end{defn}
   \textbf{Remark.} For $I\subseteq N_\msa$, there are at most finitely many $\msb$'s in $\mc{C}$ such that $a[I]\in\msb$ whenever $a\in \msa$ and they form a $\supseteq$-chain by (T3) and (C2). So only finitely many $\msb$'s are relevant for above condition (i).
  
    \textbf{Remark.} In above definition, $I$ and $J$ have one of the following relations: $I=J$, $\max I<\min J$, $\max J< \min I$. And if $f$ is $\mc{R}$-satisfiable for $\msa$, then $f$ is $\mc{R}$-satisfiable for $\msa'$ for every $\msa'\in[\msa]^{\omega_1}$.\medskip

  Recall that $\mc{R}(\msb)$ collects patterns that elements in $\msb$ follow. So it  is necessary to require $f$ to be $\mc{R}$-satisfiable for $\msa$. And roughly speaking, condition (Res) simply says that every assignment not contradicting requirements described by $\mc{R}$ can be realized.
  
  If our first ccc poset is $\pah$ as in Lemma \ref{lem1}, $\mc{C}^1=\{\bigcup G\}$, $\mc{R}^1(\bigcup G)=H$ where $G$ is $\pah$-generic and $\mct^1$, $\bfe^1$ are induced from $\mc{C}^1$, then Lemma \ref{lem1} states that (Res) in $\varphi(\pi, \mc{C}^1, \mc{R}^1, \mct^1, \bfe^1)$ holds.
  
  For some  $\mc{C}$-candidate  $\msa$ and some collection $H$ of $\mc{R}$-satisfiable for $\msa$ functions, forcing with $\pah$ will destroy $\varphi(\pi, \mc{C}, \mc{R}, \mct, \bfe)$ in general. But for some expanded $(\mc{C}', \mc{R}', \mct', \bfe')$, $\varphi(\pi, \mc{C}', \mc{R}', \mct', \bfe')$ remains true.
  First note   the following fact.
  \begin{lem}\label{lem11}
  Assume $\varphi(\pi, \mc{C}, \mc{R}, \mct, \bfe)$. Suppose $\msa$ is a candidate for $\mc{C}$ and $H$ is a non-empty collection of $\mc{R}$-satisfiable for $\msa$ functions. Then $\pah$ is ccc.
  \end{lem}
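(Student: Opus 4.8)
The plan is to show that an arbitrary uncountable subset of $\pah$ contains two compatible conditions by a standard $\Delta$-system argument, reducing the compatibility question to an instance of (Res) in $\varphi(\pi, \mc{C}, \mc{R}, \mct, \bfe)$. So let $\{p_\xi : \xi < \omega_1\}$ be an uncountable subset of $\pah$. Recall each $p_\xi$ is a finite subset of the candidate $\msa$, i.e., a finite non-overlapping family of $n$-element subsets of $\omega_1$ (where $n = N_\msa$) such that consecutive blocks are $H$-patterned. First I would thin out to an uncountable $\Gamma \subseteq \omega_1$ so that $\{p_\xi : \xi \in \Gamma\}$ forms a $\Delta$-system with root $r$, each $p_\xi \setminus r$ has a fixed size $k^*$, and — crucially — after collapsing each $p_\xi$ to the finite set of ordinals $c_\xi := \bigcup (p_\xi \setminus r)$, the family $\{c_\xi : \xi \in \Gamma\}$ is non-overlapping with a fixed order type $m^*$, and the positions of the blocks of $p_\xi \setminus r$ inside $c_\xi$ are a fixed tuple $\langle J_l \in [m^*]^n : l < k^* \rangle$ of pairwise disjoint sets independent of $\xi$. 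This is possible because $\msa$ is non-overlapping and there are only countably many possible finite "shapes".

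Next I would package $\{c_\xi : \xi \in \Gamma\}$ as a candidate. Passing to a further uncountable subset and invoking Lemma~\ref{lem10} (together with Lemma~\ref{lem8}(2) to stay inside a subfamily), I get that $\msc := \{c_\xi : \xi \in \Gamma\}$ — or an uncountable refinement of it — is itself a candidate for $\mc{C}$ of dimension $m^*$. Here one must check the hypotheses of Lemma~\ref{lem10}: that $\{\msc_i : i < m^*\}$ is $\bfe$-invariant and that each $\msc_i$ meets every proper $\mct$-predecessor of $\Phit(\msc_i)$ in a countable set. The point is that each coordinate $\msc_i$ is, up to the $\bfe$-closure, built from the coordinates of $\msa$, which already has these properties since $\msa$ is a candidate; a finite further thinning using Lemma~\ref{lem candidate}-style arguments or the construction inside Lemma~\ref{lem10} handles whatever coordinates are "new".

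Now I would cook up the target function $g : m^* \times m^* \to 2$. Because every $p_\xi$ extends $r$ and every pair of blocks within $p_\xi \setminus r$ (or between $p_\xi \setminus r$ and the fixed blocks of $r$, suitably handled) must be $H$-patterned, I define $g$ so that $g\up_{J_l \times J_{l'}}$ is order isomorphic to the relevant $h \in H$ for all $l, l' < k^*$, and I fill in the remaining entries of $g$ arbitrarily by $0$. The key claim is that $g$ is $\mc{R}$-satisfiable for $\msc$: given $\msb \in \mc{C}$ and $I, J \in [m^*]^{N_\msb}$ with $c[I], c[J] \in \msb$ for all $c \in \msc$, I must show $g\up_{I \times J}$ is order isomorphic to some element of $\mc{R}(\msb)$. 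Using Lemma~\ref{lem8}(4) and the fact that $H$ consists of $\mc{R}$-satisfiable-for-$\msa$ functions, each such $I$, $J$ either lands inside the block structure coming from $\msa$ (where $\mc{R}$-satisfiability of the members of $H$ for $\msa$, combined with (R2), gives the conclusion) or involves coordinates where the filled-in value is forced to agree with the unique pattern $\mc{R}(\msb)$ allows; the (R2)/(R3) coherence conditions are exactly what make this go through. Once $g$ is $\mc{R}$-satisfiable for $\msc$, (Res) gives $\alpha < \beta$ in $\Gamma$ with $\pi\up_{c_\alpha \times c_\beta}$ order isomorphic to $g$, and then $p_\alpha \cup p_\beta$ is a common extension of $p_\alpha$ and $p_\beta$ in $\pah$: consecutive blocks across $p_\alpha$ and $p_\beta$ realize the $H$-patterns by construction of $g$, and blocks within $p_\alpha$ or within $p_\beta$ were already fine.

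The main obstacle I anticipate is the bookkeeping in the $\Delta$-system reduction and, more seriously, verifying that $g$ (built by fixing the $H$-patterns on the "old" block positions and padding arbitrarily elsewhere) is genuinely $\mc{R}$-satisfiable for the candidate $\msc$: one has to rule out that some $\msb \in \mc{C}$ sees a pair of coordinate-columns of $\msc$ on which $g$ was filled in carelessly. The resolution should be that whenever $\msb$ constrains two coordinates $\msc_i, \msc_j$, by Lemma~\ref{lem8}(3)--(4) those coordinates lie inside blocks coming from $\msa$ (or from some earlier $\msa^i \in \mc{C}$), so $g$ on that pair is in fact a copy of an $H$-pattern or of a pattern already forced by $\mc{R}$ via (R2), not an arbitrary value — so "padding with $0$" only ever happens on pairs no $\msb$ cares about. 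Making that last point fully rigorous, using the finiteness remarks after Definition~\ref{varphi} and the chain structure from (T3)+(C2), is the delicate step.
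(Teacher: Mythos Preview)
Your approach is correct and essentially the same as the paper's, but you have overcomplicated it in two places, and the ``delicate step'' you flag at the end is in fact vacuous.

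First, since $c_\xi = \bigcup(p_\xi\setminus r)$ is a union of $k^*$ pairwise disjoint elements of $\msa$, the blocks $J_l$ \emph{partition} $m^* = k^* n$. There are no ``remaining entries'' of $g$ to pad with $0$: the function $g$ is entirely determined by choosing, for each pair $(l,l')$, some $h_{l,l'}\in H$ and setting $g\up_{J_l\times J_{l'}}$ order isomorphic to it. Second, every coordinate $\msc_i$ is a subset of some $\msa_j$ (namely $\msc_{J_l(j')}\subseteq\msa_{j'}$), so by Definition~\ref{candidate}(ii) for $\msa$ and Lemma~\ref{lem7}(2) we get $\Phit(\msc_{J_l(j')})=\Phit(\msa_{j'})$, and then all of Definition~\ref{candidate}(i)--(iv) for $\msc$ follow directly from those for $\msa$. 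No appeal to Lemma~\ref{lem10} (or Lemma~\ref{lem candidate}, which would introduce an unneeded $|\mct|\leq\omega_1$ hypothesis) is required: $\msc$ is a candidate as it stands.

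With these observations your worry dissolves. For $\msb\in\mc{C}$ and $I,J\in[m^*]^{N_\msb}$ with $\{c[I],c[J]\}\subset\msb$ for all $c\in\msc$, the set $\{\msc_i:i\in I\}$ lies in a single $\bfe$-class; since each $\{\msc_i:i\in J_l\}$ is already $\bfe$-invariant (inherited from $\msa$), we must have $I\subseteq J_l$ for a single $l$, and likewise $J\subseteq J_{l'}$. Then by Lemma~\ref{lem8}(4), $\{a[I'],a[J']\}\subset\msb$ for all $a\in\msa$ where $J_l[I']=I$, $J_{l'}[J']=J$, and $\mc{R}$-satisfiability of $h_{l,l'}$ for $\msa$ gives exactly what you need. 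This is precisely the paper's argument.
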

  \begin{proof}
  Fix $\{p_\alpha\in \pah: \alpha<\omega_1\}$. Going to an uncountable subset, assume $p_\alpha$'s form a $\Delta$-system. Since the root does not affect compatibility, we may assume that the root is $\emptyset$. 
  
  Choose $n$ and $\Gamma\in [\omega_1]^{\omega_1}$ such that 
  \begin{enumerate}
  \item $|p_\alpha|=n$ for every $\alpha\in \Gamma$;
  \item $\msb=\{b_\alpha: \alpha\in \Gamma\}$ is non-overlapping where $b_\alpha=\bigcup p_\alpha$.
  \end{enumerate}
  By Definition \ref{candidate} (ii) for $\msa$ and Lemma \ref{lem7} (2), 
  \begin{enumerate}
  \item[(3)] for $i<n$ and $j<N_\msa$, $\Phit(\msb_{iN_\msa+j})=\Phit(\msa_j)$.
  \end{enumerate}
  Together with Definition \ref{candidate} (i) for $\msa$, we conclude that $\{\msb_j: j<nN_\msa\}$ is $\bfe$-invariant. Also Definition \ref{candidate} (ii)-(iv) for $\msb$ follow from (3) and the fact that $\msb_{iN_\msa+j}\subseteq \msa_j$ whenever $i<n$ and $j<N_\msa$.
  
  So $\msb$ is a $\mc{C}$-candidate. Fix $h: nN_\msa\times nN_\msa \ra 2$ such that for every $i, j<n$,
  $$h\up_{[iN_\msa, (i+1)N_\msa)\times [jN_\msa, (j+1)N_\msa)} \text{ is order isomorphic to some } g\in H.$$
  
  \textbf{Claim.}   $h$ is $\mc{R}$-satisfiable for $\msb$. 
  \begin{proof}[Proof of Claim.]
  Fix $\ms{C}\in \mc{C}$ and $I, J$ in $[N_\msb]^{N_\ms{C}}$ such that $\{b[I], b[J]\}\subset \ms{C}$ whenever $b\in \msb$.   It suffices to prove that $h\up_{I\times J}$ is order isomorphic to some $f\in \mc{R}(\ms{C})$.
  
  Clearly, elements in $\{\msb_i: i\in I\}$ are pairwise $\bfe$-equivalent. Note also that  for every $i<n$,  $\{\msb_j: j\in [iN_\msa, (i+1)N_\msa)\}$ is $\bfe$-invariant. It follows that
   \begin{enumerate}
  \item[(4)] $I\subseteq [iN_\msa, (i+1)N_\msa)$ for some $i<n$ and similarly $J\subseteq [jN_\msa, (j+1)N_\msa)$ for some $j<n$.
  \end{enumerate}
 By Lemma \ref{lem8} (4), $\{a[\{k-iN_\msa: k\in I\}], a[\{k-jN_\msa: k\in J\}]\}\subset \ms{C}$ whenever $a\in \msa$.  
  Let $g\in H$ be order isomorphic to $h\up_{[iN_\msa, (i+1)N_\msa)\times [jN_\msa, (j+1)N_\msa)}$. Since $g$ is $\mc{R}$-satisfiable for $\msa$, $g\up_{\{k-iN_\msa: k\in I\}\times \{k-jN_\msa: k\in J\}}$ is order isomorphic to some $f\in \mc{R}(\ms{C})$. So $h\up_{I\times J}$ is order isomorphic to the same $f$.
  \end{proof}
   Now apply (Res) to get $\alpha<\beta$ in $\Gamma$ such that $h$ is order isomorphic to some $\pi\up_{b_\alpha\times b_\beta}$. By our choice of $h$, $p_\alpha$ is compatible with $p_\beta$ and hence  $\pah$ is ccc.
  \end{proof}
  
  Now we are ready to show the preservation of $\varphi$ by expanding $(\mc{C}, \mc{R}, \mct, \bfe)$.
\begin{lem}\label{lem pah}
Assume $\varphi(\pi, \mc{C}, \mc{R}, \mct, \bfe)$ and $|\mct|\leq \omega_1$. Suppose $\msa$ is a candidate for $\mc{C}$ and $H$ is a non-empty collection of $\mc{R}$-satisfiable for $\msa$ functions.  If $G$ is $\pah$-generic and uncountable, then  $\varphi(\pi, \mc{C}\cup\{\bigcup G\}, \mc{R}\cup\{(\bigcup G, H)\}, \mct', \bfe')$ holds in $V[G]$ where $\mct', \bfe'$ are induced from $\mc{C}\cup\{\bigcup G\}$.
\end{lem}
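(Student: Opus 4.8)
The plan is to verify the two clauses of $\varphi$ in turn for the expanded data, where $\mc{C}'=\mc{C}\cup\{\bigcup G\}$, $\mc{R}'=\mc{R}\cup\{(\bigcup G,H)\}$, and $\mct',\bfe'$ are induced. Write $\mathscr G=\bigcup G$; since $G$ is uncountable, $\mathscr G\in[\msa]^{\omega_1}$ is uncountable non-overlapping with $N_{\mathscr G}=N_\msa$, so by Lemma \ref{lem8}(2) it is again a candidate for $\mc{C}$ and by Lemma \ref{lem7}(2) $\Phit(\mathscr G_i)=\Phit(\msa_i)$ for each $i$; thus $\mct'=\mct\cup\{\mathscr G_i:i<N_\msa\}$. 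For $\varphi_0(\pi,\mc{C}',\mc{R}',\mct',\bfe')$ I would first check, using Definition \ref{candidate}(ii),(iii) for $\msa$ and the fact that the coordinates of a non-overlapping family are pairwise disjoint, that each $\mathscr G_i$ is an end node of $\mct'$ immediately below $\Phit(\msa_i)$, with no node of $\mct'$ strictly between and $\mathscr G_i\notin\mct$. Then (T1),(T2) are clear; (T3) for a pair $\mathscr G_i,B$ follows by casing on whether $B$ is comparable with $\Phit(\msa_i)$ (using Definition \ref{candidate}(ii) when $B\subset\Phit(\msa_i)$, and (T3) for $\mct$ otherwise); (T4) for $\mathscr G_i$ follows from Definition \ref{candidate}(iii) and (T4) for $\mct$; the only new pairs for (C1),(C2) involve $\mathscr G\subseteq\msa$ and reduce to the block decomposition of $\msa$ from Lemma \ref{lem8} (the subcase $\msc_j\subseteq\mathscr G_i$ for $\msc\in\mc{C}$ does not arise, by Definition \ref{candidate}(ii) for $\msa$). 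For the $\mc{R}$-part, (R1) is the hypothesis on $H$; (R3) for $\mathscr G$ holds since any $a<b$ in $\mathscr G$ lie in a common condition of the filter $G$, so $\pi\up_{a\times b}$ is order isomorphic to a member of $H$ by Definition \ref{defn2}; and (R2) for $(\mathscr G,\msc)$ with $\msc\in\mc{C}$ follows, via Lemma \ref{lem8}(4), from the $\mc{R}$-satisfiability of the members of $H$ for $\msa$ (Definition \ref{varphi}(i)). Finally $\bfe'$ is an equivalence relation by the earlier lemma stating that $\mathbf E$ is an equivalence relation, applied to the now-verified $\varphi_0(\pi,\mc{C}',\mc{R}',\mct')$, so $\varphi_0(\pi,\mc{C}',\mc{R}',\mct',\bfe')$ holds.

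For (Res), let $\msb$ be a candidate for $\mc{C}'$ in $V[G]$ and $f: N_\msb\times N_\msb\to 2$ be $\mc{R}'$-satisfiable for $\msb$; I must find $a<b$ in $\msb$ with $\pi\up_{a\times b}$ order isomorphic to $f$. The plan is to mirror the proof of Lemma \ref{lem1}, replacing its single appeal to $\mathrm{Pr}_0(\omega_1,2,\omega)$ by an appeal to (Res) of $\varphi(\pi,\mc{C},\mc{R},\mct,\bfe)$ in $V$. First I reduce $\msb$ to the hypothesis of Lemma \ref{lem1}: a coordinate $\msb_j$ satisfies $\Phi_{\mct'}(\msb_j)=\mathscr G_l$ iff $\msb_j\subseteq\mathscr G_l$, and for every coordinate with $\Phi_{\mct'}(\msb_j)\in\mct$ one has $|\msb_j\cap\mathscr G_l|\leq\omega$ for all $l$ (either $\mathscr G_l\subset\Phi_{\mct'}(\msb_j)$, and Definition \ref{candidate}(ii) for $\msb$ applies, or they are $\mct'$-incomparable, and (T3) applies). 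Passing to some $\msb'\in[\msb]^{\omega_1}$ — still a candidate for $\mc{C}'$ by Lemma \ref{lem8}(2), still $\mc{R}'$-satisfiable — and using Lemma \ref{lem8}(3),(4), I may assume there are pairwise disjoint $L_0,\dots,L_{k-1}\in[N_\msb]^{N_\msa}$ with $[b]^{N_\msa}\cap\mathscr G=\{b[L_i]:i<k\}$ and $b(j)\notin\bigcup\bigcup G$ whenever $b\in\msb'$ and $j\in N_\msb\setminus\bigcup_{i<k}L_i$ (disjointness of the $L_i$ using non-overlappingness of $\mathscr G$, as in Lemma \ref{lem1}). This is precisely the setting of Lemma \ref{lem1} for the generic family $\mathscr G=\bigcup G$.

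From here I run the proof of Lemma \ref{lem1} nearly verbatim: fix in $V$ a $\pah$-name $\dot{\msb}$ for $\msb'$ and $p\in G$ forcing the displayed properties; choose $p_\alpha\leq p$, $b_\alpha\in[\omega_1\setminus\alpha]^{N_\msb}$ with $p_\alpha\Vdash b_\alpha\in\dot{\msb}$ and $[b_\alpha]^{N_\msa}\cap p_\alpha=\{b_\alpha[L_i]:i<k\}$; thin to a $\Delta$-system so that $\msc=\{c_\alpha\}$ is non-overlapping, $c_\alpha=b_\alpha\cup\bigcup(p_\alpha\setminus r)$, with uniform index data $(m^*,J,\{J_i:i<k^*\})$ as in Lemma \ref{lem1}(2)--(4); choose $g: m^*\times m^*\to 2$ with $g\up_{J\times J}$ order isomorphic to $f$ and $g\up_{J_i\times J_j}$ order isomorphic to a member of $H$ for all $i,j<k^*$, possible by $\mc{R}'$-satisfiability of $f$ together with $\mc{R}'(\mathscr G)=H$, exactly as in steps (5)--(6) of Lemma \ref{lem1}. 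The genuinely new work is: (i) thin $\Gamma$ further, using $|\mct|\leq\omega_1$, Lemma \ref{lem candidate} and Lemma \ref{lem10}, so that $\msc$ is a candidate for $\mc{C}$, which uses that every coordinate of $\msc$ lies below a node of $\mct$ (the $b_\alpha$-coordinates because $\Phi_{\mct'}$ of a coordinate of $\msb$ is either in $\mct$ or is some $\mathscr G_l$, whose $\mct$-value is $\Phit(\msa_l)$; the $p_\alpha$-coordinates because they lie inside the $\msa_i$); and (ii) check that $g$ is $\mc{R}$-satisfiable for $\msc$, where any $\msd\in\mc{C}$ and $I',J'\in[m^*]^{N_\msd}$ with $\{c[I'],c[J']\}\subset\msd$ for all $c\in\msc$ have $I'$ and $J'$ each inside a single block of $\msc$ (by Lemma \ref{lem8}(4) and the block structure), so the constraint on $g\up_{I'\times J'}$ reduces either to $\mc{R}$-satisfiability of a member of $H$ for $\msa$ (a $J_i$-block) or to $\mc{R}'$-satisfiability of $f$ for $\msb$ (the $J$-block, where $\Phi_{\mct'}$ and $\Phi_\mct$ agree on the relevant coordinates). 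Granting (i),(ii), (Res) of $\varphi(\pi,\mc{C},\mc{R},\mct,\bfe)$ in $V$ yields $\alpha<\beta$ with $\pi\up_{c_\alpha\times c_\beta}$ order isomorphic to $g$; then $p_\alpha\cup p_\beta$ is a condition extending both which forces $b_\alpha,b_\beta$ to witness that $f$ is realized in $\dot{\msb}$, and a density argument concludes. I expect (i) and (ii) to be the main obstacle: tracking the $\Phi_{\mct'}$-values and $\bfe'$-classes of the coordinates of the composite family $\{b_\alpha\cup\bigcup(p_\alpha\setminus r)\}$ precisely enough that Lemmas \ref{lem candidate} and \ref{lem10} apply, and matching the sub-tuples of $c_\alpha$ lying in elements of $\mc{C}$ with the $H$- and $f$-patterns; the remainder is routine $\varphi_0$-bookkeeping or a transcription of Lemma \ref{lem1}.
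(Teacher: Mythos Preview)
Your outline of the $\varphi_0$-verification is essentially the same as the paper's, and your overall plan for (Res)---reflect $\msb$ and $f$ down to a ground-model family $\msc=\{c_\alpha\}$, thin to a $\mc{C}$-candidate via Lemmas \ref{lem candidate} and \ref{lem10}, build a suitable $g$, and apply (Res) in $V$---is also the paper's. The difference, and it is a genuine gap, is in the construction and verification of $g$.

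You specify $g$ only on $J\times J$ (by $f$) and on $J_i\times J_j$ (by members of $H$), following Lemma \ref{lem1} verbatim. That suffices in Lemma \ref{lem1} because the final step there appeals to $\mathrm{Pr}_0$, which imposes no constraint on the remaining entries of $g$. Here you must instead apply (Res), so $g$ must be $\mc{R}$-satisfiable for $\msc$, and the ``mixed'' entries can be constrained. Concretely: take $\msd\in\mc{C}$ and $I',J'\in[m^*]^{N_\msd}$ with $c_\alpha[I'],c_\alpha[J']\in\msd$ for all $\alpha$, where $I'\subseteq J$ consists of $b_\alpha$-coordinates \emph{not} lying in any $J_i$ (i.e., coordinates of $\msb$ outside $\bigcup\bigcup G$), while $J'\subseteq J_j$ for some $J_j$ disjoint from $J$. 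Such pairs do occur---e.g., when some $\msd\in\mc{C}$ has both a coordinate containing $\Phit(\msa_l)$ and a coordinate containing some $\Phi_{\mct'}(\msb_{l'})\in\mct$---and then $I'\times J'$ meets neither $J\times J$ nor any $J_i\times J_j$, so your $g$ is unspecified there; neither the $\mc{R}'$-satisfiability of $f$ (which only sees subsets of $J$) nor the $\mc{R}$-satisfiability of members of $H$ (which only sees subsets of the $J_i$) controls $g\up_{I'\times J'}$. Your clause (ii) thus does not go through.

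The paper's remedy is a single additional device: fix once and for all some $c<c'$ in the thinned family $\msc$ and set $g(i,j)=\pi(c(i),c'(j))$ on every pair not already determined by $f$ or by $H$. Then in the mixed case one checks (using that $K'$ is disjoint from every $L'$ with $c_\alpha[L']\in p_\alpha$, which follows from the candidate structure of $\msa$) that the entire rectangle $I'\times J'$ falls under this default clause, whence $g\up_{I'\times J'}$ is order isomorphic to $\pi\up_{c[I']\times c'[J']}$, and (R3) for $\msd$ gives the required membership in $\mc{R}(\msd)$. One must also check that this default is compatible with the $f$- and $H$-clauses where they overlap; the paper does this as well. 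With this device added, your argument becomes the paper's.
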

\begin{proof}
Denote $\mc{C}'=\mc{C}\cup\{\bigcup G\}$ and $\mc{R}'=\mc{R}\cup\{(\bigcup G, H)\}$.

Fix $k$, a sequence $\langle \msa^i\in \mc{C}: i<k\rangle$  and a partition $N_\msa=I\cup \bigcup_{i<k} I_i$ as in Lemma \ref{lem8} (3).

By Lemma \ref{lem8} (2) and the fact $\bigcup G\subseteq \msa$, $\bigcup G$ is a candidate for $\mc{C}$.

We first check $\varphi_0(\pi, \mc{C}\cup\{\bigcup G\}, \mc{R}\cup\{(\bigcup G, H)\}, \mct', \bfe')$.

(C1) is automatically satisfied. 

(C2). Fix $\ms{C}\neq \msb$ in $ \mc{C}\cup\{\bigcup G\}$ with $\msb_i\subseteq \ms{C}_j$ for some $i,j$. The only non-trivial case is $\msb=\bigcup G$ and $\msb_i\subset \ms{C}_j$. Then $\ms{C}_j\supset \msa_{i}$.  
Since $\{\msa_{i'}: i'<N_\msa\}$ is $\bfe$-invariant,
\begin{enumerate}
\item for every $j'<N_\ms{C}$, there is $i'<N_\msa$ such that $\ms{C}_{j'}\supset \msa_{i'}$.
\end{enumerate}

Together with  Definition \ref{candidate} (iv), we observe the following.
\begin{enumerate}
\item[(2)] If $b(j'')\in \ms{C}_{j'}$ for some $j'<N_\ms{C}, j''<N_\msa$ and $b\in \msa$, then $\msa_{j''}\subset \ms{C}_{j'}$  and hence  $\ms{C}_{j'}\supseteq \Phit(\msa_{j''})$ by definition of $\Phit$.
\end{enumerate}

Consequently, for $j''\in I$ and $b\in \msb$, 
$$b(j'')\notin \bigcup \ms{C}.$$

Now fix $i'<k$. Then we discuss according to the following two cases.
\begin{itemize}
\item[(i)] For some $j'<N_\ms{C}$ and $j''< |I_{i'}|$, $\ms{C}_{j'}\supseteq \Phit(\msa_{I_{i'}(j'')}=\msa^{i'}_{j''}$.
\item[(ii)] Above case does not occur.
\end{itemize}

If case (i) occurs, then by (C2) for $\mc{C}$, there are $k'$, $\{J_l\in [|I_{i'}|]^{N_\ms{C}}: l<k'\}$ such that $[a]^{N_\ms{C}}\cap \ms{C}=\{a[J_l]: l<k'\}$ and $a(j')\notin \bigcup \ms{C}$ whenever $a\in \msa^{i'}$ and $j'\in |I_{i'}|\setminus \bigcup_{l<k'} J_l$. In particular,  for every $b\in \msb$,
$$\text{for every } l<k' \text{ and }j'\in I_{i'}\setminus \bigcup_{m<k'} I_{i'}[J_m], \  b[I_{i'}[J_l]]\in \ms{C}\text{ and }b(j')\notin \bigcup \ms{C}.$$

If case (ii) occurs, 
then by (2),
$$\text{for every $b\in \msa\supset\msb$ and $j''\in I_{i'}$}, \ b(j'')\notin \bigcup \ms{C}.$$

Now it is clear that (C2) holds for $\mc{C}\cup \{\bigcup G\}$.
 
 (R1) for $\mc{R}\cup\{(\bigcup G, H)\}$ is automatically satisfied.
 
 (R2). Fix $\ms{C}\neq \msb$ in $ \mc{C}\cup\{\bigcup G\}$ with $\msb_i\subset \ms{C}_j$ for some $i,j$. The only non-trivial case is $\msb=\bigcup G$. Fix $J ,J'$ in $[N_\msb]^{N_\ms{C}}$ such that $\{b[J], b[J']\}\subset \ms{C}$ for every  $b\in \msb$ and $h\in H$. By Lemma \ref{lem8} (4) (under assumption $\varphi_0(\pi, \mc{C}, \mc{R}, \mct,\bfe)$), $\{b[J], b[J']\}\subset \ms{C}$ for every  $b\in \msa$. Since $h$ is $\mc{R}$-satisfiable for $\msa$, $h\up_{J\times J'}$ is order isomorphic to some $h'\in \mc{R}(\ms{C})$.
  
 (R3) follows from the forcing condition.
 
 (T1)-(T2) are automatically satisfied.
 
 (T3)-(T4) for $\mc{T}'$ follow from (T3)-(T4) for $\mct$ and the fact that $\bigcup G$ is a candidate for $\mc{C}$.
 
 (E1) is automatically satisfied.
 
 Then we check condition (Res). Suppose towards a contradiction that in $V[G]$, for a $\mc{C}'$-candidate    $\msb$ and a $\mc{R}'$-satisfiable for $\msb$ function $f: n\times n\ra 2$ where $n=N_\msb$, $f$ cannot be realized in $\msb$. Since we have proved $\varphi_0(\pi, \mc{C}', \mc{R}', \mct', \bfe')$, by Lemma \ref{lem8}, find $m<\omega$, a sequence $\langle \msb^i\in \mc{C}': i<m\rangle$ and a partition $n=J\cup \bigcup_{i<m} J_i$ such that
 \begin{enumerate}
    \item[(3)] $\Phi_{\mct'}(\msb_j)=\omega_1$ for $j\in J$;
    \item[(4)]  $\Phi_{\mct'}(\msb_{J_i(j)})=\msb^i_j$ for $i<m$ and  $j<|J_i|=N_{\msb^i}$;
    \item[(5)]  $a[J_i]\in \msb^i$ for $a\in \msb$ and $i<m$.
    \end{enumerate}

 We now work in $V$. First  let $p\in G$ force these properties for $\pah$-names $\dot{\msb}$ and $\dot{\msb}^i$'s such that each $\dot{\msb}^i$ is either a canonical name of some $\msb^i\in \mc{C}$ or the name of $\bigcup G$ and
 \begin{enumerate}
 \item[(6)] $p\Vdash $  $f$ can not be realized in $\dot{\msb}$.
 \end{enumerate}
 For every $\alpha<\omega_1$, find $p_\alpha\leq p$ and $b_\alpha\in [\omega_1\setminus \alpha]^n$ such that
 \begin{enumerate}
 \item[(7)] $p_\alpha\Vdash b_\alpha\in \dot{\msb}$. Extending $p_\alpha$ if necessary, assume $b_\alpha[J_i]\in p_\alpha$ if $\dot{\msb}^i$ is the name of $\bigcup G$.
 \end{enumerate}
 Find $\Gamma\in [\omega_1]^{\omega_1}$ such that
  \begin{enumerate}
 \item[(8)] $\{p_\alpha: \alpha\in \Gamma\}$ forms a $\Delta$-system with root $p'$;
 \item[(9)] $\ms{C}=\{c_\alpha: \alpha\in \Gamma\}$ is non-overlapping where $c_\alpha=b_\alpha\cup\bigcup (p_\alpha\setminus p')$ and the structures $(c_\alpha, b_\alpha, \bigcup (p_\alpha\setminus p'), \in)$'s are isomorphic for $\alpha\in \Gamma$.
 \end{enumerate}
 Now inductively apply Lemma \ref{lem candidate} $N_\ms{C}$ times to find $\Gamma'\in [\Gamma]^{\omega_1}$ such that for $\ms{C}'=\{c_\alpha: \alpha\in\Gamma'\}$, 
  \begin{enumerate}
 \item[(10)] for every $i<N_\ms{C}$ and every $C\subset \Phit(\ms{C}'_i)$ in $\mct$, $|\ms{C}'_i\cap C|\leq \omega$.
 \end{enumerate}
 Note that if (10) holds for $i$, then for every $C'\in [\ms{C}'_i]^{\omega_1}$, $\Phit(C')=\Phit(\ms{C}'_i)$ which in turn implies that (10) holds with $\ms{C}'_i$ replaced by $C'$. So the $i$th step in the inductive application of Lemma \ref{lem candidate} preserves (10) for every $i'<i$.
 
 Denote $\ms{D}=\{b_\alpha: \alpha\in \Gamma'\}$. \medskip
 
 \textbf{Claim 1.}  For every $i<m$ such that $\dot{\msb}^i$ is the canonical name of some $\msb^i\in \mc{C}$,
 \begin{itemize}
 \item[(i)] for every $\alpha\in \Gamma'$, $b_\alpha[J_i]\in \msb^i$;
 \item[(ii)] for every $j< |J_i|$, $\Phit(\ms{D}_{J_i(j)})=\msb^i_j$;
 \item[(iii)] $\{\{b_\alpha(j): \alpha\in \Gamma'\}: j\in J_i\}$ is $\bfe$-invariant.
 \end{itemize} 
 \begin{proof}[Proof of Claim 1.]
 Note that (i) follows from (7) and (iii) follows from (i)-(ii). To prove (ii), choose a $\pah$-generic filter $G'$ such that 
 $$\Sigma=\{\alpha\in \Gamma': p_\alpha\in G'\} \text{ is uncountable}.$$
 Then $\{b_\alpha: \alpha\in \Sigma\}\subseteq \dot{\msb}^{G'}$ where $\dot{\msb}^{G'}$ is the interpretation of $\dot{\msb}$ by $G'$.
 
Fix $j<|J_i|$. Clearly, $p\in G'$.  Then 
$$\Phi_{\dot{\mct'}^{G'}}(\dot{\msb}^{G'}_{J_i(j)})=\msb^i_j.$$ 
Since $\dot{\msb}^{G'}$ is a candidate for   $\dot{\mc{C}'}^{G'}$, by Definition \ref{candidate} (ii) and Lemma \ref{lem7} (2), 
$$\Phi_{\dot{\mct'}^{G'}}(\{b_\alpha({J_i(j)}): \alpha\in \Sigma\})=\msb^i_j.$$
Then by definition of $\Phit$ and $\Phi_{\dot{\mct'}^{G'}}$,  
$$\Phit(\{b_\alpha({J_i(j)}): \alpha\in \Sigma\})=\msb^i_j.$$
 By (10) and Lemma \ref{lem7} (2), $\Phit(\ms{D}_{J_i(j)})=\Phit(\{b_\alpha({J_i(j)}): \alpha\in \Sigma\})=\msb^i_j$.
 \end{proof}

 The same argument shows the following.
   \begin{enumerate}
 \item[(11)]  For $j\in J$, $\Phit(\ms{D}_j)=\omega_1$.  
 \end{enumerate}
 
 Now arbitrarily choose $J'\in [N_\ms{C}]^{N_\msa}$ such that $c_\alpha[J']\in p_\alpha$ for some $\alpha\in \Gamma'$.  
 Denote $\ms{D}'=\{c_\alpha[J']: \alpha\in \Gamma'\}\subseteq \msa$.
 
 Recall that $k$, $\langle \msa^i: i<k\rangle$ and $N_\msa=I\cup\bigcup_{i<k} I_i$ are guaranteed by Lemma \ref{lem8} (3) (under assumption $\varphi_0(\pi, \mc{C}, \mc{R}, \mct,\bfe)$). Consequently,
 \begin{enumerate}
 \item[(12)]  $\Phit(\ms{D}'_j)=\omega_1$ for $j\in I$;
 \item[(13)]   $\Phit(\ms{D}'_{I_i(j)})=\msa^i_j$ for $i<k$ and $j<|I_i|$, and $\{\{d(j): d\in \ms{D}'\}: j\in I_i\}$ is $\bfe$-invariant.
  \end{enumerate}
  
  Now by Claim 1 and (11)-(13), $\{\ms{C}'_i: i<N_\ms{C}\}$ is $\bfe$-invariant. Together with (10), we apply Lemma \ref{lem10} to obtain a $\mc{C}$-candidate $\ms{C}''\in [\ms{C}']^{\omega_1}$. Assume
  $$\ms{C}''=\{c_\alpha: \alpha\in \Gamma''\} \text{ for some } \Gamma''\subseteq \Gamma'.$$
  
  We are going to find a $\mc{R}$-satisfiable for $\ms{C}''$ function $g: N_\ms{C}\times N_\ms{C}\ra 2$  to realize $f$. We will need the following fact.\medskip
  
    \textbf{Claim 2.}   If $a\cap b_\xi\neq\emptyset$ for some $a\in p_\xi$ and $\xi\in \Gamma'$, then $a\subseteq b_\xi$.
 \begin{proof}[Proof of Claim 2.]
 Choose $G'$ and $\Sigma$ as in Claim 1.

 Let $K$ be such that $a=c_\xi[K]$. Note that by (9), $c_\alpha[K]\in p_\alpha$  for all $\alpha\in \Gamma'$.
Then 
$$\{c_\alpha[K]: \alpha\in \Sigma\}\subseteq \bigcup G'.$$ 
Let $i$ and $i'$ be such that $a(i)=b_\xi(i')$. Then $\dot{\msb}^{G'}_{i'}\cap (\bigcup G')_{i}$ is uncountable. By Definition \ref{candidate} (ii) for $\dot{\msb}^{G'}$, $\dot{\msb}^{G'}_{i'}\subset (\bigcup G')_{i}$.

  Note that $\{\dot{\msb}^{G'}_i: i<n\}$ is $\dot{\bfe'}^{G'}$-invariant. Since $c_\alpha[K]\in \bigcup G'$,  $b_\alpha\in \dot{\msb}^{G'}$ and $c_\alpha(K(i))=b_\alpha(i')$ for $\alpha\in \Sigma$,  $c_\alpha[K]\subseteq b_\alpha$ whenever $\alpha\in \Sigma$.  By (9), $a\subseteq b_\xi$.
 \end{proof}
 
 \textbf{Claim 3.} There exists $g: N_\ms{C}\times N_\ms{C}\ra 2$ such that 
 \begin{itemize}
 \item[(14)] $g$ is $\mc{R}$-satisfiable for $\ms{C}''$; 
 \item[(15)] $g\up_{K\times K}$ is order isomorphic to $f$ where $c_\alpha[K]=b_\alpha$ for   $\alpha\in \Gamma''$;
 \item[(16)] If $c_\alpha[K']\in p_\alpha$ and $c_\alpha[K'']\in p_\alpha$ for some $K', K''\subseteq N_\ms{C}$ and $\alpha\in \Gamma''$, then $g\up_{K'\times K''}$ is order isomorphic to some $h\in H$.
 \end{itemize}
 
 \begin{proof}[Proof of Claim 3.]
 Choose $c<c'$ in $\ms{C}''$.  Now choose $g: N_\ms{C}\times N_\ms{C}\ra 2$ to satisfy (15)-(16) above and the following requirement.
 \begin{itemize} 
 \item[(17)] For $(i, j)$ on which $g$ is not defined according to (15) or (16), define $g(i, j)=\pi(c(i), c'(j))$.
 \end{itemize}
 We first check that (15) and (16) can be satisfied simultaneously and hence $g$ is well-defined. Recall by Claim 2,   $K'$ (or $K''$) is either contained in $K$ or disjoint from $K$.

  Now if $K'\cap K=\emptyset$ or $K\cap K''=\emptyset$, then (15) and (16) for $K', K''$ can be satisfied simultaneously. So suppose $K'\subseteq K$ and $K''\subseteq K$. We need   $g\up_{K'\times K''}$ to be order isomorphic to $f\up_{L'\times L''}$ and order isomorphic to some $h\in H$  where $K[L']=K'$ and $K[L'']=K''$.  It suffices to show that $f\up_{L'\times L''}$ is order isomorphic to some $h\in H$. Recall that    
  $$p_\alpha\Vdash \{b_\alpha[L'], b_\alpha[L'']\}=\{c_\alpha[K'], c_\alpha[K'']\}\subset \bigcup \dot{G } \text{ and } b_\alpha\in \dot{\msb}.$$
 Then by Lemma \ref{lem8} (4), some $p'\leq p$ forces that $\{b[L'], b[L'']\}\subset \bigcup \dot{G}$ whenever $b\in \dot{\msb}$.
  Together with the fact that $f$ is forced to be $\dot{\mc{R}'}$-satisfiable for $\dot{\msb}$, we conclude that $f\up_{L'\times L''}$ is order isomorphic to some $h\in \dot{\mc{R}}'(\bigcup \dot{G})=H$. So $g$ is well-defined.\medskip
  
  Then we check that $g$ is $\mc{R}$-satisfiable for $\ms{C}''$. Now fix $\msb'\in \mc{C}$ and $K', K''$ in $[N_{\ms{C}''}]^{N_{\msb'}}$ with $\{c_\alpha[K'], c_\alpha[K'']\}\subset \msb'$ whenever $\alpha\in \Gamma''$. We want $g\up_{K'\times K''}$ to be order isomorphic to some $h\in \mc{R}(\msb')$.
  
  Note that for $i, j\in K'$, $\{c_\alpha(i): c_\alpha\in \ms{C}''\}\bfe \{c_\alpha(j): c_\alpha\in \ms{C}''\}$. Together with Claim 1, Claim 2 and (10)-(13), we conclude
  \begin{itemize}
  \item[1)] either $K'\subseteq K$;
  \item[2)] or for some $L'$ disjoint from $K$, $K'\subseteq L'$ and $c_\alpha[L']\in p_\alpha$.
  \end{itemize}
  The same conclusion holds for $K''$.
  
  If case 1) occurs to both $K'$ and $K''$, then by (15) and the fact that $f$ is forced to be $\dot{\mc{R}'}$-satisfiable for $\dot{\msb}$, $g\up_{K'\times K''}$ is order isomorphic to some $h\in \mc{R}(\msb')$.
  
  If case 2) occurs to both $K'$ and $K''$, then by (16) and the fact that every $h\in H$ is $\mc{R}$-satisfiable for $\msa$, $g\up_{K'\times K''}$ is order isomorphic to some $h'\in \mc{R}(\msb')$.
  
  Now, by symmetry, assume that $K'\subseteq K$ and for some $L''$ disjoint from $K$, $K''\subseteq L''$ and $c_\alpha[L'']\in p_\alpha$. We may assume that for no $L'\supseteq K'$, $c_\alpha[L']\in p_\alpha$ since otherwise we can simply apply (16) as above. This, together with the fact that $\msa$ is a $\mc{C}$-candidate, shows that for every $L'$ with $c_\alpha[L']\in p_\alpha$, $K'\cap L'=\emptyset$.  
  Then by  (17), 
  $$g\up_{K'\times K''}\text{ is order isomorphic to } \pi\up_{c[K']\times c'[K'']}.$$
  Together with (R3) and the fact that $\{c[K'], c'[K'']\}\subset \msb'$,  we conclude that $g\up_{K'\times K''}$ is order isomorphic to some $h\in \mc{R}(\msb')$.
  
  This shows that $g$ is $\mc{R}$-satisfiable for $\ms{C}''$ and finishes the proof of the claim.
 \end{proof}
  Applying condition (Res) in $\varphi(\pi, \mc{C}, \mc{R}, \mct,\bfe)$ to $\ms{C}''$ and $g$, we get $\alpha<\beta$ in $\Gamma''$ such that $\pi\up_{c_\alpha\times c_\beta}$ is order isomorphic to $g$.
  
  By (16), $p_\alpha\cup p_\beta$ is a condition and stronger than both $p_\alpha$ and $p_\beta$. Just note that the root $p'$ does not affect compatibility.  By (15), $\pi\up_{b_\alpha\times b_\beta}$ is order isomorphic to $f$. So
  $$p_\alpha\cup p_\beta\Vdash f \text{ is realized in } \dot{\msb} \text{ witnessed by } b_\alpha \text{ and } b_\beta.$$
  But this contradicts (6). This finishes the proof of the lemma.
\end{proof}  
  \bigskip
  
  In the rest of the section, we describe the finite support iteration of ccc posets $\langle\mc{P}_\alpha, \dot{\mc{Q}}_\beta: \alpha\leq\omega_2, \beta<\omega_2\rangle$ such that
  \begin{enumerate}
  \item $\varphi(\pi, \mc{C}^\alpha, \mc{R}^\alpha, \mct^\alpha, \bfe^\alpha)$ holds in $V^{\mc{P}_\alpha}$;
  \item for $\alpha<\beta$, $\mc{C}^\alpha\subseteq \mc{C}^\beta$ and $\mc{R}^\beta$ extends $\mc{R}^\alpha$ as functions;
  \item if $\alpha$ is a limit ordinal, then $\mc{C}^\alpha=\bigcup_{\beta<\alpha} \mc{C}^\beta$ and $\mc{R}^\alpha=\bigcup_{\beta<\alpha} \mc{R}^\beta$.
  \end{enumerate} 
  
We first prove that $\varphi$ is preserved at limit stages.
\begin{lem}\label{limit stage}
Suppose $\langle\mc{P}_\alpha, \dot{\mc{Q}}_\beta: \alpha\leq\nu, \beta<\nu\rangle$ is a finite support iteration of ccc posets such that 
\begin{itemize}
\item[(i)] for $\alpha<\nu$, $|\mct^\alpha|\leq \omega_1$ and $\varphi(\pi, \mc{C}^\alpha, \mc{R}^\alpha, \mct^\alpha, \bfe^\alpha)$ holds in $V^{\mc{P}_\alpha}$;
\item[(ii)] for $\alpha<\beta$, $\mc{C}^\alpha\subseteq \mc{C}^\beta$ and $\mc{R}^\beta$ extends $\mc{R}^\alpha$ as functions;
  \item[(iii)] $\nu$ is a limit ordinal,  $\mc{C}^\nu=\bigcup_{\beta<\nu} \mc{C}^\beta$, $\mc{R}^\nu=\bigcup_{\beta<\nu} \mc{R}^\beta$.
\end{itemize} 
Then $\varphi(\pi, \mc{C}^ \nu, \mc{R}^ \nu, \mct^ \nu, \bfe^ \nu)$ holds in $V^{\mc{P}_ \nu}$.
\end{lem}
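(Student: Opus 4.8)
The plan is to split the verification of $\varphi(\pi,\mc{C}^\nu,\mc{R}^\nu,\mct^\nu,\bfe^\nu)$ in $V^{\mc{P}_\nu}$ into the skeleton $\varphi_0$, which is handled by absoluteness from bounded stages, and the clause (Res), which requires a $\Delta$-system reflection modelled on the proof of Lemma~\ref{lem pah}. For $\varphi_0$: every instance of (C1), (C2), (R1), (R2), (R3) mentions at most two members of $\mc{C}^\nu=\bigcup_{\beta<\nu}\mc{C}^\beta$, hence lies in some $\mc{C}^\beta$ with $\beta<\nu$, and the relevant assertions (``uncountable'', ``non-overlapping'', order-isomorphism of restrictions of $\pi$, the structural conclusion of (C2)) are absolute between $V^{\mc{P}_\beta}$ and $V^{\mc{P}_\nu}$ because $\mc{P}_\nu$ is ccc; that $\mc{R}^\nu$ is a function with domain $\mc{C}^\nu$ is immediate from (ii)--(iii). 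Since $\mct^\beta\subseteq\mct^\nu$ and the columns added at a successor step enter $\mct^{\beta+1}$ as \emph{end} nodes (part of what Lemma~\ref{lem pah} establishes), no node of $\mct^\nu$ gains a new $\supset$-predecessor after the stage at which it appears; so (T2) is the definition of $\mct^\nu$, (T3)--(T4) are again local and absolute, and (T1) follows — tree-ness from (T3), and $ht(\mct^\nu)\le\omega$ by the descending-ordinal argument of Lemma~\ref{lem4} via (T4). Finally $\bfe^\nu$ is defined by (E1), and it is an equivalence relation by the argument recorded just after Definition~\ref{cter}, applied to $\varphi_0(\pi,\mc{C}^\nu,\mc{R}^\nu,\mct^\nu)$.

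For (Res), suppose toward a contradiction that in $V^{\mc{P}_\nu}$ there are a $\mc{C}^\nu$-candidate $\msa$ and an $\mc{R}^\nu$-satisfiable $f:N_\msa\times N_\msa\ra 2$ with $f$ not realized in $\msa$; fix a $\mc{P}_\nu$-name $\dot\msa$ and $p^*\in\mc{P}_\nu$ forcing this, together with (forced) the partition $N_\msa=I\cup\bigcup_{l<k}I_l$ and the $\msa^l\in\mc{C}^\nu$ given by Lemma~\ref{lem8}(3). A direct check, using $\mct^\gamma\subseteq\mct^\nu$, (T3), and the minimality of $\Phi_{\mct^\gamma}$, shows that a $\mc{C}^\nu$-candidate is in fact a $\mc{C}^\gamma$-candidate for every $\gamma<\nu$, and similarly that $\mc{R}^\nu$-satisfiability descends to $\mc{R}^\gamma$-satisfiability; moreover $\mc{R}^\gamma$-satisfiability is \emph{inherited} by $\mc{R}^{\gamma'}$ for $\gamma\le\gamma'$, since for an uncountable ``ground'' set $X$ no condition of $\mc{P}_{\msa_\delta,H_\delta}$ can force $X\subseteq\bigcup\dot G$, so passing to $\mc{C}^{\delta+1}$ creates no new satisfiability constraints. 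Now for each $\xi<\omega_1$ choose $p_\xi\le p^*$ and $a_\xi\in[\omega_1\setminus\xi]^{N_\msa}$ with $p_\xi\Vdash a_\xi\in\dot\msa$ and $p_\xi\Vdash a_\xi[I_l]\in\msa^l$; refine $p_\xi$ so that it decides, for each $\gamma\in\mathrm{supp}(p_\xi)$, the value $s_\xi^\gamma=p_\xi(\gamma)$ (a concrete finite subset of $[\omega_1]^{<\omega}$) and the concrete finite value $H_\gamma$ of the finite name $\dot H_\gamma$. Passing to an uncountable $\Gamma$, arrange that the supports $\mathrm{supp}(p_\xi)$ form a $\Delta$-system with finite root $S$, that the decided values $H_\gamma$ ($\gamma\in S$) are uniform in $\xi$, that the configurations $\big(a_\xi,(s_\xi^\gamma)_{\gamma\in S}\big)$ are pairwise order isomorphic, and that the finite sets $c_\xi:=a_\xi\cup\bigcup_{\gamma\in S}s_\xi^\gamma$ are non-overlapping. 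Put $\beta:=\big(\sup S\cup\{\text{stages of the }\msa^l\}\big)+1<\nu$; then all the $\msa^l$ and the posets $\dot{\mc{Q}}_\gamma$ ($\gamma\in S$) are determined below $\mc{P}_\beta$, the family $\ms{C}:=\{c_\xi:\xi\in\Gamma\}$ is definable from ground data hence belongs to $V^{\mc{P}_\beta}$, and both $\varphi(\pi,\mc{C}^\beta,\mc{R}^\beta,\mct^\beta,\bfe^\beta)$ and $|\mct^\beta|\le\omega_1$ hold there.

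Working in $V^{\mc{P}_\beta}$, the rest is a transcription of Claims~1--3 of the proof of Lemma~\ref{lem pah}, with the tail iteration $\mc{P}_\nu/\mc{P}_\beta$ in place of $\pah$. First, shrink $\Gamma$ (using Lemma~\ref{lem candidate}, whence $|\mct^\beta|\le\omega_1$ is needed) and apply Lemmas~\ref{lem10} and~\ref{lem8} to obtain $\Gamma''\subseteq\Gamma$ for which $\ms{C}'':=\{c_\xi:\xi\in\Gamma''\}$ is a $\mc{C}^\beta$-candidate whose columns have $\Phi_{\mct^\beta}$-values dictated by the $\msa^l$ and by the decided shapes of the $\dot{\mc{Q}}_\gamma$; in particular the $a$-block of $\ms{C}''$ is a $\mc{C}^\beta$-candidate $\ms{D}$ reflecting the forced structure of $\dot\msa$, and each $\gamma$-block ($\gamma\in S$) reflects that of $\dot\msa_\gamma$. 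Then define $g:N_{\ms{C}''}\times N_{\ms{C}''}\ra 2$ to be (a copy of) $f$ on $\ms{D}$, to be order isomorphic to some member of $H_\gamma$ on each $\gamma$-block, and to agree with $\pi$ on a fixed pair $c<c'$ of $\ms{C}''$ on the remaining ``mixed'' entries. The crucial — and, I expect, the only laborious — step is to verify that $g$ is $\mc{R}^\beta$-satisfiable for $\ms{C}''$: for blocks landing inside $\ms{D}$ one uses that $f$ is $\mc{R}^\beta$-satisfiable for $\dot\msa$ together with Lemma~\ref{lem8}(4); for blocks landing inside a $\gamma$-block one uses that each $h\in H_\gamma$ is $\mc{R}^\gamma$- hence $\mc{R}^\beta$-satisfiable for $\dot\msa_\gamma$, again with Lemma~\ref{lem8}(4); and for mixed blocks one invokes (R3), exactly as in Claim~3. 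Applying (Res) of $\varphi(\pi,\mc{C}^\beta,\mc{R}^\beta,\mct^\beta,\bfe^\beta)$ to $\ms{C}''$ and $g$ produces $\xi<\eta$ in $\Gamma''$ with $\pi\up_{c_\xi\times c_\eta}$ order isomorphic to $g$; the shape of $g$ on the $\gamma$-blocks makes $p_\xi\cup p_\eta$ a condition of $\mc{P}_\nu$ below $p^*$ (at $\gamma\in S$ the value $s_\xi^\gamma\cup s_\eta^\gamma$ is forced into $\dot{\mc{Q}}_\gamma$, while coordinates outside $S$ do not interact), and its shape on $\ms{D}$ makes $p_\xi\cup p_\eta$ force that $a_\xi,a_\eta$ witness $f$ realized in $\dot\msa$ — contradicting the choice of $p^*$. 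Hence (Res), and with it $\varphi(\pi,\mc{C}^\nu,\mc{R}^\nu,\mct^\nu,\bfe^\nu)$, holds in $V^{\mc{P}_\nu}$.
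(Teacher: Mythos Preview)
Your treatment of $\varphi_0$ is essentially fine, though the claim that ``no node of $\mct^\nu$ gains a new $\supset$-predecessor after the stage at which it appears'' is not part of the hypotheses (i)--(iii) and should not be invoked; the paper handles (T1) directly from (T4), since an infinite chain of predecessors would yield either $A_0(0)>A_1(0)>\cdots$ or a contradiction to Lemma~\ref{lem4}.

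For (Res) there is a genuine gap. You assume that for every $\gamma\in\mathrm{supp}(p_\xi)$ the iterand $\dot{\mc{Q}}_\gamma$ is of the form $\mc{P}_{\msa_\gamma,H_\gamma}$, so that $p_\xi(\gamma)$ can be ``decided'' as a finite subset of $[\omega_1]^{<\omega}$ and an $H_\gamma$ is available for the pattern analysis. The lemma's hypotheses say nothing of the sort: the iteration is an \emph{arbitrary} finite support ccc iteration satisfying (i)--(iii), and even in the intended application (Proposition~\ref{prop1}) many stages force with an arbitrary $\varphi$-preserving ccc poset, which has neither finite-set conditions nor an associated $H$. Your construction of $c_\xi$, of $g$ on ``$\gamma$-blocks'', the inheritance claim about $\mc{R}^\gamma$-satisfiability, and the compatibility step ``$s_\xi^\gamma\cup s_\eta^\gamma$ is forced into $\dot{\mc{Q}}_\gamma$'' all rest on this unjustified structural assumption and therefore collapse.

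The paper's argument is both correct and simpler: it never analyses the conditions on the root. After the $\Delta$-system on supports with root $r$, pick $\delta<\nu$ with $r\cup\mathrm{supp}(p)\subset\delta$ and all $\msa^i\in\mc{C}^\delta$; choose a $\mc{P}_\delta$-generic $G$ with $\Sigma=\{\alpha:p_\alpha\upharpoonright\delta\in G\}$ uncountable, and work in $V[G]$ with the \emph{bare} family $\{a_\alpha:\alpha\in\Sigma\}$ --- no condition data attached. Shrink to a $\mc{C}^\delta$-candidate $\ms{B}''$ via Lemmas~\ref{lem candidate} and~\ref{lem10}, verify that $f$ is $\mc{R}^\delta$-satisfiable for $\ms{B}''$ by passing to a further $\mc{P}_\nu$-generic and invoking Lemma~\ref{lem8}(4), and apply (Res) of $\varphi(\pi,\mc{C}^\delta,\ldots)$ to realise $f$ at some $\alpha<\beta$ in $\Sigma$. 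Compatibility of $p_\alpha,p_\beta$ is then free: below $\delta$ both restrictions lie in $G$, and above $\delta$ their supports are disjoint. No Claim~3-style pattern-matching on the root is needed at all.
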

\begin{proof}
We first prove $\varphi_0(\pi, \mc{C}^ \nu, \mc{R}^ \nu, \mct^ \nu, \bfe^ \nu)$. For (T1), it suffices to prove that every $A\in \mct^\nu$ has finitely many predecessors.  But if $A$ has infinitely many predecessors, then 
\begin{itemize}
\item either there is a $\subset$-chain $A_0\subset A_1\subset\cdot\cdot\cdot$ in $\mct^\nu$ which induces $A_0(0)>A_1(0)>\cdot\cdot\cdot$ by (T4) for $\mct^\alpha$'s;
\item or there is a $\supset$-chain $A_0\supset A_1\supset\cdot\cdot\cdot$ in $\mct^\nu$ with non-empty intersection which yields a contradiction by proof of Lemma \ref{lem4}.
\end{itemize}
Since neither of above case can occur, (T1) holds.
 The rest requirements only refer to   finite collection of elements in $\mc{C}^\nu$ and $\mct^\nu$. So $\varphi_0(\pi, \mc{C}^ \nu, \mc{R}^ \nu, \mct^ \nu, \bfe^ \nu)$ follows directly from (i)-(iii).

Now we check (Res) for $\varphi(\pi, \mc{C}^ \nu, \mc{R}^ \nu, \mct^ \nu, \bfe^ \nu)$. As in Lemma \ref{lem pah}, fix $p\in \mc{P}_\nu$, a $\mc{P}_\nu$-name $\dot{\msa}$ of a candidate for $\mc{C}^\nu$  family, $n<\omega$ and a function $f: n\times n\ra 2$ such that
\begin{enumerate}
\item $p\Vdash N_{\dot{\msa}}=n, \ f $ is $\dot{\mc{R}^\nu}$-satisfiable for $\dot{\msa}$ which can not be realized in $\dot{\msa}$.
\end{enumerate}
By Lemma \ref{lem8} and extending $p$ if necessary, find $k<\omega$, a sequence $\langle \dot{\msa^i}\in \dot{\mc{C}^\nu}: i<k\rangle$ and a partition $n=I\cup \bigcup_{i<k} I_i$ such that
\begin{enumerate}\setcounter{enumi}{1}
\item $p\Vdash \Phi_{\dot{\mct^\nu}}(\dot{\msa}_j) =\omega_1$ for $j\in I$;
\item $p\Vdash \Phi_{\dot{\mct^\nu}}(\dot{\msa}_{I_i(j)}) =\dot{\msa}^i_j$ for $i<k$ and $j< |I_i|=N_{\dot{\msa^i}}$;
\item $p\Vdash a[I_i]\in \dot{\msa^i}$ for $a\in \dot{\msa}$ and $i<k$.
\end{enumerate}

Find $\delta<\nu$, extending $p$ if necessary, such that
 \begin{enumerate}
 \item[(5)] $p \Vdash \dot{\msa^i}\in \dot{\mc{C}^\delta}$ for every $i<k$. 
 \end{enumerate}

For every $\alpha<\omega_1$, find $p_\alpha\leq p$ and $a_\alpha\in [\omega_1\setminus \alpha]^n$ such that
 \begin{enumerate}
 \item[(6)] $p_\alpha\Vdash a_\alpha\in \dot{\msa}$. 
 \end{enumerate}
 Find $\Gamma\in [\omega_1]^{\omega_1}$ such that
  \begin{enumerate}
 \item[(7)] $\{supp(p_\alpha): \alpha\in \Gamma\}$ forms a $\Delta$-system with root $r$ where $supp(p_\alpha)\in [\nu]^{<\omega}$ is the support of $p_\alpha$;
 \item[(8)] $\ms{B}=\{a_\alpha: \alpha\in \Gamma\}$ is non-overlapping.
 \end{enumerate} 
Enlarging $\delta$, we may assume that
 \begin{enumerate}\setcounter{enumi}{8}
 \item $r\cup supp(p) \subset \delta$.
 \end{enumerate} 
 Choose a $\mc{P}_\delta$-generic filter $G$ such that $p\up_\delta\in G$ and
 \begin{enumerate}\setcounter{enumi}{9}
 \item $\Sigma=\{\alpha\in \Gamma: p_\alpha\up_\delta\in G\}$ is uncountable.
 \end{enumerate}
 
 Now work in $V[G]$.  First find $\langle \msa^i\in \mc{C}^\delta: i<k\rangle $ such that
  \begin{enumerate}\setcounter{enumi}{10}
 \item $\dot{\msa^i}^{G}=\msa^i$. Here by (5), we view $\dot{\msa^i}$ as a $\mc{P}_\delta$-name and $\msa^i$ as its interpretation by $G$.
 \end{enumerate} 
 Inductively apply Lemma \ref{lem candidate} $n$ times to find $\Sigma'\in [\Sigma]^{\omega_1}$ such that for $\ms{B}'=\{a_\alpha: \alpha\in\Sigma'\}$, 
  \begin{enumerate}
 \item[(12)] for every $i<n$ and every $B\subset \Phi_{\mct^\delta}(\ms{B}'_i)$ in $\mct^\delta$, $|\ms{B}'_i\cap B|\leq \omega$.
 \end{enumerate}\medskip

 \textbf{Claim 1.}  For $j\in I$, $\Phi_{\mct^\delta}(\ms{B}'_j)=\omega_1$.  
 \begin{proof}[Proof of Claim 1.]
 Choose a $\mc{P}_\nu$-generic filter $G'$ such that $G=G'\up_\delta$ and
 $$\Sigma''=\{\alpha\in \Sigma': p_\alpha\in G'\} \text{ is uncountable}.$$
 Then $\{a_\alpha: \alpha\in \Sigma''\}\subseteq \dot{\msa}^{G'}$.
 
Clearly, $p\in G'$. So 
$$\Phi_{\mct^\nu}(\dot{\msa}^{G'}_j)=\omega_1.$$  
By Definition \ref{candidate} (ii) and Lemma \ref{lem7} (2), 
$$\Phi_{\mct^\nu}(\{a_\alpha(j): \alpha\in \Sigma''\})=\omega_1.$$
 Then by definition of $\Phi_{\mct^\nu}$ and $\Phi_{\mct^\delta}$, 
 $$\Phi_{\mct^\delta}(\{a_\alpha(j): \alpha\in \Sigma''\})=\omega_1.$$
  By (12) and Lemma \ref{lem7} (2), $\Phi_{\mct^\delta}(\ms{B}'_j)=\Phi_{\mct^\delta}(\{a_\alpha(j): \alpha\in \Sigma''\})=\omega_1$.
 \end{proof}

 The same argument shows the following.
   \begin{enumerate}
 \item[(13)] For every $i<k$ and every $j< |I_i|$, $\Phi_{\mct^\delta}(\ms{B}'_{I_i(j)})=\msa^i_j$. In particular, $\{\{a_\alpha(j): \alpha\in \Sigma'\}: j\in I_i\}$ is $\bfe^\delta$-invariant.
 \end{enumerate}
 
  Now by Claim 1 and (13), $\{\ms{B}'_i: i<n\}$ is $\bfe$-invariant. Together with (12), we apply Lemma \ref{lem10} to obtain $\ms{B}''\in [\ms{B}']^{\omega_1}$ such that
  \begin{enumerate}
  \item[(14)] $\ms{B}''$ is a candidate for $\mc{C}^\delta$.
  \end{enumerate}
   Assume
  $$\ms{B}''=\{a_\alpha: \alpha\in \Sigma''\} \text{ for some } \Sigma''\subseteq \Sigma'.$$
  
      \textbf{Claim 2.}   $f$ is $\mc{R}^\delta$-satisfiable for $\msb''$.
 \begin{proof}[Proof of Claim 2.]
 Arbitrarily choose $\ms{C}\in \mc{C}^\delta$ and $J, J'$ in $[n]^{N_\ms{C}}$ such that $\{a[J], a[J']\}\subset \ms{C}$ for all $a\in \msb''$. Choose a $\mc{P}_\nu$-generic filter $G'$ such that $G=G'\up_\delta$ and
 $$\Sigma'''=\{\alpha\in \Sigma'': p_\alpha\in G'\} \text{ is uncountable}.$$
 Then $\{a_\alpha: \alpha\in \Sigma'''\}\subseteq \dot{\msa}^{G'}$. Note also $p\in G'$.
 
 So for uncountably many $a\in \dot{\msa}^{G'}$, $\{a[J], a[J']\}\subset \ms{C}$. Since $\dot{\msa}^{G'}$ is a candidate for $\mc{C}^\nu\supseteq \mc{C}^\delta$,  by Lemma \ref{lem8} (4),
 $$\text{for all } a\in \dot{\msa}^{G'}, \ \{a[J], a[J']\}\subset \ms{C}.$$
 Since $f$ is $\mc{R}^\nu$-satisfiable for $\dot{\msa}^{G'}$, $f\up_{J\times J'}$ is order isomorphic to some $h$ in  $\mc{R}^\nu(\ms{C})=\mc{R}^\delta(\ms{C})$. So $f$ is $\mc{R}^\delta$-satisfiable for $\msb''$.
 \end{proof}

  Applying condition (Res) in $\varphi(\pi, \mc{C}^\delta, \mc{R}^\delta, \mct^\delta,\bfe^\delta)$ to $\ms{B}''$ and $f$, we get $\alpha<\beta$ in $\Sigma''$ such that $\pi\up_{a_\alpha\times a_\beta}$ is order isomorphic to $f$.
  
  By (7) and (10), $p_\alpha$ is compatible with $p_\beta$.  Say $q\leq p_\alpha, p_\beta$. Then
  $$q\Vdash f \text{ is realized in } \dot{\msa} \text{ witnessed by } a_\alpha \text{ and } a_\beta.$$
 But this contradicts (1). This finishes the proof of the lemma.
\end{proof}

  We now describe the procedure at successor stage.
  
  In $V^{\mc{P}_\xi}$, if $\varphi(\pi, \mc{C}^ \xi, \mc{R}^ \xi, \mct^ \xi, \bfe^ \xi)$ holds and the ccc poset $\mc{Q}$ that we deal with at step $\xi$ preserves $\varphi(\pi, \mc{C}^ \xi, \mc{R}^ \xi, \mct^ \xi, \bfe^ \xi)$, then we simply force with $\mc{Q}$ and let 
  $$(\mc{C}^{\xi+1}, \mc{R}^{\xi+1}, \mct^{\xi+1}, \bfe^{\xi+1})=(\mc{C}^\xi, \mc{R}^\xi, \mct^\xi, \bfe^\xi).$$
  If in $V^{\mc{P}_\xi}$,  $\varphi(\pi, \mc{C}^ \xi, \mc{R}^ \xi, \mct^ \xi, \bfe^ \xi)$ holds and the corresponding ccc poset $\mc{Q}$ forces the failure of $\varphi(\pi, \mc{C}^ \xi, \mc{R}^ \xi, \mct^ \xi, \bfe^ \xi)$, then instead of forcing with $\mc{Q}$, we will force with some $\pah$ to destroy powerfully ccc of $\mc{Q}$. Then  by Lemma \ref{lem pah}, $\varphi(\pi, \mc{C}^{\xi+1}, \mc{R}^ {\xi+1}, \mct^ {\xi+1}, \bfe^ {\xi+1})$ holds in the forcing extension for some expanded $\mc{C}^{\xi+1}, \mc{R}^ {\xi+1}$ and induced $\mct^ {\xi+1}, \bfe^ {\xi+1}$.
  
We use the following lemma to find corresponding $\msa$ and $H$.
  \begin{lem}\label{lem14}
  Assume $\varphi(\pi, \mc{C}, \mc{R}, \mct, \bfe)$ and $|\mct|\leq \omega_1$. Suppose $\mc{Q}$ is a ccc poset which forces the failure of $\varphi(\pi, \mc{C}, \mc{R}, \mct, \bfe)$ and $1\leq m<\omega$. Then there are a candidate for $\mc{C}$ family $\msa$ and a non-empty collection of $\mc{R}$-satisfiable functions $H$ such that $\Vdash_{\pah} \mc{Q}^m$ is not ccc.
  \end{lem}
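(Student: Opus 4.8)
The plan is to notice that $\mc{Q}$ can damage only the clause (Res) of $\varphi(\pi,\mc{C},\mc{R},\mct,\bfe)$, to pull a witness of this damage down to a ground‑model candidate $\msa$ together with a suitable $H$, and then to force with $\pah$ so that the generic family produces an uncountable antichain already in $\mc{Q}$ (hence in $\mc{Q}^{m}$). Since $\varphi_{0}(\pi,\mc{C},\mc{R},\mct,\bfe)$ only ever speaks of finitely many members of $\mc{C}$ and $\mct$ at a time, and $\mc{Q}$ alters none of $\pi,\mc{C},\mc{R},\mct,\bfe$, the statement $\varphi_{0}(\pi,\mc{C},\mc{R},\mct,\bfe)$ holds in $V^{\mc{Q}}$; so $\mc{Q}$ forces the failure of (Res). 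By a density argument we fix $p\in\mc{Q}$, an integer $n$, a $\mc{Q}$-name $\dot{\msa}$ and a function $f\colon n\times n\ra 2$ such that $p$ forces that $\dot{\msa}$ is a candidate for $\mc{C}$ with $N_{\dot{\msa}}=n$, that $f$ is $\mc{R}$-satisfiable for $\dot{\msa}$, and that $f$ cannot be realized in $\dot{\msa}$.

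Now I transfer this to $V$. For each $\alpha<\omega_{1}$ choose $q_{\alpha}\leq p$ and $a_{\alpha}\in[\omega_{1}\setminus\alpha]^{n}$ with $q_{\alpha}\Vdash a_{\alpha}\in\dot{\msa}$, and pass to an uncountable $\Gamma$ for which $\{a_{\alpha}\colon\alpha\in\Gamma\}$ is non-overlapping with $a_{\alpha}<a_{\beta}$ whenever $\alpha<\beta$ in $\Gamma$. Exactly as in the proofs of Lemmas \ref{lem pah} and \ref{limit stage} — applying Lemma \ref{lem candidate} $n$ times (this is the only place $|\mct|\leq\omega_{1}$ is used) and Lemma \ref{lem10}, and passing to a $\mc{Q}$-generic filter in which uncountably many of the $q_{\alpha}$ lie, so that the relevant coordinate families appear as uncountable subfamilies of an interpretation of $\dot{\msa}$ — one obtains an uncountable $\Sigma\subseteq\Gamma$ with $\Sigma\in V$ such that $\msa:=\{a_{\alpha}\colon\alpha\in\Sigma\}$ is a candidate for $\mc{C}$; and a routine absoluteness argument (using the remarks after Definition \ref{varphi} and Lemma \ref{lem8}(4), and the fact that $\mc{R}$-satisfiability is decided by $\mc{C},\mc{R},\mct,\bfe$ and $\pi$, which do not change) shows that $f$ is $\mc{R}$-satisfiable for $\msa$. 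Let $H$ be the set of all $h\colon n\times n\ra 2$ that are $\mc{R}$-satisfiable for $\msa$ and order isomorphic to $f$. Then $f\in H$, so $H\neq\emptyset$, and by Lemma \ref{lem11} the poset $\pah$ is ccc.

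Finally, let $G$ be $\pah$-generic and uncountable. Whenever $a_{\alpha}<a_{\beta}$ lie in $\bigcup G$, the defining pattern of $\pah$ gives $h\in H$ with $\pi\up_{a_{\alpha}\times a_{\beta}}$ order isomorphic to $h$, hence to $f$. If $q_{\alpha}$ and $q_{\beta}$ were compatible, a common extension $r\leq p$ would force $a_{\alpha},a_{\beta}\in\dot{\msa}$, and therefore force that $f$ is realized in $\dot{\msa}$ (witnessed by $a_{\alpha}<a_{\beta}$), contradicting the choice of $p$. Hence the conditions $q_{\alpha}$ with $a_{\alpha}\in\bigcup G$ are pairwise incompatible (in particular pairwise distinct), and since $\bigcup G$ is uncountable they form an uncountable antichain in $\mc{Q}$; sending it diagonally into $\mc{Q}^{m}$ yields $\Vdash_{\pah}\mc{Q}^{m}$ is not ccc. (If in addition one wanted $\mc{Q}$ itself to stay ccc, one would run the same argument with an $m$-fold ``diagonal'' version of $\dot{\msa}$ and the block version of $f$; but the statement of the lemma does not require this.) Taking this $\msa$ and $H$ proves the lemma.

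The main obstacle is the middle step: manufacturing the ground-model candidate $\msa$ and checking that $f$ remains $\mc{R}$-satisfiable for it. This is not genuinely new — it is precisely the bookkeeping already carried out in the proofs of Lemmas \ref{lem pah} and \ref{limit stage} (Lemma \ref{lem candidate} to secure clause (ii) of Definition \ref{candidate}, Lemma \ref{lem10} to assemble a candidate, and the passage to a generic to read off the $\bfe$-classes and the values of $\Phit$ on the coordinates), but it must be executed with care. Everything else — the preservation of $\varphi_{0}$, the choice of $H$, and the final incompatibility argument — is routine.
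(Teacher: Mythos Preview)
Your argument is correct: you destroy the ccc of $\mc{Q}$ itself (taking essentially $H=\{f\}$ on $n\times n$), and then embed the resulting antichain diagonally into $\mc{Q}^{m}$. The paper proceeds differently. It takes $m$ copies $a_{\alpha,0}<\cdots<a_{\alpha,m-1}$ with separate witnesses $q_{\alpha,i}$, sets $a_{\alpha}=\bigcup_{i<m}a_{\alpha,i}$, works with the candidate $\msa''$ of these concatenated blocks, and lets $H$ consist of $h\colon nm\times nm\to 2$ realized as some $\pi\up_{a_{\alpha}\times a_{\beta}}$ and having \emph{at least one} diagonal $n\times n$ block order isomorphic to $f$. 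Then for $a_{\alpha}<a_{\beta}$ in the generic only one coordinate $l<m$ is guaranteed to make $q_{\alpha,l}\perp q_{\beta,l}$, which kills $\mc{Q}^{m}$ without a priori killing $\mc{Q}$. For the statement of Lemma~\ref{lem14} alone your simpler route suffices; the paper's block construction is there because the freedom to choose $m$ (and to leave $\mc{Q}$ ccc) is exactly what is exploited in Section~4 (Lemma~\ref{lem25}) to preserve $\psi_{0}$ and keep $\mc{H}_{0},\mc{H}_{1}$ ccc. Your parenthetical remark shows you are aware of this.
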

\begin{proof} First note that $\varphi_0(\pi, \mc{C} , \mc{R} , \mct , \bfe )$ is absolute between models with the same $\omega_1$. So 
  \begin{itemize}
  \item[(1)] $\Vdash_\mc{Q}$ condition (Res) fails.
  \end{itemize}
  Then find $q\in \mc{Q}$, $\mc{Q}$-name $\dot{\msa}$ of a candidate for $\mc{C}$ family, $n$ and $f:n\times n\ra 2$ such that 
  \begin{itemize}
  \item[(2)] $q\Vdash_\mc{Q}$  $\dot{\msa}, f$ witness the failure of condition (Res) in $\varphi(\pi, \mc{C} , \mc{R} , \mct , \bfe )$ and $n=N_{\dot{\msa}}$.
  \end{itemize}
  Find $k<\omega$, a sequence $\langle \msa^i\in \mc{C}: i<k\rangle$ and a partition $n=I\cup \bigcup_{i<k} I_i$ such that, extending $q$ if necessary, $q$ forces the following:
   \begin{itemize}
  \item[(3)]  for $j\in I$, $\Phi_{\mct }(\dot{\msa}_j)=\omega_1$;
  \item[(4)] for $i<k$ and $j< |I_i|$, $\Phi_{\mct }(\dot{\msa}_{I_i(j)})=\msa^i_j$;
  \item[(5)] for $a\in \dot{\msa}$ and $i<k$, $a[I_i]\in \msa^i$.
    \end{itemize}
  For every $\alpha<\omega_1$, choose $q'_\alpha\leq q$ and $a'_\alpha$ such that
 \begin{itemize}
  \item[(6)] $q'_\alpha\Vdash_\mc{Q} a'_\alpha\in \dot{\msa} \cap [\omega_1\setminus \alpha]^n$.
  \end{itemize}
  
Then we choose $m$ pairs.\footnote{The choice of $m$ is not important in this section. But the freedom of choosing $m$ and $a_{\alpha, i}$'s will be used in next section.}
   \begin{itemize}
  \item[(7)] Choose, for every $\alpha<\omega_1$ and $i<m$, $(q_{\alpha, i}, a_{\alpha,i})$ to be some $(q'_\xi, a'_\xi)$ such that for $\beta<\alpha$,   $a_{\beta, m-1}<a_{\alpha,0}<a_{\alpha,1}<\cdot\cdot\cdot<a_{\alpha, m-1}$.
  \end{itemize}

  For every $\alpha$, denote
  $$a_\alpha=\bigcup_{i<m} a_{\alpha, i}.$$
  Inductively applying Lemma \ref{lem candidate} $nm$ times, we get $\Gamma\in [\omega_1]^{\omega_1}$ such that for every $i<N_{\msa'}=nm$ where $\msa'=\{a_\alpha: \alpha\in \Gamma\}$,
   \begin{itemize}
  \item[(8)] for every $A\subset \Phi_{\mct }(\msa'_i)$ in $\mct $, $|\msa'_i\cap A|\leq \omega$.
  \end{itemize}\medskip

  \textbf{Claim 1.} For every $l<m$,  and $j\in I$,  $\Phi_{\mct }(\msa'_{nl+j})=\omega_1$.

  \begin{proof}[Proof of Claim 1.]
  Choose a $\mc{Q}$-generic filter $G$ such that $q\in G$ and $\Gamma'=\{\alpha\in \Gamma: q_{\alpha, l}\in G\}$ is uncountable. Then in $V[G]$,  $\{a_\alpha(nl+j): \alpha\in \Gamma'\}\subseteq \dot{\msa}^G_{j}$.  By (3), $\Phi_{\mct }(\dot{\msa}^G_j)=\omega_1.$
  Since $\dot{\msa}^G$ is a candidate for $\mc{C}$,
  $\Phi_{\mct }(\{a_\alpha(nl+j):\alpha\in \Gamma'\}=\omega_1.$ Now the claim follows from (8) and Lemma \ref{lem7} (2).
  \end{proof}
  
The same argument shows that for each $l<m$, $i<k$ and $j<|I_i|$,
  \begin{itemize}
  \item[(9)] $\Phi_{\mct }(\msa'_{nl+I_i(j)})=\msa^i_j$.
  \end{itemize}
  So Claim 1, (9) and (5) show that $\{\msa'_i: i<nm\}$ is $\bfe $-invariant. Together with (8), we apply Lemma \ref{lem10} to obtain $\msa''\in [\msa']^{\omega_1}$ that is a candidate for $\mc{C}$. Assume for some $\Gamma'\in [\Gamma]^{\omega_1}$,
  $$\msa''=\{a_\alpha: \alpha\in \Gamma'\}.$$
  
  For $l<m$, denote $\msb^l=\{a_{\alpha, l}: \alpha\in \Gamma'\}$. We will need the following fact.\medskip
  
  \textbf{Claim 2.} For every $l<m$, $f$ is  $\mc{R} $-satisfiable for $\msb^l$.

 \begin{proof}[Proof of Claim 2.]
 Fix $l<m$. Arbitrarily choose $\ms{C}\in \mc{C} $ and $J, J'$ in $[n]^{N_\ms{C}}$ such that $\{b[J], b[J']\}\subset \ms{C}$ for all $b\in \msb^l$. Choose a $\mc{Q}$-generic filter $G$ such that 
 $$\Gamma''=\{\alpha\in\Gamma': q_{\alpha, l}\in G\} \text{ is uncountable}.$$
 Then $\{a_{\alpha, l}: \alpha\in \Gamma''\}\subseteq \dot{\msa}^{G}$ and $p\in G$.
 
 So for uncountably many $a\in \dot{\msa}^{G}$, $\{a[J], a[J']\}\subset \ms{C}$. By Lemma \ref{lem8} (4),  $ \{a[J], a[J']\}\subset \ms{C}$ for all $ a\in \dot{\msa}^{G}$.
 Since $f$ is $\mc{R} $-satisfiable for $\dot{\msa}^{G}$, $f\up_{J\times J'}$ is order isomorphic to some $h$ in  $\mc{R} (\ms{C})$. So $f$ is $\mc{R} $-satisfiable for $\msb^l$.
 \end{proof}
  
  Now let $H$ be the collection of all $h: nm\times nm\ra 2$ such that
  \begin{itemize}
  \item[(10)] for some $\alpha<\beta$ in $\Gamma'$,  $h$ is order isomorphic to $\pi\up_{a_\alpha\times a_\beta}$;
  \item[(11)] for some $l<m$, $h\up_{[nl, n(l+1))\times [nl, n(l+1))}$ is order isomorphic to $f$.
  \end{itemize}
We will need the following property of $H$.\medskip

\textbf{Claim 3.} $H$ is a non-empty collection of $\mc{R} $-satisfiable for $\msa''$ functions.

\begin{proof}[Proof of Claim 3.]
By Claim 2 and (Res) in $\varphi(\pi, \mc{C} , \mc{R} , \mct , \bfe )$, there are $\alpha<\beta$ in $\Gamma'$ such that $f$ is order isomorphic to $\pi\up_{a_{\alpha, 0}\times a_{\beta, 0}}$. Then the $h: nm\times nm\ra 2$ that is order isomorphic to $\pi\up_{a_\alpha\times a_\beta}$ is in $H$. So $H$ is non-empty.

Now we check that every $h\in H$ is $\mc{R} $-satisfiable for $\msa''$. Fix $h\in H$ and $\alpha<\beta$ in $\Gamma'$ such that $h$ is order isomorphic to $\pi\up_{a_\alpha\times a_\beta}$.  

Arbitrarily choose $\ms{C}\in \mc{C} $ and $J, J'$ in $[nm]^{N_\ms{C}}$ such that $\{a[J], a[J']\}\subset \ms{C}$ for all $a\in \msa''$. In particular, $\{a_\alpha[J], a_\beta[J']\}\subset \ms{C}$. By (R3), $\pi\up_{a_\alpha[J]\times a_\beta[J']}$ is order isomorphic to some $g\in \mc{R} (\ms{C})$. So $h\up_{J\times J'}$ is order isomorphic to the same $g\in \mc{R} (\ms{C})$. This shows that $h$ is $\mc{R} $-satisfiable for $\msa''$. 
\end{proof}

Omitting a countable subset of $\msa''$, we may assume that 
\begin{itemize}
\item[(12)] $\Vdash_{\ms{P}_{\msa'', H}} \dot{G} \text{ is uncountable}$
where $\dot{G}$ is the canonical name of a $\ms{P}_{\msa'', H}$-generic filter. 
\end{itemize}

 By Lemma \ref{lem11}, $\ms{P}_{\msa'', H}$ is ccc. Then $\{\langle q_{\alpha, i} : i<m\rangle: a_\alpha\in \bigcup G\}$ is an uncountable antichain of $\mc{Q}^m$ in $V[G]$ where $G$ is $\ms{P}_{\msa'', H}$-generic. To see this, fix $a_\alpha< a_\beta$ in $\bigcup G$. By (11), for some $l<m$, $\pi\up_{a_{\alpha, l}\times a_{\beta, l}}$ is order isomorphic to $f$. Then by (2) and (6), $q_{\alpha, l}$ is incomparable with $q_{\beta, l}$.
   Then $\msa''$ and $H$ are as desired. 
   \end{proof}

So at step $\xi$, instead of forcing with $\mc{Q}$, we will force with $\ms{P}_{\msa, H}$ guaranteed by above lemma and let
$$\mc{C}^{\xi+1}=\mc{C} \cup \{\bigcup G\}, \ \mc{R}^{\xi+1}=\mc{R} \cup \{(\bigcup G, H)\}$$
 and $\mct^{\xi+1}, \bfe^{\xi+1}$ be induced from $\mc{C}^{\xi+1}$ where $G$ is $\ms{P}_{\msa, H}$-generic.
 
Then $\ms{P}_{\msa, H}$ is ccc and in $V^{\mc{P}_{\xi+1}}$, $\varphi(\pi, \mc{C}^{\xi+1}, \mc{R}^{\xi+1}, \mct^{\xi+1}, \bfe^{\xi+1})$ holds while $\mc{Q}^m$ is not ccc.

  With the help of all previous lemmas, we are now able to describe the general procedure of iterating ccc posets with finite support that produces minimal damage to a strong coloring $\pi$.
  \begin{prop}\label{prop1}
  Suppose $2^{\omega_1}=\omega_2$ and $\pi: [\omega_1]^2\ra 2$ satisfies {\rm Pr}$_0(\omega_1, 2, \omega)$. Then there is a finite support iteration of ccc posets $\langle \mc{P}_\alpha, \dot{\mc{Q}}_\beta: \alpha\leq \omega_2, \beta<\omega_2\rangle$ and $\mc{P}_\alpha$-names $\dot{\mc{C}^\alpha}, \dot{\mc{R}^\alpha}, \dot{\mct^\alpha}, \dot{\bfe^\alpha}$ for every $\alpha\leq \omega_2$ such that
  \begin{enumerate}
  \item $\Vdash_{\mc{P}_{\omega_2}} $ {\rm MA}(powerfully ccc) + $2^\omega=\omega_2$;
  \item for every $\alpha\leq \omega_2$, $\Vdash_{\mc{P}_{\alpha}} \varphi(\pi, \dot{\mc{C}^\alpha}, \dot{\mc{R}^\alpha}, \dot{\mct^\alpha}, \dot{\bfe^\alpha})$.
  \end{enumerate}
  \end{prop}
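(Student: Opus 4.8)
\textbf{Proof plan for Proposition \ref{prop1}.}

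The plan is to build the finite support iteration $\langle \mc{P}_\alpha, \dot{\mc{Q}}_\beta: \alpha\leq\omega_2, \beta<\omega_2\rangle$ by recursion on $\alpha$, maintaining simultaneously (a) $\varphi(\pi, \dot{\mc{C}^\alpha}, \dot{\mc{R}^\alpha}, \dot{\mct^\alpha}, \dot{\bfe^\alpha})$, (b) $|\dot{\mct^\alpha}|\leq\omega_1$, (c) $|\mc{P}_\alpha|\leq\omega_2$ so that the CH-type bookkeeping goes through, and (d) the monotonicity/continuity conditions (2)-(3) listed just before Lemma \ref{limit stage}. We begin at $\alpha=0$ with $\mc{P}_0$ trivial, $\dot{\mc{C}^0}=\emptyset$, $\dot{\mct^0}=\{\omega_1\}$; then $\varphi_0$ holds vacuously and (Res) holds because with $\mc{C}=\emptyset$ any candidate $\msa$ has no constraints, so every $f$ is $\mc{R}$-satisfiable, and $f$ can be realized by a direct appeal to Pr$_0(\omega_1,2,\omega)$ of $\pi$. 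At limit stages $\nu$ we set $\dot{\mc{C}^\nu}=\bigcup_{\beta<\nu}\dot{\mc{C}^\beta}$ and $\dot{\mc{R}^\nu}=\bigcup_{\beta<\nu}\dot{\mc{R}^\beta}$; Lemma \ref{limit stage} then gives $\varphi(\pi, \dot{\mc{C}^\nu},\dot{\mc{R}^\nu},\dot{\mct^\nu},\dot{\bfe^\nu})$, and $|\dot{\mct^\nu}|\leq\omega_1$ follows since each $\dot{\mct^\beta}$ has size $\leq\omega_1$ and $\mathrm{cf}(\omega_2)>\omega_1$ means every element of $\dot{\mct^\nu}$ appears at a bounded stage (more carefully, at limits of cofinality $\omega_1$ or $\omega$ one still has $|\dot{\mct^\nu}|\leq\omega_1$ because the union of $\leq\omega_1$ sets each of size $\leq\omega_1$ has size $\leq\omega_1$).

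The heart is the successor step. Using a standard bookkeeping function I would enumerate, as $\xi$ ranges over $\omega_2$, all pairs $(\dot{\mc{Q}}, \langle\dot{D}_n:n<\omega\rangle)$ where $\dot{\mc{Q}}$ is (forced to be) a powerfully ccc poset of size $<2^\omega$ appearing in some $V^{\mc{P}_\xi}$ together with $\omega_1$-many dense sets (here $2^{\omega_1}=\omega_2$ and ccc-ness of the iteration keep the number of such names at $\omega_2$, and nice-name counting keeps $|\mc{P}_{\omega_2}|=\omega_2$). At stage $\xi$, working in $V^{\mc{P}_\xi}$ where $\varphi(\pi,\dots)$ holds, I look at the poset $\mc{Q}$ the bookkeeping hands me. If $\mc{Q}$ preserves $\varphi(\pi, \dot{\mc{C}^\xi}, \dots)$, I simply force with $\mc{Q}$ and keep $(\dot{\mc{C}}, \dot{\mc{R}}, \dot{\mct}, \dot{\bfe})$ unchanged; preservation is immediate and $|\dot{\mct}|$ does not grow. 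If $\mc{Q}$ forces the failure of $\varphi(\pi, \dot{\mc{C}^\xi},\dots)$, I invoke Lemma \ref{lem14} (with, say, $m=1$) to obtain a $\mc{C}$-candidate $\msa$ and a non-empty collection $H$ of $\mc{R}$-satisfiable functions with $\Vdash_{\pah}\mc{Q}$ not ccc; I force with $\dot{\mc{Q}}_\xi=\ms{P}_{\msa,H}$ (which is ccc by Lemma \ref{lem11}, and by the convention after Definition \ref{defn2} has uncountable generic), and set $\dot{\mc{C}^{\xi+1}}=\dot{\mc{C}^\xi}\cup\{\bigcup\dot G\}$, $\dot{\mc{R}^{\xi+1}}=\dot{\mc{R}^\xi}\cup\{(\bigcup\dot G,H)\}$ with $\dot{\mct^{\xi+1}},\dot{\bfe^{\xi+1}}$ induced. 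Lemma \ref{lem pah} gives $\varphi(\pi, \dot{\mc{C}^{\xi+1}},\dots)$ in $V^{\mc{P}_{\xi+1}}$, and since we add only $N_\msa$ new end nodes to $\mct$, condition (b) is preserved. A crucial point in the bookkeeping is that $\mc{Q}$, being \emph{powerfully ccc}, cannot be the poset whose finite power gets an uncountable antichain added (the added antichain is to $\mc{Q}^m$ for the $\mc{Q}$ of some \emph{other}, non-powerfully-ccc poset witnessing a failure of $\varphi$), so no powerfully ccc poset on the bookkeeping list is ever killed — hence every such poset, with its $\omega_1$ dense sets, eventually gets a generic filter and {\rm MA}(powerfully ccc) holds in $V^{\mc{P}_{\omega_2}}$.

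Finally I would verify clause (1): $2^\omega=\omega_2$ holds in $V^{\mc{P}_{\omega_2}}$ by the usual computation (ccc finite support iteration of length $\omega_2$ over a model of $2^{\omega_1}=\omega_2$ adds $\omega_2$ reals and collapses nothing, so $2^\omega\geq\omega_2$; nice names bound it above by $\omega_2$), and {\rm MA}(powerfully ccc) holds because any powerfully ccc poset $\mc{P}$ of size $<\omega_2$ in the final model, together with $<\omega_2=\omega_1$-many dense sets, already lives in some $V^{\mc{P}_\xi}$, is powerfully ccc there (powerful ccc is preserved by going to an intermediate ccc extension since it is a property of all finite powers and those remain ccc), and is treated at a later stage $\eta\geq\xi$ where — since $\mc{Q}=\mc{P}$ does not destroy $\varphi$, as a powerfully ccc poset it produces no uncountable antichain in any finite power and so in particular cannot serve as the $\mc{Q}^m$ of Lemma \ref{lem14} applied to itself — we forced with $\mc{P}$ directly and met the dense sets. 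The main obstacle is the verification, at the successor step, that the two horns of the case split are exhaustive and that in the "bad" horn one genuinely preserves \emph{all} of $\varphi$ (not just $\varphi_0$): this is exactly the content of Lemma \ref{lem pah}, whose hypotheses — $\msa$ a $\mc{C}$-candidate, $H$ consisting of $\mc{R}$-satisfiable functions, $|\mct|\leq\omega_1$ — are precisely what Lemma \ref{lem14} and the maintained invariants deliver; the remaining work is routine bookkeeping and cardinal arithmetic.
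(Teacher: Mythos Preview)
Your approach matches the paper's: finite-support iteration with bookkeeping, forcing with $\mc{Q}$ if it preserves $\varphi$ and with some $\pah$ (via Lemma~\ref{lem14}) otherwise, invoking Lemmas~\ref{lem pah} and~\ref{limit stage} for preservation of $\varphi$ at successor and limit stages. The bound $|\mct^\alpha|\leq\omega_1$ for $\alpha<\omega_2$ is also as in the paper (finitely many nodes added per step); you do not need and should not claim $|\mct^{\omega_2}|\leq\omega_1$.

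The one muddled point is your justification that a powerfully ccc $\mc{P}$ lands in case (i). You write that $\mc{P}$ ``produces no uncountable antichain in any finite power and so cannot serve as the $\mc{Q}^m$ of Lemma~\ref{lem14} applied to itself'' --- but this reasoning is backward: $\mc{P}$ being powerfully ccc \emph{at stage $\eta$} says nothing directly about whether $\mc{P}$ preserves $\varphi$ there, and Lemma~\ref{lem14} does not assert an antichain exists in the ground model. The correct argument is the contrapositive, using powerfully ccc \emph{in the final model}: if $\mc{P}$ fell into case (ii) at stage $\eta$, you would force with $\pah$, which by Lemma~\ref{lem14} \emph{adds} an uncountable antichain to $\mc{P}^m$ in $V^{\mc{P}_{\eta+1}}$ (and hence one exists in $V^{\mc{P}_{\omega_2}}$), contradicting that $\mc{P}$ is powerfully ccc there. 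So every poset that remains powerfully ccc in $V^{\mc{P}_{\omega_2}}$ was in case (i) at the stage it was treated and hence was forced. This is precisely what the paper's terse ``(1) follows from our choice of $\dot{\mc{Q}}_\beta$ in case (ii) and Lemma~\ref{lem14}'' unpacks to. Relatedly, the bookkeeping should enumerate \emph{all} ccc posets of size $\leq\omega_1$, not just those ``forced to be powerfully ccc'': you only learn which case applies when you reach stage $\eta$.
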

  \begin{proof}
  $\langle \mc{P}_\alpha, \dot{\mc{Q}}_\beta: \alpha\leq \omega_2, \beta<\omega_2\rangle$ is a standard iteration of ccc posets of size $\leq \omega_1$ with the following modification   at step $\alpha$.
  \begin{itemize}
  \item[(i)] If the $\alpha$th (in some standard enumeration) poset $\dot{\mc{Q}}$ preserves $\varphi(\pi, \dot{\mc{C}^\alpha}, \dot{\mc{R}^\alpha}, \dot{\mct^\alpha}, \dot{\bfe^\alpha})$, then take $\dot{\mc{Q}}$ as $ \dot{\mc{Q}}_\alpha$ and $(\dot{\mc{C}^{\alpha+1}}, \dot{\mc{R}^{\alpha+1}}, \dot{\mct^{\alpha+1}}, \dot{\bfe^{\alpha+1}})=(\dot{\mc{C}^\alpha}, \dot{\mc{R}^\alpha}, \dot{\mct^\alpha}, \dot{\bfe^\alpha})$.
  \item[(ii)] Otherwise, choose some $1\leq m<\omega$ and $\dot{\msa}, \dot{H}$  guaranteed by Lemma \ref{lem14}. Take $\ms{P}_{\dot{\msa}, \dot H}$ as $ \dot{\mc{Q}}_\alpha$ and $\dot{\mc{C}^{\alpha+1}}=\dot{\mc{C}^{\alpha}}\cup \{\bigcup \dot{G}\}$, $\dot{\mc{R}^{\alpha+1}}=\dot{\mc{R}^{\alpha}}\cup \{(\bigcup \dot{G}, \dot{H}\}$ where $\dot{G}$ is the canonical name of a $\ms{P}_{\dot{\msa}, \dot H}$-generic filter. $\dot{\mct^{\alpha+1}}$ and $\dot{\bfe^{\alpha+1}}$ are induced from  $\dot{\mc{C}^{\alpha+1}}$.
  \end{itemize} 
  Moreover, at limit step $\alpha$,
  \begin{itemize}
  \item[(iii)] $\dot{\mc{C}^\alpha}=\bigcup_{\beta<\alpha} \dot{\mc{C}^\beta}$, $\dot{\mc{R}^\alpha}=\bigcup_{\beta<\alpha} \dot{\mc{R}^\beta}$ and $\dot{\mct^\alpha}, \dot{\bfe^\alpha}$ are induced from $\dot{\mc{C}^\alpha}$.
  \end{itemize}
  Note that at each step, at most finitely many nodes can be added to $\dot{\mct^\alpha}$. So $|\dot{\mct^\alpha}|\leq \omega_1$ for every $\alpha<\omega_2$.
  Then by Lemma \ref{lem11}-\ref{limit stage}, (2) holds. (1) follows from our choice of $ \dot{\mc{Q}}_\beta$ in case (ii) and Lemma \ref{lem14}.
  \end{proof}
 If we always choose $m=1$ in Lemma \ref{lem14}, then   {\rm MA} holds in the final model in above proposition.
If we always choose a prefixed  $m$, we get a corresponding forcing axiom.
    \begin{prop}\label{prop2}
  Suppose $2^{\omega_1}=\omega_2$, $\pi: [\omega_1]^2\ra 2$ satisfies {\rm Pr}$_0(\omega_1, 2, \omega)$ and $1\leq m<\omega$. Then there is a finite support iteration of ccc posets $\langle \mc{P}_\alpha, \dot{\mc{Q}}_\beta: \alpha\leq \omega_2, \beta<\omega_2\rangle$ and $\mc{P}_\alpha$-names $\dot{\mc{C}^\alpha}, \dot{\mc{R}^\alpha}, \dot{\mct^\alpha}, \dot{\bfe^\alpha}$ for every $\alpha\leq \omega_2$ such that
  \begin{enumerate}
  \item $\Vdash_{\mc{P}_{\omega_2}} $ {\rm MA}($\{\ms{P}: \ms{P}^m$ is ccc$\}$) + $2^\omega=\omega_2$;
  \item for every $\alpha\leq \omega_2$, $\Vdash_{\mc{P}_{\alpha}} \varphi(\pi, \dot{\mc{C}^\alpha}, \dot{\mc{R}^\alpha}, \dot{\mct^\alpha}, \dot{\bfe^\alpha})$.
  \end{enumerate}
  \end{prop}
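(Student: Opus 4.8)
The plan is to re-run the construction from the proof of Proposition~\ref{prop1} almost verbatim, the only change being that in case~(ii) of the successor step we always invoke Lemma~\ref{lem14} with the \emph{prefixed} $m$ of the statement, rather than being free to choose $m$. Thus $\langle\mc{P}_\alpha,\dot{\mc{Q}}_\beta:\alpha\leq\omega_2,\beta<\omega_2\rangle$ is a finite support iteration of ccc posets of size $\leq\omega_1$, with a standard bookkeeping enumerating, cofinally often, all such posets together with all $\omega_1$-sequences of dense subsets. At step $\alpha$, given the scheduled poset $\dot{\mc{Q}}$: if $\dot{\mc{Q}}$ preserves $\varphi(\pi,\dot{\mc{C}^\alpha},\dot{\mc{R}^\alpha},\dot{\mct^\alpha},\dot{\bfe^\alpha})$ we take $\dot{\mc{Q}}_\alpha=\dot{\mc{Q}}$ and set $(\dot{\mc{C}^{\alpha+1}},\dot{\mc{R}^{\alpha+1}},\dot{\mct^{\alpha+1}},\dot{\bfe^{\alpha+1}})=(\dot{\mc{C}^\alpha},\dot{\mc{R}^\alpha},\dot{\mct^\alpha},\dot{\bfe^\alpha})$; otherwise we apply Lemma~\ref{lem14} with this fixed $m$ to obtain $\dot{\msa},\dot{H}$ with $\Vdash_{\ms{P}_{\dot{\msa},\dot{H}}}\dot{\mc{Q}}^m$ is not ccc, set $\dot{\mc{Q}}_\alpha=\ms{P}_{\dot{\msa},\dot{H}}$, put $\dot{\mc{C}^{\alpha+1}}=\dot{\mc{C}^{\alpha}}\cup\{\bigcup\dot{G}\}$ and $\dot{\mc{R}^{\alpha+1}}=\dot{\mc{R}^{\alpha}}\cup\{(\bigcup\dot{G},\dot{H})\}$, and let $\dot{\mct^{\alpha+1}},\dot{\bfe^{\alpha+1}}$ be induced from $\dot{\mc{C}^{\alpha+1}}$; at limits we take unions.

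First I would observe that at most finitely many nodes are added to $\dot{\mct^\alpha}$ at each step, so $\Vdash_{\mc{P}_\alpha}|\dot{\mct^\alpha}|\leq\omega_1$ throughout; hence Lemma~\ref{lem11} applies and each $\ms{P}_{\dot{\msa},\dot{H}}$ is ccc, so the construction is a finite support ccc iteration and forces $2^\omega=\omega_2$. Then (2) follows exactly as in Proposition~\ref{prop1}: $\varphi$ holds at stage $0$ because $\pi$ witnesses {\rm Pr}$_0(\omega_1,2,\omega)$; it passes through case~(i) trivially, through case~(ii) by Lemma~\ref{lem pah} (whose hypotheses --- $\dot{\msa}$ a $\dot{\mc{C}^\alpha}$-candidate and $\dot{H}$ a non-empty collection of $\dot{\mc{R}^\alpha}$-satisfiable functions --- are precisely what Lemma~\ref{lem14} delivers), and through limit stages by Lemma~\ref{limit stage}.

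The genuinely new point is (1). Here I would argue as follows. Work in $V^{\mc{P}_{\omega_2}}$ and let $\ms{P}$ have $\ms{P}^m$ ccc, $|\ms{P}|\leq\omega_1$, and let $\{D_\xi:\xi<\omega_1\}$ be dense in $\ms{P}$. By the usual reflection for finite support ccc iterations of length $\omega_2$ (using $2^{\omega_1}=\omega_2$ and the bookkeeping), there is a stage $\alpha<\omega_2$ at which $\ms{P}$ and $\langle D_\xi:\xi<\omega_1\rangle$ are scheduled. The key observation is that any uncountable antichain of $\ms{P}^m$ lying in $V^{\mc{P}_\alpha}$ remains an uncountable antichain of the (unchanged) poset $\ms{P}^m$ in $V^{\mc{P}_{\omega_2}}$; since $\ms{P}^m$ is ccc in $V^{\mc{P}_{\omega_2}}$, it follows that $\Vdash_{\mc{P}_\alpha}\ms{P}^m$ is ccc, in particular $\Vdash_{\mc{P}_\alpha}\ms{P}$ is ccc. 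Now suppose case~(ii) were invoked at step $\alpha$ for $\ms{P}$; then by the choice of $\dot{\mc{Q}}_\alpha=\ms{P}_{\dot{\msa},\dot{H}}$ through Lemma~\ref{lem14} with the fixed $m$ we would get $\Vdash_{\mc{P}_{\alpha+1}}\ms{P}^m$ is not ccc, hence $\ms{P}^m$ would fail to be ccc in $V^{\mc{P}_{\omega_2}}$, a contradiction. So case~(i) was invoked, $\ms{P}$ was taken as $\dot{\mc{Q}}_\alpha$, and the $\mc{P}_{\alpha+1}$-generic filter meets every $D_\xi$; this filter persists to $V^{\mc{P}_{\omega_2}}$. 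This yields {\rm MA}$(\{\ms{P}:\ms{P}^m$ is ccc$\})$, which together with $2^\omega=\omega_2$ is (1).

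I expect the main obstacle to be purely a matter of bookkeeping --- the same one already present in Proposition~\ref{prop1} --- namely arranging the enumeration so that each relevant pair (poset, $\omega_1$-sequence of dense sets) is caught at a stage below a condition deciding whether the poset preserves or destroys $\varphi$, with the poset still ccc there; this is the standard Solovay--Tennebaum machinery, and the persistence of uncountable antichains under further forcing does the rest. Everything else is a mechanical transcription of the proof of Proposition~\ref{prop1}, since replacing \emph{powerfully ccc} by \emph{$\ms{P}^m$ is ccc} touches none of the preservation lemmas (they never refer to that class) and only changes which posets have their ccc-ness destroyed in case~(ii).
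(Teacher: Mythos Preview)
Your proposal is correct and matches the paper's approach exactly: the paper does not give a separate proof of Proposition~\ref{prop2} but simply remarks, immediately before its statement, that ``If we always choose a prefixed $m$, we get a corresponding forcing axiom,'' i.e.\ one reruns Proposition~\ref{prop1} invoking Lemma~\ref{lem14} with the fixed $m$ in case~(ii). Your write-up spells out the persistence-of-antichains argument for~(1) that the paper leaves implicit, but the underlying idea is identical.
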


  \section{Non-productivity of ccc posets}
  In above section, we described an iteration of ccc posets with minimal damage to a strong coloring $\pi$. In the final model, MA(powerfully ccc) always holds. However, it is also possible that the final model satisfies the full MA, e.g., taking $m=1$ in Proposition \ref{prop2}. Even if we always take large $m$, it is possible that the final model still satisfies MA.
  
  In order to guarantee the failure of MA in the final model, we will find two ccc posets whose product is not ccc and preserve the ccc property of these two posets in the iteration. In other words,  ccc is not productive and hence MA fails in the final model.
  
  Say ccc is \emph{productive} if the product of any two   ccc posets is ccc.
  
  In fact, non-productivity is necessary in the final model, since MA is equivalent to the combination of MA(powerfully ccc) and ccc being productive.
  
  In this section,  we will follow the procedures in previous section except for Lemma \ref{lem14} where we will choose $m$, $\msa$ and $H$ to satisfy additional requirements.
  
  First, we introduce   two ccc posets that will witness non-productivity of ccc.
  \begin{defn}\label{defn11}
  For $\pi: [\omega_1]^2\ra 2$ and $l<2$, 
  $$\mc{H}^\pi_l=\{p\in [\omega_1]^{<\omega}: \text{ for all } \alpha<\beta \text{ in } p, \ \pi(\alpha, \beta)=l\}$$ is the collection of finite $l$-homogeneous subsets of $\pi$ ordered by reverse inclusion. We will omit supscript  $\pi$ if it is clear from context.
  \end{defn}
  Here, for $l<2$, $\mc{H}_l$ contains all singletons, i.e., $[\omega_1]^1\subset \mc{H}_l$.
  
  We would like to preserve ccc of $\mc{H}_l$ for each $l<2$ in the iterated forcing. To do this, we will preserve the following stronger property.
  \begin{defn}
  Assume $\varphi(\pi, \mc{C}, \mc{R}, \mct, \bfe)$. Then $\psi_0(\pi, \mc{C}, \mc{R}, \mct, \bfe)$ is the assertion that  for every $l<2$,  for every $\msa$ that is a candidate for $\mc{C}$, for every $I, J\subseteq N_\msa$ such that $\{a[I] ,a[J]: a\in \msa\}\subset \mc{H}_l$, there is a $\mc{R}$-satisfiable for $\msa$ function $f$ such that $f[I\times J]=\{l\}$.
  \end{defn}
  
  We first show that under some condition, $\psi_0(\pi, \mc{C}, \mc{R}, \mct, \bfe)$ together with $\varphi(\pi, \mc{C}, \mc{R}, \mct, \bfe)$ implies ccc of $\mc{H}_l$ for $l<2$.
  \begin{lem}\label{lem17}
  Assume $\varphi(\pi, \mc{C}, \mc{R}, \mct, \bfe)$, $\psi_0(\pi, \mc{C}, \mc{R}, \mct, \bfe)$ and $|\mct|\leq \omega_1$. Then $\mc{H}_l$ is ccc for each $l<2$.
  \end{lem}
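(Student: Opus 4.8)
The plan is to run the usual ccc argument, fed by the two hypotheses. Let $\{p_\alpha:\alpha<\omega_1\}\subseteq\mc{H}_l$; thinning to a $\Delta$-system and discarding the root (which lies in $\mc{H}_l$ and is irrelevant to compatibility), we may assume each $p_\alpha=b_\alpha\in[\omega_1]^n$ for a fixed $n\geq 1$ (the case $n=0$ being trivial), that $\msa=\{b_\alpha:\alpha\in\Gamma\}$ is non-overlapping, and that each $b_\alpha$ is $l$-homogeneous (using that $\mc{H}_l$ is closed under subsets). It then suffices to find $\alpha\neq\beta$ in $\Gamma$ with $b_\alpha\cup b_\beta$ $l$-homogeneous, and I will do this by realizing the constantly-$l$ function on a $\mc{C}$-candidate into which $\msa$ embeds coordinate-wise.

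The first step is to replace $\msa$ by such a $\mc{C}$-candidate $\msb$. Using $|\mct|\leq\omega_1$, apply Lemma \ref{lem candidate} once for each coordinate $i<n$ to thin $\Gamma$ so that $|\msa_i\cap A|\leq\omega$ for every $A\subset\Phit(\msa_i)$ in $\mct$; by Lemma \ref{lem7}(2)--(3) this stabilises $\Phit$ on every coordinate and on everything $\bfe$-equivalent to a coordinate, and the property survives further thinning. Then I close $\msa$ up under $\bfe$: for each $i$ with $\Phit(\msa_i)=\ms{D}_k$ for some $\ms{D}\in\mc{C}$, Lemma \ref{lem6} gives $\Delta\in[\omega_1]^{\omega_1}$ with $\msa_i=\ms{D}_k[\Delta]$ and $[\msa_i]_\bfe=\{\ms{D}_t[\Delta]:t<N_\ms{D}\}$, and I adjoin to $b_\alpha$ the element of $\ms{D}$ sitting at the corresponding $\Delta$-index (coordinates with $\Phit(\msa_i)=\omega_1$ contribute nothing). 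After one further thinning, so that the enlarged family is non-overlapping and of a fixed isomorphism type --- which preserves the Lemma \ref{lem candidate} property and, by Lemmas \ref{lem6} and \ref{lem7}(2), the $\bfe$-invariance of the resulting coordinate set --- Lemma \ref{lem10} produces an uncountable $\mc{C}$-candidate $\msb$. By construction there is a fixed $I\subseteq N_\msb$ such that $b'[I]\in\mc{H}_l$ and $b'[I]\in\{b_\alpha:\alpha\in\Gamma\}$ for every $b'\in\msb$.

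Now apply $\psi_0(\pi,\mc{C},\mc{R},\mct,\bfe)$ to the candidate $\msb$ with $I=J$: since $\{b'[I]:b'\in\msb\}\subseteq\mc{H}_l$, it yields a function $f:N_\msb\times N_\msb\to 2$ that is $\mc{R}$-satisfiable for $\msb$ and has $f[I\times I]=\{l\}$. Being $\mc{R}$-satisfiable for $\msb$ is precisely condition (i) of (Res), so $\varphi(\pi,\mc{C},\mc{R},\mct,\bfe)$ gives $b'<b''$ in $\msb$ with $\pi\up_{b'\times b''}$ order isomorphic to $f$; in particular $\pi(b'(i),b''(j))=l$ for all $i,j\in I$. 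Thus every pair with one endpoint in $b'[I]$ and the other in $b''[I]$ gets color $l$, and since $b'[I]$ and $b''[I]$ are themselves $l$-homogeneous, $b'[I]\cup b''[I]\in\mc{H}_l$. Writing $b'[I]=b_\alpha$, $b''[I]=b_\beta$, the original conditions $p_\alpha$ and $p_\beta$ are compatible in $\mc{H}_l$, so $\mc{H}_l$ is ccc.

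The main obstacle is the second step --- manufacturing a $\mc{C}$-candidate $\msb$ into which $\msa$ embeds coordinate-wise. This is the $\bfe$-closure construction already carried out, in the simpler candidate-to-candidate form, inside the proofs of Lemmas \ref{lem11}, \ref{lem14} and \ref{limit stage}, and it hinges on Lemma \ref{lem candidate} --- hence on the full strength of Lemma \ref{lem4} --- to keep $\Phit$ from drifting while one thins. Everything after $\msb$ is in place is a direct application of $\psi_0$ and (Res), together with the trivial observation that merging two $l$-homogeneous sets all of whose crossing pairs are colored $l$ produces an $l$-homogeneous set.
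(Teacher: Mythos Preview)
Your argument is correct and follows essentially the same route as the paper: pass to a $\Delta$-system with empty root, thin coordinate-by-coordinate via Lemma~\ref{lem candidate} (using $|\mct|\leq\omega_1$), take the $\bfe$-closure of the coordinate sets and invoke Lemma~\ref{lem10} to obtain a $\mc{C}$-candidate $\msb$ with a fixed $I$ picking out the original conditions, then apply $\psi_0$ with $I=J$ and (Res) to realize an $f$ with $f[I\times I]=\{l\}$. The paper is somewhat terser about the $\bfe$-closure step (it simply asserts the existence of $\msa$ with the right coordinate set and cites Lemma~\ref{lem7}(3) to transfer the Lemma~\ref{lem candidate} property to the new coordinates), but your more explicit unpacking via Lemma~\ref{lem6} is exactly what underlies that assertion.
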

  \begin{proof}
  Fix $l<2$ and $\{p_\alpha\in \mc{H}_l: \alpha<\omega_1\}$. Find $\Gamma\in [\omega_1]^{\omega_1}$ such that 
  \begin{enumerate}
  \item $\{p_\alpha: \alpha\in \Gamma\}$ forms a $\Delta$-system. Since the root does not affect compatibility, assume the root is $\emptyset$.
  \end{enumerate}
  Going to an uncountable subset, we may assume that for some $n<\omega$, 
  \begin{enumerate}\setcounter{enumi}{1}
  \item $|p_\alpha|=n$ whenever $\alpha\in \Gamma$ and  $\{p_\alpha: \alpha\in \Gamma\}$ is non-overlapping.
  \end{enumerate}
  Inductively applying Lemma \ref{lem8}, we find $\Gamma'\in [\Gamma]^{\omega_1}$ such that
   \begin{enumerate}\setcounter{enumi}{2}
  \item for every $i<n$ and every $A\subset \Phit(\{p_\alpha(i): \alpha\in \Gamma'\})$ in $\mct$, 
  $$|A\cap \{p_\alpha(i): \alpha\in \Gamma'\}|\leq \omega.$$
  \end{enumerate}
  Note that (3) remains true if we replace $\Gamma'$ by an uncountable subset of $\Gamma'$. Now, going to an uncountable subset of $\Gamma'$ if necessary, find an uncountable non-overlapping $\msa\subset [\omega_1]^{N_\msa}$ such that 
  \begin{enumerate}\setcounter{enumi}{3}
  \item $\{\msa_i: i<N_\msa\}$ is the $\bfe$-closure of $\{\{p_\alpha(i): \alpha\in \Gamma'\}: i<n\}$;
  \item for some $I\subseteq N_\msa$, $\{a[I]: a\in \msa\}=\{p_\alpha: \alpha\in \Gamma'\}$.  
  \end{enumerate}
 Together with Lemma \ref{lem7} (3), we conclude the following.
   \begin{enumerate}\setcounter{enumi}{5}
  \item For every $i<N_\msa$ and every $A\subset \Phit(\msa_i)$ in $\mct$,   $|A\cap \msa_i|\leq \omega.$
  \end{enumerate}
  Now by Lemma \ref{lem10}, find $\msb\in [\msa]^{\omega_1}$ such that $\msb$ is a candidate for $\mc{C}$. Applying $\psi_0(\pi, \mc{C}, \mc{R}, \mct, \bfe)$ to $l, \msb, I, I$, we find a $\mc{R}$-satisfiable for $\msb$ function $f$ such that $f[I\times I]=\{l\}$.
  
  By (Res) in $\varphi(\pi, \mc{C}, \mc{R}, \mct, \bfe)$, there are $a<b$ in $\msb$ such that $\pi\up_{a\times b}$ is order isomorphic to $f$. By (5), let $\alpha<\beta$ be such that $a[I]=p_\alpha$ and $b[I]=p_\beta$. Then $\pi[p_\alpha\times p_\beta]=\{l\}$. So $p_\alpha$ is compatible with $p_\beta$.
  \end{proof}
  
  To investigate the preservation of $\psi_0$ during the iteration process, we will introduce an equivalent formulation of $\psi_0$ that is easier to analyze. But first, we describe the structure induced from a $\mc{C}$-candidate  from which a $\mc{R}$-satisfiable function can be defined easily.
  \begin{defn}
   Assume $\varphi(\pi, \mc{C}, \mc{R}, \mct, \bfe)$.  For a candidate for $\mc{C}$ family $\msa$,  let  $\mc{A}(\msa)$ be the collection of all $(\msb, I, I')\in \mc{C}\times [N_\msa]^{N_\msb}\times [N_\msa]^{N_\msb}$ such that
    \begin{itemize}
    \item[(i)] $\{a[I], a[I']\}\subset \msb\text{ whenever } a\in \msa$;
    \end{itemize}
    and $\preceq_\msa$ be the partial order defined on $\mc{A}(\msa)$ by
      \begin{itemize}
      \item[(ii)] $(\msb, I, I')\preceq_\msa (\ms{C},J, J') $ if $ I\subseteq J, I'\subseteq J' $ and $ \{a[K], a[K']\}\subset \msb$ whenever $ a\in \ms{C}$ where $J[K]=I$ and $J'[K']=I'$.
        \end{itemize}
    Let $\prec_\msa$ be the strict part of $\preceq_\msa$,
     \begin{itemize}
       \item[(iii)] $(\msb, I, I')\prec_\msa (\ms{C}, J, J') \text{ if }(\msb, I, I')\preceq_\msa (\ms{C}, J, J') \text{ and }(\msb, I, I')\neq (\ms{C}, J, J') .$
    \end{itemize}
     Let $\mc{A}_{\max}(\msa) $ be the collection of $\prec_\msa-\max $ elements of $\mc{A}(\msa)$, i.e., $(\msb, I, I')\in \mc{A}_{\max}(\msa) $ if no $(\ms{C}, J, J')$ is $\prec_\msa$ above $(\msb, I, I')$.
  \end{defn}
  It is clear that $\mc{A}(\msa)$ is finite, $\preceq_\msa$ is transitive and $(\msb, K, K')\preceq_\msa (\ms{C}, L, L') \wedge (\ms{C}, L, L')\preceq_\msa  (\msb, K, K')$ is equivalent to $(\msb, K, K')= (\ms{C}, L, L') $.
  
  The following fact will be used to construct $\mc{R}$-satisfiable functions for $\msa$.
  \begin{lem}\label{lem18}
  Assume $\varphi(\pi, \mc{C}, \mc{R}, \mct, \bfe)$. Suppose $\msa$ is a candidate for $\mc{C}$. Then for $(\msb, I, I')\neq (\ms{C}, J, J')$ in $\mc{A}_{\max}(\msa) $, $(I\times I')\cap (J\times J')=\emptyset$.
  \end{lem}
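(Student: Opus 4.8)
The plan is to argue by contradiction. Suppose some pair $(i_0,i_0')$ lies in both $I\times I'$ and $J\times J'$, so $i_0\in I\cap J$ and $i_0'\in I'\cap J'$. I would show that, under this assumption, $(\msb,I,I')$ and $(\ms{C},J,J')$ are $\preceq_\msa$-comparable. Since two distinct $\prec_\msa$-maximal elements of a finite poset can never be comparable, this contradicts $(\msb,I,I')\neq(\ms{C},J,J')$ and finishes the proof.

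First I would record a combinatorial observation used throughout: if $\ms{D}\in\mc{C}$ and $L,L'\in[N_\msa]^{N_{\ms{D}}}$ both satisfy $a[L],a[L']\in\ms{D}$ for every $a\in\msa$, and $L\cap L'\neq\emptyset$, then $L=L'$. Indeed, fixing $j\in L\cap L'$ and any $a\in\msa$, the ordinal $a(j)$ belongs to both elements $a[L],a[L']$ of the non-overlapping family $\ms{D}$, so these two elements coincide, and since $a$ enumerates a finite set injectively, $L=L'$. I would also note that $\msa_{i_0}$ and $\msa_{i_0'}$ are uncountable; that $\msa_{i_0}\subseteq\msb_{I^{-1}(i_0)}\cap\ms{C}_{J^{-1}(i_0)}$ and $\msa_{i_0'}\subseteq\msb_{(I')^{-1}(i_0')}\cap\ms{C}_{(J')^{-1}(i_0')}$; and that all four of these coordinates lie in $\mct$ by (T2). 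Hence, by (T3), $\msb_{I^{-1}(i_0)}$ is comparable with $\ms{C}_{J^{-1}(i_0)}$, and $\msb_{(I')^{-1}(i_0')}$ is comparable with $\ms{C}_{(J')^{-1}(i_0')}$.

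Then I would split into cases according to whether $\msb=\ms{C}$. If $\msb=\ms{C}$, then $a[I],a[J]\in\msb$ for all $a$ with $i_0\in I\cap J$, so the observation gives $I=J$; likewise $I'=J'$, so the two triples coincide, a contradiction. If $\msb\neq\ms{C}$, then the remark after Definition \ref{c} forbids $\msb_{I^{-1}(i_0)}=\ms{C}_{J^{-1}(i_0)}$, so by (T3) this containment is strict; by the symmetry of the two triples in the statement, assume $\msb_{I^{-1}(i_0)}\subsetneq\ms{C}_{J^{-1}(i_0)}$. A short argument shows the other containment points the same way: if it went the opposite way, applying (C2) in both directions would give $N_{\ms{C}}\leq N_\msb$ and $N_\msb\leq N_{\ms{C}}$, hence $N_\msb=N_{\ms{C}}$, which makes the index sets provided by (C2) all equal to $N_\msb$, forcing $\msb\subseteq\ms{C}\subseteq\msb$, contradicting $\msb\neq\ms{C}$. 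Now I apply (C2) to $\msb_{I^{-1}(i_0)}\subsetneq\ms{C}_{J^{-1}(i_0)}$, obtaining index sets $\{K_l\in[N_\msb]^{N_{\ms{C}}}:l<k_0\}$ with $b[K_l]\in\ms{C}$ for all $b\in\msb$ and $I^{-1}(i_0)\in K_{l_0}$ for some $l_0$. Since $a[I]\in\msb$ for $a\in\msa$, the family $\msa$ satisfies $a[I[K_{l_0}]]\in\ms{C}$ for all $a$, and also $a[J]\in\ms{C}$ for all $a$, with $i_0\in I[K_{l_0}]\cap J$; the observation then gives $J=I[K_{l_0}]\subseteq I$. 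Treating the $i_0'$-coordinates in the same way yields an index set $K'_{l_1}$ with $b[K'_{l_1}]\in\ms{C}$ for all $b\in\msb$ and $J'=I'[K'_{l_1}]\subseteq I'$. Unwinding the definition of $\preceq_\msa$, with $K_{l_0}$ and $K'_{l_1}$ playing the roles of the restriction sets, this says exactly $(\ms{C},J,J')\preceq_\msa(\msb,I,I')$, which with maximality forces equality and contradicts $\msb\neq\ms{C}$.

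The one genuinely delicate point is the bookkeeping in this last case: I must verify that the index set $K_{l_0}$ extracted from (C2) on the $i_0$-side, and $K'_{l_1}$ on the $i_0'$-side, are precisely the sets for which $I[K_{l_0}]=J$ and $I'[K'_{l_1}]=J'$ as demanded by the definition of $\preceq_\msa$, and that the two invocations of (C2) are mutually consistent (both coordinate containments pointing the same way). Once the non-overlapping observation and (T2)--(T3) are in place, the rest is routine verification.
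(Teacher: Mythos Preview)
Your proposal is correct and follows essentially the same route as the paper's proof: assume a common pair $(i_0,i_0')$, use (T3) to force comparability of the relevant coordinate sets $\msb_{I^{-1}(i_0)}$ and $\ms{C}_{J^{-1}(i_0)}$, apply (C2) to extract the restriction index set, and conclude that one triple sits $\preceq_\msa$-below the other, contradicting maximality. Your organization differs only cosmetically---you isolate the non-overlapping observation as a reusable lemma and split on $\msb=\ms{C}$ versus $\msb\neq\ms{C}$ rather than on $\msb_i=\ms{C}_j$ versus $\msb_i\subsetneq\ms{C}_j$---and your size-comparison argument for why both containments must point the same way is more explicit than the paper's one-line appeal, but the substance is identical.
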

  \begin{proof}
  Suppose towards a contradiction that $I(i)=J(j)$ and $I'(i')=J'(j')$ for some $i, j, i', j'$.
  
  Then $\msb_i\cap \ms{C}_j\supseteq \{a(I(i)): a\in \msa\}$ is uncountable. By (T3), either $\msb_i\subseteq \ms{C}_j$ or $\ms{C}_j\subseteq \msb_i$. By symmetry, assume $\msb_i\subseteq \ms{C}_j$.
  
  First consider the case $\msb_i= \mc{C}_j$. By (C2), $\msb=\ms{C}$. Choose $a\in \msa$. Then $a[I]\in \msb$, $a[J]\in \msb$ and $a[I]\cap a[J]\neq\emptyset$. Since $\msb$ is non-overlapping, $I=J$. Similarly, $I'=J'$. This contradicts the assumption that $(\msb, I, I')\neq (\ms{C}, J, J')$.
  
  So $\msb_i\subset \ms{C}_j$. By (C2), take $K\in [N_\msb]^{N_\ms{C}}$ such that $i\in K$ and $b[K]\in \ms{C}$ whenever $b\in \msb$.
  
  We claim that $I[K]=J$. To see this, choose $a\in \msa$. On one hand, $a[I]\in \msb$ and hence $a[I[K]]\in \ms{C}$. On the other hand, $a[J]\in \ms{C}$. Since $I(i)\in I[K]\cap J$, $a[I[K]]\cap a[J]\neq\emptyset$. This, together with the fact that $\ms{C}$ is non-overlapping,  shows $I[K]=J$.
  
  Now consider the pair $\msb_{i'}$ and $\ms{C}_{j'}$. Since $\msb_i\subset \ms{C}_j$ and $\msb_{i'}\cap\ms{C}_{j'}$ is uncountable, $\msb_{i'}\subset\ms{C}_{j'}$. Repeating above argument we find $K'$ such that $I'[K']=J'$ and $b[K']\in \ms{C}$ whenever $b\in\msb$.
  
  This shows that $(\ms{C}, J, J')\prec_\msa (\msb,I, I')$.  But this contradicts the fact that $(\ms{C}, J, J')$ is $\prec_\msa -\max$.
  \end{proof}
  
  The following lemma describes the method of forming $\mc{R}$-satisfiable functions.
  \begin{lem}\label{lem R-sat}
  Assume $\varphi(\pi, \mc{C}, \mc{R}, \mct, \bfe)$. Suppose $\msa$ is a $\mc{C}$-candidate and $f: N_\msa\times N_\msa\ra 2$.  Then $f$ is $\mc{R}$-satisfiable for $\msa$ iff for every $(\msb, I, I')\in \mc{A}_{\max}(\msa)$, $f\up_{I\times I'}$ is order isomorphic to some $g\in \mc{R}(\msb)$.
  \end{lem}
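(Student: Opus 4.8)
The plan is to prove both directions directly from the definition of $\mc{R}$-satisfiability, exploiting the structure of $\mc{A}(\msa)$, $\preceq_\msa$ and the combinatorial facts already established about $\mc{C}$-candidates. The ``only if'' direction is essentially immediate: if $f$ is $\mc{R}$-satisfiable for $\msa$, then by Definition \ref{varphi}(i), for \emph{every} $\msb\in\mc{C}$ and every $I,J\in[N_\msa]^{N_\msb}$ with $\{a[I],a[J]\}\subset\msb$ for all $a\in\msa$, $f\up_{I\times J}$ is order isomorphic to some $g\in\mc{R}(\msb)$. Since each $(\msb,I,I')\in\mc{A}_{\max}(\msa)$ is in particular an element of $\mc{A}(\msa)$, it satisfies exactly this hypothesis, so $f\up_{I\times I'}$ is order isomorphic to some $g\in\mc{R}(\msb)$.

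For the ``if'' direction, suppose $f\up_{I\times I'}$ is order isomorphic to some $g\in\mc{R}(\msb)$ for every $(\msb,I,I')\in\mc{A}_{\max}(\msa)$, and let $(\ms{C},J,J')\in\mc{A}(\msa)$ be arbitrary; I must show $f\up_{J\times J'}$ is order isomorphic to some element of $\mc{R}(\ms{C})$. First I would observe that by finiteness of $\mc{A}(\msa)$ and transitivity of $\preceq_\msa$, there is some $(\msb,I,I')\in\mc{A}_{\max}(\msa)$ with $(\ms{C},J,J')\preceq_\msa(\msb,I,I')$. Unpacking the definition of $\preceq_\msa$: $J\subseteq I$, $J'\subseteq I'$, and writing $I[K]=J$, $I'[K']=J'$, we have $\{a[K],a[K']\}\subset\ms{C}$ whenever $a\in\msb$. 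Since $(\msb,I,I')\in\mc{A}_{\max}(\msa)\subseteq\mc{A}(\msa)$, we have $a[I]\in\msb$ for all $a\in\msa$, so $\{a[I][K],a[I][K']\}=\{a[J],a[J']\}\subset\ms{C}$ for all $a\in\msa$ — consistent with $(\ms{C},J,J')\in\mc{A}(\msa)$. Now by hypothesis $f\up_{I\times I'}$ is order isomorphic to some $g\in\mc{R}(\msb)$; hence $f\up_{J\times J'}=f\up_{I[K]\times I'[K']}$ is order isomorphic to $g\up_{K\times K'}$. Finally, applying (R2) in $\varphi_0(\pi,\mc{C},\mc{R})$ to $\msb$, $\ms{C}$ (with the inclusion $\ms{C}_j\subseteq\msb_i$ coming from $K$, or more precisely to the configuration witnessed by $K,K'$) — since $\{b[K],b[K']\}\subset\ms{C}$ for $b\in\msb$ and $g\in\mc{R}(\msb)$ — we conclude $g\up_{K\times K'}$ is order isomorphic to some $h\in\mc{R}(\ms{C})$. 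Therefore $f\up_{J\times J'}$ is order isomorphic to the same $h\in\mc{R}(\ms{C})$, as required.

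The main obstacle I anticipate is the careful bookkeeping of the index sets under repeated restriction: namely, verifying that the map $K$ witnessing $(\ms{C},J,J')\preceq_\msa(\msb,I,I')$ produces exactly the configuration to which (R2) applies, and that ``order isomorphic'' composes correctly along $I\supseteq J=I[K]$. There is also a mild subtlety in that (R2) as stated requires $\msb_i\subset\ms{C}_j$ (strict) for \emph{some} coordinates; when instead $\msb=\ms{C}$ and $K$ is trivial, or when some coordinates coincide, one should reduce to the remark after Definition \ref{c} (forcing $\msb=\ms{C}$, $K$ the identity, so the conclusion is immediate). Handling this degenerate case and the genuinely strict case uniformly — or splitting cleanly — is where the argument needs a little care, but no new idea beyond (R2), the definition of $\preceq_\msa$, and finiteness of $\mc{A}(\msa)$.
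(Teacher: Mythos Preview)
Your proposal is correct and follows essentially the same approach as the paper's proof: the ``only if'' direction is immediate, and for the ``if'' direction you pick an arbitrary element of $\mc{A}(\msa)$, pass to a $\preceq_\msa$-maximal element above it by finiteness, and then use (R2) to descend from the pattern in $\mc{R}$ of the maximal element back to the required one. Your bookkeeping and your remark on the degenerate case (where the two triples coincide, making (R2) unnecessary) are accurate; the paper handles this identically, simply invoking (R2) without spelling out the strict-inclusion verification.
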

  \begin{proof}
  Only the ``if'' part needs a proof. To prove that $f$ is $\mc{R}$-satisfiable for $\msa$, fix $\msb\in \mc{C}$ and $I, I'$ in $[N_\msa]^{N_\msb}$ such that $\{a[I], a[I']\}\subset \msb$ whenever $a\in \msa$. It suffices to show that $f\up_{I\times I'}$ is order isomorphic to some $g\in \mc{R}(\msb)$.
  
 Note that $(\msb, I, I')\in \mc{A}(\msa)$. Since $\mc{A}(\msa)$ is finite, there exists $(\ms{C}, J, J')\in \mc{A}_{\max}(\msa)$ such that $(\msb, I, I')\preceq_\msa (\ms{C}, J, J')$. Then  for some $h\in \mc{R}(\ms{C})$,
  \begin{enumerate}
  \item $ I\subseteq J, I'\subseteq J' $ and $ \{a[K], a[K']\}\subset \msb$ whenever $ a\in \ms{C}$ where $J[K]=I$ and $J'[K']=I'$;
  \item $f\up_{J\times J'}$ is order isomorphic to $h$.
  \end{enumerate}
  By (R2), $h\up_{K\times K'}$ is order isomorphic to some $g\in \mc{R}(\msb)$. Then $f\up_{I\times I'}$ is order isomorphic to the same $g$.  
  \end{proof}
  
    Now we have a new formulation of $\psi_0(\pi, \mc{C}, \mc{R}, \mct, \bfe)$ which refers to elements of $\mc{C}$ instead of candidates  for $\mc{C}$.
    \begin{lem}\label{lem psi0}
      Assume $\varphi(\pi, \mc{C}, \mc{R}, \mct, \bfe)$ and $|\mct|\leq \omega_1$. The following statements are equivalent.
      \begin{itemize}
      \item[(i)] $\psi_0(\pi, \mc{C}, \mc{R}, \mct, \bfe)$.
      \item[(ii)] For every $l<2$, every $\msa\in \mc{C}$ and every $I, J\subseteq N_\msa$ such that 
      $$|\{a\in \msa: a[I]\in \mc{H}_l\}|=|\{a\in \msa: a[J]\in\mc{H}_l\}|=\omega_1,$$
       there exists $f\in\mc{R}(\msa)$ such that $f[I\times J]=\{l\}$.
      \end{itemize}
    \end{lem}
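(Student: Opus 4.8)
The plan is to prove both implications; (ii)$\Rightarrow$(i) is a direct assembly argument, while (i)$\Rightarrow$(ii) requires passing to a ``product'' family, and that is where the work lies.

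\emph{(ii)$\Rightarrow$(i).} Fix $l<2$, a candidate $\msa$ for $\mc{C}$, and $I,J\subseteq N_\msa$ with $\{a[I],a[J]:a\in\msa\}\subset\mc{H}_l$. By Lemma~\ref{lem R-sat} it suffices to build $f:N_\msa\times N_\msa\ra 2$ with $f(i,j)=l$ for all $(i,j)\in I\times J$ and, for every $(\msb,I',I'')\in\mc{A}_{\max}(\msa)$, with $f\up_{I'\times I''}$ order isomorphic to some member of $\mc{R}(\msb)$. By Lemma~\ref{lem18} the rectangles $I'\times I''$ for distinct triples in $\mc{A}_{\max}(\msa)$ are pairwise disjoint, so I may define $f$ on each of them independently and set $f(i,j)=l$ on the (disjoint) remainder of $I\times J$. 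Fix $(\msb,I',I'')\in\mc{A}_{\max}(\msa)$. Since $\msa$ is non-overlapping, $a\mapsto a[I']$ injects $\msa$ into $\msb$; let $K'\subseteq N_\msb$ be the set of $\msb$-coordinates corresponding to $I\cap I'$ under this identification, so that writing $b=a[I']$ we have $b[K']=a[I\cap I']$, an $l$-homogeneous subset of $a[I]\in\mc{H}_l$; hence $\{b\in\msb:b[K']\in\mc{H}_l\}$ is uncountable, and likewise $\{b\in\msb:b[K'']\in\mc{H}_l\}$ is uncountable where $K''$ corresponds to $J\cap I''$. Applying (ii) to $l,\msb,K',K''$ produces $g\in\mc{R}(\msb)$ with $g(k,k')=l$ for all $(k,k')\in K'\times K''$ (when $K'$ or $K''$ is empty there is nothing to arrange and any $g\in\mc{R}(\msb)$ works, $\mc{R}(\msb)$ being nonempty by (R3) applied to a pair in $\msb$). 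Declaring $f\up_{I'\times I''}$ order isomorphic to $g$ makes $f(i,j)=l$ on $(I\cap I')\times(J\cap I'')=(I\times J)\cap(I'\times I'')$, which is exactly what is needed, so by Lemma~\ref{lem R-sat} $f$ is $\mc{R}$-satisfiable for $\msa$.

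\emph{(i)$\Rightarrow$(ii).} Fix $l<2$, $\msa\in\mc{C}$, and $I,J\subseteq N_\msa$ with $\msb_1:=\{a\in\msa:a[I]\in\mc{H}_l\}$ and $\msb_2:=\{a\in\msa:a[J]\in\mc{H}_l\}$ both uncountable. By routine thinning choose, for $\xi<\omega_1$, elements $a_\xi\in\msb_1$ and $b_\xi\in\msb_2$ with $a_\xi<b_\xi$ so that $\ms{D}_0:=\{a_\xi\cup b_\xi:\xi<\omega_1\}$ is non-overlapping; inside $a_\xi\cup b_\xi$ the coordinates inherited from $a_\xi$ form a copy $I^*$ of $I$ and those from $b_\xi$ a copy $J^*$ of $J$, and $d[I^*]\in\mc{H}_l$, $d[J^*]\in\mc{H}_l$ for every $d\in\ms{D}_0$. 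Using Lemma~\ref{lem candidate} coordinatewise (legitimate since $|\mct|\leq\omega_1$), together with Lemmas~\ref{lem5}, \ref{lem6} and \ref{lem10} — thinning each coordinate to be $\Phit$-minimal, enlarging $\ms{D}_0$ by the (finite) $\bfe$-closure of its coordinate set so that this set becomes $\bfe$-invariant, and applying Lemma~\ref{lem10} — I obtain a candidate $\ms{D}$ for $\mc{C}$, each of whose elements extends some $a_\xi\cup b_\xi$, still satisfying $d[I^*],d[J^*]\in\mc{H}_l$ for all $d\in\ms{D}$. Now apply (i), i.e.\ $\psi_0$, to $l,\ms{D},I^*,J^*$: this yields an $\mc{R}$-satisfiable for $\ms{D}$ function $f'$ with $f'(i,j)=l$ for all $(i,j)\in I^*\times J^*$. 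By (Res) in $\varphi(\pi,\mc{C},\mc{R},\mct,\bfe)$ there are $d<d'$ in $\ms{D}$ with $\pi\up_{d\times d'}$ order isomorphic to $f'$; writing $d\supseteq a_\xi\cup b_\xi$, $d'\supseteq a_\eta\cup b_\eta$ and reading off the $a$-part of $d$ and the $b$-part of $d'$ gives $a_\xi<b_\eta$ in $\msa$ with $\pi(a_\xi(i),b_\eta(j))=l$ for all $i\in I$, $j\in J$. Finally (R3) supplies $g\in\mc{R}(\msa)$ order isomorphic to $\pi\up_{a_\xi\times b_\eta}$, and then $g(i,j)=l$ for all $(i,j)\in I\times J$, as (ii) demands.

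The main obstacle is manufacturing, in the second direction, a genuine $\mc{C}$-candidate out of the product family $\ms{D}_0$: after the coordinatewise use of Lemma~\ref{lem candidate} a coordinate $(\ms{D}_0)_i$ may lie strictly below $\msa_{i}$ (respectively $\msa_{i-N_\msa}$) in $\mct$, so its $\bfe$-class need not be contained among the coordinates of $\ms{D}_0$ and the coordinate set fails to be $\bfe$-invariant; one must therefore enlarge $\ms{D}_0$ by the $\bfe$-closure of its coordinates, which amounts to tracking, for each newly added $\bfe$-class $\{\mse_k[\Gamma]:k<N_\mse\}$, a corresponding element of $\mse\in\mc{C}$ (via the common index set $\Gamma$) and gluing it onto the relevant members of $\ms{D}_0$, all while preserving $\Phit$-minimality so that Lemma~\ref{lem10} still applies. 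This is the same delicate but by-now-familiar bookkeeping met in the proofs of Lemmas~\ref{lem pah} and \ref{lem14}. The remaining ingredients — injectivity of $a\mapsto a[I']$, the disjointness of the $\mc{A}_{\max}$-rectangles, and the translation of coordinate sets across the identifications — are routine.
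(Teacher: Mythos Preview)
Your proof is correct and follows the same overall architecture as the paper's: the (ii)$\Rightarrow$(i) direction is essentially identical (assemble $f$ on the pairwise disjoint rectangles $I'\times I''$ from $\mc{A}_{\max}(\msa)$ via Lemma~\ref{lem18}, use (ii) on each block, invoke Lemma~\ref{lem R-sat}), and for (i)$\Rightarrow$(ii) both you and the paper build a ``product'' candidate $\ms{D}$ in which the $I$-block and the $J$-block sit side by side, then apply $\psi_0$.

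Two differences are worth noting. First, in constructing $\ms{D}$ the paper works on each factor separately --- it thins $\{a\in\msa:a[I]\in\mc{H}_l\}$ to a candidate $\msb''$ (via Lemma~\ref{lem candidate} on a single coordinate, then $\bfe$-closure, then Lemma~\ref{lem10}) and likewise produces $\ms{C}''$ for the $J$-side, and only then concatenates --- whereas you concatenate first and do the candidate-manufacturing on the product; both routes work, and your closing paragraph correctly identifies the bookkeeping involved. Second, and more substantively, your final step in (i)$\Rightarrow$(ii) takes an unnecessary detour through (Res) and (R3). Once you have $f'$ that is $\mc{R}$-satisfiable for $\ms{D}$, observe that for every $d\in\ms{D}$ the $a$-block $d[K_1]$ and the $b$-block $d[K_2]$ both lie in $\msa$; the very definition of $\mc{R}$-satisfiability then forces $f'\upharpoonright_{K_1\times K_2}$ to be order isomorphic to some $g\in\mc{R}(\msa)$, and reading off the $I^*\times J^*$ entries gives $g[I\times J]=\{l\}$ immediately. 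This is how the paper concludes, and it saves you from invoking (Res) at all.
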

  \begin{proof}
  ``(i)$\Rightarrow$(ii)''. Fix $l<2, \msa\in \mc{C}$ and $I, J\subseteq N_\msa$ such that 
  $$|\{a\in \msa: a[I]\in \mc{H}_l\}|=|\{a\in \msa: a[J]\in\mc{H}_l\}|=\omega_1.$$ 
  First, by Lemma \ref{lem candidate},  find $\msb\in [\msa]^{\omega_1}$ such that
   \begin{enumerate}
   \item for every $b\in \msb$, $b[I]\in \mc{H}_l$;
  \item for every $A\subset\Phit(\msb_0)$ in $\mct$, $|\msb_0\cap A|\leq \omega$.
  \end{enumerate}
  Let $\msb'\subset [\omega_1]^{N_{\msb'}}$ be uncountable non-overlapping such that
  $$\{\msb'_i: i<N_{\msb'}\} \text{ is the $\bfe$-closure of } \msb_0.$$
  By Lemma \ref{lem7},
    \begin{enumerate}\setcounter{enumi}{2}
  \item for every $i<N_{\msb'}$ and $A\subset\Phit(\msb'_i)$ in $\mct$, $|\msb'_i\cap A|\leq \omega$.
  \end{enumerate}
  Let $I'$ be such that
     \begin{enumerate}\setcounter{enumi}{3}
  \item for every $b\in \msb'$, $b[I']\in \msb$.
  \end{enumerate}
  Then by Lemma \ref{lem10}, there is $\msb''\in [\msb']^{\omega_1}$ such that $\msb''$ is a candidate for $\mc{C}$. 
  
  Similarly, find $\ms{C}\in [\msa]^{\omega_1}$,  $\ms{C}'$, $J'$ and $\ms{C}''\in [\ms{C}']^{\omega_1}$ that is a candidate for $\mc{C}$ such that
  \begin{enumerate}\setcounter{enumi}{4}
  \item for every $c\in \ms{C}$, $c[J]\in \mc{H}_l$;
  \item $\{\ms{C}'_i: i<N_{\msb'}\} \text{ is the $\bfe$-closure of } \ms{C}_0$;
   \item for every $i<N_{\ms{C}'}$ and $A\subset\Phit(\ms{C}'_i)$ in $\mct$, $|\ms{C}'_i\cap A|\leq \omega$.
  \item for every $c\in \ms{C}'$, $c[J']\in \ms{C}$.
  \end{enumerate}
  
  Now find uncountable non-overlapping $\ms{D}\subset [\omega_1]^{N_{\msb''}+N_{\ms{C}''}}$ such that
    \begin{enumerate}\setcounter{enumi}{8}
  \item for every $d\in \ms{D}$, $d[N_{\msb''}]\in \msb''$ and $d[[N_{\msb''}, N_{\msb''}+N_{\ms{C}''})]\in \ms{C}''$;
  \item  $\ms{D}$ is a candidate for $\mc{C}$.
  \end{enumerate}
  Together with (1), (4), (5) and (8), we conclude that
$$\text{for every $d\in \ms{D}$, $d[I'[I]]\in \mc{H}_l$ and }d[\{N_{\msb''}+ J'(j): j\in J\}]\in \mc{H}_l.$$

Applying $\psi_0(\pi, \mc{C}, \mc{R}, \mct, \bfe)$ to $l, \ms{D}$ and $ I'[I], \{N_{\msb''}+ J'(j): j\in J\}\subseteq N_\ms{D}$, we find a $\mc{R}$-satisfiable for $\ms{D}$ function $g$ such that $g[I'[I]\times \{N_{\msb''}+ J'(j): j\in J\}]=\{l\}$. 
  
  Now by (4) and (8), $g\up_{I'\times \{N_{\msb''}+j: j\in J'\}}$ is order isomorphic to some $f\in \mc{R}(\msa)$. Clearly, $f[I\times J]=\{l\}$.\medskip
  
  ``(ii)$\Rightarrow$(i)''.  Fix  $l<2$,  a candidate for $\mc{C}$ family $\msa$ and $I, J\subseteq N_\msa$ such that $\{a[I] ,a[J]: a\in \msa\}\subset \mc{H}_l$. Now define $f: N_\msa\times N_\msa\ra 2$ such that
   \begin{enumerate}\setcounter{enumi}{10}
  \item for every $(\msb, K, K')\in\mc{A}_{\max}(\msa)$, $f\up_{K\times K'}$ is order isomorphic to some $g\in \mc{R}(\msb)$ and $f[(K\cap I)\times (K'\cap J)]=\{l\}$.
   \item for $(i, j)$ not in $K\times K'$ of above form, $f(i, j)=l$.
  \end{enumerate}
 First note that for a single  $(\msb, K, K')\in\mc{A}_{\max}(\msa)$, there exists $f\up_{K\times K'}$ satisfying (11). To see this, apply (ii) to $l$, $\msb\in \mc{C}$ and $I' ,J'$ where $K[I']=K\cap I$ and $K'[J']=K'\cap J$.  
 
 Then by Lemma \ref{lem18}, there exists $f$ satisfying (11).  Clearly, (12) is automatically satisfied. 
 
 We will show that $f$ is as desired. 
 
 To see $f[I\times J]=\{l\}$, fix $(i, j)\in I\times J$. If $f(i, j)$ is defined by (12), then $f(i, j)=l$. Now suppose $(i, j)\in K\times K'$ for some $(\msb, K, K')\in\mc{A}_{\max}(\msa)$. Then $(i, j)\in (K\cap I)\times (K'\cap J)$ and hence $f(i, j)=l$.
 
 Now by Lemma \ref{lem R-sat}, $f$ is $\mc{R}$-satisfiable for $\msa$ and hence witnesses $\psi_0(\pi, \mc{C}, \mc{R}, \mct, \bfe)$ for $l, \msa, I, J$.
  \end{proof}
  
  The reformulation of $\psi_0(\pi, \mc{C}, \mc{R}, \mct, \bfe)$ in above lemma indicates the preservation of $\psipi$ in the iterated forcing process. We first describe the successor steps.
  \begin{cor}\label{cor preservation}
  Assume $\varphi(\pi, \mc{C}, \mc{R}, \mct, \bfe)$, $\psi_0(\pi, \mc{C}, \mc{R}, \mct, \bfe)$ and $|\mct|\leq \omega_1$. 
  \begin{enumerate}
  \item If $\ms{P}$ is a ccc poset preserving $\varphi(\pi, \mc{C}, \mc{R}, \mct, \bfe)$, then $\ms{P}$ also preserves $\psi_0(\pi, \mc{C}, \mc{R}, \mct, \bfe)$.
  \item  Suppose $\msa$ is a candidate for $\mc{C}$ and $H$ is a non-empty collection of $\mc{R}$-satisfiable functions. If for every $l<2$ and every $I, J\subseteq N_\msa$ such that
  $$|\{a\in \msa: a[I]\in \mc{H}_l\}|=|\{a\in \msa: a[J]\in\mc{H}_l\}|=\omega_1,$$
       there exists $f\in H$ such that $f[I\times J]=\{l\}$, then $\psi_0(\pi, \mc{C}\cup \{\bigcup G\}, \mc{R}\cup\{(\bigcup G, H)\}, \mct', \bfe')$ holds in $V[G]$ where $G$ is an uncountable $\pah$-generic filter and $\mct', \bfe'$ are induced from $\mc{C}\cup \{\bigcup G\}$.
  \end{enumerate}
  \end{cor}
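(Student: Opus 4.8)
The plan is to deduce both clauses from the reformulation of $\psi_0$ in Lemma~\ref{lem psi0}: whenever $\varphi(\pi,\mc{C},\mc{R},\mct,\bfe)$ holds and $|\mct|\le\omega_1$, the assertion $\psipi$ is equivalent to condition (ii) of that lemma, namely that for every $l<2$, every $\msa\in\mc{C}$ and every $I,J\subseteq N_\msa$ for which $\{a\in\msa:a[I]\in\mc{H}_l\}$ and $\{a\in\msa:a[J]\in\mc{H}_l\}$ are both uncountable, some $f\in\mc{R}(\msa)$ has $f[I\times J]=\{l\}$. The advantage of (ii) is that it quantifies only over \emph{members} of $\mc{C}$ and mentions $\mc{H}_l$, which is computed from the ground-model coloring $\pi$; consequently, for $\msa\in\mc{C}$ lying in the ground model the set $\{a\in\msa:a[I]\in\mc{H}_l\}$ is a ground-model set, and since the forcings involved are ccc it stays uncountable (equivalently, of size $\omega_1$) in the extension. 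Both parts of the corollary will be straightforward consequences of this absoluteness.

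For part (1), I first observe that in $V[\ms{P}]$ we have $\varphi(\pi,\mc{C},\mc{R},\mct,\bfe)$ by hypothesis and $|\mct|\le\omega_1$ because $\ms{P}$ is ccc, so Lemma~\ref{lem psi0} applies there and it suffices to verify its condition (ii) in $V[\ms{P}]$. Fixing $l<2$, $\msa\in\mc{C}$ and $I,J\subseteq N_\msa$ witnessing the hypothesis of (ii) in $V[\ms{P}]$, the sets $\{a\in\msa:a[I]\in\mc{H}_l\}$ and $\{a\in\msa:a[J]\in\mc{H}_l\}$ lie in $V$, hence are already uncountable in $V$; applying (ii) in $V$ — which holds by $\psipi$ together with Lemma~\ref{lem psi0} — yields $f\in\mc{R}(\msa)$ with $f[I\times J]=\{l\}$, and the same $f$ serves in $V[\ms{P}]$ since $\mc{R}$ is unchanged.

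For part (2), Lemma~\ref{lem pah} gives $\varphi(\pi,\mc{C}',\mc{R}',\mct',\bfe')$ in $V[G]$, where $\mc{C}'=\mc{C}\cup\{\bigcup G\}$ and $\mc{R}'=\mc{R}\cup\{(\bigcup G,H)\}$, and $|\mct'|\le\omega_1$ because only finitely many nodes were adjoined; so Lemma~\ref{lem psi0} again reduces the task to verifying condition (ii) for $\mc{C}'$ in $V[G]$. Given $\msb\in\mc{C}'$ and $I,J\subseteq N_\msb$ satisfying the uncountability hypothesis, I split into two cases. If $\msb\in\mc{C}$, then $\mc{R}'(\msb)=\mc{R}(\msb)$ and the argument of part (1) applies verbatim. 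If $\msb=\bigcup G$, then $N_\msb=N_\msa$ and $\bigcup G\subseteq\msa$, so $\{a\in\msa:a[I]\in\mc{H}_l\}\supseteq\{b\in\bigcup G:b[I]\in\mc{H}_l\}$ is uncountable and likewise for $J$; the hypothesis of part (2) now supplies $f\in H=\mc{R}'(\bigcup G)$ with $f[I\times J]=\{l\}$, which is what condition (ii) demands.

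I do not expect a genuine obstacle. The two points that need care are, first, that before invoking Lemma~\ref{lem psi0} in the extension one must have $\varphi$ installed there for the (possibly expanded) structure together with $|\mct'|\le\omega_1$ — supplied respectively by the hypothesis/Lemma~\ref{lem pah} and by the finiteness of each step's additions — and, second, the absoluteness of uncountability of the sets $\{a\in\msa:a[I]\in\mc{H}_l\}$, which rests on $\mc{H}_l$ being a ground-model object and on ccc-ness of the forcings (for $\pah$, this is Lemma~\ref{lem11}).
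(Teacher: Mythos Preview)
Your proof is correct and follows exactly the approach the paper intends: the corollary is stated without proof as an immediate consequence of the reformulation in Lemma~\ref{lem psi0}, and you have spelled out precisely the intended argument, using that condition (ii) of Lemma~\ref{lem psi0} quantifies only over ground-model objects $\msa\in\mc{C}$ and $\mc{R}(\msa)$ (plus the one new pair $(\bigcup G,H)$ in part (2)), together with ccc absoluteness of uncountability for the ground-model sets $\{a\in\msa:a[I]\in\mc{H}_l\}$. Your care in noting that Lemma~\ref{lem pah} supplies $\varphi$ for the expanded structure and that $|\mct'|\le\omega_1$ is preserved is exactly right; the paper makes the same observation (that the ``(ii)$\Rightarrow$(i)'' direction does not even need $|\mct|\le\omega_1$) just after this corollary.
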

  \textbf{Remark.} It may seem that the assumption in (2) of above corollary that corresponding $f$ exists for $l<2$ and $I, J$ with $|\{a\in \msa: a[I]\in \mc{H}_l\}|=|\{a\in \msa: a[J]\in\mc{H}_l\}|=\omega_1$ is stronger than the property asserted by $\psipi$. But if we replace $\msa$ by some $\msb\in [\msa]^{\omega_1}$ with property that for every $l<2$ and $I$, either $\{a\in \msb: a[I]\in \mc{H}_l\}=\emptyset$ or $\{a\in \msb: a[I]\in \mc{H}_l\}=\msb$, then the assumption in (2) above will be the same as the one asserted by $\psipi$. As a matter of fact, the proof of Lemma \ref{lem psi0} shows that $\psipi$ is equivalent to its modification by replacing its assumption $\{a[I] ,a[J]: a\in \msa\}\subset \mc{H}_l$ by $|\{a\in \msa: a[I]\in \mc{H}_l\}|=|\{a\in \msa: a[J]\in\mc{H}_l\}|=\omega_1$.\medskip
  
  Then we describe the limit steps. Note that the ``(ii)$\Rightarrow$(i)'' part in the proof of  of Lemma \ref{lem psi0} does not use $|\mct|\leq \omega_1$.
  \begin{cor}\label{cor limit}
Suppose $\langle\mc{P}_\alpha, \dot{\mc{Q}}_\beta: \alpha\leq\nu, \beta<\nu\rangle$ is a finite support iteration of ccc posets such that 
\begin{itemize}
\item[(i)] for $\alpha<\nu$, $|\mct^\alpha|\leq \omega_1$ and $\varphi(\pi, \mc{C}^\alpha, \mc{R}^\alpha, \mct^\alpha, \bfe^\alpha)$,  $\psi_0(\pi, \mc{C}^\alpha, \mc{R}^\alpha, \mct^\alpha, \bfe^\alpha)$ hold in $V^{\mc{P}_\alpha}$;
\item[(ii)] for $\alpha<\beta$, $\mc{C}^\alpha\subseteq \mc{C}^\beta$ and $\mc{R}^\beta$ extends $\mc{R}^\alpha$ as functions;
  \item[(iii)] $\nu$ is a limit ordinal,  $\mc{C}^\nu=\bigcup_{\beta<\nu} \mc{C}^\beta$, $\mc{R}^\nu=\bigcup_{\beta<\nu} \mc{R}^\beta$.
\end{itemize} 
Then $\varphi(\pi, \mc{C}^ \nu, \mc{R}^ \nu, \mct^ \nu, \bfe^ \nu)$ and $\psi_0(\pi, \mc{C}^ \nu, \mc{R}^ \nu, \mct^ \nu, \bfe^ \nu)$ hold in $V^{\mc{P}_ \nu}$.
\end{cor}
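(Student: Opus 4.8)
The plan is to treat the two conclusions separately. The assertion $\varphi(\pi, \mc{C}^\nu, \mc{R}^\nu, \mct^\nu, \bfe^\nu)$ in $V^{\mc{P}_\nu}$ requires nothing new: hypotheses (i)--(iii) here are exactly the hypotheses of Lemma \ref{limit stage} (the extra $\psi_0$ clauses play no role there), so $\varphi(\pi, \mc{C}^\nu, \mc{R}^\nu, \mct^\nu, \bfe^\nu)$ is immediate. All the work goes into $\psi_0(\pi, \mc{C}^\nu, \mc{R}^\nu, \mct^\nu, \bfe^\nu)$, and I would obtain it through the reformulation in Lemma \ref{lem psi0}. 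The one point of care is that $|\mct^\nu| \le \omega_1$ may fail (it can have size $\omega_2$ when $\nu = \omega_2$), so Lemma \ref{lem psi0} cannot simply be quoted at stage $\nu$. Instead I split it into its two implications: condition (ii) of Lemma \ref{lem psi0} will be verified at stage $\nu$ by reflecting down the iteration, and then only the ``(ii)$\Rightarrow$(i)'' direction of that lemma will be applied at stage $\nu$ --- which, as recorded in the remark just above the corollary, uses only $\varphi(\pi, \mc{C}^\nu, \mc{R}^\nu, \mct^\nu, \bfe^\nu)$ (already in hand) and not the cardinality bound on $\mct$.

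To verify condition (ii) of Lemma \ref{lem psi0} at stage $\nu$: fix $l < 2$, $\msa \in \mc{C}^\nu$ and $I, J \subseteq N_\msa$ with $|\{a \in \msa : a[I] \in \mc{H}_l\}| = |\{a \in \msa : a[J] \in \mc{H}_l\}| = \omega_1$ in $V^{\mc{P}_\nu}$. Since $\mc{C}^\nu = \bigcup_{\beta < \nu} \mc{C}^\beta$, choose $\beta < \nu$ with $\msa \in \mc{C}^\beta$. The sets $\{a \in \msa : a[I] \in \mc{H}_l\}$ and $\{a \in \msa : a[J] \in \mc{H}_l\}$ are computed identically in $V^{\mc{P}_\beta}$ and $V^{\mc{P}_\nu}$ (membership in $\mc{H}_l$ only involves $\pi$ restricted to a finite set), and $\mc{P}_\nu/\mc{P}_\beta$ is ccc, so both sets are already uncountable in $V^{\mc{P}_\beta}$. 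There $\varphi$, $\psi_0$ and $|\mct^\beta| \le \omega_1$ all hold, so the ``(i)$\Rightarrow$(ii)'' direction of Lemma \ref{lem psi0} produces $f \in \mc{R}^\beta(\msa)$ with $f[I \times J] = \{l\}$. Finally $\mc{R}^\nu(\msa) = \mc{R}^\beta(\msa)$ because the maps $\mc{R}^\gamma$ are increasing and $\msa \in \mathrm{dom}(\mc{R}^\beta)$; hence $f \in \mc{R}^\nu(\msa)$, which is condition (ii) at stage $\nu$.

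With condition (ii) available at stage $\nu$, I would run the ``(ii)$\Rightarrow$(i)'' argument from the proof of Lemma \ref{lem psi0} verbatim in $V^{\mc{P}_\nu}$: given a candidate $\msa$ for $\mc{C}^\nu$, $l < 2$, and $I, J \subseteq N_\msa$ with $\{a[I], a[J] : a \in \msa\} \subseteq \mc{H}_l$, one defines $f \colon N_\msa \times N_\msa \ra 2$ by choosing, for each $(\msb, K, K') \in \mc{A}_{\max}(\msa)$, the restriction $f\up_{K \times K'}$ to be order isomorphic to some $g \in \mc{R}^\nu(\msb)$ which is constantly $l$ on the required sub-block $(K\cap I)\times(K'\cap J)$ --- such $g$ being supplied by condition (ii) applied to $\msb \in \mc{C}^\nu$ --- these blocks being pairwise disjoint by Lemma \ref{lem18}, and filling the remaining entries of $f$ with $l$; Lemma \ref{lem R-sat} then certifies that $f$ is $\mc{R}^\nu$-satisfiable for $\msa$, and by construction $f[I \times J] = \{l\}$. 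This establishes $\psi_0(\pi, \mc{C}^\nu, \mc{R}^\nu, \mct^\nu, \bfe^\nu)$ and finishes the proof. The main --- and really the only --- obstacle is precisely the possible failure of $|\mct^\nu| \le \omega_1$ at limit stages: a direct appeal to Lemma \ref{lem psi0} at $\nu$ is illegitimate, and the argument works only because the cardinality hypothesis enters solely in the ``(i)$\Rightarrow$(ii)'' half (through Lemma \ref{lem candidate}), which we invoke downstairs at a stage $\beta < \nu$ where it holds, after reflecting the $\mc{H}_l$-hypotheses there.
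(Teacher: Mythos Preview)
Your proof is correct and follows exactly the approach the paper intends. The paper gives no explicit proof of this corollary beyond the remark immediately preceding it --- that the ``(ii)$\Rightarrow$(i)'' half of Lemma \ref{lem psi0} does not use $|\mct|\leq\omega_1$ --- and you have correctly unpacked that hint: condition (ii) of Lemma \ref{lem psi0} at stage $\nu$ reflects to some $\beta<\nu$ (since each $\msa\in\mc{C}^\nu$ already lies in some $\mc{C}^\beta$ and $\mc{R}^\nu(\msa)=\mc{R}^\beta(\msa)$), and then the cardinality-free direction yields $\psi_0$ at $\nu$. One minor remark: your invocation of the ccc quotient to argue that the sets $\{a\in\msa:a[I]\in\mc{H}_l\}$ remain uncountable in $V^{\mc{P}_\beta}$ is unnecessary --- these sets are literally identical in $V^{\mc{P}_\beta}$ and $V^{\mc{P}_\nu}$, since $\msa$ is a fixed object of $V^{\mc{P}_\beta}$ and membership of $a[I]$ in $\mc{H}_l$ is absolute.
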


So in order to preserve property $\psi_0$, we only need to take care of the forcing $\pah$ in the iteration process. More precisely, in the proof of Lemma \ref{lem14}, we need to choose $m, \msa''$ and $H$ carefully so that the assumption of Corollary \ref{cor preservation} (2) is satisfied. To do this, we first observe the following property of $\mc{R}$-satisfiable functions.
\begin{lem}\label{lem23}
Assume $\varphi(\pi, \mc{C}, \mc{R}, \mct, \bfe)$. Suppose $1< m<\omega$ and $\msa$ is a candidate for $\mc{C}$. If $\langle f_{i, j}: i<j<m\rangle$ is a sequence of $\mc{R}$-satisfiable for $\msa$ functions, then there are $a_0<a_1<\cdot\cdot\cdot <a_{m-1}$ in $\msa$ such that $f_{i, j}$ is order isomorphic to $\pi\up_{a_i\times a_j}$ whenever $i<j<m$.
\end{lem}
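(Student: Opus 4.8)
The plan is to prove, by induction on $m\ge 1$, the following apparently stronger statement: for every candidate $\msa$ for $\mc{C}$ and every sequence $\langle f_{i,j}:i<j<m\rangle$ of $\mc{R}$-satisfiable for $\msa$ functions, the family of all $a^0\cup\dots\cup a^{m-1}$ with $a^0<\dots<a^{m-1}$ in $\msa$ and $\pi\up_{a^i\times a^j}$ order isomorphic to $f_{i,j}$ for every $i<j<m$ contains an uncountable non-overlapping subfamily. The lemma is then the $m$-th instance, since any element of such a family is a chain $a_0<\dots<a_{m-1}$ as required. For $m=1$ this family is just $\msa$; the case $m=2$ is a direct application of (Res) — realize $f_{0,1}$ in $\msa$, and, invoking (Res) on uncountable tails of $\msa$ as in the proof of Lemma \ref{lem1}, do so cofinally often. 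Already here one sees the key point: a single use of (Res) only produces a relation between \emph{two} members of a candidate, so to obtain an $m$-chain with all $\binom{m}{2}$ pairwise colours prescribed one must iterate, nesting the realizations $m-1$ deep; that nesting is exactly the induction.

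For the inductive step, assume the statement for $m$ and let $\langle f_{i,j}:i<j<m+1\rangle$ be given; put $N=N_\msa$. By the inductive hypothesis applied to $\langle f_{i,j}:i<j<m\rangle$, fix an uncountable non-overlapping family $\ms{D}$ of valid $m$-chains. Since $\ms{D}_{tN+l}\subseteq\msa_l$ for $t<m$, $l<N$, the smallness clause of Definition \ref{candidate}~(ii) is inherited from $\msa$, and by Lemma \ref{lem7}~(2) we get $\Phit(\ms{D}_{tN+l})=\Phit(\msa_l)$; hence by Lemma \ref{lem6} and the $\bfe$-invariance of $\{\msa_l:l<N\}$ the set $\{\ms{D}_s:s<mN\}$ is $\bfe$-invariant, and Lemma \ref{lem10} lets us thin $\ms{D}$ to a candidate for $\mc{C}$. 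Now form
\[
\ms{E}=\{c\cup e:\ c\in\ms{D},\ e\in\msa,\ c<e\},
\]
which is uncountable; up to discarding a countable set, its coordinates are those of $\ms{D}$ (the first $mN$) followed by those of $\msa$ (the last $N$), and the same argument (plus a finite induction to arrange clauses (iii)--(iv) of Definition \ref{candidate}) thins $\ms{E}$ to a candidate for $\mc{C}$ with $N_{\ms{E}}=(m+1)N$. Finally, define $g:N_{\ms{E}}\times N_{\ms{E}}\ra 2$ by prescribing, for each $i<m$, that $g$ on the block $[iN,(i+1)N)\times[mN,(m+1)N)$ be order isomorphic to $f_{i,m}$, and, on every pair of coordinates not so prescribed, setting $g(x,y)=\pi(d(x),d'(y))$ for a fixed $d<d'$ in $\ms{E}$.

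The main step is to verify that $g$ is $\mc{R}$-satisfiable for $\ms{E}$; this is a routine check of the kind carried out in Lemma \ref{lem R-sat} and in Claim 3 of the proof of Lemma \ref{lem pah}. Given $\ms{F}\in\mc{C}$ and $I,J\in[N_{\ms{E}}]^{N_{\ms{F}}}$ with $\{x[I],x[J]\}\subset\ms{F}$ for all $x\in\ms{E}$, each of $I,J$ lies inside a single $\bfe$-class of coordinates of $\ms{E}$ (as in those proofs), and such a class sits either inside one block $[iN,(i+1)N)$ with $i\le m$, or one is reduced to the case $I\times J\subseteq[iN,(i+1)N)\times[mN,(m+1)N)$ for some $i<m$; in the latter case, since $f_{i,m}$ is $\mc{R}$-satisfiable for $\msa$ and the translated index sets $I',J'$ satisfy $a[I'],a[J']\in\ms{F}$ for all $a\in\msa$ (by Lemma \ref{lem8}~(4), as this already holds for uncountably many $a$), $g\up_{I\times J}$ is order isomorphic to some $h\in\mc{R}(\ms{F})$; in every other case $g$ agrees with $\pi$ on $I\times J$ and (R3) gives the conclusion. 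Granting this, apply condition (Res) in $\phipi$ to uncountable tails of $\ms{E}$ (legitimate by the Remark after Definition \ref{varphi}) and a transfinite recursion with gaps to obtain an uncountable non-overlapping family of pairs $\ms{F}^1_\alpha<\ms{F}^2_\alpha$ in $\ms{E}$, each realizing $g$. Writing the $\ms{D}$-part of $\ms{F}^1_\alpha$ as $a^0_\alpha\cup\dots\cup a^{m-1}_\alpha\in\ms{D}$ and the $\msa$-part of $\ms{F}^2_\alpha$ as $e_\alpha$, realizing $g$ makes $\pi\up_{a^i_\alpha\times e_\alpha}$ order isomorphic to $f_{i,m}$ for each $i<m$, while $\pi\up_{a^i_\alpha\times a^j_\alpha}$ is order isomorphic to $f_{i,j}$ for $i<j<m$ because $a^0_\alpha\cup\dots\cup a^{m-1}_\alpha\in\ms{D}$; since $a^{m-1}_\alpha<e_\alpha$, the set $a^0_\alpha\cup\dots\cup a^{m-1}_\alpha\cup e_\alpha$ is a valid $(m+1)$-chain, and the gaps make these non-overlapping. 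This completes the induction.

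The conceptual obstacle is the one flagged above: (Res) realizes a pattern only between two members of a candidate, so by itself it cannot furnish an $m$-chain with all pairwise colours prescribed once $m\ge 3$; the device above circumvents this by realizing the relation between a candidate $\ms{D}$ of already-structured $m$-chains and one further copy of $\msa$. Everything else — the two thinnings to candidates for $\mc{C}$ and the $\mc{R}$-satisfiability of $g$ — is bookkeeping parallel to the proofs of Lemmas \ref{lem11}, \ref{lem pah} and \ref{lem R-sat}, the only point needing attention being the $\bfe$-structure of the coordinates of $\ms{E}$, since a coordinate from the $\ms{D}$-part and one from the $\msa$-part are both subsets of some $\msa_l$.
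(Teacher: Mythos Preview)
Your proof is correct and takes essentially the same inductive approach as the paper: build a candidate consisting of already-valid $m$-chains, then apply (Res) with a function that prescribes $f_{i,m}$ on appropriate blocks so as to attach one more element of $\msa$. The paper is slightly leaner --- it applies (Res) directly to the family $\msb$ of $m$-chains (your $\ms{D}$), prescribing $f_{i,k}$ on the blocks $[iN,(i+1)N)\times[0,N)$, and reads off the $(m+1)$-chain $a_{\alpha,0}<\cdots<a_{\alpha,m-1}<a_{\beta,0}$ from two realized members $a_\alpha<a_\beta$ of $\msb$, so your auxiliary family $\ms{E}$ (and the extra $\bfe$-bookkeeping it entails) is unnecessary.
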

\begin{proof}
We prove the lemma by induction on $m$. For $m=2$, the lemma follows from (Res) in $\varphi(\pi, \mc{C}, \mc{R}, \mct, \bfe)$.

Suppose the lemma holds for $m=k$ and we   prove for $m=k+1$. Fix a $\mc{C}$-candidate $\msa$ and a sequence of $\mc{R}$-satisfiable for $\msa$ functions $\langle f_{i, j}: i<j<k+1\rangle$.

First note that for every $\msa'\in [\msa]^{\omega_1}$ and $i<j<k+1$, $\msa'$ is a candidate for $\mc{C}$ and $f_{i, j}$ is  $\mc{R}$-satisfiable for $\msa'$. By induction hypothesis, we then inductively find $\{\langle a_{\alpha, i}\in \msa: i<k\rangle :\alpha<\omega_1\}$  such that for $\alpha<\beta$,
\begin{enumerate}
\item  $a_{\alpha, 0}<a_{\alpha, 1}<\cdot\cdot\cdot<a_{\alpha, k-1}<a_{\beta, 0}$;
\item $f_{i, j}$ is order isomorphic to $\pi\up_{a_{\alpha, i}\times a_{\alpha, j}}$ whenever $i<j<k$.
\end{enumerate}
At step $\alpha$, apply   induction hypothesis to $\{a\in \msa: a> a_{\xi, i} \text{ for all }\xi<\alpha, i<k\}$ and $\langle f_{i, j}: i<j<k\rangle$.

For each $\alpha$, let $a_\alpha=\bigcup_{i<k} a_{\alpha, i}$. Then
$$\msb=\{a_\alpha: \alpha<\omega_1\}$$
is a candidate for $\mc{C}$, $N_\msb=kN_\msa$ and
\begin{enumerate}\setcounter{enumi}{2}
\item for every $b\in \msb$ and $i<k$, $b[[iN_\msa, (i+1)N_\msa)]\in \msa$.
\end{enumerate}
It is   straightforward to check (or by   Lemma \ref{lem11} Claim) that  $f: N_\msb\times N_\msb\ra 2$ is $\mc{R}$-satisfiable for $\msb$ iff for every $i, j<k$, $f\up_{[iN_\msa, (i+1)N_\msa)\times [jN_\msa, (j+1)N_\msa)}$ is order isomorphic to some $g$ that is $\mc{R}$-satisfiable for $\msa$.

Now choose a $\mc{R}$-satisfiable for $\msb$ function $f$ such that
\begin{enumerate}\setcounter{enumi}{3}
\item for every $i<k$, $f\up_{[iN_\msa, (i+1)N_\msa) \times [0, N_\msa)}$ is order isomorphic to $f_{i, k}$.
\end{enumerate}
Applying (Res) in $\varphi(\pi, \mc{C}, \mc{R}, \mct, \bfe)$ to $\msb$ and $f$, we get $\alpha<\beta$ such that $\pi\up_{a_\alpha\times a_\beta}$ is order isomorphic to $f$. Then by (4),
\begin{enumerate}\setcounter{enumi}{4}
\item for every $i<k$, $\pi\up_{a_{\alpha, i}\times a_{\beta, 0}}$ is order isomorphic to $f_{i, k}$.
\end{enumerate}
This, together with (2), shows that $a_{\alpha, 0}<\cdot\cdot\cdot<a_{\alpha, k-1}<a_{\beta, 0}$ in $\msa$ are as desired. This finishes the induction and the proof of the lemma.
\end{proof}
  
  Then we will use the following lemma to choose $m, \msa''$ and $H$ such that $H$ is non-empty and the assumption of Corollary \ref{cor preservation} (2) holds.
  \begin{lem}\label{lem24}
  Assume $\varphi(\pi, \mc{C}, \mc{R}, \mct, \bfe)$ and $\psi_0(\pi, \mc{C}, \mc{R}, \mct, \bfe)$.  Suppose  $\msa$ is a candidate for $\mc{C}$. Then there are $m<\omega$ and a sequence of $\mc{R}$-satisfiable for $\msa$ functions $\langle f_{i, j}: i<j<m\rangle$ such that for all $a_0<a_1<\cdot\cdot\cdot <a_{m-1}$ in $\msa$, if  $f_{i, j}$ is order isomorphic to $\pi\up_{a_i\times a_j}$ whenever $i<j<m$, then for every $l<2$ and $b\in \mc{H}_l$, $b\cap a_i=\emptyset$ for some $i<m$.
  \end{lem}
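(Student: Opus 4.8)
The plan is to reduce the statement to a finite combinatorial fact about $\mc{R}$-satisfiable functions and then dispatch that fact by a crude counting argument. Write $N=N_\msa$ and let $\mc{F}$ be the set of all functions $f\colon N\times N\ra 2$ that are $\mc{R}$-satisfiable for $\msa$; this is a finite set, and it is nonempty (see below). The first step is to reformulate the goal: it is enough to find $m<\omega$ and functions $f_{i,j}\in\mc{F}$ (for $i<j<m$) such that no pair $(l,s)$ with $l<2$ and $s\colon m\ra N$ is \emph{bad}, where by definition $(l,s)$ is bad if $f_{i,j}(s(i),s(j))=l$ for all $i<j<m$. Granting this, suppose $a_0<\dots<a_{m-1}$ in $\msa$ are such that $\pi\up_{a_i\times a_j}$ is order isomorphic to $f_{i,j}$ for all $i<j$, fix $l<2$ and $b\in\mc{H}_l$, and assume toward a contradiction that $b\cap a_i\neq\emptyset$ for every $i<m$. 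Choosing $x_i\in b\cap a_i$ and letting $s(i)<N$ be such that $x_i=a_i(s(i))$, the ordering $a_0<\dots<a_{m-1}$ gives $x_0<\dots<x_{m-1}$, so $\pi(x_i,x_j)=l$ for $i<j$ because $b\in\mc{H}_l$; but $\pi(x_i,x_j)=\pi(a_i(s(i)),a_j(s(j)))=f_{i,j}(s(i),s(j))$ by the order isomorphism, so $(l,s)$ is bad, a contradiction. (Such realizations $a_0<\dots<a_{m-1}$ exist by Lemma \ref{lem23}, but the displayed conclusion of the lemma is what we want for \emph{every} realization, even vacuously.)

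The second step records the only structural input needed from $\psipi$, namely that no single position is ``forced'': for every $(p,q)\in N\times N$ and every $l<2$ there is some $g\in\mc{F}$ with $g(p,q)=l$. This is immediate from $\psipi$ applied to the candidate $\msa$ with $I=\{p\}$ and $J=\{q\}$, since the singletons $\{a(p)\}$ and $\{a(q)\}$ have no pairs and hence lie in $\mc{H}_l$ for every $l$, so that $\psipi$ furnishes an $\mc{R}$-satisfiable $g$ with $g[\{p\}\times\{q\}]=\{l\}$. In particular $\mc{F}\neq\emptyset$ (and $|\mc{F}|\geq 2$), and for each position $(p,q)$ and each $l<2$ the number of $g\in\mc{F}$ with $g(p,q)=l$ is at most $|\mc{F}|-1$.

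The third step is the count. Fix $m$ and consider all assignments $\langle f_{i,j}\colon i<j<m\rangle\in\mc{F}^{\binom{m}{2}}$; there are $|\mc{F}|^{\binom{m}{2}}$ of them. There are at most $2N^{m}$ pairs $(l,s)$, and for a fixed such pair the number of assignments making it bad is at most $(|\mc{F}|-1)^{\binom{m}{2}}$, since each of the $\binom{m}{2}$ coordinates $f_{i,j}$ then has at most $|\mc{F}|-1$ admissible values by the second step. Hence the number of assignments admitting some bad pair is at most $2N^{m}(|\mc{F}|-1)^{\binom{m}{2}}$, and
$$2N^{m}(|\mc{F}|-1)^{\binom{m}{2}}\ =\ |\mc{F}|^{\binom{m}{2}}\cdot 2N^{m}\bigl(1-|\mc{F}|^{-1}\bigr)^{\binom{m}{2}}\ <\ |\mc{F}|^{\binom{m}{2}}$$
once $m$ is large enough, because $2N^{m}\bigl(1-|\mc{F}|^{-1}\bigr)^{\binom{m}{2}}\ra 0$ as $m\ra\infty$ (the super-exponential decay of $\bigl(1-|\mc{F}|^{-1}\bigr)^{m(m-1)/2}$ beats the exponential growth of $N^{m}$). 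For such $m$ some assignment $\langle f_{i,j}\rangle$ admits no bad pair, and by the first step this $m$ and this sequence are as required. The only delicate point is the reduction in the first step; after that, the content is simply the observation — powered by $\psipi$ ruling out forced positions — that one need not construct the $f_{i,j}$ explicitly, since a single bad pair excludes only a $\bigl(1-|\mc{F}|^{-1}\bigr)$-fraction of the choices at each of quadratically many independent coordinates, and a union bound (equivalently, a counting bound) then does the rest.
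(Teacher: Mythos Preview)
Your proof is correct, but it takes a genuinely different route from the paper. The paper gives an explicit construction: it sets $m=N_\msa+2N_\msa^{N_\msa}$, enumerates all functions $s_k\colon N_\msa\ra N_\msa$, and for each $i<N_\msa$ and each $k$ uses $\psipi$ to choose $f_{i,N_\msa+k}$ with $f_{i,N_\msa+k}(s_k(i),i)=0$ and $f_{i,N_\msa+N_\msa^{N_\msa}+k}$ with value $1$ at that same position. The verification is then direct: if $b$ meets each of $a_0,\dots,a_{N_\msa-1}$, the resulting selector equals some $s_k$, and the $k$th block kills every coordinate of the corresponding later $a_j$. Your argument, by contrast, is a pure probabilistic-method count: you only extract from $\psipi$ that no single position is forced to a fixed value, and then a union bound over the $2N^m$ pairs $(l,s)$ against the $(|\mc{F}|-1)^{\binom{m}{2}}$ assignments making one pair bad shows that a good assignment exists once $m$ is large enough. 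The paper's approach is constructive and gives an explicit $m$; yours is shorter, non-constructive, and in some regimes (e.g.\ when $|\mc{F}|$ is small) yields a much smaller $m$, though when $|\mc{F}|$ is large the implicit $m$ can exceed the paper's. Either way, both proofs use exactly the same consequence of $\psipi$ (singleton $I,J$), so neither exploits more of the hypothesis than the other.
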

  \begin{proof}
  Take $m=N_\msa+2N_\msa^{N_\msa}$. Let $\{s_k: k<N_\msa^{N_\msa}\}$ enumerate functions from $N_\msa$ to $N_\msa$.
  
  Then for $i<N_\msa$ and $k<N_\msa^{N_\msa}$, let $f_{i, N_\msa+k}$ be a $\mc{R}$-satisfiable for $\msa$ function such that $f_{i, N_\msa+k}(s_k(i), i)=0$ and $f_{i, N_\msa+N_\msa^{N_\msa}+k}$ be a $\mc{R}$-satisfiable for $\msa$ function such that $f_{i, N_\msa+N_\msa^{N_\msa}+k}(s_k(i), i)=1$. The existence of $f_{i, j}$'s follows from $\psi_0(\pi, \mc{C}, \mc{R}, \mct, \bfe)$. The rest $f_{i, j}$'s are arbitrary $\mc{R}$-satisfiable for $\msa$ functions.
  
  To show that $m$ and $\langle f_{i, j}: i<j<m\rangle$ are as desired, fix $a_0<a_1<\cdot\cdot\cdot <a_{m-1}$ in $\msa$ such that  $f_{i, j}$ is order isomorphic to $\pi\up_{a_i\times a_j}$ whenever $i<j<m$. Fix $l<2$ and $b\in \mc{H}_l$.
  
  If $b\cap a_i=\emptyset$ for some $i<N_\msa$, then we are done.
  
  Suppose otherwise. Define $s: N_\msa\ra N_\msa$ by 
  $$a_i(s(i))=\min (b\cap a_i) \text{ for all }i<N_\msa.$$
  Assume $s=s_k$ for some $k<N_\msa^{N_\msa}$.
  
  If $l=1$, then $b\cap a_{N_\msa+k}=\emptyset$. To see this, note that $a_{N_\msa+k}(i)\notin b$ since $\pi(a_i(s_k(i)), a_{N_\msa+k}(i))=f_{i, N_\msa+k}(s_k(i), i)=0$ and $a_i(s_k(i))=a_i(s(i))\in b$.
  
  Similarly, if $l=0$, then $b\cap a_{N_\msa+N_\msa^{N_\msa}+k}=\emptyset$. This finishes the proof of the lemma.
  \end{proof}
  
    \begin{lem}\label{lem25}
  Assume $\varphi(\pi, \mc{C}, \mc{R}, \mct, \bfe)$, $\psi_0(\pi, \mc{C}, \mc{R}, \mct, \bfe)$ and $|\mct|\leq \omega_1$. Suppose $\mc{Q}$ is a ccc poset which forces the failure of $\varphi(\pi, \mc{C}, \mc{R}, \mct, \bfe)$. Then there are $m<\omega$, a candidate for $\mc{C}$ family $\msa$ and a non-empty collection of $\mc{R}$-satisfiable functions $H$ such that in $V[G]$ where $G$ is $\pah$-generic, $\mc{Q}^m$ is not ccc and $\psi_0(\pi, \mc{C}\cup\{\bigcup G\}, \mc{R}\cup\{(\bigcup G, H)\}, \mct', \bfe')$ holds
where $\mct'$ and $\bfe'$ are induced from $\mc{C}\cup\{\bigcup G\}$.
  \end{lem}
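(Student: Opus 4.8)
The plan is to combine the argument of Lemma \ref{lem14} with the extra bookkeeping supplied by Lemmas \ref{lem24} and \ref{lem23}, so that the collection $H$ we build not only records a pattern witnessing $\mc{Q}^m$ non-ccc but also records enough patterns to verify the hypothesis of Corollary \ref{cor preservation} (2). First I would run the proof of Lemma \ref{lem14} up to the point where we have the $\mc{Q}$-name $\dot{\msa}$ of a $\mc{C}$-candidate, the function $f$ witnessing the failure of (Res), and the conditions $q'_\alpha\le q$ with $a'_\alpha\in\dot{\msa}$. The only real change is in choosing how many copies to take: instead of an arbitrary $m$, apply Lemma \ref{lem24} to $\msa$ (more precisely, to a $\mc{C}$-candidate $\msa_0$ refining the $\dot{\msa}$'s generic realizations, obtained exactly as $\msa''$ was in Lemma \ref{lem14}) to get $m_0<\omega$ and a family $\langle f_{i,j}: i<j<m_0\rangle$ of $\mc{R}$-satisfiable functions with the ``every $l$-homogeneous $b$ misses some coordinate block'' property. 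Then set $m=m_0+1$ (or $m_0$ together with one further block carrying the failure pattern $f$); in the notation of Lemma \ref{lem14}, the $a_{\alpha,i}$'s for $i<m_0$ will be arranged to realize the $f_{i,j}$'s, while one additional pair of blocks will, as before, realize $f$ and hence witness incompatibility in $\mc{Q}$.

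Next I would assemble the candidate $\msa''$ and the collection $H$. Following Lemma \ref{lem14}, $\msa''=\{a_\alpha:\alpha\in\Gamma'\}$ with $a_\alpha=\bigcup_{i<m} a_{\alpha,i}$, where we have thinned $\Gamma'$ using Lemma \ref{lem candidate} (so that Definition \ref{candidate}(ii) holds) and Lemma \ref{lem10} (so that $\msa''$ is genuinely a $\mc{C}$-candidate), and where — by Lemma \ref{lem23} applied inside $\msa_0$, exactly as in the proof of that lemma — the $a_{\alpha,i}$ blocks for $i<m_0$ already realize $\langle f_{i,j}: i<j<m_0\rangle$. The collection $H$ consists of those $h:nm\times nm\to 2$ that are order isomorphic to some $\pi\up_{a_\alpha\times a_\beta}$ (for $\alpha<\beta$ in $\Gamma'$) and that, on the relevant block, realize $f$; this is the $H$ of Lemma \ref{lem14}, and Claim 3 of that proof already shows $H$ is a non-empty collection of $\mc{R}$-satisfiable for $\msa''$ functions, and that $\Vdash_{\pah}\mc{Q}^m$ is not ccc via the antichain $\{\langle q_{\alpha,i}:i<m\rangle : a_\alpha\in\bigcup G\}$. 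Combined with Lemma \ref{lem pah}, we get $\varphi(\pi,\mc{C}\cup\{\bigcup G\},\mc{R}\cup\{(\bigcup G,H)\},\mct',\bfe')$ in $V[G]$.

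The remaining and genuinely new point is to verify $\psi_0$ in $V[G]$, i.e.\ to check the hypothesis of Corollary \ref{cor preservation}(2): for every $l<2$ and every $I,J\subseteq N_{\msa''}=nm$ such that $|\{a\in\msa'': a[I]\in\mc{H}_l\}|=|\{a\in\msa'': a[J]\in\mc{H}_l\}|=\omega_1$, there is $f^*\in H$ with $f^*[I\times J]=\{l\}$. By the remark after Corollary \ref{cor preservation} and the argument of Lemma \ref{lem psi0}, it is enough to handle this after passing to an uncountable subset of $\msa''$ on which each $\{a: a[I]\in\mc{H}_l\}$ is either empty or everything; so fix such $I$, $J$ and $l$ with $a[I],a[J]\in\mc{H}_l$ for all $a$. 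Now here is the key use of Lemma \ref{lem24}: the blocks $a_{\alpha,0},\dots,a_{\alpha,m_0-1}$ realize the $f_{i,j}$'s, so by the conclusion of Lemma \ref{lem24}, any $l$-homogeneous set $b$ — in particular $b=a[I]$ or $b=a[J]$ — must be disjoint from some coordinate block $a_{\alpha,i}$, $i<m_0$. Since $\msa''$ is a $\mc{C}$-candidate and the blocks are $\bfe$-classes, $I$ and $J$ must each be entirely contained within a single coordinate block, say $I\subseteq[i_0 n,(i_0{+}1)n)$ and $J\subseteq[j_0 n,(j_0{+}1)n)$ — which means $I$, $J$ are ``copies'' of subsets $I_0,J_0\subseteq n=N_{\dot{\msa}}$ with $a[I_0],a[J_0]\in\mc{H}_l$ in the generic realization of $\dot{\msa}$. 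Since $\varphi$ fails to be forced false in a way that respects $\psi_0$ downstairs (we are assuming $\psi_0(\pi,\mc{C},\mc{R},\mct,\bfe)$), $f$ was chosen — by adding the requirement, in the step where $f$ is picked in the proof of Lemma \ref{lem14}, that $f$ be not merely $\mc{R}$-satisfiable but also satisfy $f[I_0\times J_0]=\{l\}$ for all such $l,I_0,J_0$ compatible with $\mc{H}_l$ (possible by $\psi_0$ downstairs, exactly as in the ``(ii)$\Rightarrow$(i)'' direction of Lemma \ref{lem psi0}) — so that the corresponding $h\in H$ obtained from the block carrying $f$ has $h[I\times J]=\{l\}$. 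The main obstacle, and the step I would be most careful about, is exactly this interleaving: ensuring that the single $f$ witnessing the failure of (Res) can simultaneously be taken to satisfy all the diagonal ``$f[I_0\times J_0]=\{l\}$'' demands without destroying its $\mc{R}$-satisfiability or its incompatibility-witnessing property — this requires invoking $\psi_0(\pi,\mc{C},\mc{R},\mct,\bfe)$ to fill in the free coordinates of $f$ on each $\mc{A}_{\max}$-block, precisely as in the proof of Lemma \ref{lem psi0}(ii$\Rightarrow$i), and checking (via Lemma \ref{lem18}) that these demands live on disjoint blocks so no conflict arises.
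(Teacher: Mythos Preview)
Your argument for preserving $\psi_0$ has two genuine errors. First, the step ``since $\msa''$ is a $\mc{C}$-candidate and the blocks are $\bfe$-classes, $I$ and $J$ must each be entirely contained within a single coordinate block'' is false: Lemma~\ref{lem24} only tells you that the $l$-homogeneous set $a[I]$ \emph{misses} some block $a_{\alpha,i^*}$, i.e.\ $I\cap[ni^*,n(i^*{+}1))=\emptyset$, not that $I$ lies in a single block. Nothing about $\bfe$-invariance prevents $I$ from spanning several blocks. Second, you cannot ``add the requirement'' that $f$ satisfy $f[I_0\times J_0]=\{l\}$ for all relevant $l,I_0,J_0$: the function $f$ is handed to you by the forced failure of (Res), not chosen freely, and in any case such an $f$ cannot exist, since singletons are in both $\mc{H}_0$ and $\mc{H}_1$, forcing $f(i,i)=0$ and $f(i,i)=1$ simultaneously.

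What the paper does instead is the following, and it is the idea you are missing. The set $H$ is taken to be \emph{all} $\mc{R}$-satisfiable $h:nm\times nm\to 2$ whose restriction to \emph{some} diagonal block $[nl,n(l{+}1))\times[nl,n(l{+}1))$ is order isomorphic to $f$ (this agrees, via (Res), with your realized-pattern description). Given $l<2$ and $I,J$ with $a[I],a[J]\in\mc{H}_l$ for all $a\in\msa''$, pick $i^*<m$ with $I\cap[ni^*,n(i^*{+}1))=\emptyset$ (this exists by Lemma~\ref{lem24}). Now build $h$ block-by-block: on the $(i^*,i^*)$ block put $f$ (so $h\in H$), and on every block $(i,j)$ where $I$ hits the $i$th block and $J$ hits the $j$th block, use $\psi_0$ downstairs to choose an $\mc{R}$-satisfiable $g_{i,j}$ with $g_{i,j}$ constant $l$ on the projections of $I,J$. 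Since the $(i^*,i^*)$ block carries no points of $I\times J$, the $f$ placed there does not interfere, and $h[I\times J]=\{l\}$. No modification of $f$ is needed; the freedom is in which block of $h$ carries $f$.
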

  \begin{proof}
  As in the proof of Lemma \ref{lem14}, find $q\in \mc{Q}$, $\mc{Q}$-name $\dot{\msa}$ of a $\mc{C}$-candidate, $n$ and $f:n\times n\ra 2$ such that 
  \begin{enumerate}
  \item $q\Vdash_\mc{Q}$  $\dot{\msa}, f$ witness the failure of condition (Res) in $\varphi(\pi, \mc{C} , \mc{R} , \mct , \bfe )$ and $n=N_{\dot{\msa}}$.
  \end{enumerate}
  
  For every $\alpha<\omega_1$, choose $q'_\alpha\leq q$ and $a'_\alpha$ such that
 \begin{itemize}
  \item[(2)] $q'_\alpha\Vdash_\mc{Q} a'_\alpha\in \dot{\msa} \cap [\omega_1\setminus \alpha]^n$.
  \end{itemize}
  
As in the proof of Lemma \ref{lem14}, find $\Gamma\in [\omega_1]^{\omega_1}$ such that
\begin{enumerate}\setcounter{enumi}{2}
  \item $\msa'=\{a_\alpha': \alpha\in \Gamma\}$   is a $\mc{C}$-candidate and $f$ is $\mc{R}$-satisfiable for $\msa'$;\footnote{E.g., take $\msa''$ in the proof of Lemma \ref{lem14} with $m=1$.}
  \item for every $l<2$ and $I\subseteq n$, $\{a\in \msa': a[I]\in \mc{H}_l\}$ is either $\emptyset$ or $\msa'$.
\end{enumerate}
Now choose $m<\omega$ and $\mc{R}$-satisfiable for $\msa'$ functions $\langle f_{i, j}: i<j<m\rangle$ guaranteed by Lemma \ref{lem24}. Note that $f_{i, j}$ is $\mc{R}$-satisfiable for $\msa''$ whenever $\msa''\in [\msa']^{\omega_1}$.

Inductively find $\{\langle a_{\alpha, i}\in \msa': i<m\rangle :\alpha<\omega_1\}$  such that for $\alpha<\beta$,
\begin{enumerate}\setcounter{enumi}{4}
\item  $a_{\alpha, 0}<a_{\alpha, 1}<\cdot\cdot\cdot<a_{\alpha, m-1}<a_{\beta, 0}$;
\item for every $l<2$ and $b\in \mc{H}_l$, $b\cap a_{\alpha, i}=\emptyset$ for some $i<m$.
\end{enumerate}
To see the existence of such $a_{\alpha, i}$'s, we may   apply Lemma \ref{lem23} to $\{a\in \msa': a> a_{\xi, i}$ for all $\xi<\alpha, i<m\}$ and $\langle f_{i, j}: i<j<m\rangle$ to get $a_{\alpha, 0}<\cdot\cdot\cdot<a_{\alpha, m-1}$ and (6) follows from Lemma \ref{lem24}.

  For every $\alpha$, denote
  $$a_\alpha=\bigcup_{i<m} a_{\alpha, i}$$
  and for each $i<m$, let $q_{\alpha, i}\in \mc{Q}$ be such that $(q_{\alpha, i}, a_{\alpha, i})=(q'_\xi, a'_\xi)$ for some $\xi$.
  
  As in the proof of Lemma \ref{lem14}, find $\Gamma'\in [\omega_1]^{\omega_1}$ such that 
  \begin{enumerate}\setcounter{enumi}{6}
  \item $\msa''=\{a_\alpha: \alpha\in \Gamma'\}$   is a candidate for $\mc{C}$.
\end{enumerate}

  Now let $H$ be the collection of all $h: nm\times nm\ra 2$ such that
\begin{enumerate}\setcounter{enumi}{7}
  \item $h$ is $\mc{R} $-satisfiable for $\msa''$ functions;
  \item for some $l<m$, $h\up_{[nl, n(l+1))\times [nl, n(l+1))}$ is order isomorphic to $f$.
\end{enumerate}
Clearly, $H$ is non-empty.

We will need the following to apply Corollary \ref{cor preservation} (2).\medskip

\textbf{Claim.} For every $l<2$ and every $I, J\subseteq nm$ such that
  $$|\{a\in \msa'': a[I]\in \mc{H}_l\}|=|\{a\in \msa'': a[J]\in\mc{H}_l\}|=\omega_1,$$
       there exists $h\in H$ such that $h[I\times J]=\{l\}$.

\begin{proof}[Proof of Claim.]
By (4), $\{a\in \msa'': a[I]\in \mc{H}_l\}=\msa''=\{a\in \msa'': a[J]\in\mc{H}_l\}$.
By (6), $I\cap [ni^*, n(i^*+1))=\emptyset$ for some $i^*<m$.

Fix $(i, j)\in m\times m$ such that $[ni, n(i+1))\cap I\neq \emptyset$ and $[nj, n(j+1))\cap J\neq \emptyset$. Applying $\psipi$ to $l, \msa'$ and $I' ,J'$ where
$$I'=\{k<n: ni+k\in I\}\text{ and }J'=\{k<n: nj+k\in J\},$$ 
we get a $\mc{R}$-satisfiable for $\msa'$ function $g_{i, j}$ such that $g_{i, j}[I'\times J']=\{l\}$.

Now define $h: nm\times nm\ra 2$ by
\begin{enumerate}\setcounter{enumi}{9}
  \item for $(i, j)$ such that $[ni, n(i+1))\cap I\neq \emptyset$ and $[nj, n(j+1))\cap J\neq \emptyset$, $h\up_{[ni, n(i+1))\times [nj, n(j+1))}$ is order isomorphic to $g_{i,j}$;
  \item for the rest $(i, j)$, $h\up_{[ni, n(i+1))\times [nj, n(j+1))}$ is order isomorphic to $f$.
\end{enumerate}
By (10) and our choice of $g_{i, j}$, $h[I\times J]=\{l\}$. 

Since $h\up_{[ni, n(i+1))\times [nj, n(j+1))}$ is order isomorphic to some $\mc{R}$-satisfiable for $\msa'$ function $g$  for all $i, j<m$, it is straightforward to check that $h$ is $\mc{R}$-satisfiable for $\msa''$. 

By (11) and the fact that $I\cap [ni^*, n(i^*+1))=\emptyset$,  $h\up_{[ni^*, n(i^*+1))\times [ni^*, n(i^*+1))}$ is order isomorphic to $f$.
So $h$ is as desired.
\end{proof}

Omitting a countable subset of $\msa''$, we may assume that 
\begin{itemize}
\item[(12)] $\Vdash_{\ms{P}_{\msa'', H}} \dot{G} \text{ is uncountable}$
where $\dot{G}$ is the canonical name of a $\ms{P}_{\msa'', H}$-generic filter. 
\end{itemize}

 By Lemma \ref{lem11}, $\ms{P}_{\msa'', H}$ is ccc. Clearly, $\{\langle q_{\alpha, i} : i<m\rangle: a_\alpha\in \bigcup G\}$ is an uncountable antichain of $\mc{Q}^m$ in $V[G]$ where $G$ is $\ms{P}_{\msa'', H}$-generic.  By Corollary \ref{cor preservation} and above claim, $\psi_0(\pi, \mc{C}\cup\{\bigcup G\}, \mc{R}\cup\{(\bigcup G, H)\}, \mct', \bfe')$ holds in $V[G]$. Then $m, \msa''$ and $H$ are as desired. 
   \end{proof}
  
  \begin{proof}[Proof of Theorem \ref{theorem1}]
  Starting from a model of  $2^{\omega_1}=\omega_2$ in which there exists a coloring $\pi: [\omega_1]^2\ra 2$ witnessing {\rm Pr}$_0(\omega_1, 2, \omega)$. Repeating the proof of Proposition \ref{prop1} while replacing Lemma \ref{lem14} by Lemma \ref{lem25}, we get a model of {\rm MA}(powerfully ccc) + $2^\omega=\omega_2$ in which $\mc{H}_0$ and $\mc{H}_1$ are ccc.  In  particular, {\rm MA} fails.
  \end{proof}

  \section{Enlarging the continuum}
  In this section, we will enlarge the continuum while we distinguish {\ma}  from \map.  From the cardinal invariant point of view, $\mathfrak{m}$(powerfully ccc) can be arbitrarily large while $\mathfrak{m}=\omega_1$.
  
  For a property $\Psi$ of ccc property, $\mathfrak{m}(\Psi)$ is the least cardinal $\kappa$ such that $\mathrm{MA}_\kappa(\Psi)$ fails. $\mathfrak{m}$ is $\mathfrak{m}$(ccc). Note that $\mathfrak{m}$ has to be $\omega_1$ if $\mathfrak{m}$(powerfully ccc)$>\mathfrak{m}$.
  
  A natural attempt is to start from a model of {\rm MA}(powerfully ccc) + $\neg${\rm MA} and force with a powerfully ccc poset to enlarge the continuum while preserve \map. Then $\mc{H}_0$ and $\mc{H}_1$ remain ccc since if an uncountable antichain can be added by a powerfully ccc poset, then by \map, some uncountable antichain already exists in the ground model. But we do not know if there is always a powerfully ccc poset to enlarge the continuum while preserve \map.
  \begin{question}
  Assume $\omega_1< \kappa=\kappa^{<\kappa}$. Is there always a powerfully ccc poset $\mc{P}$ such that $V^{\mc{P}}\vDash $ {\rm MA}(powerfully ccc) + $2^\omega=\kappa$? What if we assume {\rm MA}(powerfully ccc)+ $\neg${\rm CH} in the ground model?
  \end{question}
  
  Note that we can not simply iteratively force with all powerfully ccc posets since powerfully ccc is not preserved under iteration.
  \begin{exam}\label{exam1}
  Suppose $\sigma: [\omega_1]^2\ra 2$ satisfies {\rm Pr}$_0(\omega_1, 2, \omega)$ and $Fn(\omega, 2)$ is the Cohen forcing. $\dot{Q}$ is the $Fn(\omega, 2)$-name of the poset $\mc{H}^\sigma_{r(0)}$ where $r$ is the generic Cohen real. Then $Fn(\omega, 2)$ is powerfully ccc and $\Vdash_{Fn(\omega, 2)} \dot{Q}$ is powerfully ccc. But $Fn(\omega, 2)*\dot{Q}$ is not powerfully ccc. In fact, $\{((\{(0, 0)\}, \{\alpha\}), (\{(0, 1)\}, \{\alpha\})): \alpha<\omega_1\}$ is an uncountable antichain of $(Fn(\omega, 2)*\dot{Q})^2$.
  \end{exam}
  Of course some additional assumption is needed in above example since under {\rm MA{+$\neg${\rm CH}, every finite support iteration of ccc posets is ccc and hence has precaliber $\omega_1$. Also, in Section 6.3, we will give properties together with {\rm MA}(powerfully ccc)+$\neg${\rm MA}   implying that powerfully ccc is closed under finite support iterated forcing. But the advantage of the strategy in this section is that in the iteration process, we can force with many ccc non-powerfully ccc posets.\medskip

  Looking into the proof of Proposition \ref{prop1} and Theorem \ref{theorem1}, we note that the only reason to require $\mathfrak{c}=\omega_2$ in the final model is to prove and apply Lemma \ref{lem candidate}. So in this section, we will find a new lemma to play the role of Lemma \ref{lem candidate} while the continuum can be enlarged.

  Here we will use the generalized bounding number for the eventual domination on $\omega_1$ (see \cite{CS}). For $f, g \in \omega_1^{\omega_1}$ functions  from  $\omega_1$ to $\omega_1$, say 
  $$f<^* g\text{ if }|\{\alpha<\omega_1: f(\alpha)\geq g(\alpha)\}|\leq \omega.$$
   $\mathfrak{b}_{\omega_1}$ is the minimal size of an unbounded subfamily of $(\omega_1^{\omega_1}, <^*)$.
  
  The following fact follows from \cite[Theorem 4.2]{Baumgartner}. See also \cite{CS} for a systematic study of $\mathfrak{b}_\kappa$ and related cardinal invariants.
  \begin{thm}[\cite{Baumgartner}; \cite{CS}]
  Assume {\rm GCH} and let $\kappa>\omega_2$ be regular. Then there is a $\sigma$-closed, $\omega_2$-cc poset $\mc{P}$ such that $\Vdash_\mc{P} {\mathrm{CH}} $ and $\mathfrak{b}_{\omega_1}=\kappa=\kappa^{<\kappa}$.
  \end{thm}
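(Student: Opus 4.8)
The plan is to realize $\mc{P}$ as a countable-support \emph{product} of $\kappa$ copies of a single $\sigma$-closed ``Hechler-type'' forcing $\mathbb{Q}$ that adjoins a function in $\omega_1^{\omega_1}$ which $<^*$-dominates every ground-model function. Concretely, a condition of $\mathbb{Q}$ is a pair $(s,F)$ with $s\in\omega_1^{<\omega_1}$ (domain a countable ordinal) and $F\in[\omega_1^{\omega_1}]^{\le\omega}$, ordered by: $(t,G)\le(s,F)$ iff $t\supseteq s$, $G\supseteq F$, and $t(\beta)\ge\sup\{g(\beta):g\in F\}$ for all $\beta\in\mathrm{dom}(t)\setminus\mathrm{dom}(s)$ (the supremum is below $\omega_1$ since $F$ is countable). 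One checks $\mathbb{Q}$ is $\sigma$-closed: given a decreasing $\omega$-chain, take unions of the $s$-parts (domain a countable ordinal) and of the $F$-parts (still countable), and the promise clause survives because $F_0\subseteq F_n$ for all $n$. Since $\{(s,F):g\in F\}$ is dense for each $g\in\omega_1^{\omega_1}$, the generic $d$ satisfies $d(\beta)\ge g(\beta)$ for all $\beta$ past some countable ordinal, i.e.\ $d$ $<^*$-dominates every ground-model function. Then $\mc{P}$ consists of the countable partial functions $p$ on $\kappa$ with $p(\xi)\in\mathbb{Q}$, ordered coordinatewise; it is $\sigma$-closed because $\mathbb{Q}$ is and supports are countable.

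The first main step is $\omega_2$-cc of $\mc{P}$. Using GCH in $V$ we have $|\omega_1^{<\omega_1}|=\omega_1$ and $|\omega_1^{\omega_1}|=\omega_2$, so $|\mathbb{Q}|=\omega_2$, and the crucial observation is that two conditions of $\mathbb{Q}$ with the \emph{same} $s$-part are compatible, with common extension $(s,F\cup F')$ (whose promise clause is vacuous). Now given $\{p_i:i<\omega_2\}\subseteq\mc{P}$, apply the $\Delta$-system lemma — valid since CH gives $\omega_1^{\aleph_0}=\omega_1<\omega_2$ — to thin to a $\Delta$-system with countable root $R$; then, since the $s$-parts along $R$ range over a set of size $\le\omega_1$, thin again so that all $p_i\restriction R$ have identical $s$-parts at every coordinate of $R$. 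Any two surviving $p_i,p_j$ are then compatible (take $F\cup F'$ coordinatewise on $R$, and their supports are disjoint off $R$), so there is no antichain of size $\omega_2$.

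The second main step computes $\mathfrak{b}_{\omega_1}$ in $V[G]$. Since $\mc{P}$ is $\sigma$-closed it adds no reals and preserves $\omega_1$, so CH is preserved; since $|\mc{P}|=\kappa$ (counting countable partial functions into $\mathbb{Q}$, using $\kappa^{\aleph_0}=\kappa$ from GCH with $\kappa$ regular) and $\mc{P}$ is $\omega_2$-cc, a nice-name count gives $2^{\omega_1}=\kappa$ and $\kappa^{<\kappa}=\kappa$ in $V[G]$ (antichains have size $\le\omega_1$, so there are $\le\kappa^{\omega_1}=\kappa$ of them, and $2^\gamma\le\kappa^\gamma=\kappa$ for $\gamma<\kappa$); in particular $\mathfrak{b}_{\omega_1}\le2^{\omega_1}=\kappa$. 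For the reverse inequality, let $\{\dot g_\zeta:\zeta<\mu\}$ with $\mu<\kappa$; by $\omega_2$-cc each $\dot g_\zeta$ has a name mentioning $\le\omega_1$ conditions, hence is a $\mc{P}_{A_\zeta}$-name for some $A_\zeta\in[\kappa]^{\le\omega_1}$. With $A=\bigcup_\zeta A_\zeta$ of size $<\kappa$, regularity of $\kappa$ gives some $\xi^*\in\kappa\setminus A$. Factoring $\mc{P}\cong\mc{P}_A\times\mathbb{Q}_{\xi^*}\times\mc{P}_{\mathrm{rest}}$, the coordinate $d_{\xi^*}$ is $\mathbb{Q}$-generic over $V[G\restriction A]$, which contains every $g_\zeta$, so $d_{\xi^*}$ $<^*$-dominates all of them; thus every family of size $<\kappa$ is bounded and $\mathfrak{b}_{\omega_1}\ge\kappa$. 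Finally $\mc{P}$ (being $\sigma$-closed and $\omega_2$-cc) preserves all cardinals and cofinalities, so $\kappa$ remains regular and $\Vdash_\mc{P}\mathfrak{b}_{\omega_1}=\kappa=\kappa^{<\kappa}$, with $\Vdash_\mc{P}\mathrm{CH}$ as already noted.

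I expect the $\omega_2$-cc argument to be the point requiring the most care — specifically the passage from ``$\Delta$-system with root $R$'' to ``identical $s$-parts on $R$'' together with the verification that matching $s$-parts genuinely force coordinatewise compatibility; everything else is either soft ($\sigma$-closure, the nice-name cardinal arithmetic) or a routine density and product-factoring argument. As the excerpt indicates, this is Baumgartner's theorem, so the construction could instead be quoted wholesale; the countable-support product presentation above is offered as the cleanest self-contained route.
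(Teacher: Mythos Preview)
Your proof is correct. The paper does not actually prove this theorem at all --- it is quoted as a known result with the citation ``The following fact follows from \cite[Theorem 4.2]{Baumgartner}. See also \cite{CS}\ldots'' --- so there is no proof in the paper to compare against; you have supplied a clean self-contained argument where the paper is content to cite the literature, exactly as you anticipated in your closing remark.

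One cosmetic point: your dominating condition $t(\beta)\ge\sup\{g(\beta):g\in F\}$ yields $d\ge^* g$ rather than $g<^* d$ in the paper's strict sense ($f<^*g$ iff $f(\alpha)<g(\alpha)$ for all but countably many $\alpha$). This is harmless --- either replace $\ge$ by $>$ in the promise clause, or observe that $d\ge^* g$ for all $g$ in a family implies $d+1$ strictly dominates them --- but it is worth saying explicitly. The $\omega_2$-cc step you flagged as delicate is indeed fine: under CH the root $R$ is countable and the $s$-parts on $R$ take at most $\omega_1^{\omega}=\omega_1$ values, so the refinement to identical $s$-parts goes through.
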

  
  So we will start from a model of {\rm CH} and $\mathfrak{b}_{\omega_1}=\kappa=\kappa^{<\kappa}$ for some $\kappa>\omega_2$. Fix a coloring $\pi: [\omega_1]^2\ra 2$ witnessing {\rm Pr}$_0(\omega_1, 2, \omega)$ (see \cite{Todorcevic89} for a proof of $\mathfrak{b}=\omega_1\Rightarrow \mathrm{Pr}_0(\omega_1, \omega, \omega)$).
  
  We will then iteratively force with ccc posets with finite support. We then need the fact that $\mathfrak{b}_{\omega_1}$ remains the same in the iteration process, since every function $f\in \omega_1^{\omega_1}$ in a ccc extension is dominated by some $g$ in the ground model. Just note that if $p\Vdash_\mc{P} \dot{f}(\alpha)<\omega_1$ where $\mc{P}$ is a ccc poset, then for some $\alpha'<\omega_1$, $p\Vdash_\mc{P} \dot{f}(\alpha)<\alpha'$.
  \begin{fact}
  Suppose $\mc{P}$ is a ccc poset and $\dot{f}$ is a $\mc{P}$-name of a function from $\omega_1$ to $\omega_1$. Then there is a function $g: \omega_1\ra \omega_1$ in the ground model such that $\Vdash_{\mc{P}} \dot{f}(\alpha)< g(\alpha)$ for every $\alpha<\omega_1$.
  \end{fact}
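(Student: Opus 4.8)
The plan is a routine pointwise argument using the countable chain condition; this is exactly the observation sketched just before the statement. Fix $\alpha<\omega_1$. The set $D_\alpha$ of conditions that decide the value of $\dot f(\alpha)$ is dense in $\mc{P}$ — given any $q\in\mc{P}$, passing to a generic filter through $q$ we see that $\dot f(\alpha)$ is interpreted as some countable ordinal, and some $r\leq q$ forces that value — so I would fix a maximal antichain $A_\alpha\subseteq D_\alpha$, which is then predense in $\mc{P}$. For each $p\in A_\alpha$ let $\beta^\alpha_p<\omega_1$ be the ordinal with $p\Vdash_\mc{P}\dot f(\alpha)=\beta^\alpha_p$.

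Since $\mc{P}$ is ccc, each $A_\alpha$ is countable, so $g(\alpha):=\sup\{\beta^\alpha_p+1:p\in A_\alpha\}$ is a countable supremum of countable ordinals and hence lies below $\omega_1$. This defines a function $g:\omega_1\ra\omega_1$ in the ground model. To verify $\Vdash_\mc{P}\dot f(\alpha)<g(\alpha)$ for each $\alpha$: whenever $G$ is $\mc{P}$-generic, predensity of $A_\alpha$ gives some $p\in A_\alpha\cap G$, whence $\dot f(\alpha)^G=\beta^\alpha_p<g(\alpha)$.

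There is no genuine obstacle here. The only point worth isolating is that a single condition forcing $\dot f(\alpha)$ to be some countable ordinal need not pin down which one; this is handled by refining to the antichain $A_\alpha$ below the deciding conditions and invoking ccc to make $A_\alpha$ — and therefore the set of ordinals that can be the value of $\dot f(\alpha)$ — countable and hence bounded in $\omega_1$. The construction is carried out for each $\alpha<\omega_1$ separately, which is all that the statement requires.
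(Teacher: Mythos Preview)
Your argument is correct and is exactly the standard maximal-antichain expansion of the paper's one-line hint preceding the Fact (that $p\Vdash_{\mc{P}}\dot f(\alpha)<\omega_1$ already yields some $\alpha'<\omega_1$ with $p\Vdash_{\mc{P}}\dot f(\alpha)<\alpha'$). Nothing to add.
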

  
  The following lemma will be used to play the role of Lemma \ref{lem candidate}.
   \begin{lem}\label{lem candidate1}
   Assume $\varphi_0(\pi, \mc{C}, \mc{R}, \mc{T}, \mathbf{E})$ and $|\mct|< \mathfrak{b}_{\omega_1}$. For every $A\in [\omega_1]^{\omega_1}$, there exists $B\in [A]^{\omega_1}$ such that
   $$\text{ for every } B'\subset \Phi_\mct(B) \text{ in } \mct, |B\cap B'|\leq \omega.$$
  \end{lem}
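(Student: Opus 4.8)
The plan is to mimic the proof of Lemma \ref{lem candidate}, replacing the counting argument ``$|\mct|\leq \omega_1$ implies the subtree $\mct'=\{C\in\mct:|A\cap C|=\omega_1\}$ is countable'' by an argument that bounds the \emph{height at which $A$ can keep descending} using a function in $\omega_1^{\omega_1}$ together with the hypothesis $|\mct|<\mathfrak{b}_{\omega_1}$. As before, it suffices to find $C\in\mct$ with
\begin{enumerate}
\item[(1)] $|A\cap C|=\omega_1$ and for every countable $\mct'\subseteq\{C'\in\mct:C'\subset C\}$, $|A\cap C\setminus\bigcup\mct'|=\omega_1$,
\end{enumerate}
since once such a $C$ is fixed, the construction of $B$ from the proof of Lemma \ref{lem candidate} (the split into $\kappa<\omega_1$ and $\kappa=\omega_1$, where $\kappa=|\{C'\in\mct:C'\subset C\}|\leq\omega_1$ because $\{C'\in\mct:C'\subset C\}$ is a downward $\subset$-chain of height $\leq\omega$ each of whose levels, by (T3), contributes only countably many predecessors of $A$'s members... actually $\{C'\subset C\}$ is totally ordered by $\supset$ and has order type $\leq\omega$, so $\kappa\leq\omega$) produces $B$ with $\Phit(B)=C$ as required. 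In fact this last remark simplifies matters: $\{C'\in\mct: C'\subset C\}$ is a $\supset$-chain of length $<\omega$ by (T4)/Lemma \ref{lem4}, so $\kappa<\omega_1$ always and we just take $B=A\cap C\setminus\bigcup\{C':C'\subset C\text{ in }\mct\}$.

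So the whole content is producing $C$ satisfying (1). Suppose, toward a contradiction, that no such $C$ exists. As in Lemma \ref{lem candidate}, for each $C$ in $\mct'=\{C\in\mct:|A\cap C|=\omega_1\}$ there is a countable $\mct''_C\subseteq\{C'\in\mct:C'\subset C\}$ with $|A\cap C\setminus\bigcup\mct''_C|\leq\omega$; by (T3) this forces $succ_{\mct'}(C)$ to be countable, so each node of $\mct'$ has countably many immediate successors in $\mct'$, and moreover $|A\cap C\setminus\bigcup succ_{\mct'}(C)|\leq\omega$. Now here is where $\mathfrak{b}_{\omega_1}$ enters. For each $\alpha<\omega_1$, consider the set of nodes $C\in\mct'$ with $A(\alpha)\in C$: by (T3) these form a $\supset$-chain, necessarily finite by Lemma \ref{lem4}, so there is a unique $\subset$-minimal such node; call it $C_\alpha$, and let $g(\alpha)<\omega_1$ be chosen so large that for every $C\in\mct'$ of height $\leq ht(C_\alpha)$ with $A(\alpha)\in C$ we have: the countably many ``escape points'' $A\cap C\setminus\bigcup succ_{\mct'}(C)$, restricted to those below $\alpha$, are all in $A[g(\alpha)]$ — more precisely choose $g:\omega_1\to\omega_1$ so that for each of the (finitely many, by Lemma \ref{lem4}) nodes $C\in\mct'$ with $A(\alpha)\in C$, the initial segment of the countable set $A\cap C\setminus\bigcup succ_{\mct'}(C)$ lying $\le A(\alpha)$ is enumerated within $A\upharpoonright g(\alpha)$. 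Since $|\mct|<\mathfrak{b}_{\omega_1}$, the family $\{g\}$ together with one such function per node can be $<^*$-dominated; fix $h\in\omega_1^{\omega_1}$ with $g<^*h$ and $h$ dominating the relevant per-node bounding functions, and pick $\alpha$ in the club of closure points of $h$ and outside all the countably-many exceptional coordinates. Then, exactly as in Lemma \ref{lem candidate}, for every $C\in\mct'$ with $A(\alpha)\in C$ we get $A(\alpha)\in C'$ for some $C'\in succ_{\mct'}(C)$ — the point being that $A(\alpha)$ is not an escape point of $C$, because escape points of $C$ below $A(\alpha)$ have index $<g(\alpha)\le h(\alpha)=\alpha$... wait, we need $A(\alpha)$ itself not to be an escape point; that follows because if $A(\alpha)$ were an escape point of $C$ then it is one of the countably many we arranged to list by stage $g(\alpha)$, contradicting $\alpha\ge g(\alpha)$ being a closure point and the exceptional-coordinate choice. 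Hence we build $\omega_1=C_0\supset C_1\supset\cdots$ in $\mct'$ with $A(\alpha)\in\bigcap_n C_n$, contradicting Lemma \ref{lem4}. This contradiction yields the desired $C$.

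The main obstacle, and the place requiring care in the actual write-up, is the bookkeeping in the definition of $g$: one must isolate, uniformly in $\alpha$, a \emph{single} $\omega_1\to\omega_1$ function (or a set of $<\mathfrak{b}_{\omega_1}$-many such functions, one per node of $\mct$) whose eventual domination guarantees that a generic $\alpha$ is never an ``escape point''. The cleanest route is: for each $C\in\mct$ with $|A\cap C|=\omega_1$ fix once and for all a countable set $E_C=A\cap C\setminus\bigcup succ_{\mct'}(C)$ and an enumeration; define $g_C:\omega_1\to\omega_1$ by $g_C(\alpha)=$ the least $\beta$ such that $E_C\cap A(\alpha)\subseteq A[\beta]$ (and $g_C(\alpha)=0$ if $A(\alpha)\notin C$). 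Since there are $|\mct'|\leq|\mct|<\mathfrak{b}_{\omega_1}$ of these, choose $h$ with $g_C<^*h$ for all relevant $C$ simultaneously; then any $\alpha$ that is a limit of values of $h$ (equivalently lies in the club $\{\alpha: h[\alpha]\subseteq\alpha\}$ — reindex $h$ to range in $\omega_1$ so this makes sense, or just pass to the club of $\alpha$ with $\sup h[\alpha]\le\alpha$) and avoids, for each of the countably many $C$ with $A(\alpha)$ possibly in $C$, the set $E_C$, witnesses $A(\alpha)\notin E_C$ for all those $C$ — the last clause is free since a \emph{fixed} $\alpha$ sees only finitely many such $C$ (Lemma \ref{lem4}) and we only need $A(\alpha)\notin E_C$ for those, which holds for club-many $\alpha$ once we note each $E_C$ is countable and there are only countably many $C$ at each fixed height, hence by (T3) only countably many total $C\in\mct'$ with height $\le$ any given finite bound... this is exactly the place to be careful, but it goes through because along a descending chain through $A(\alpha)$ the heights are bounded by Lemma \ref{lem4}). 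With $C$ in hand, the rest is verbatim from Lemma \ref{lem candidate}, giving $B=A\cap C\setminus\bigcup\{C'\in\mct:C'\subset C\}$, which is uncountable by (1) and satisfies $\Phit(B)=C$ and the conclusion.
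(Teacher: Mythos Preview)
Your proposal has a genuine gap, and it stems from misidentifying where the hypothesis $|\mct|<\mathfrak{b}_{\omega_1}$ is actually needed.

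First, the existence of $C\in\mct$ satisfying (1) does \emph{not} require any cardinality bound on $\mct$. The argument in Lemma~\ref{lem candidate} for this part only uses that $\mct'=\{C\in\mct:|A\cap C|=\omega_1\}$ is a countably-branching tree of height $\le\omega$ (hence countable), and this follows from (T1)--(T4) alone. The paper simply cites this: ``the proof of Lemma~\ref{lem candidate} shows the existence of $C$ satisfying (1).'' Your elaborate bookkeeping with the functions $g_C$ is therefore aimed at a non-issue.

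Second, and this is the real problem, your construction of $B$ from $C$ is wrong. You claim that $\{C'\in\mct: C'\subset C\}$ is ``totally ordered by $\supset$ and has order type $\le\omega$''. It is not: this is the set of all \emph{descendants} of $C$ in the tree $(\mct,\supset)$, which is a subtree, not a chain, and can have size up to $|\mct|$ --- possibly far larger than $\omega_1$. (You may be confusing it with the set of predecessors of $C$, which \emph{is} a finite chain.) Consequently your proposed $B=A\cap C\setminus\bigcup\{C'\in\mct:C'\subset C\}$ need not be uncountable; condition (1) only controls countable subfamilies of descendants.

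This is precisely where the paper uses $\mathfrak{b}_{\omega_1}$. Given $C$ satisfying (1), let $\mct'=\{C'\in succ_\mct(C):|A\cap C'|=\omega_1\}$. If $|\mct'|\le\omega$, take $B=A\cap C\setminus\bigcup\mct'$. Otherwise fix $\{C_\alpha:\alpha<\omega_1\}\subseteq\mct'$, and for each remaining $D\in succ_\mct(C)$ define $f_D(\alpha)=\sup(D\cap C_\alpha)<\omega_1$ (finite by (T3)). Since there are $<\mathfrak{b}_{\omega_1}$ such $D$, choose $g$ with $f_D<^*g$ for all of them, and pick $x_\alpha\in A\cap C_\alpha$ above $g(\alpha)$, above all earlier $x_\xi$, and outside $\bigcup_{\xi<\alpha}C_\xi$. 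Then $B=\{x_\alpha:\alpha<\omega_1\}$ meets each immediate successor of $C$ in at most a countable set, hence every $B'\subset C$ in $\mct$ as well.
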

  \begin{proof}
  First assume that for some $C\in \mct$, 
  \begin{enumerate}
  \item $|A\cap C|=\omega_1$ and for every countable subset $\mct'$ of $\{C'\in \mct: C'\subset C\}$, $|A\cap C\setminus \bigcup \mct'|=\omega_1$.
  \end{enumerate}
  Let 
  $$\mct'=\{C'\in succ_{\mct}(C):   |A\cap C'|=\omega_1\}$$
  where   $succ_{\mct}(C)$ is the collection of immediate successors of $C$ in $\mct$.
    
  If $|\mct'|<\omega_1$, then take $B=A\cap C\setminus \bigcup \mct'$. Clearly,  $\Phit(B)=C$ and $B$ is as desired.
  
  Now suppose $\mct'$ is uncountable. Fix $\{C_\alpha\in \mct': \alpha<\omega_1\}$.
  
  For every $D\in succ_\mct(C)\setminus \{C_\alpha: \alpha<\omega_1\}$, define $f_D: \omega_1\ra \omega_1$ by
  $$f_D(\alpha)=\sup (D\cap C_\alpha).$$
  Here we denote $\sup (\emptyset)=0$. By (T3), $f_D$ is a function from $\omega_1$ to $\omega_1$.
  
  Since $|\mct|< \mathfrak{b}_{\omega_1}$, there is $g\in \omega_1^{\omega_1}$ such that $f_D<^* g$ for all $D\in succ_\mct(C)\setminus \{C_\alpha: \alpha<\omega_1\}$. Now inductively choose $x_\alpha\in A\cap C_\alpha\setminus g(\alpha)$ such that 
  $$x_\alpha> \sup\{x_\xi: \xi<\alpha\} \text{ and } x_\alpha\notin \bigcup_{\xi<\alpha} C_\xi.$$
  Let $B=\{x_\alpha: \alpha<\omega_1\}$. Clearly, $\Phit(B)=C$ and $B$ is as desired.\medskip
  
  Recall that the proof of Lemma \ref{lem candidate} shows the existence of $C$ satisfying (1). This finishes the proof of the lemma. 
  \end{proof}
  
  Repeating the proof of Theorem \ref{theorem1} with Lemma \ref{lem candidate} replaced by Lemma \ref{lem candidate1}, we get the following.
       \begin{thm}\label{thm2}
  Suppose $\mathfrak{b}_{\omega_1}=\kappa=\kappa^{<\kappa}$. Then there is a finite support iteration of ccc posets $\langle \mc{P}_\alpha, \dot{\mc{Q}}_\beta: \alpha\leq \kappa, \beta<\kappa\rangle$  such that in $V[G]$ where $G$ is $\mc{P}_{\kappa}$-generic,
  \begin{enumerate}
  \item {\rm MA}(powerfully ccc) holds  and $2^\omega=\kappa$;
  \item {\rm MA} fails.
  \end{enumerate}
  \end{thm}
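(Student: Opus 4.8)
The plan is to mimic the proof of Theorem \ref{theorem1} essentially verbatim, with the single substitution of Lemma \ref{lem candidate} by Lemma \ref{lem candidate1}. First I would observe that the hypotheses needed to run the machinery of Sections 3 and 4 are not $2^{\omega_1}=\omega_2$ and $|\mct|\leq\omega_1$ per se, but rather (a) a coloring $\pi$ witnessing $\mathrm{Pr}_0(\omega_1,2,\omega)$, which exists because $\mathfrak{b}_{\omega_1}=\kappa$ forces $\mathfrak{b}=\omega_1$ and $\mathfrak{b}=\omega_1$ implies $\mathrm{Pr}_0(\omega_1,2,\omega)$ (cited to \cite{Todorcevic89}); and (b) at each stage of the iteration, the bound $|\mct^\alpha|<\mathfrak{b}_{\omega_1}$, so that Lemma \ref{lem candidate1} applies in place of Lemma \ref{lem candidate}. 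Since at each step of the iteration at most finitely many nodes are added to $\mct^\alpha$ and the iteration has length $\kappa$, we get $|\mct^\alpha|\leq\omega_1+|\alpha|<\kappa=\mathfrak{b}_{\omega_1}$ for every $\alpha<\kappa$, so (b) holds throughout.

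Next I would check that $\mathfrak{b}_{\omega_1}$ is preserved by the whole finite support iteration. This follows from the Fact quoted in the excerpt: each $\mc{P}_\alpha$ is ccc (being a finite support iteration of ccc posets), so any $\mc{P}_\kappa$-name of a function $\omega_1\to\omega_1$ appears at some stage $\mc{P}_\alpha$ with $\alpha<\kappa$ by ccc-ness and a nice-names argument, hence is dominated by a ground-model (or intermediate-model) function; thus any potential unbounded family of size $<\kappa$ in the extension would already be bounded. Combined with $\kappa=\kappa^{<\kappa}$, which guarantees the bookkeeping for MA-style iterations of length $\kappa$ works, we obtain that in $V[G]$ we have $2^\omega=\kappa$ and the final model handles all powerfully ccc posets of size $<\kappa$ via the case-(ii)/case-(i) dichotomy of Proposition \ref{prop1}.

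Then I would re-run the successor-step construction from Section 4: at stage $\xi$, if the $\xi$th poset $\dot{\mc{Q}}$ preserves $\varphi(\pi,\dot{\mc{C}^\xi},\dot{\mc{R}^\xi},\dot{\mct^\xi},\dot{\bfe^\xi})$ then force with it and keep the structure fixed; otherwise apply Lemma \ref{lem25} — whose proof uses Lemma \ref{lem14}'s machinery, which in turn uses Lemma \ref{lem candidate}, now replaced by Lemma \ref{lem candidate1} — to find $m$, a $\mc{C}$-candidate $\msa$ and a nonempty collection $H$ of $\mc{R}$-satisfiable functions such that $\pah$ is ccc, $\Vdash_{\pah}\mc{Q}^m$ is not ccc, and $\psi_0$ is preserved. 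Limit stages are handled by Lemma \ref{limit stage} and Corollary \ref{cor limit}, whose proofs only ever refer to finitely many elements of $\mc{C}^\nu$ and $\mct^\nu$ at a time, so the cardinality of $\mct^\nu$ is irrelevant there. At the end, $\varphi(\pi,\mc{C}^\kappa,\mc{R}^\kappa,\mct^\kappa,\bfe^\kappa)$ and $\psi_0(\pi,\mc{C}^\kappa,\mc{R}^\kappa,\mct^\kappa,\bfe^\kappa)$ hold in $V[G]$, so by Lemma \ref{lem17} both $\mc{H}_0$ and $\mc{H}_1$ are ccc, yet their product is trivially not ccc (an element of $\mc{H}_0$ and an element of $\mc{H}_1$ built from the same $\alpha<\omega_1$ are pairwise incompatible across the two coordinates exactly as in the witnessing antichain), so $\mathrm{MA}$ fails; and $\mathrm{MA}$(powerfully ccc) holds by the bookkeeping.

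The main obstacle — the one place where genuine care beyond "repeat the earlier proof" is needed — is verifying that Lemma \ref{lem candidate1} really is a drop-in replacement for Lemma \ref{lem candidate} at every point it is invoked, most importantly inside the inductive "$nm$ times" applications in the proofs of Lemma \ref{lem14}/Lemma \ref{lem25} and inside Lemma \ref{lem psi0}. Concretely, each such application needs the hypothesis $|\mct|<\mathfrak{b}_{\omega_1}$ to hold at the relevant stage $\delta<\kappa$; since $|\mct^\delta|\leq\omega_1+|\delta|<\kappa$ this is fine, but one must be sure that no intermediate structure $\ms{C}$, $\ms{C}'$, etc., secretly enlarges $\mct$ — it does not, because these are candidates/auxiliary families, not elements added to $\mc{C}$. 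A secondary technical point is confirming that the Fact on preservation of $\mathfrak{b}_{\omega_1}$ genuinely gives preservation of the cardinal (not merely of individual witnesses) through a finite support iteration of length $\kappa>\omega_2$; this is standard but should be stated, using that every name for a function $\omega_1\to\omega_1$ is, by ccc, captured at a stage below $\kappa$ of cofinality $>\omega_1$ or simply dominated stagewise. Everything else transfers mechanically.
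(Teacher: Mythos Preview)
Your approach is exactly the paper's: the author's proof is literally the single sentence ``Repeating the proof of Theorem \ref{theorem1} with Lemma \ref{lem candidate} replaced by Lemma \ref{lem candidate1}, we get the following,'' and your elaboration of what that entails is faithful.

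There is, however, one genuine slip. You write that a coloring $\pi$ witnessing $\mathrm{Pr}_0(\omega_1,2,\omega)$ exists ``because $\mathfrak{b}_{\omega_1}=\kappa$ forces $\mathfrak{b}=\omega_1$.'' This implication is false: $\mathfrak{b}_{\omega_1}\geq\omega_2$ is a ZFC theorem, so if your implication held then $\mathfrak{b}=\omega_1$ would be provable, contradicting the consistency of $\mathfrak{b}>\omega_1$. The paper handles this differently: in the setup paragraph of Section 5 it starts from a model of CH (where $\mathfrak{b}=\omega_1$ holds trivially), and in the remark immediately following Theorem \ref{thm2} it notes that if CH fails one may first force with Cohen forcing to obtain a witness to $\mathrm{Pr}_0(\omega_1,2,\omega)$, observing that this ccc step does not disturb $\mathfrak{b}_{\omega_1}$ by the quoted Fact. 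Either route repairs the gap; you should replace your incorrect implication with one of them.

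A second, very minor point: you invoke Lemma \ref{lem17} at the final stage $\kappa$ to conclude $\mc{H}_0,\mc{H}_1$ are ccc, but there $|\mct^\kappa|$ may equal $\kappa=\mathfrak{b}_{\omega_1}^{V[G]}$, not strictly less, so Lemma \ref{lem candidate1} (used inside Lemma \ref{lem17}) is not literally available. The fix is standard and you essentially have the ingredients: any putative uncountable antichain of $\mc{H}_l$ in $V[G]$ lives in some $V[G_\alpha]$ with $\alpha<\kappa$ by ccc, and there $|\mct^\alpha|<\kappa$ so Lemma \ref{lem17} applies.
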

 CH may fail in the ground model in above theorem, but we can first force with Cohen forcing to get a coloring witnessing Pr$_0(\omega_1, 2, \omega)$. Then we get a model of $\mathfrak{b}_{\omega_1}=\kappa=\kappa^{<\kappa}$ + Pr$_0(\omega_1, 2, \omega)$.

  \section{Strengthening {\rm MA}(powerfully ccc)}
  In this section, we will prove that $\neg${\rm MA} is consistent with several strengthening of \map. The first strengthened forcing axiom is  {\rm MA}(squarely ccc). 
  
  Recall that a poset is \emph{squarely ccc} if its square is ccc.
  
  \subsection{{\rm MA}(squarely ccc)}
  
 The consistency of {\rm MA}(squarely ccc)+$\neg${\rm MA}  also indicates to what extent can a ccc poset be different from being a square. More precisely, we will construct a model in which there is a ccc poset without property ($K$) while the square of any ccc poset is either non-ccc or has precalibre $\omega_1$.
  
  Recall that in the proof of Theorem \ref{theorem1}, we need to destroy ccc of $\mc{Q}^m$ for some finite $m$ according to Lemma \ref{lem25} if a ccc poset $\mc{Q}$ does not preserve $\phipi$.  And the value of $m$ is determined by Lemma \ref{lem24}. If we can modify the proof and always choose $m$ to be 2, then we get {\rm MA}(squarely ccc) together with $\mc{H}_0, \mc{H}_1$ being ccc.
  
  We will use the following lemma to play the role of Lemma \ref{lem24}.
  \begin{lem}\label{lem30}
  Assume $\varphi(\pi, \mc{C}, \mc{R}, \mct, \bfe)$ and $\psi_0(\pi, \mc{C}, \mc{R}, \mct, \bfe)$.  Suppose  $\msa$ is a candidate for $\mc{C}$. Then there are $m<\omega$ and a sequence of $\mc{R}$-satisfiable for $\msa$ functions $\langle f_{i, j}: i, j<m\rangle$ such that for all 
  $$a_0<a_1<\cdot\cdot\cdot <a_{m-1}<b_0<\cdot\cdot\cdot<b_{m-1}$$
   in $\msa$, if  $f_{i, j}$ is order isomorphic to $\pi\up_{a_i\times b_j}$ whenever $i,j<m$, then for every $l<2$ and $b\in \mc{H}_l$, either $b\cap a_i=\emptyset$ or $b\cap b_i=\emptyset$ for some $i<m$.
  \end{lem}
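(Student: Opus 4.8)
The plan is to mimic the proof of Lemma \ref{lem24} but to work with ordered \emph{pairs} of blocks rather than single blocks, so that the witnessing index $m$ can be recovered with a ``square'' structure. Recall that in Lemma \ref{lem24} the key idea was: given $b\in\mc{H}_l$, if $b$ meets every $a_i$ for $i<N_\msa$, then $b$ induces a function $s:N_\msa\to N_\msa$ via $a_i(s(i))=\min(b\cap a_i)$, and then a suitably chosen $\mc{R}$-satisfiable function $f_{i,\,N_\msa+k}$ (where $s=s_k$) forces a color contradicting $l$-homogeneity on some later $a_{N_\msa+k}$ or $a_{N_\msa+N_\msa^{N_\msa}+k}$. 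Here the difficulty is that we must produce the contradiction using only \emph{two} blocks from the final antichain, so we will encode the relevant data across the split $a_0<\dots<a_{m-1}<b_0<\dots<b_{m-1}$.

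First I would set $m=N_\msa+2N_\msa^{N_\msa}$ exactly as before, and let $\{s_k:k<N_\msa^{N_\msa}\}$ enumerate the functions from $N_\msa$ to $N_\msa$. For $i<N_\msa$ and $k<N_\msa^{N_\msa}$, using $\psi_0(\pi,\mc{C},\mc{R},\mct,\bfe)$ I would choose $\mc{R}$-satisfiable for $\msa$ functions: one, call it $f^{(0)}_{i,k}$, with $f^{(0)}_{i,k}(s_k(i),i)=0$, and another $f^{(1)}_{i,k}$ with $f^{(1)}_{i,k}(s_k(i),i)=1$. (These are the same functions used in Lemma \ref{lem24}; each exists because $\{a[\{s_k(i)\}],a[\{i\}]:a\in\msa\}$ trivially lies in $\mc{H}_0$, respectively $\mc{H}_1$, after passing to an uncountable subset, or one simply invokes $\psi_0$ with the relevant singleton coordinates.) Then I define $f_{i,j}$ for $i,j<m$: for $i<N_\msa$ and $j=N_\msa+k$ set $f_{i,j}$ order isomorphic to $f^{(0)}_{i,k}$ on the first $N_\msa$ coordinates; for $j=N_\msa+N_\msa^{N_\msa}+k$ use $f^{(1)}_{i,k}$; all remaining $f_{i,j}$ are arbitrary $\mc{R}$-satisfiable for $\msa$ functions. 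The point is that the indices $j\in[N_\msa,m)$ of the \emph{second} block $\langle b_j\rangle$ play the role that the indices $N_\msa+k$ and $N_\msa+N_\msa^{N_\msa}+k$ of the single sequence played in Lemma \ref{lem24}.

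Now fix $a_0<\dots<a_{m-1}<b_0<\dots<b_{m-1}$ in $\msa$ with $f_{i,j}$ order isomorphic to $\pi\up_{a_i\times b_j}$ for all $i,j<m$, and fix $l<2$ and $b\in\mc{H}_l$. If $b\cap a_i=\emptyset$ for some $i<N_\msa$ we are done. Otherwise $b$ meets every $a_i$ with $i<N_\msa$, so define $s:N_\msa\to N_\msa$ by $a_i(s(i))=\min(b\cap a_i)$ and pick $k$ with $s=s_k$. If $l=1$, consider $b_{N_\msa+k}$: for each $i<N_\msa$, $\pi(a_i(s_k(i)),b_{N_\msa+k}(i))=f_{i,N_\msa+k}(s_k(i),i)=0$, while $a_i(s_k(i))\in b$; since $b$ is $1$-homogeneous and $a_i(s_k(i))\in b$, no element $x$ with $\pi(a_i(s_k(i)),x)=0$ and $x>a_i(s_k(i))$ can lie in $b$, so $b_{N_\msa+k}(i)\notin b$ for every $i$, i.e.\ $b\cap b_{N_\msa+k}=\emptyset$. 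Symmetrically, if $l=0$, then $b\cap b_{N_\msa+N_\msa^{N_\msa}+k}=\emptyset$. In either case we have exhibited some index $i^\ast<m$ with $b\cap a_{i^\ast}=\emptyset$ or $b\cap b_{i^\ast}=\emptyset$, as required.

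The main point to be careful about — the only genuine obstacle — is the interaction between the ordering of the two blocks and the homogeneity argument: one must ensure that the element of $b$ witnessing membership in $a_i$ lies \emph{below} the relevant coordinate of $b_j$ so that $1$-homogeneity (resp.\ $0$-homogeneity) applies in the right direction. This is why the hypothesis insists $a_0<\dots<a_{m-1}<b_0<\dots<b_{m-1}$, with all $a$'s preceding all $b$'s; then $\min(b\cap a_i)\le\max a_i<\min b_j\le b_j(i)$, so the color $\pi(a_i(s_k(i)),b_j(i))$ is indeed a color of a pair from $b$ once we know $b_j(i)\in b$, giving the contradiction. A secondary routine check is that the $f_{i,j}$ as defined are genuinely $\mc{R}$-satisfiable for $\msa$ (immediate from the choice of $f^{(0)}_{i,k},f^{(1)}_{i,k}$ via $\psi_0$, and from the freedom to choose the remaining $f_{i,j}$ arbitrarily $\mc{R}$-satisfiable, which is possible since $\mc{R}(\msb)$ is nonempty for each relevant $\msb$ by $\varphi(\pi,\mc{C},\mc{R},\mct,\bfe)$).
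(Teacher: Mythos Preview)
Your proposal is correct and follows essentially the same approach as the paper's proof: use $\psi_0$ to pick, for each $i<N_\msa$ and each $s_k:N_\msa\to N_\msa$, $\mc{R}$-satisfiable functions forcing the color $0$ (resp.\ $1$) at the entry $(s_k(i),i)$, then argue that if $b\in\mc{H}_l$ meets every $a_i$ for $i<N_\msa$, the induced function $s$ points to a $b_j$ disjoint from $b$. The only difference is cosmetic: you take $m=N_\msa+2N_\msa^{N_\msa}$ (carrying over the offset from Lemma~\ref{lem24}), whereas the paper uses the slightly smaller $m=2N_\msa^{N_\msa}$ and places the ``killing'' functions at $j=k$ and $j=N_\msa^{N_\msa}+k$ directly; your first $N_\msa$ indices in the $b$-block are simply unused, which is harmless since the lemma only asks for some finite $m$.
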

  \begin{proof}
  Take $m=2N_\msa^{N_\msa}$. Let $\{s_k: k<N_\msa^{N_\msa}\}$ enumerate functions from $N_\msa$ to $N_\msa$.
  
  Then for $i<N_\msa$ and $j<N_\msa^{N_\msa}$, let $f_{i, j}$ be a $\mc{R}$-satisfiable for $\msa$ function such that $f_{i, j}(s_j(i), i)=0$ and $f_{i, N_\msa^{N_\msa}+j}$ be a $\mc{R}$-satisfiable for $\msa$ function such that $f_{i, N_\msa^{N_\msa}+j}(s_j(i), i)=1$. The existence of $f_{i, j}$'s follows from $\psi_0(\pi, \mc{C}, \mc{R}, \mct, \bfe)$. The rest $f_{i, j}$'s are arbitrary $\mc{R}$-satisfiable for $\msa$ functions.
  
  To show that $m$ and $\langle f_{i, j}: i, j<m\rangle$ are as desired, fix 
    $$a_0<a_1<\cdot\cdot\cdot <a_{m-1}<b_0<\cdot\cdot\cdot<b_{m-1}$$
     in $\msa$ such that  $f_{i, j}$ is order isomorphic to $\pi\up_{a_i\times b_j}$ whenever $i,j<m$. Fix $l<2$ and $b\in \mc{H}_l$.
  
  If $b\cap a_i=\emptyset$ for some $i<N_\msa$, then we are done.
  
  Suppose otherwise. Define $s: N_\msa\ra N_\msa$ by 
  $$a_i(s(i))=\min (b\cap a_i) \text{ for all }i<N_\msa.$$
  Assume $s=s_j$ for some $j<N_\msa^{N_\msa}$.
  
  If $l=1$, then $b\cap b_{j}=\emptyset$. To see this, note that $b_{j}(i)\notin b$ since 
  $$\pi(a_i(s_j(i)), b_{j}(i))=f_{i, j}(s_j(i), i)=0\text{ and }a_i(s_j(i))=a_i(s(i))\in b.$$
  
  Similarly, if $l=0$, then $b\cap b_{ N_\msa^{N_\msa}+j}=\emptyset$. This finishes the proof of the lemma.
  \end{proof}
  
  Now Lemma \ref{lem25} can be strengthened to requiring $m=2$. The proof is similar to the proof of Lemma \ref{lem25}. So we only sketch the modifications.
   \begin{lem}\label{lem31}
  Assume $\varphi(\pi, \mc{C}, \mc{R}, \mct, \bfe)$, $\psi_0(\pi, \mc{C}, \mc{R}, \mct, \bfe)$ and $|\mct|<\mathfrak{b}_{\omega_1}$. Suppose $\mc{Q}$ is a ccc poset which forces the failure of $\varphi(\pi, \mc{C}, \mc{R}, \mct, \bfe)$. Then there are a candidate for $\mc{C}$ family $\msa$ and a non-empty collection of $\mc{R}$-satisfiable functions $H$ such that in $V[G]$ where $G$ is $\pah$-generic, $\mc{Q}^2$ is not ccc and $\psi_0(\pi, \mc{C}\cup\{\bigcup G\}, \mc{R}\cup\{(\bigcup G, H)\}, \mct', \bfe')$ holds
where $\mct'$ and $\bfe'$ are induced from $\mc{C}\cup\{\bigcup G\}$.
  \end{lem}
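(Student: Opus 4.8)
The plan is to mimic the proof of Lemma~\ref{lem25} step by step, replacing the application of Lemma~\ref{lem24} with an application of Lemma~\ref{lem30}, and replacing Lemma~\ref{lem candidate} by Lemma~\ref{lem candidate1} wherever the continuum bound was used (this accounts for the weakened hypothesis $|\mct|<\mathfrak{b}_{\omega_1}$ in place of $|\mct|\le\omega_1$). First I would, exactly as in Lemma~\ref{lem25}, fix $q\in\mc{Q}$, a $\mc{Q}$-name $\dot{\msa}$ of a $\mc{C}$-candidate, $n$ and $f:n\times n\ra 2$ witnessing the failure of (Res), and for each $\alpha<\omega_1$ choose $q'_\alpha\le q$ and $a'_\alpha$ with $q'_\alpha\Vdash_{\mc{Q}} a'_\alpha\in\dot{\msa}\cap[\omega_1\setminus\alpha]^n$. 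Using Lemma~\ref{lem candidate1} (in place of Lemma~\ref{lem candidate}) and the usual $\Delta$-system and isomorphism-type refinements, I would thin to $\Gamma$ so that $\msa'=\{a'_\alpha:\alpha\in\Gamma\}$ is a $\mc{C}$-candidate, $f$ is $\mc{R}$-satisfiable for $\msa'$, and for every $l<2$ and $I\subseteq n$ the set $\{a\in\msa':a[I]\in\mc{H}_l\}$ is either $\emptyset$ or all of $\msa'$.

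Next I would invoke Lemma~\ref{lem30} instead of Lemma~\ref{lem24}: this produces $m<\omega$ and $\mc{R}$-satisfiable for $\msa'$ functions $\langle f_{i,j}:i,j<m\rangle$ such that whenever $a_0<\cdots<a_{m-1}<b_0<\cdots<b_{m-1}$ in $\msa'$ realize these patterns (i.e.\ $f_{i,j}\cong\pi\up_{a_i\times b_j}$ for all $i,j<m$), then for every $l<2$ and $b\in\mc{H}_l$ we have $b\cap a_i=\emptyset$ or $b\cap b_i=\emptyset$ for some $i<m$. The point is that each realizing configuration now uses only \emph{two} distinguished $n$-blocks (an ``$a$-block'' $a_\alpha:=\bigcup_{i<m}a_{\alpha,i}$ and a ``$b$-block''), so I should pair up the $q'_\xi$'s accordingly: inductively (using Lemma~\ref{lem23} applied to the tail $\{a\in\msa':a>a_{\xi,i}\text{ for all }\xi<\alpha,i<m\}$ and the family $\langle f_{i,j}:i,j<m\rangle$, realized as a single $\mc{R}$-satisfiable function on the appropriate product of blocks) I would build $\{\langle a_{\alpha,i}:i<m\rangle:\alpha<\omega_1\}$ with $a_{\alpha,0}<\cdots<a_{\alpha,m-1}<a_{\beta,0}$ for $\alpha<\beta$ and with the conclusion of Lemma~\ref{lem30} holding for the pair of $m$-tuples indexed by any $\alpha<\beta$. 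Setting $a_\alpha=\bigcup_{i<m}a_{\alpha,i}$, and picking $q_{\alpha,i}$ with $(q_{\alpha,i},a_{\alpha,i})=(q'_\xi,a'_\xi)$, I would thin once more via Lemma~\ref{lem candidate1} and the $\Delta$-system lemma to get $\Gamma'$ so that $\msa''=\{a_\alpha:\alpha\in\Gamma'\}$ is a $\mc{C}$-candidate.

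I would then define $H$ to be the collection of all $h:nm\times nm\ra2$ that are $\mc{R}$-satisfiable for $\msa''$ and such that for \emph{some} $i,j<m$ the restriction $h\up_{[ni,n(i+1))\times[nj,n(j+1))}$ is order isomorphic to $f$; nonemptiness of $H$ follows by realizing the $\langle f_{i,j}\rangle$-pattern via Lemma~\ref{lem23}. The key claim to verify, exactly as in Lemma~\ref{lem25}, is that for every $l<2$ and $I,J\subseteq nm$ with $\{a\in\msa'':a[I]\in\mc{H}_l\}$ and $\{a\in\msa'':a[J]\in\mc{H}_l\}$ both uncountable, there is $h\in H$ with $h[I\times J]=\{l\}$; this is proved by the same argument: by the refinement of $\msa'$ both sets equal $\msa''$, then by the conclusion of Lemma~\ref{lem30} one of the distinguished blocks, say block $i^*$ on the $a$-side (or its paired block on the $b$-side), is disjoint from $I$ (resp.\ from $J$), so I can set $h$ to agree with an $f_{i,j}$-pattern witnessing $l$ on the blocks meeting $I\times J$ (obtained from $\psipi$ applied to $\msa'$ with the appropriate coordinate sets) while agreeing with $f$ on the $(i^*,i^*)$-block; this $h$ is $\mc{R}$-satisfiable for $\msa''$ since it is blockwise order-isomorphic to $\mc{R}$-satisfiable for $\msa'$ functions. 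Finally, by Lemma~\ref{lem11} $\pah$ is ccc; after omitting a countable part of $\msa''$ so that the generic filter $G$ is uncountable, $\{\langle q_{\alpha,i}:i<m\rangle:a_\alpha\in\bigcup G\}$ is an uncountable antichain in $\mc{Q}^m$ — but actually, because every pair $a_\alpha<a_\beta$ in $\bigcup G$ realizes an $f$-pattern on \emph{some} block $[ni,n(i+1))\times[ni,n(i+1))$ (by the definition of $H$ via condition analogous to (9) in Lemma~\ref{lem25}) while the generic forces the $a$- and $b$-tuples to be two blocks, one extracts an uncountable antichain already in $\mc{Q}^2$: for each such pair the incompatible witnesses $q_{\alpha,i},q_{\beta,j}$ on the offending block together with one more fixed coordinate give an antichain in $\mc{Q}\times\mc{Q}$. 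Finally Corollary~\ref{cor preservation}(2) together with the above claim gives $\psi_0(\pi,\mc{C}\cup\{\bigcup G\},\mc{R}\cup\{(\bigcup G,H)\},\mct',\bfe')$ in $V[G]$.

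The main obstacle I anticipate is the bookkeeping that forces the antichain down to $\mc{Q}^2$ rather than $\mc{Q}^m$: one must arrange the $H$-patterns and the pairing of conditions $q_{\alpha,i}$ so that for \emph{each} generic pair $a_\alpha<a_\beta$ there are fixed coordinates $i,j<m$ with $q_{\alpha,i}\perp q_{\beta,j}$, and then one still needs these witnessing coordinates to be chosen uniformly on an uncountable set so that the resulting pairs form an antichain in $\mc{Q}\times\mc{Q}$ — this requires an extra pigeonhole refinement of $\Gamma'$ to stabilize which block $(i,j)$ carries the $f$-pattern, together with the observation from Lemma~\ref{lem30} that the ``spoiled'' block on the $b$-side is uniformly determined by the pattern. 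The rest is a faithful transcription of the proof of Lemma~\ref{lem25}, using Lemma~\ref{lem30} in place of Lemma~\ref{lem24} and Lemma~\ref{lem candidate1} in place of Lemma~\ref{lem candidate}, and is routine.
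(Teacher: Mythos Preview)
Your proposal correctly identifies that the argument should follow Lemma~\ref{lem25} with Lemma~\ref{lem30} replacing Lemma~\ref{lem24}, but it is missing the one genuinely new idea, and the gap is exactly where you flag the obstacle at the end. You cannot pass from an antichain in $\mc{Q}^m$ down to one in $\mc{Q}^2$ by any pigeonhole refinement of $\Gamma'$: the block $(i,j)$ carrying the $f$-pattern depends on the \emph{pair} $\alpha<\beta$, and no thinning on single indices stabilizes a pair-dependent quantity. Relatedly, your claim that one can build the sequence so that ``the conclusion of Lemma~\ref{lem30} holds for the pair of $m$-tuples indexed by \emph{any} $\alpha<\beta$'' is not what Lemma~\ref{lem23} delivers: Lemma~\ref{lem30} requires the full cross-pattern $\pi\up_{a_i\times b_j}\cong f_{i,j}$ for all $i,j<m$, and you cannot arrange this simultaneously for all pairs.

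The paper's move is to bundle conditions rather than refine. Using only that $\mc{Q}$ is ccc, for each $\alpha$ one picks $F_\alpha\in[\Gamma]^m$ such that $\{q'_\xi:\xi\in F_\alpha\}$ has a \emph{common lower bound} $q''_\alpha\in\mc{Q}$, and sets $a''_\alpha=\bigcup_{\xi\in F_\alpha}a'_\xi$. One then realizes the Lemma~\ref{lem30} pattern once, via a single application of (Res), between two such tuples $a''_{\alpha'}<a''_{\alpha''}$, and sets $a_\alpha=a''_{\alpha'}\cup a''_{\alpha''}$ with associated pair $(p_\alpha,q_\alpha)=(q''_{\alpha'},q''_{\alpha''})\in\mc{Q}^2$. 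Now each $a_\alpha$ has $2m$ sub-blocks and, by Lemma~\ref{lem30}, every $b\in\mc{H}_l$ misses one of them --- this is the analogue of step~(6) in Lemma~\ref{lem25}, and it is what drives the $\psi_0$-claim. The set $H$ lives on $2nm\times 2nm$ and uses only diagonal $f$-blocks $[nl,n(l+1))\times[nl,n(l+1))$ for $l<2m$. For a generic pair $a_\alpha<a_\beta$ realizing $f$ on block $l$, the underlying $q'_\xi$'s are incompatible; since $p_\alpha$ lies below all $q'_\xi$ for $\xi\in F_{\alpha'}$ and $q_\alpha$ below all $q'_\xi$ for $\xi\in F_{\alpha''}$, one of $p_\alpha\perp p_\beta$ or $q_\alpha\perp q_\beta$ holds according as $l<m$ or $l\ge m$, so $(p_\alpha,q_\alpha)\perp(p_\beta,q_\beta)$ in $\mc{Q}^2$. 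The common-lower-bound step is the missing ingredient in your sketch.
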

  \begin{proof}[Sketch proof.]
  Find $q$, $\dot{\msa}$, $n$, $f$, $\{(q'_\alpha, a'_\alpha): \alpha<\omega_1\}$ and $\Gamma\in [\omega_1]^{\omega_1}$, $\msa'=\{a'_\alpha: \alpha\in \Gamma\}$ as in the proof of Lemma \ref{lem25}.
  
Applying Lemma \ref{lem30} to $\msa'$, we find $m$ and a sequence of $\mc{R}$-satisfiable for $\msa'$ functions $\langle f_{i, j}: i, j<m\rangle$.

Inductively find $F_\alpha\in [\Gamma]^m$ such that 
  \begin{enumerate}
  \item $F_\alpha(0)>\sup \bigcup_{\xi<\alpha} F_\xi$ and  $\{q'_\xi: \xi\in F_\alpha\}$ has a common lower bound $q''_\alpha$. Denote $a''_\alpha=\bigcup_{\xi\in F_\alpha} a'_\xi$.
  \end{enumerate}
  Since $\msa'$ is a $\mc{C}$-candidate, $\msb^\alpha=\{a''_\beta: \alpha\leq\beta<\omega_1\}$ is also a $\mc{C}$-candidate  for each $\alpha<\omega_1$. Moreover, $f^*: nm\times nm\ra 2$ is $\mc{R}$-satisfiable for $\msb^\alpha$ for all $\alpha$ where 
  $$f^*\up_{[ni, n(i+1))\times [nj, n(j+1))}\text{ is order isomorphic to $f_{i,j}$ for all }i, j<m.$$
  
  Now applying $\phipi$ to $\msb^\alpha$ and $f^*$, we get $\alpha''> \alpha'\geq \alpha$ such that 
  $$\pi\up_{a''_{\alpha'}\times a''_{\alpha''}}\text{ is order isomorphic to }f^*.$$
  Let
  $$a_\alpha=a''_{\alpha'}\cup a''_{\alpha''} ,\ a_{\alpha, i}=a_\alpha[[ni, n(i+1))]\text{ for $i<2m$ and}$$
  $$p_\alpha=q''_{\alpha'}, \ q_\alpha=q''_{\alpha''}.$$
  By our choice of $f^*$, for every $\alpha<\omega_1$,
  \begin{enumerate}\setcounter{enumi}{1}
\item for every $l<2$ and $b\in \mc{H}_l$, $b\cap a_{\alpha, i}=\emptyset$ for some $i<2m$.
\end{enumerate}

As in the proof of Lemma \ref{lem25}, find $\Sigma\in [\omega_1]^{\omega_1}$ such that
 \begin{enumerate}\setcounter{enumi}{2}
\item $\msb=\{a_\alpha: \alpha\in \Sigma\}$ is a candidate for $\mc{C}$.
\end{enumerate}
Let $H$ be the collection of all $h: 2nm\times 2nm\ra 2$ such that
\begin{enumerate}\setcounter{enumi}{3}
  \item $h$ is a $\mc{R} $-satisfiable for $\msb$ function;
  \item for some $l<2m$, $h\up_{[nl, n(l+1))\times [nl, n(l+1))}$ is order isomorphic to $f$.
\end{enumerate}
Then the proof of Lemma \ref{lem25} shows that $\psi_0(\pi, \mc{C}\cup\{\bigcup G\}, \mc{R}\cup\{(\bigcup G, H)\}, \mct', \bfe')$ holds in $V[G]$ where $G$ is $\ms{P}_{\msb, H}$-generic.  Also, $\{(p_\alpha, q_\alpha): a_\alpha\in \bigcup G\}$ is an uncountable antichain of $\mc{Q}^2$.
   \end{proof}
  
  Repeating the proof of Theorem \ref{thm2} with Lemma \ref{lem25} replaced by Lemma \ref{lem31}, we get the following.
       \begin{thm}\label{thm3}
  Suppose $\mathfrak{b}_{\omega_1}=\kappa=\kappa^{<\kappa}$. Then there is a finite support iteration of ccc posets $\langle \mc{P}_\alpha, \dot{\mc{Q}}_\beta: \alpha\leq \kappa, \beta<\kappa\rangle$  such that in $V^{\mc{P}_{\kappa}}$,
  \begin{enumerate}
  \item {\rm MA}(squarely ccc) holds  and $2^\omega=\kappa$;
  \item {\rm MA} fails.
  \end{enumerate}
  \end{thm}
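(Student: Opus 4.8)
The plan is to run the construction behind Theorem~\ref{thm2} essentially verbatim, changing only that Lemma~\ref{lem25} is replaced by Lemma~\ref{lem31}: whenever the ccc poset currently under consideration destroys $\varphi$, one adds an uncountable antichain to its \emph{square} instead of to some higher finite power. First I would pass to a model carrying a witness of ${\rm Pr}_0(\omega_1,2,\omega)$. Forcing with the Cohen poset ${\rm Fn}(\omega,2)$ adds such a coloring $\pi\colon[\omega_1]^2\to 2$ (by \cite{Todorcevic89}, as this forcing makes $\mathfrak{b}=\omega_1$), and being ccc of size $\le\omega_1$ it preserves both $\kappa^{<\kappa}=\kappa$ and $\mathfrak{b}_{\omega_1}=\kappa$; the latter because, by the Fact preceding Lemma~\ref{lem candidate1}, every new function $\omega_1\to\omega_1$ is dominated by a ground-model one, so a ground-model unbounded family stays unbounded and no new unbounded family of smaller size can appear. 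Thus I may assume $\mathfrak{b}_{\omega_1}=\kappa=\kappa^{<\kappa}$ together with such a $\pi$; in particular $\kappa$ is regular.

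Next I would build a finite support iteration $\langle\mc{P}_\alpha,\dot{\mc{Q}}_\beta:\alpha\le\kappa,\beta<\kappa\rangle$ of ccc posets of size $<\kappa$, with the usual bookkeeping (using $\kappa^{<\kappa}=\kappa$) listing all names for such posets together with their $<\kappa$-sized families of dense sets, each appearing cofinally. At a successor stage $\xi$, writing $\mc{Q}$ for the poset currently handled: if $\mc{Q}$ preserves $\varphi(\pi,\mc{C}^\xi,\mc{R}^\xi,\mct^\xi,\bfe^\xi)$, put $\dot{\mc{Q}}_\xi=\mc{Q}$ and leave $(\mc{C},\mc{R},\mct,\bfe)$ fixed; otherwise apply Lemma~\ref{lem31} to obtain a $\mc{C}^\xi$-candidate $\msa$ and a non-empty collection $H$ of $\mc{R}^\xi$-satisfiable functions with $\pah$ ccc and $\Vdash_{\pah}\mc{Q}^2$ not ccc, put $\dot{\mc{Q}}_\xi=\pah$ (ccc by Lemma~\ref{lem11}), and set $\mc{C}^{\xi+1}=\mc{C}^\xi\cup\{\bigcup\dot G\}$, $\mc{R}^{\xi+1}=\mc{R}^\xi\cup\{(\bigcup\dot G,H)\}$ with $\mct^{\xi+1},\bfe^{\xi+1}$ induced; at limit stages take unions. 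The invariant I would carry along, by induction on $\alpha<\kappa$, is that both $\varphi(\pi,\mc{C}^\alpha,\mc{R}^\alpha,\mct^\alpha,\bfe^\alpha)$ and $\psi_0(\pi,\mc{C}^\alpha,\mc{R}^\alpha,\mct^\alpha,\bfe^\alpha)$ hold in $V^{\mc{P}_\alpha}$: at a successor stage this comes from Lemma~\ref{lem pah} and Lemma~\ref{lem31} in the $\pah$ case, and from $\varphi$-preservation together with Corollary~\ref{cor preservation}(1) otherwise; at a limit stage it comes from Lemma~\ref{limit stage} and Corollary~\ref{cor limit}. All of these apply once one reads ``$|\mct|\le\omega_1$'' as ``$|\mct|<\mathfrak{b}_{\omega_1}$'' and ``Lemma~\ref{lem candidate}'' as ``Lemma~\ref{lem candidate1}'' throughout; and indeed $|\mct^\alpha|\le\omega+|\alpha|<\kappa=\mathfrak{b}_{\omega_1}$ for every $\alpha<\kappa$, since each successor step adjoins only finitely many nodes to $\mct$ and $\kappa$ is regular.

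Then I would read off the conclusions in $V^{\mc{P}_\kappa}$. The iteration is a finite support iteration of ccc posets, hence ccc, and so preserves cardinals and cofinalities; $2^\omega=\kappa$ by the standard computation, using $|\mc{P}_\kappa|=\kappa$ for the upper bound and the $\kappa$ Cohen reals added at cofinally many stages for the lower bound (Cohen forcing always preserves $\varphi$, for otherwise Lemma~\ref{lem31} would yield a $\pah$ forcing a finite power of a Cohen poset to be non-ccc, which is impossible). For ${\rm MA}$(squarely ccc): given a squarely ccc $\mc{P}$ of size $<\kappa$ (the standard elementary-submodel reduction applies, since squarely ccc passes to regular subposets) and $<\kappa$-many dense subsets of $\mc{P}$ in $V^{\mc{P}_\kappa}$, the $\kappa$-cc together with $\kappa^{<\kappa}=\kappa$ places this data in some $V^{\mc{P}_\xi}$ with $\xi<\kappa$ at which the bookkeeping processes it; there $\mc{P}^2$ is still ccc (incompatibility in a fixed poset is absolute and $\omega_1$ is preserved), and $\mc{P}$ cannot destroy $\varphi(\pi,\mc{C}^\xi,\ldots)$: were it to, Lemma~\ref{lem31} would adjoin an uncountable antichain to $\mc{P}^2$ in $V^{\mc{P}_{\xi+1}}$, which by the same absoluteness would still be an uncountable antichain of $\mc{P}^2$ in $V^{\mc{P}_\kappa}$, contradicting that $\mc{P}$ is squarely ccc there. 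Hence $\mc{P}$ was forced with and all its dense sets met. For $\neg{\rm MA}$: by Lemma~\ref{lem17} inside each $V^{\mc{P}_\alpha}$ with $\alpha<\kappa$ (legitimate since $\varphi$, $\psi_0$ and $|\mct^\alpha|<\mathfrak{b}_{\omega_1}$ all hold there), $\mc{H}_0$ and $\mc{H}_1$ are ccc in every $V^{\mc{P}_\alpha}$, hence --- again by the $\kappa$-cc and absoluteness of incompatibility --- in $V^{\mc{P}_\kappa}$; but $\{(\{\alpha\},\{\alpha\}):\alpha<\omega_1\}$ is an uncountable antichain of $\mc{H}_0\times\mc{H}_1$, since for $\alpha<\beta$ a common lower bound of $(\{\alpha\},\{\alpha\})$ and $(\{\beta\},\{\beta\})$ would require $\pi(\alpha,\beta)=0$ in the first coordinate and $\pi(\alpha,\beta)=1$ in the second. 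So ccc is not productive in $V^{\mc{P}_\kappa}$, and therefore ${\rm MA}$ fails there.

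The step I expect to cost the most care is verifying that the whole preservation apparatus --- Lemma~\ref{lem pah}, Corollary~\ref{cor limit}, and the proof of Lemma~\ref{lem31} --- genuinely runs with ``$|\mct|\le\omega_1$'' weakened to ``$|\mct|<\mathfrak{b}_{\omega_1}$'', i.e.\ that Lemma~\ref{lem candidate1} is a bona fide drop-in replacement for Lemma~\ref{lem candidate} everywhere it is used; this is exactly the point at which the hypothesis $\mathfrak{b}_{\omega_1}=\kappa$ buys the freedom to iterate for length $\kappa$ while keeping $\mc{H}_0,\mc{H}_1$ ccc. A secondary obstacle is the bookkeeping-and-absoluteness bookwork establishing that a poset ``handled via some $\pah$'' in the iteration cannot be squarely ccc in the final model.
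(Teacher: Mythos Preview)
Your proposal is correct and follows exactly the approach the paper takes: the paper's own proof is the single sentence ``Repeating the proof of Theorem~\ref{thm2} with Lemma~\ref{lem25} replaced by Lemma~\ref{lem31}, we get the following,'' and your write-up is a careful elaboration of precisely this, including the reduction to a model with a ${\rm Pr}_0(\omega_1,2,\omega)$ coloring, the bookkeeping, the $\varphi+\psi_0$ invariant maintained via Lemmas~\ref{lem pah}, \ref{limit stage}, \ref{lem31} and Corollaries~\ref{cor preservation}, \ref{cor limit} with Lemma~\ref{lem candidate1} replacing Lemma~\ref{lem candidate}, and the final read-off of ${\rm MA}$(squarely ccc) and $\neg{\rm MA}$. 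Your verification that Cohen forcing preserves $\varphi$ (hence reals are added cofinally) and your absoluteness argument for why a poset handled via some $\pah$ cannot be squarely ccc in the final model are details the paper leaves implicit but which are correct as you state them.
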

  
  \subsection{Forcing axiom for more ccc posets}
  
  We would like the final model to satisfy forcing axiom for more ccc posets while preserve ccc of $\mc{H}_0$ and $\mc{H}_1$. Since {\rm MA}(squarely ccc) can be consistently true by Theorem \ref{thm3}, we will look into ccc posets themselves. More precisely, we will investigate for what ccc poset $\mc{Q}$ that does not preserve $\phipi$, we can destroy ccc of $\mc{Q}$ by some $\pah$ while preserve extended $\psi_0(\pi, \mc{C}', \mc{R}', \mct', \bfe')$.
  
  The proof of Lemma \ref{lem31} can be modified to destroy ccc of $\mc{Q}$ if there are uncountably many $q_\alpha$'s such that for some $m$ and $\{a_{\alpha, i}: i< m\}$,
  \begin{itemize}
  \item[(i)] $q_\alpha\Vdash a_{\alpha ,i}\in \dot{\msa}\cap [\omega_1\setminus \alpha]^n$ and $a_{\alpha, 0}<\cdot\cdot\cdot<a_{\alpha, m-1}$;
  \item[(ii)] for every $l<2$ and $b\in \mc{H}_l$, $b\cap a_{\alpha, i}=\emptyset$ for some $i<m$.
  \end{itemize}
  
  So we investigate the property to guarantee the existence of $a_{\alpha, i}$'s with above properties.
  \begin{lem}\label{lem33}
  Suppose $\msa\subset [\omega_1]^{<\omega}$ is uncountable non-overlapping, $\pi: [\omega_1]^2\ra 2$ is a coloring and $l<2$. Then the following statements are equivalent.
  \begin{itemize}
  \item[(i)] For some $\delta<\omega_1$, for every finite $\msa'\subset \msa\cap [\omega_1\setminus \delta]^{<\omega}$,  there exists  $b\in \mc{H}_l$ such that $b\cap a\neq \emptyset$   whenever $a\in \msa'$.
  \item[(ii)] There is an $l$-homogeneous subset meeting all but countably many elements of $\msa$.
  \end{itemize}
  \end{lem}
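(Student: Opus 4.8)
The plan is to prove the two implications separately, with (ii)$\Rightarrow$(i) being routine and (i)$\Rightarrow$(ii) requiring a compactness-style argument. For (ii)$\Rightarrow$(i): suppose $b^*\in\mc{H}_l$ (possibly infinite, so really $[b^*]^{<\omega}\subset\mc{H}_l$) meets all but countably many $a\in\msa$. Let $\delta<\omega_1$ be such that $b^*\cap a\neq\emptyset$ whenever $a\in\msa$ and $\min a\geq\delta$. Then for any finite $\msa'\subset\msa\cap[\omega_1\setminus\delta]^{<\omega}$, pick one point of $b^*$ in each $a\in\msa'$; this finite subset $b$ of $b^*$ is $l$-homogeneous and meets every $a\in\msa'$. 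So (i) holds with this $\delta$.

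For (i)$\Rightarrow$(ii), fix the witnessing $\delta$ and work with $\msa\cap[\omega_1\setminus\delta]^{<\omega}$; we want to build a single $l$-homogeneous set meeting all of it. The natural device is to run a tree/compactness argument: consider finite $l$-homogeneous sets $b$ together with the information of which elements of $\msa$ they already meet, and show the hypothesis lets us keep extending. Concretely, I would build by recursion on $\alpha<\omega_1$ an increasing chain of finite $l$-homogeneous sets $b_\alpha$ and, simultaneously, enumerate $\msa\cap[\omega_1\setminus\delta]^{<\omega}=\{a_\alpha:\alpha<\omega_1\}$, arranging that $b_{\alpha+1}\cap a_\alpha\neq\emptyset$. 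The issue is that extending $b_\alpha$ to hit $a_\alpha$ must not destroy the fact that we can still hit everything later. Here is where (i) is used in full strength: (i) applied to the finite family $\{a_\alpha\}$ together with the (finitely many) constraints already imposed by $b_\alpha$ is not literally enough, so I would instead argue as follows. For each finite $F\subset\omega_1$, let $X_F\subseteq\omega_1$ be the set of $\gamma<\omega_1$ such that there is a finite $l$-homogeneous $b$ with $F\subseteq b$ and $b$ meeting $a_\gamma$; the point is that if some $b_\alpha$ cannot be extended to meet $a_\alpha$ while remaining $l$-homogeneous, then every element of $a_\alpha$ is $\pi$-colored $1-l$ with some element of $b_\alpha$, and one shows (using that there are uncountably many $a$'s and a counting/$\Delta$-system argument) this contradicts (i) applied to a suitable large finite subfamily.

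The cleanest route is probably this: by (i), for each $n<\omega$ choose, for arbitrarily large finite $\msa'\subset\msa$, a finite $l$-homogeneous $b$ meeting all of $\msa'$; among all $l$-homogeneous $b$ (finite), say $b$ is \emph{good} if $\{a\in\msa:\ b\cup b'\text{ is }l\text{-homogeneous for some finite }b'\text{ with }b'\cap a\neq\emptyset\}$ is co-countable — equivalently $\{a\in\msa:b\cup\{x\}\in\mc{H}_l\text{ for some }x\in a\}$ is co-countable. One checks $\emptyset$ is good (that is just a restatement of a consequence of (i) after shrinking $\msa$), and that if $b$ is good then for co-countably many $a\in\msa$ there is $x\in a$ with $b\cup\{x\}\in\mc{H}_l$ and $b\cup\{x\}$ still good — if not, one gets uncountably many $a$ each of which blocks goodness, and a $\Delta$-system / pigeonhole argument on these (together with the $2$-valued coloring) produces a finite subfamily violating (i). Then a straightforward recursion of length $\omega_1$, enumerating $\msa$ and at stage $\alpha$ extending the current good $b_\alpha$ to a good $b_{\alpha+1}$ meeting $a_\alpha$ (taking unions at limits, which preserves $l$-homogeneity and goodness since these are closed under increasing unions of finite sets along a chain — goodness because co-countableness is preserved), yields $b^*=\bigcup_{\alpha}b_\alpha$ meeting all but countably many elements of $\msa$, as required.

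I expect the main obstacle to be the ``goodness is preserved under extension'' step, i.e.\ showing that if $b$ is good then it can be extended to a good $b\cup\{x\}$ meeting a prescribed $a$: this is exactly where the quantifier structure of (i) (``for \emph{every} finite subfamily there is a common hitter'') must be leveraged against a potential uncountable family of obstructions, and it will need a careful $\Delta$-system argument on the obstructing $a$'s combined with the finiteness of the color set $\{0,1\}$ to derive the contradiction with (i). Everything else — the limit stages, the final union, and the (ii)$\Rightarrow$(i) direction — is routine.
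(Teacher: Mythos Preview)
Your (ii)$\Rightarrow$(i) is correct and matches the paper's one-line dismissal of that direction.

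For (i)$\Rightarrow$(ii), the recursion-with-goodness approach has a genuine gap at exactly the step you flag. Hypothesis (i) produces, for any finite $\msa'$, a finite $l$-homogeneous $c$ meeting each $a\in\msa'$ --- but with \emph{no control over which element of each $a$ lies in $c$}. So even if you apply (i) to a family containing $a$ together with many obstructing $a'$'s, the resulting $c$ may hit $a$ at some $a(k)$ with $b\cup\{a(k)\}$ not $l$-homogeneous; nothing forces $c$ to cohere with the already-built $b$. Your proposed $\Delta$-system plus finite-color pigeonhole only stabilizes the \emph{pattern} of the witnesses, not their compatibility with $b$, so it does not close the gap. (There is also a secondary issue: even granting your key step, the countable exceptional set depends on $b_\alpha$ and increases with $\alpha$, so it is not clear that only countably many $a_\alpha$ are skipped over the whole recursion.)

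The paper avoids building $b$ incrementally and instead uses ultrafilters to select coherently from the finite witnesses supplied by (i). First, for any \emph{countable} $\msb\subset\msa\cap[\omega_1\setminus\delta]^{<\omega}$: enumerate it as $\{a_n:n<\omega\}$, use (i) to pick $b_n\in\mc H_l$ meeting $a_0,\dots,a_{n-1}$, fix a nonprincipal ultrafilter $\mc U$ on $\omega$, and for each $m$ take $x_m\in a_m$ with $\{n:x_m\in b_n\}\in\mc U$ (this exists since $a_m$ is finite and the union of these index sets over $a_m$ is cofinite). Any two $x_m,x_{m'}$ lie together in $\mc U$-many $b_n$, hence $\{x_m:m<\omega\}$ is $l$-homogeneous and meets all of $\msb$. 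Second, for each $\alpha<\omega_1$ apply this to $\msa\cap[\alpha\setminus\delta]^{<\omega}$ to get an $l$-homogeneous $B_\alpha$; now fix a uniform ultrafilter $\mc V$ on $\omega_1$ and for each $a$ above $\delta$ pick $y_a\in a$ with $\{\alpha:y_a\in B_\alpha\}\in\mc V$. The same argument shows $\{y_a\}$ is $l$-homogeneous and meets every element of $\msa$ above $\delta$. The ultrafilter does precisely what your recursion cannot: it picks one coherent point per $a$ out of the supply of finite witnesses, without ever needing to extend a fixed partial solution.
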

  \begin{proof}
  Only ``(i)$\Rightarrow$(ii)'' needs a proof. Fix $\delta<\omega_1$ guaranteed by (i).
  
  Fix a non-principal ultrafilter $\mc{U}$ on $\omega$ and a uniform ultrafilter $\mc{V}$ on $\omega_1$, i.e., $|F|=\omega_1$ whenever $F\in \mc{V}$.
  
  We first show the countable case.\medskip
  
  \textbf{Claim.} If $\msb\subset \msa\cap [\omega_1\setminus \delta]^{<\omega}$ is countable, then there is an $l$-homogeneous set meeting all elements of $\msb$.
  \begin{proof}[Proof of Claim.]
  Without loss of generality, we may assume that $\msb$ is infinite and $\msb=\{a_n: n<\omega\}$. For every $n<\omega$, fix $b_n\in \mc{H}_l$ such that $b_n\cap a_m\neq \emptyset$ whenever $m<n$.
  
  For every $m<\omega$, let $x_m$ be the least $\alpha\in a_m$ such that $\{n: \alpha\in b_n\}\in \mc{U}$. Since $a_m$ is finite and $\mc{U}$ is non-principal, $x_m$ exists.
  
  Then $\{x_m: m<\omega\}$ is $l$-homogeneous and meeting all elements of $\msb$. To see homogeneity, note that $x_m$ and $x_k$ belong to the same $b_n$ for almost all $n$ in $\mc{U}$.
  \end{proof}
  
  Now for every $\alpha<\omega_1$, fix an $l$-homogeneous set $B_\alpha$ meeting all elements of $\msa\cap [\alpha\setminus \delta]^{<\omega}$.
  
    For every $a\in \msa\cap [\omega_1\setminus \delta]^{<\omega}$, let $y_a$ be the least $\xi\in a$ such that $\{\alpha: \xi\in B_\alpha\}\in \mc{V}$.  Then $\{y_a: a\in \msa\cap [\omega_1\setminus \delta]^{<\omega}\}$ is an $l$-homogeneous set meet all elements of $\msa\cap [\omega_1\setminus \delta]^{<\omega}$.
  \end{proof}
  
  Now the proof of Lemma \ref{lem31} shows the following.
   \begin{lem}\label{lem34}
  Assume $\varphi(\pi, \mc{C}, \mc{R}, \mct, \bfe)$, $\psi_0(\pi, \mc{C}, \mc{R}, \mct, \bfe)$ and $|\mct|<\mathfrak{b}_{\omega_1}$. Suppose $\mc{Q}$ is a ccc poset which forces the failure of $\varphi(\pi, \mc{C}, \mc{R}, \mct, \bfe)$. If $\mc{Q}$ forces that there is no uncountable $l$-homogeneous subset for any $l<2$, then there are a candidate for $\mc{C}$ family $\msa$ and a non-empty collection of $\mc{R}$-satisfiable functions $H$ such that in $V[G]$ where $G$ is $\pah$-generic, $\mc{Q}$ is not ccc and $\psi_0(\pi, \mc{C}\cup\{\bigcup G\}, \mc{R}\cup\{(\bigcup G, H)\}, \mct', \bfe')$ holds
where $\mct'$ and $\bfe'$ are induced from $\mc{C}\cup\{\bigcup G\}$.
  \end{lem}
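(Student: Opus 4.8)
The plan is to run the argument of Lemma~\ref{lem31} (equivalently of Lemma~\ref{lem25}) essentially verbatim, with one substitution: the finite blocks $\{a_{\alpha,i}:i<m\}$ of a $\mc{C}$-candidate enjoying the ``missing coordinate'' property — which in Lemma~\ref{lem31} were manufactured by fixing the functions $\langle f_{i,j}\rangle$ of Lemma~\ref{lem30} and realizing them — will this time be supplied directly by the hypothesis that $\mc{Q}$ adds no uncountable homogeneous subset, via Lemma~\ref{lem33}. As the paper's remark preceding Lemma~\ref{lem33} makes explicit, it suffices to produce, for uncountably many $\alpha<\omega_1$, a condition $q_\alpha\leq q$ (where $q,\dot{\msa},n,f$ are chosen as in the first lines of the proof of Lemma~\ref{lem25}, so that $q$ forces $\dot{\msa},f$ to witness the failure of (Res)) together with finite sets $a_{\alpha,0}<\cdots<a_{\alpha,m-1}$ in $[\omega_1\setminus\alpha]^n$ such that $q_\alpha\Vdash a_{\alpha,i}\in\dot{\msa}$ for all $i<m$ and such that every $b\in\mc{H}_0\cup\mc{H}_1$ is disjoint from some $a_{\alpha,i}$. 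Because these $m$ blocks are forced into $\dot{\msa}$ by a \emph{single} condition $q_\alpha$ (rather than distributed over coordinates of a product), the antichain we extract at the end will live in $\mc{Q}$ itself.

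The key new step is the following forcing claim: for each $l<2$, $q$ forces that for every $\delta<\omega_1$ there is a finite family $\msa'\subseteq\dot{\msa}\cap[\omega_1\setminus\delta]^{<\omega}$ admitting no $b\in\mc{H}_l$ with $b\cap a\neq\emptyset$ for all $a\in\msa'$. Indeed, if this failed, some $q^*\leq q$ and some $\delta$ would force the opposite; passing to a generic extension by a filter containing $q^*$, the uncountable $\mc{C}$-candidate $\dot{\msa}$ would verify clause~(i) of Lemma~\ref{lem33}, so by Lemma~\ref{lem33} there would be an $l$-homogeneous set meeting all but countably many members of $\dot{\msa}$; since those members are uncountably many and pairwise disjoint (being non-overlapping), this set would be an uncountable $l$-homogeneous subset, contradicting the hypothesis on $\mc{Q}$. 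Granting the claim, for each $\alpha<\omega_1$ pick $q_\alpha\leq q$ deciding (from the claim applied with $\delta=\alpha$ and $l=0,1$) two finite families of $n$-element subsets of $[\omega_1\setminus\alpha]$ to be concrete finite families $F_0^\alpha,F_1^\alpha$ in $V$ with $q_\alpha\Vdash F_0^\alpha\cup F_1^\alpha\subseteq\dot{\msa}$; since ``no $b\in\mc{H}_l$ meets every member of $F_l^\alpha$'' refers only to $\pi$ and the finite sets $F_l^\alpha$, it is absolute, hence true in $V$, and enumerating $F_0^\alpha\cup F_1^\alpha$ increasingly as $\{a_{\alpha,i}:i<m_\alpha\}$ gives exactly the desired blocks (this single condition can force all of them into $\dot{\msa}$ precisely because $q$ forces $\dot{\msa}$ uncountable). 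Passing to an uncountable set of $\alpha$'s we may assume $m_\alpha=m$ is constant, the blocks increase with $\alpha$, and, after refining $q_\alpha$ and thinning, the coordinate structure of $\{a_{\alpha,i}:i<m\}$ relative to $\mc{C}$ is fixed.

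From here the remainder is the bookkeeping of Lemma~\ref{lem25}: set $a_\alpha=\bigcup_{i<m}a_{\alpha,i}$, thin to a $\mc{C}$-candidate $\msa''=\{a_\alpha:\alpha\in\Sigma\}$ using Lemma~\ref{lem candidate1} (applicable since $|\mct|<\mathfrak{b}_{\omega_1}$) and Lemma~\ref{lem10}, and let $H$ be the set of $h:nm\times nm\ra 2$ that are $\mc{R}$-satisfiable for $\msa''$ and have $h\up_{[nl,n(l+1))\times[nl,n(l+1))}$ order isomorphic to $f$ for some $l<m$. Then $H$ is non-empty (take $h$ with every block order isomorphic to $f$, which is $\mc{R}$-satisfiable for $\msa''$ since $f$ is $\mc{R}$-satisfiable along $\dot{\msa}$), and the hypothesis of Corollary~\ref{cor preservation}(2) holds for $H$: this is verified exactly as in the Claim inside the proof of Lemma~\ref{lem25}, using $\psi_0(\pi,\mc{C},\mc{R},\mct,\bfe)$ together with the missing-coordinate property of the $a_{\alpha,i}$ to locate, for $I,J$ with $a[I],a[J]\in\mc{H}_l$ for uncountably many $a\in\msa''$, a block $i^{*}$ disjoint from $I$, and to fill the remaining blocks of some $h\in H$ with $\psi_0$-witnesses and copies of $f$. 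Consequently $\ms{P}_{\msa'',H}$ is ccc by Lemma~\ref{lem11}, preserves $\varphi$ by Lemma~\ref{lem pah}, and preserves $\psi_0$ in the expanded structure by Corollary~\ref{cor preservation}(2). Finally, if $G$ is $\ms{P}_{\msa'',H}$-generic then for $a_\alpha<a_\beta$ in $\bigcup G$ we have $\pi\up_{a_\alpha\times a_\beta}$ order isomorphic to some $h\in H$, whose diagonal block $l$ gives $\pi\up_{a_{\alpha,l}\times a_{\beta,l}}$ order isomorphic to $f$; a common extension of $q_\alpha$ and $q_\beta$ would then force $f$ realized in $\dot{\msa}$, contradicting the choice of $q,\dot{\msa},f$, so $\{q_\alpha:a_\alpha\in\bigcup G\}$ is an uncountable antichain of $\mc{Q}$.

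The step I expect to be the main obstacle is the forcing claim and its use: one must be careful that the ``missing coordinate'' property of the blocks is a genuine statement about fixed finite sets in $V$, not merely a forced statement about $\dot{\msa}$, so that it is available both where $\mc{H}_l$ is handled absolutely (the verification of Corollary~\ref{cor preservation}(2)) and where the antichain is read off in the $\ms{P}_{\msa'',H}$-extension. Everything after that is a routine, if lengthy, transcription of the proof of Lemma~\ref{lem25}, with ``$2m$'', ``$p_\alpha,q_\alpha$'' and ``$\mc{Q}^2$'' replaced throughout by ``$m$'', ``$q_\alpha$'' and ``$\mc{Q}$''.
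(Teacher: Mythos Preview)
Your proposal is correct and follows essentially the same route as the paper's sketch proof: obtain the blocks $\{a_{\alpha,i}:i<m_\alpha\}$ via Lemma~\ref{lem33} (using that $\mc{Q}$ adds no uncountable homogeneous set), stabilize $m_\alpha=m$, thin to a $\mc{C}$-candidate, define $H$ exactly as in Lemma~\ref{lem25}, and read off an antichain of $\mc{Q}$ from $\bigcup G$. You have spelled out more carefully than the paper does the ``forcing claim'' justifying the use of Lemma~\ref{lem33} and the absoluteness of the missing-coordinate property; these details are implicit in the paper's one-line invocation of Lemma~\ref{lem33} but are exactly right.
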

  \begin{proof}[Sketch proof.]
  Find $q$, $\dot{\msa}$, $n$, $f$ as in the proof of Lemma \ref{lem25}. By Lemma \ref{lem33}, for eveary $\alpha<\omega_1$, find $q_\alpha< q$, $m_\alpha<\omega$ and $\{a_{\alpha, i}: i<m_\alpha\}$ such that
  \begin{enumerate}
  \item $q_\alpha\Vdash a_{\alpha, 0}<\cdot\cdot\cdot<a_{\alpha, m_\alpha-1}$ are in $\dot{\msa}\cap [\omega_1\setminus \alpha]^n$;
  \item  for every $l<2$ and $b\in \mc{H}_l$, $b\cap a_{\alpha, i}=\emptyset$ for some $i<m_\alpha$.  
    \end{enumerate}
  Find $\Gamma\in [\omega_1]^{\omega_1}$ and $m$ such that for every $\alpha\in \Gamma$,
  \begin{enumerate}\setcounter{enumi}{2}
  \item $m_\alpha=m$, $a_\alpha=\bigcup_{i<m} a_{\alpha, i}$ and $a_\xi<a_\alpha$ whenever $\xi\in \Gamma\cap \alpha$.
  \end{enumerate}
  As in the proof of Lemma \ref{lem25}, find $\Gamma'\in [\Gamma]^{\omega_1}$ such that 
   \begin{enumerate}\setcounter{enumi}{3}
  \item $\msb=\{a_\alpha: \alpha\in \Gamma'\}$ is a candidate for $\mc{C}$.
  \end{enumerate}

Let $H$ be the collection of all $h: nm\times nm\ra 2$ such that
\begin{enumerate}\setcounter{enumi}{4}
  \item $h$ is $\mc{R} $-satisfiable for $\msb$ functions;
  \item for some $l<m$, $h\up_{[nl, n(l+1))\times [nl, n(l+1))}$ is order isomorphic to $f$.
\end{enumerate}
Then the proof of Lemma \ref{lem25} shows that $\psi_0(\pi, \mc{C}\cup\{\bigcup G\}, \mc{R}\cup\{(\bigcup G, H)\}, \mct', \bfe')$ holds in $V[G]$ where $G$ is $\ms{P}_{\msb, H}$-generic.  Also, $\{q_\alpha: a_\alpha\in \bigcup G\}$ is an uncountable antichain of $\mc{Q}$.
   \end{proof}
  
    Modifying the proof of Theorem \ref{thm3}, we get the following.
           \begin{thm}\label{thm4}
         Suppose $\mathfrak{b}_{\omega_1}=\kappa=\kappa^{<\kappa}$ and $\pi: [\omega_1]^2\ra 2$ satisfies {\rm Pr}$_0(\omega_1, 2, \omega)$. Then there is a finite support iteration of ccc posets $\langle \mc{P}_\alpha, \dot{\mc{Q}}_\beta: \alpha\leq \kappa, \beta<\kappa\rangle$  such that in $V^{\mc{P}_{\kappa}}$,
  \begin{enumerate}
  \item {\rm MA}(squarely ccc) holds and $2^\omega=\kappa$;
  \item $\mc{H}_0$ and $\mc{H}_1$ are ccc and in particular, {\rm MA} fails;
  \item if $\mc{P}$ is a ccc poset which forces that $\pi$ has no uncountable $l$-homogeneous subset for any $l<2$, then \ma($\{\mc{P}\}$) holds.
  \end{enumerate}
  \end{thm}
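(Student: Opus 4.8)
The plan is to re-run the iteration of Theorem \ref{thm3} — that is, the finite support iteration $\langle \mc{P}_\alpha, \dot{\mc{Q}}_\beta : \alpha\le\kappa,\ \beta<\kappa\rangle$ of Proposition \ref{prop1}, carried out with Lemma \ref{lem candidate1} in place of Lemma \ref{lem candidate} throughout (so that $|\mct|<\mathfrak{b}_{\omega_1}$ suffices everywhere $|\mct|\le\omega_1$ was used) — but with a refined bookkeeping at successor stages. At stage $\alpha$, working in $V^{\mc{P}_\alpha}$ where $\varphi(\pi,\mc{C}^\alpha,\mc{R}^\alpha,\mct^\alpha,\bfe^\alpha)$ and $\psi_0(\pi,\mc{C}^\alpha,\mc{R}^\alpha,\mct^\alpha,\bfe^\alpha)$ hold, I would handle the poset $\dot{\mc{Q}}$ handed over by the bookkeeping by a three-way split: (a) if $\dot{\mc{Q}}$ preserves $\varphi$, set $\dot{\mc{Q}}_\alpha=\dot{\mc{Q}}$ and meet the dense sets assigned to it, keeping $(\mc{C},\mc{R},\mct,\bfe)$ unchanged (by Corollary \ref{cor preservation}(1), $\psi_0$ is preserved); (b) if $\dot{\mc{Q}}$ destroys $\varphi$ but forces that $\pi$ has no uncountable $l$-homogeneous subset for any $l<2$, apply Lemma \ref{lem34} to obtain $\msa,H$ with $\Vdash_{\pah}$ ``$\mc{Q}$ is not ccc'' together with preservation of $\psi_0$, set $\dot{\mc{Q}}_\alpha=\pah$, and expand $(\mc{C}^{\alpha+1},\mc{R}^{\alpha+1},\mct^{\alpha+1},\bfe^{\alpha+1})$ as in Lemma \ref{lem pah}; (c) otherwise, apply Lemma \ref{lem31} to obtain $\msa,H$ with $\Vdash_{\pah}$ ``$\mc{Q}^2$ is not ccc'' and preservation of $\psi_0$, set $\dot{\mc{Q}}_\alpha=\pah$, and expand the structures. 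In cases (b) and (c), $\varphi$ is preserved by Lemma \ref{lem pah} and $\psi_0$ by the cited lemmas.

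At limit stages one takes the unions of the $\mc{C}^\beta,\mc{R}^\beta$ and the induced $\mct,\bfe$; Lemma \ref{limit stage} and Corollary \ref{cor limit} then give $\varphi$ and $\psi_0$ at the limit. Since at most finitely many nodes enter $\mct$ per step and $\kappa$ is regular, $|\mct^\alpha|\le|\alpha|+\omega_1<\kappa=\mathfrak{b}_{\omega_1}$ for every $\alpha<\kappa$, so all the lemmas (in their $\mathfrak{b}_{\omega_1}$-versions) apply at every stage; moreover, by the Fact that ccc forcing preserves $<^*$-unboundedness, $\mathfrak{b}_{\omega_1}^{V^{\mc{P}_\alpha}}\ge\kappa$ at every stage, and $2^\omega=\kappa$ in $V^{\mc{P}_\kappa}$ by the usual counting of nice names.

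For (2): $\varphi$, $\psi_0$ and $|\mct^\beta|<\mathfrak{b}_{\omega_1}$ hold at every $\beta<\kappa$, so by (the $\mathfrak{b}_{\omega_1}$-version of) Lemma \ref{lem17}, $\mc{H}_0$ and $\mc{H}_1$ are ccc in each $V^{\mc{P}_\beta}$. An uncountable subset of $\mc{H}_l$ in $V^{\mc{P}_\kappa}$ is essentially a subset of $\omega_1$, hence — by ccc of $\mc{P}_\kappa$, regularity of $\kappa>\omega_1$, and finite supports — already lies in some $V^{\mc{P}_\beta}$, where it has two compatible elements; compatibility in the fixed poset $\mc{H}_l$ is absolute, so $\mc{H}_l$ is ccc in $V^{\mc{P}_\kappa}$. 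Then $\{(\{\xi\},\{\xi\}):\xi<\omega_1\}$ is an uncountable antichain of $\mc{H}_0\times\mc{H}_1$, so ccc is not productive in $V^{\mc{P}_\kappa}$, whence $\ma$ fails there (as recorded in Section 4, $\ma$ is equivalent to $\ma(\mathrm{powerfully\ ccc})$ together with productivity of ccc). For (1) and (3): given in $V^{\mc{P}_\kappa}$ either a squarely ccc $\mc{P}$ (for (1)) or a ccc $\mc{P}$ forcing that $\pi$ has no uncountable $l$-homogeneous subset for any $l<2$ (for (3)), together with $<\kappa$ dense sets, reflection ($\kappa$ regular, $\kappa^{<\kappa}=\kappa$, nice names) puts $\mc{P}$ and the dense sets into some $V^{\mc{P}_{\beta_0}}$, and the bookkeeping assigns this pair to a stage $\alpha\ge\beta_0$. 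In $V^{\mc{P}_\alpha}$, $\mc{P}$ is still ccc (antichains of the fixed poset $\mc{P}$ are absolute) and, in case (3), still forces ``no uncountable $l$-homogeneous subset'' (downward absolute). If $\mc{P}$ failed to preserve $\varphi$ at stage $\alpha$: in case (3) we used branch (b), so $\mc{P}$ is not ccc in $V^{\mc{P}_{\alpha+1}}$, hence not in $V^{\mc{P}_\kappa}$, contradicting the choice of $\mc{P}$; in case (1) we used branch (b) or (c), and in both $\mc{P}^2$ fails to be ccc in $V^{\mc{P}_{\alpha+1}}$, again a contradiction. So $\mc{P}$ preserved $\varphi$, branch (a) applied, $\dot{\mc{Q}}_\alpha=\mc{P}$, and the $\alpha$-th coordinate of the generic is a filter on $\mc{P}$ meeting all the dense sets; hence $\ma(\{\mc{P}\})$.

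The main obstacle is making the three branches of the successor step mutually coherent so that (1), (2) and (3) hold simultaneously. The key point — and the reason the no-homogeneous-subset case cannot simply be absorbed into branch (c) — is that for a ccc poset which forces ``$\pi$ has no uncountable $l$-homogeneous subset'' yet destroys $\varphi$, one must destroy its ccc outright via Lemma \ref{lem34}, not merely its square: were such a poset left ccc in the final model, it would refute (3), since at its assigned stage we forced with $\pah$ rather than with it. Everything else is routine: the bookkeeping bookkeeps all relevant pairs, the $\mathfrak{b}_{\omega_1}$-sized $\mct$ is harmless thanks to Lemma \ref{lem candidate1}, and the ccc-ness of $\mc{H}_0,\mc{H}_1$ reflects down to an intermediate model.
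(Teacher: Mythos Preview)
Your proposal is correct and follows essentially the same approach as the paper: the paper's proof is the one-liner ``Modifying the proof of Theorem \ref{thm3}, we get the following,'' and your three-way successor-stage split (preserve $\varphi$ / destroy $\varphi$ but force no uncountable homogeneous set, so apply Lemma \ref{lem34} / otherwise apply Lemma \ref{lem31}) is exactly the intended modification. Your explicit verification that, in case (3), the property ``$\mc{P}$ forces no uncountable $l$-homogeneous subset'' passes down from $V^{\mc{P}_\kappa}$ to $V^{\mc{P}_\alpha}$, and that in case (1) either branch (b) or (c) kills $\mc{P}^2$, fills in the details the paper leaves implicit.
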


   \subsection{Iterating powerfully ccc posets}
  Although {\rm MA}(powerfully ccc) is highly unlikely to imply preservation of powerfully ccc in finite support iteration,  in this subsection, we introduce several properties that, together with {\rm MA}(powerfully ccc), imply preservation of powerfully ccc in finite support iteration.  The consistency of these properties follows from the following theorem.

         \begin{thm}\label{thm6}
         Suppose $\mathfrak{b}_{\omega_1}=\kappa=\kappa^{<\kappa}$ and $\pi: [\omega_1]^2\ra 2$ satisfies {\rm Pr}$_0(\omega_1, 2, \omega)$. Then there is a finite support iteration of ccc posets $\langle \mc{P}_\alpha, \dot{\mc{Q}}_\beta: \alpha\leq \kappa, \beta<\kappa\rangle$  such that in $V^{\mc{P}_{\kappa}}$,
  \begin{enumerate}
  \item {\rm MA}(squarely ccc) holds and $2^\omega=\kappa$;
  \item $\mc{H}_0$ and $\mc{H}_1$ are ccc and in particular, {\rm MA} fails;
  \item if $\mc{P}$ is a ccc poset which forces that $\pi$ has no uncountable $l$-homogeneous subset for any $l<2$, then \ma($\{\mc{P}\}$) holds;
  \item for every $A\in [\omega_1]^{\omega_1}$ and $l<2$, there are $m<\omega$ and uncountable non-overlapping $\msb\subset [A]^m$ such that for every $a<b$ in $\msb$, $\pi(a(i), b(i))=l$ for some $i<m$.
  \end{enumerate}
  \end{thm}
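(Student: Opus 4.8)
The plan is to obtain Theorem \ref{thm6} by running essentially the finite support iteration of length $\kappa$ from the proof of Theorem \ref{thm4}, with one further bookkeeping task. As there, at a stage presenting a ccc poset $\mc{Q}$ we force with $\mc{Q}$ if it preserves $\varphi(\pi,\mc{C},\mc{R},\mct,\bfe)$, and otherwise force a suitable $\pah$ that kills $\mc{Q}^2$ by Lemma \ref{lem31}, or kills $\mc{Q}$ by Lemma \ref{lem34} when $\mc{Q}$ adds no uncountable homogeneous set; this delivers clauses (1)--(3). In addition we let the bookkeeping run through all pairs $(\dot A,l)$, with $\dot A$ a name for an element of $[\omega_1]^{\omega_1}$ and $l<2$, and at such a stage force a $\pah$ whose generic produces the family $\msb$ demanded by clause (4). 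Throughout the iteration we preserve $\varphi$, $\psi_0$ and $|\mct^\alpha|<\mathfrak{b}_{\omega_1}$, and also the extra invariant that $\pi$ has no uncountable homogeneous subset. The last holds in the ground model since $\pi$ witnesses {\rm Pr}$_0(\omega_1,2,\omega)$, it is genuinely needed since clause (4) applied to an uncountable $(1-l)$-homogeneous set and colour $l$ is impossible, and one checks it is automatically maintained: a $\varphi$-preserving poset creating an uncountable $l'$-homogeneous set $X$ would, via a candidate built (by Lemmas \ref{lem candidate1} and \ref{lem10}) from a low $X'\in[X]^{\omega_1}$ together with (Res), force colour $1-l'$ on a pair of points of $X'$; and each auxiliary $\pah$ that we use has $H$ containing functions of both colours on every diagonal entry, forced by the preservation of $\psi_0$, so no homogeneous column enters the model.

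The core is a lemma producing, from $\varphi$, $\psi_0$, $|\mct|<\mathfrak{b}_{\omega_1}$, the absence of an uncountable homogeneous set, and given $A\in[\omega_1]^{\omega_1}$ and $l<2$, a candidate $\msa$ for $\mc{C}$, a two-element set $I=\{i_0,i_1\}\subseteq N_\msa$ with $\msa_{i_0},\msa_{i_1}\subseteq A$, and a non-empty collection $H$ of $\mc{R}$-satisfiable-for-$\msa$ functions, such that $\pah$ is ccc, $\psi_0$ is preserved, and every $h\in H$ has $h(i,i)=l$ for some $i\in I$. In the $\pah$-extension the family $\msb:=\{a[I]:a\in\bigcup G\}$ is then an uncountable non-overlapping subfamily of $[A]^2$ and for every $a<b$ in $\msb$ one of $\pi(a(0),b(0))$, $\pi(a(1),b(1))$ equals $l$ --- clause (4) with $m=2$. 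To build $\msa$: since there is no uncountable $(1-l)$-homogeneous set, a maximal $l$-matching of the graph whose edges are the $l$-coloured pairs in $A$ must be uncountable, for a countable maximal matching would have a countable vertex set that is a vertex cover of this graph, whose complement in $A$ would be an uncountable $(1-l)$-homogeneous set. Fixing such an $l$-matching, and thinning it (twice using Lemma \ref{lem candidate1}) so that both its columns are low with respect to $\mct$, then adjoining the $\bfe$-orbits of the two columns and thinning by Lemma \ref{lem10}, we reach a candidate $\msa$ in which coordinates $i_0,i_1$ carry the two matching columns inside $A$ and $\pi(a(i_0),a(i_1))=l$ for every $a\in\msa$. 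The point is that this forces $\{a\in\msa:a[I]\in\mc{H}_{1-l}\}=\emptyset$.

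To choose $H$ we argue as in Lemma \ref{lem30}, taking $H$ finite with one member for each relevant $\psi_0$-constraint, where a constraint is a triple $(l',I',J')$ with $|\{a:a[I']\in\mc{H}_{l'}\}|=|\{a:a[J']\in\mc{H}_{l'}\}|=\omega_1$; by the Remark after Corollary \ref{cor preservation} it suffices to put into $H$, for each such triple, an $\mc{R}$-satisfiable-for-$\msa$ function $h$ with $h[I'\times J']=\{l'\}$ which moreover has $h(i,i)=l$ for some $i\in I$ (and at least one function, so $H\neq\emptyset$). The decisive observation is that a constraint with $l'=1-l$ cannot have both $I\subseteq I'$ and $I\subseteq J'$, since then $a[I']\supseteq a[I]$ would contain the $l$-coloured pair $\{a(i_0),a(i_1)\}$, so $a[I']\notin\mc{H}_{1-l}$ for every $a$; hence for any arising constraint at least one of $i_0,i_1$, say $i$, lies outside $I'$ or outside $J'$, so $(i,i)\notin I'\times J'$ and we are free to force the $(i,i)$-entry of $h$ to be $l$. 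That this remains compatible with $\mc{R}$-satisfiability uses the no-uncountable-homogeneous-set invariant: if $i$ lies in a column $\ms{C}_k$ of some $\ms{C}\in\mc{C}$, then by (R3) $\mc{R}(\ms{C})$ must contain a function with value $l$ at $(k,k)$, for otherwise $\ms{C}_k$ would be $(1-l)$-homogeneous; combining this with $\psi_0$ (through Lemma \ref{lem psi0}) and the criterion of Lemma \ref{lem R-sat} one assembles the required $h$. Then Lemma \ref{lem11} gives ccc of $\pah$, Corollary \ref{cor preservation} (2) preservation of $\psi_0$, Lemma \ref{lem pah} preservation of $\varphi$, and by construction every $h\in H$ forces $\pi(a(i_0),b(i_0))=l$ or $\pi(a(i_1),b(i_1))=l$.

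I expect the main obstacle to be exactly this three-way balance for $H$: it must be $\mc{R}$-satisfiable, rich enough to keep $\psi_0$, and still carry colour $l$ on some $I$-diagonal entry of each of its members. Building $\msa$ from an $l$-matching --- which makes $\{a:a[I]\in\mc{H}_{1-l}\}$ empty and so removes the single $\psi_0$-constraint that would otherwise be fatal --- together with the standing hypothesis that $\pi$ has no uncountable homogeneous set, is precisely what makes the balance achievable. Finally, with the bookkeeping visiting every $(\dot A,l)$ cofinally, Corollary \ref{cor limit} handles the limit stages, $|\mct^\alpha|<\mathfrak{b}_{\omega_1}$ is kept because only finitely many nodes enter $\mct$ at each step, and $V^{\mc{P}_\kappa}$ satisfies (1)--(4), exactly as in Theorem \ref{thm4} with clause (4) adjoined; in particular $\mc{H}_0$ and $\mc{H}_1$ remain ccc, so {\rm MA} fails.
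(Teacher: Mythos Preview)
Your overall framework---iterate as in Theorem \ref{thm4} with extra bookkeeping for pairs $(\dot A,l)$---matches the paper, but your handling of clause (4) is different from the paper's and has a real gap.

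The paper does not build $\msa$ from an $l$-matching or aim for $m=2$. Instead, from $A$ and $l$ it forms a single-column candidate $\msa$ with $\msa_{i^*}\subseteq A$, then manufactures an \emph{auxiliary} ccc poset
\[
\mc{Q}=\{F\in[\msa]^{<\omega}:\{a(i^*):a\in F\}\in\mc{H}_{1-l}\},
\]
checks that $\mc{Q}$ forces the failure of $\varphi$ (the $\mc{R}$-satisfiable $f$ with $f(i^*,i^*)=l$ cannot be realized in the generic $\bigcup G'$), and then simply invokes Lemma \ref{lem25} as a black box. The resulting $H$ has every $h\in H$ order-isomorphic to $f$ on some diagonal block, so on the generic family $\msb=\{\{a_{\beta,j}(i^*):j<m\}:a_\beta\in\bigcup G''\}$ one always gets $\pi(a(j),b(j))=l$ for some $j<m$. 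No new balance has to be struck: $\mc{R}$-satisfiability and $\psi_0$-preservation come for free from Lemma \ref{lem25}.

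The gap in your route is the sentence ``combining this with $\psi_0$ \dots\ one assembles the required $h$.'' You need, for each constraint $(l',I',J')$, a single $\mc{R}$-satisfiable $h$ with $h[I'\times J']=\{l'\}$ \emph{and} $h(i,i)=l$ for some $i\in I$. You correctly observe that for $l'=1-l$ some $i\in I$ lies outside $I'$ (say), so $(i,i)\notin I'\times J'$; but this does \emph{not} make the $(i,i)$-entry free. If $\Phit(\msa_i)\neq\omega_1$---and nothing in your matching construction prevents this, since $A$ may sit below a node of $\mct$---then $(i,i)$ lies in some diagonal block $K\times K$ with $(\msb,K,K)\in\mc{A}_{\max}(\msa)$, and you need a single $g\in\mc{R}(\msb)$ with $g(k,k)=l$ (where $K(k)=i$) \emph{and} $g$ constant $l'$ on the part of $K\times K$ meeting $I'\times J'$. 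Neither $\psi_0$ (which via Lemma \ref{lem psi0}(ii) produces one $g$ per single constraint) nor the no-homogeneous-set observation (which only gives some $g$ with $g(k,k)=l$) yields such a combined $g$; $\mc{R}(\msb)$ need not contain one. So the construction of $H$ is not complete, and your claim of $m=2$ is not established. The paper's device of pushing the work through an auxiliary $\mc{Q}$ and reusing Lemma \ref{lem25} avoids exactly this difficulty.
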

  \begin{proof}
  Let $\langle \mc{P}_\alpha, \dot{\mc{Q}}_\beta: \alpha\leq \kappa, \beta<\kappa\rangle$ be a standard iteration of ccc posets of size $<\kappa$ such that at step $\alpha$,
  \begin{itemize}
  \item[(i)] If $\alpha$ is a limit ordinal, then choose $\dot{\mc{Q}}_\alpha$ to guarantee that (1)-(3) hold in the final model according to the construction in Theorem \ref{thm4};
  \item[(ii)]   If $\alpha$ is a successor ordinal, then we deal with $\dot{A}$ in $[\omega_1]^{\omega_1}$ and $l<2$ for $\dot{A}, l$ in some standard enumeration.
  \end{itemize} 
Then (1)-(3) above hold   and we describe the procedure at successor steps.
  
Fix a successor ordinal $\alpha<\kappa$.  Choose a $\mc{P}_\alpha$-generic filter $G$. We work in $V[G]$. 

Fix $A\in [\omega_1]^{\omega_1}$ and $l<2$ as in (ii). Since $|\mct^\alpha|<\kappa=\mathfrak{b}_{\omega_1}$,  by Lemma \ref{lem candidate1}, find $B\in [A]^{\omega_1}$ such that
\begin{itemize}
\item[(a)] for every $B'\subset \Phi_{\mct^\alpha}(B)$ in $\mct^\alpha$, $|B'\cap B|\leq \omega$.
\end{itemize}
Fix uncountable non-overlapping $\msa\subset [\omega_1]^{N_\msa}$ and $i^*<N_\msa$ such that
\begin{itemize}
\item[(b)] $\{\msa_j: j<N_\msa\}$ is the $\bfe^\alpha$-closure of $B$ and $B=\msa_{i^*}$.
\end{itemize}
By Lemma \ref{lem7} and Lemma \ref{lem10}, going to an uncountable subset if necessary, we may assume that
\begin{itemize}
\item[(c)] $\msa$ is a $\mc{C}^\alpha$-candidate.
\end{itemize}
Consider poset  $\mc{Q}=\{F\in [\msa]^{<\omega}: \{a({i^*}): a\in F\}\in \mc{H}_{1-l}\}$ ordered by reverse inclusion.

Note that $\mc{Q}$ is ccc since $\mc{H}_{1-l}$ is ccc. Also, $\mc{Q}$ forces the failure of $\varphi(\pi, \mc{C}^\alpha, \mc{R}^\alpha, \mct^\alpha, \bfe^\alpha)$. In fact, by $\psi_0(\pi, \mc{C}^\alpha, \mc{R}^\alpha, \mct^\alpha, \bfe^\alpha)$, let $f: n\times n\ra 2$ be $\mc{R}^\alpha$-satisfiable for $\msa$ where $n=N_\msa$ such that
\begin{itemize}
\item[(d)] $f(i^*, i^*)=l$. Then $f$ can not be realized in $\bigcup G'$ where $G'$ is $\mc{Q}$-generic and uncountable.
\end{itemize}
By (d), for some $q\in \mc{Q}$, the $\mc{Q}$-name $\dot{\msa'}$ of $\bigcup G'$,  $q\Vdash_\mc{Q} \dot{\msa'}, f$ witness the failure of condition (Res) in $\varphi(\pi, \mc{C}^\alpha, \mc{R}^\alpha, \mct^\alpha, \bfe^\alpha)$ and $\dot{G'}$ is uncountable.

Choose, for every $\beta<\omega_1$, $a'_\beta\in \msa\cap [\omega_1\setminus \beta]^n$ such that $q'_\beta=q\cup \{a'_\beta\}\in \mc{Q}$. Then, for every $\beta<\omega_1$,
\begin{itemize}
\item[(e)] $q'_\beta\Vdash_\mc{Q} a'_\beta\in \dot{\msa'}\cap [\omega_1\setminus \beta]^n$.
\end{itemize}
Now repeating the proof of Lemma \ref{lem25}, we get $m$, $\msa'=\{a_\beta: \beta<\omega_1\}$, $\msa''\in [\msa']^{\omega_1}$ and $H$ such that    
\begin{itemize}
\item[(f)] $a_\beta=\bigcup_{i<m} a_{\beta, i}$ where each $a_{\beta, i}$ equals some $a'_{\eta}$ and $a_\beta<a_\gamma$ for $\beta<\gamma$;
\item[(g)] $\psi_0(\pi, \mc{C}^\alpha\cup \{\bigcup G''\}, \mc{R}^\alpha\cup \{(\bigcup G'', H)\}, \mct', \bfe')$ holds in $V[G][G'']$ where $G''$ is $\mc{P}_{\msa'', H}$-generic and $\mct', \bfe'$ are induced from $\mc{C}^\alpha\cup \{\bigcup G''\}$;
\item[(h)] for every $a_\beta<a_\gamma$ in $\bigcup G''$, there is $j<m$ such that $\pi(a_{\beta, j}(i^*), a_{\gamma, j}(i^*))=l$.
\end{itemize}
To see (h), note that $f(i^*, i^*)=l$ and  $\pi\up_{a_{\beta, j}\times a_{\gamma, j}}$ is order isomorphic to $f$ for some $j<m$.

Now $m$ and $\msb=\{\{a_{\beta, j}(i^*): j<m\}: a_\beta\in \bigcup G''\}$ satisfy the following,
\begin{itemize}
\item[(i)] for   $a<b$ in $\msb$, $\pi(a(j), b(j))=l$ for some $j<m$.
\end{itemize}
Take $\mc{P}_{\msa'', H}$ as $\mc{Q}_\alpha$. It is straightforward to check that (4) holds.
  \end{proof}
  
    \begin{thm}\label{thm7}
         Suppose {\rm MA}(powerfully ccc) + $\neg\mathrm{CH}$ and for coloring $\pi: [\omega_1]^2\ra 2$, $\mc{H}_0$ and $\mc{H}_1$ are ccc. Suppose moreover that,
  \begin{itemize}
  \item[(i)] if $\mc{P}$ is a ccc poset which forces that $\pi$ has no uncountable $l$-homogeneous subset for any $l<2$, then \ma($\{\mc{P}\}$) holds;
  \item[(ii)] for every $A\in [\omega_1]^{\omega_1}$ and $l<2$, there are $m<\omega$ and uncountable non-overlapping $\msb\subset [A]^m$ such that for every $a<b$ in $\msb$, $\pi(a(i), b(i))=l$ for some $i<m$.
  \end{itemize}
  Then powerfully ccc is closed under finite support iterated forcing.
  \end{thm}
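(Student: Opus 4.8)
The plan is to reduce the statement to the classical preservation of property $K$ (equivalently, under the hypotheses, of precaliber $\omega_1$) by finite support iterations, by proving the key strengthening: \emph{under ${\rm MA}$(powerfully ccc)$+\neg{\rm CH}$ together with (i) and (ii), every powerfully ccc poset has property $K$.} Granting this, an induction on $\nu$ shows that a finite support iteration $\langle\mathcal{P}_\alpha,\dot{\mathcal{Q}}_\beta:\alpha\le\nu,\beta<\nu\rangle$ in which each $\dot{\mathcal{Q}}_\beta$ is forced to be powerfully ccc has $\mathcal{P}_\nu$ with property $K$, hence powerfully ccc — the whole point being, in view of Example \ref{exam1}, that some global hypothesis is genuinely needed here, and that the hypotheses must be maintained in the intermediate models $V^{\mathcal{P}_\alpha}$. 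Maintaining (ii) along the iteration is the step I expect to be the real obstacle; I return to it at the end.

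The engine of the strengthening is the following observation, the only place where (ii) and the very definition of ``powerfully ccc'' both enter: \emph{no powerfully ccc poset adds an uncountable $l$-homogeneous subset of $\pi$, for either $l<2$.} Suppose $\mathcal{P}$ is powerfully ccc and $\Vdash_{\mathcal{P}}\dot X$ is an uncountable $l$-homogeneous subset of $\pi$. Choose, for every $\alpha<\omega_1$, a condition $p_\alpha$ and an ordinal $x_\alpha>\sup\{x_\gamma:\gamma<\alpha\}$ with $p_\alpha\Vdash\check x_\alpha\in\dot X$, and set $A=\{x_\alpha:\alpha<\omega_1\}\in[\omega_1]^{\omega_1}$. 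Apply (ii) to $A$ and the colour $1-l$: there are $m<\omega$ and an uncountable non-overlapping $\mathscr{B}\subset[A]^m$ such that for all $a<b$ in $\mathscr{B}$ there is $i<m$ with $\pi(a(i),b(i))=1-l$. Writing $b=(x_{\gamma_0(b)},\dots,x_{\gamma_{m-1}(b)})$ for $b\in\mathscr{B}$, form $\bar p_b=(p_{\gamma_0(b)},\dots,p_{\gamma_{m-1}(b)})\in\mathcal{P}^m$. Since $\mathcal{P}$ is powerfully ccc, $\mathcal{P}^m$ is ccc, so among the $\omega_1$ conditions $\bar p_b$ two are compatible, say $\bar r\le\bar p_a,\bar p_b$ for some $a<b$ in $\mathscr{B}$. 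Picking $i<m$ with $\pi(a(i),b(i))=1-l$, the $i$-th coordinate $r$ of $\bar r$ satisfies $r\le p_{\gamma_i(a)},p_{\gamma_i(b)}$, so $r\Vdash\{x_{\gamma_i(a)},x_{\gamma_i(b)}\}\subseteq\dot X$ while $\pi(x_{\gamma_i(a)},x_{\gamma_i(b)})=1-l\ne l$, contradicting that $\dot X$ is forced to be $l$-homogeneous.

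Let $\Theta$ be the class of ccc posets forcing that $\pi$ has no uncountable $l$-homogeneous subset for any $l<2$. The observation says every powerfully ccc poset lies in $\Theta$, and $\Theta$ is closed under finite products and two-step iterations: a two-step iteration of $\Theta$-posets is ccc and, since the second iterand still kills homogeneous subsets of the ground-model colouring $\pi$ over the first extension, it too kills homogeneous subsets of $\pi$ over $V$. Hypothesis (i) therefore yields ${\rm MA}(\{\mathcal{Q}\})$ for every $\mathcal{Q}\in\Theta$, i.e.\ ${\rm MA}(\Theta)$. Now the standard argument that ${\rm MA}_{\omega_1}$ forces ccc posets to have property $K$ applies: for a powerfully ccc $\mathcal{P}$ and $\{p_\alpha:\alpha<\omega_1\}\subseteq\mathcal{P}$, the compatibility partition on the index set, handled via ${\rm MA}_{\omega_1}$ for the auxiliary ccc posets built from $\mathcal{P}$ — its finite powers together with the natural poset of finite compatible subsets, which one checks are again in $\Theta$, so that ${\rm MA}(\Theta)$ suffices — produces an uncountable linked subset. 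Thus, under the hypotheses, powerfully ccc, property $K$ and precaliber $\omega_1$ coincide.

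Finally the iteration. By induction on $\nu$ one proves simultaneously that $\mathcal{P}_\nu$ has precaliber $\omega_1$ and that ${\rm MA}$(powerfully ccc)$+\neg{\rm CH}+$(i)$+$(ii) hold in $V^{\mathcal{P}_\nu}$. At a successor $\beta+1$ the iterand $\dot{\mathcal{Q}}_\beta$ is forced powerfully ccc, hence, applying the strengthening inside $V^{\mathcal{P}_\beta}$, forced to have precaliber $\omega_1$, so $\mathcal{P}_{\beta+1}=\mathcal{P}_\beta*\dot{\mathcal{Q}}_\beta$ has precaliber $\omega_1$; limits are handled by the usual theorem that precaliber $\omega_1$ is preserved under finite support iteration. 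The one genuinely delicate ingredient — where I expect most of the work to lie — is that (ii), and with it the inclusion ``powerfully ccc $\subseteq\Theta$'' and the axiom ${\rm MA}(\Theta)$, persist under forcing with a precaliber $\omega_1$ (equivalently powerfully ccc) poset $\mathcal{P}$: given a $\mathcal{P}$-name $\dot A$ for an uncountable subset of $\omega_1$ and a colour $l$, one must produce in $V$ an $m<\omega$ and an uncountable non-overlapping family of $m$-tuples that $\mathcal{P}$-generically witnesses (ii). The naive recipe — choose $p_\alpha\Vdash x_\alpha\in\dot A$, apply (ii) in $V$ to $\{x_\alpha:\alpha<\omega_1\}$, and keep only the tuples all of whose coordinates are driven into $\dot A$ by mutually compatible conditions — may collapse to a countable family, so one must instead use ${\rm MA}$(powerfully ccc) in $V$ to pin $\dot A$ down on an uncountable set through an auxiliary powerfully ccc forcing before invoking (ii); the analogous point has to be checked at limit stages of the iteration as well.
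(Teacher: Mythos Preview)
Your core observation is exactly right and matches the paper's engine: using (ii) with the opposite colour, one sees that no powerfully ccc poset can add an uncountable $l$-homogeneous subset of $\pi$. The paper uses essentially the same antichain-in-$\mathcal{P}^m$ trick you describe.

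Where your proposal diverges is in the iteration. You set up an induction that requires the full package---${\rm MA}$(powerfully ccc), $\neg{\rm CH}$, (i), (ii)---to hold in every intermediate model $V^{\mathcal{P}_\beta}$, and you correctly flag the preservation of (ii) as unresolved. In fact the preservation of (i) and of ${\rm MA}$(powerfully ccc) are just as problematic: $\mathcal{P}_\beta$ may well introduce new powerfully ccc posets for which no provision was made. This is a genuine gap, and the ``naive recipe'' you sketch for (ii) does not obviously close it.

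The paper's proof sidesteps the whole issue by never leaving $V$. The induction hypothesis is simply that $\mathcal{P}_\eta$ has precaliber $\omega_1$; hypotheses (i) and (ii) are only ever invoked in the ground model. At the successor step $\nu=\eta+1$, one argues by contradiction: if ${\rm MA}(\{\mathcal{P}_\nu(\le r)\})$ fails for some $r$, then since $\mathcal{P}_\nu$ is ccc, (i) in $V$ forces some $(p,\dot q)\in\mathcal{P}_\nu$ to add an uncountable $l$-homogeneous $\dot X$. Now pick $(p_\alpha,\dot q_\alpha)$ and $x_\alpha$ as you do, but use precaliber $\omega_1$ of $\mathcal{P}_\eta$ (the induction hypothesis, in $V$) to make $\{p_\alpha:\alpha\in\Gamma\}$ \emph{centered}. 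Apply (ii) in $V$ to $A=\{x_\alpha:\alpha\in\Gamma\}$ with colour $1-l$, obtaining $m$ and $\mathscr{B}$; for each $a\in\mathscr{B}$ take a common lower bound $p'_a\in\mathcal{P}_\eta$ of the relevant $p_\alpha$'s. Choosing a $\mathcal{P}_\eta$-generic $G$ so that $\{a\in\mathscr{B}:p'_a\in G\}$ is uncountable, the tuples $\langle\dot q_{a(i)}^G:i<m\rangle$ form an uncountable antichain in $\mathcal{Q}_\eta^m$ in $V[G]$, contradicting that $\dot{\mathcal{Q}}_\eta$ is forced powerfully ccc. The limit case is the standard $\Delta$-system argument for precaliber $\omega_1$.

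So the fix is not to preserve the hypotheses along the iteration, but to push your observation one level up: instead of showing that $\dot{\mathcal{Q}}_\eta$ has precaliber $\omega_1$ inside $V^{\mathcal{P}_\eta}$, show directly in $V$ that the two-step $\mathcal{P}_\eta*\dot{\mathcal{Q}}_\eta$ cannot add a homogeneous set, using the centeredness coming from the induction hypothesis on $\mathcal{P}_\eta$.
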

  \begin{proof}
  Let $\langle \mc{P}_\alpha, \dot{\mc{Q}}_\beta: \alpha\leq \nu, \beta<\nu\rangle$ be a finite support iteration of powerfully ccc posets. We will show that $\mc{P}_\nu$ is powerfully ccc by induction on $\nu$. First note that, since {\rm MA}(powerfully ccc) holds, a poset is powerfully ccc   iff it has precaliber $\omega_1$.
  
  $\nu=1$ is trivial.
  
  Then assume that $\nu=\eta+1$ is a successor ordinal. If {\rm MA}$(\{\mc{P}_\nu(\leq r)\})$ holds for every $r\in \mc{P}_\nu$ where $\mc{P}_\nu(\leq r)=\{r'\in \mc{P}_\nu: r'\leq r\}$, then   $\mc{P}_\nu$ has precaliber $\omega_1$.
  
  So suppose otherwise. View $\mc{P}_\nu$ as $\mc{P}_\eta*\dot{Q}_\eta$. By (i), for some $l<2$, $(p, \dot{q})\in\mc{P}_\nu$ and $\mc{P}_\nu$-name $\dot{X}$,
  \begin{enumerate}
  \item $(p,\dot{q})\Vdash_{\mc{P}_\nu} \pi\text{ has an uncountable $l$-homogeneous subset } \dot{X}.$
  \end{enumerate}
  For each $\alpha<\omega_1$, find $(p_\alpha, \dot{q}_\alpha)\leq (p,\dot{q})$ and $x_\alpha\in \omega_1\setminus \alpha$ such that
  \begin{enumerate}\setcounter{enumi}{1}
  \item $(p_\alpha, \dot{q}_\alpha)\Vdash_{\mc{P}_\nu} x_\alpha\in \dot{X}$.
  \end{enumerate}
    By induction hypothesis, $\mc{P}_\eta$ is powerfully ccc and hence has precaliber $\omega_1$.  Then find $\Gamma\in [\omega_1]^{\omega_1}$ such that
      \begin{enumerate}\setcounter{enumi}{2}
  \item $\{p_\alpha: \alpha\in\Gamma\}$ is centered. 
  \end{enumerate}
  Denote
  $$A=\{x_\alpha: \alpha\in \Gamma\}.$$
  By (ii), there are $m<\omega$ and uncountable non-overlapping $\msb\subset [A]^m$ such that 
   \begin{enumerate}\setcounter{enumi}{3}
  \item for every $a<b$ in $\msb$, $\pi(a(i), b(i))=1-l$ for some $i<m$.
  \end{enumerate}
  By (3), for every $a\in \msb$, find $p'_a\in \mc{P}_\eta$ such that
   \begin{enumerate}\setcounter{enumi}{4}
  \item $p'_a$ is a common lower bound of $\{p_\xi: \xi\in a\}$.
  \end{enumerate}
  Now choose a $\mc{P}_\eta$-generic filter $G$ such that 
  \begin{enumerate}\setcounter{enumi}{5}
  \item $\msa=\{a\in\msb: p'_a\in G\}$ is uncountable.
  \end{enumerate}
  
  For $a\in \msa$ and $i<m$, let 
  $$q_{a, i}=\dot{q}_{a(i)}^G.$$
  We claim that $\{\langle q_{a, i}: i<m\rangle: a\in \msa\}$ is an antichain of $\mc{Q}_\eta^m$ where $\mc{Q}_\eta=\dot{Q}_\eta^G$. To see this, fix $a<b$ in $\msa$. By (4),  $\pi(a(i), b(i))=1-l$ for some $i<m$. Then by (1) and (2), $(p_{a(i)},\dot{q}_{a(i)})$ is incompatible with $(p_{b(i)},\dot{q}_{b(i)})$. By (5) and (6), $p_{a(i)}$ and $p_{b(i)}$ are in $G$. So $q_{a, i}=\dot{q}_{a(i)}^G$ is incompatible with $q_{b,i}$.
  
  This shows that $\mc{Q}_\eta^m$ has an uncountable antichain in $V[G]$, contradicting the fact that $\Vdash_{\mc{P}_\eta} \dot{Q}_\eta$ is powerfully ccc. This finishes the successor case.
  
  Finally assume that $\nu$ is a limit ordinal. Note by induction hypothesis, $\mc{P}_\alpha$ is powerfully ccc and hence has precaliber $\omega_1$ for every $\alpha<\nu$. Now a standard $\Delta$-system argument shows that $\mc{P}_\nu$ has precaliber $\omega_1$.
  \end{proof}
Other than the strategy in Section 5, iteration of powerfully ccc posets according to above theorem can have arbitrary length not restricted by the ground model.

   \subsection{More variants}
 We introduce two more variants and leave the proof to interested reader.
  First,   preserve ccc of $\mc{H}_0$ only and require in addition that $\pi$ has no uncountable 0-homogeneous subset. 
    
   The following   property   plays a role similar to $\psipi$ in Section 4.
  \begin{defn}
  Assume $\varphi(\pi, \mc{C}, \mc{R}, \mct, \bfe)$. Then $\psi_1(\pi, \mc{C}, \mc{R}, \mct, \bfe)$ is the assertion that  for every $\msa$ that is a candidate for $\mc{C}$, 
  \begin{itemize}
  \item[(i)]  for every $I, J\subseteq N_\msa$ such that $\{a[I] ,a[J]: a\in \msa\}\subset \mc{H}_0$, there is a $\mc{R}$-satisfiable for $\msa$ function $f$ such that $f[I\times J]=\{0\}$;
  \item[(ii)]   for every $i, j<N_\msa$, there is a $\mc{R}$-satisfiable for $\msa$ function $g$ such that $g(i,j)=1$.
  \end{itemize}
  \end{defn}
  
Then the following statements are consistent for $\pi: [\omega_1]^2\ra 2$.
  \begin{itemize}
  \item {\rm MA}(squarely ccc) holds and $2^\omega$ is arbitrarily large.
  \item $\mc{H}_0$ is ccc and $\pi$ has no uncountable 0-homogeneous subset. In particular, {\rm MA} fails.
  \item If $\mc{P}$ is a ccc poset which forces that $\pi$ has no uncountable 0-homogeneous subset, then \ma($\{\mc{P}\}$) holds.
  \end{itemize}

  Above statement can be strengthened to satisfy the following: for a ccc poset $\mc{P}$, $\mc{P}$ forces that $\mc{H}_0$ is not ccc iff $\mc{P}$ forces that $\pi$ has an uncountable 0-homogeneous subset.
  
  Second, for $n\geq 2$,   force $\mc{H}_0$ to have property $Pr_n$ (see \cite[Definition 4.1]{Bagaria} or \cite{BS}). So $\mathrm{MA}(Pr_n)$ fails in the final model.  We find a witness of $Pr_{n}(\mc{H}_0)$ as follows.
  
  For a $\mc{C}$-candidate $\msa$ and $I$ such that  $\msa^*=\{a[I]: a\in \msa\}\subset\mc{H}_0$,  iteratively force with ``appropriate'' $\mc{P}_{\msa^i, H_i}$ for $i<\frac{n(n-1)}{2}$  such that
\begin{itemize}
\item $G_i$ is $\mc{P}_{\msa^i, H_i}$ generic over $V^{\mc{P}_{\msa^0, H_0}*\cdot\cdot\cdot*\mc{P}_{\dot{\msa}^{i-1}, H_{i-1}}}$ and  every element $a$ of $\msa^i$ is a union of $k_i$ elements of $\bigcup G_{i-1}$ (view $\bigcup G_{-1}$ as $\msa$): $a=\bigcup_{j<k_i} a_{j}$;
\item for every $a<b$ in $\bigcup G_i$, there exists $j<k_i$ such that $a_j[J]\cup b_j[J]\in \mc{H}_0$ whenever $a_j[J]\in \msa^*$.
\end{itemize}
Then $\{[a]^{|I|}\cap \msa^*: a\in \bigcup G_{\frac{n(n-1)}{2}-1}\}$ is a witness of $Pr_{n}(\mc{H}_0)$ for $\msa^*$.

  \section{Closing remarks}
  {\rm MA}$_{\omega_1}$ can be viewed as the combination of the following two statements for a property $\Psi$ in Figure 1 below countable and above ccc,
  \begin{itemize}
  \item[(i)] every ccc poset of size $\leq \omega_1$ has property $\Psi$;
  \item[(ii)] {\rm MA}$_{\omega_1}(\Psi)$.
  \end{itemize}
  Now for no $\Psi$ does (ii) imply {\rm MA}$_{\omega_1}$. It is interesting to know for what $\Psi$ does (i) imply {\rm MA}$_{\omega_1}$. It is easy to see that (i) for powerfully ccc (or squarely ccc) is equivalent to (i) for productively ccc. And it is proved in \cite{TV} that 
  \begin{itemize}
  \item (i) for precaliber $\omega_1$ implies {\rm MA}$_{\omega_1}$;
  \item   $\mathscr{K}_{n+1}$  that every ccc poset has property $K_{n+1}$ implies (i) for $\sigma$-$n$-linked.
  \end{itemize}
 No other implication is known except for the trivial ones.   See \cite{TV},  \cite{Todorcevic91}, \cite{Moore2000} , \cite{LT2001}, \cite{MY} for more partial results.  The following question is still open.
     \begin{question}[\cite{LT2001}]
 Does $\mathscr{K}_2$ imply $\mathrm{MA}_{\omega_1}$? Does productivity of ccc  imply $\mathscr{K}_2$?
  \end{question}

  The modifications in Section 6 indicate that the iteration introduced in Section 3 may have potential to distinguish more closely related properties. It is possible that this iteration   will be useful in distinguishing (i) above for different $\Psi$'s.

 \section*{Acknowledgement}
  I would like to thank Stevo Todorcevic and Liuzhen Wu for introducing Question \ref{q2} and indicating its connection with Question \ref{q1} to me.

\bibliographystyle{plain}

\end{document}